\newtheorem{thm}{Theorem}[section] 
\newtheorem{cor}[thm]{Corolary} 
\newtheorem{lemma}[thm]{Lemma}
\newtheorem{prop}[thm]{Proposition}
\newtheorem{defn}[thm]{Definition}
\theoremstyle{remark}
\numberwithin{section}{chapter}
\numberwithin{equation}{section} 
\numberwithin{figure}{section}
\newcommand{\Z}{\ensuremath{\mathbb{Z}}}
\newcommand{\R}{\ensuremath{\mathbb{R}}}
\newcommand{\N}{\ensuremath{\mathbb{N}}}
\newcommand{\Rn}{\ensuremath{\mathbb{R}^n}}
\newcommand{\frlap}{\ensuremath{(-\Delta)^s}}
\newcommand{\eee}{\ensuremath{\varepsilon}}
\newcommand{\E}{\ensuremath{\mathcal{E}}}
\newcommand{\C}{\ensuremath{\mathcal{C}}}
\newcommand{\Co}{\ensuremath{\mathbb{C}}}
\newcommand{\K}{\ensuremath{\mathcal{K}}}
\newcommand{\bgs}[1]{\begin{equation*} \begin{aligned} #1 \end{aligned} \end{equation*}}
\newcommand{\eqlab}[1]{\begin{equation}  \begin{aligned}#1 \end{aligned}\end{equation}}
\newcommand{\al}{\ensuremath{&\;}}
\newcommand{\Lm}{\ensuremath{\mathcal{L}}}
\begin{document}

\title[Nonlocal diffusion and applications]{Nonlocal diffusion and applications}
\author{Claudia Bucur, Enrico Valdinoci}
\email{c.bucur@unimelb.edu.au}
\email{enrico.valdinoci@wias-berlin.de}
\thanks{It is a pleasure to thank Serena Dipierro, Rupert Frank, 
Richard Mathar, Alexander Nazarov, Joaquim Serra 
and Fernando Soria
for very interesting and pleasant discussions.
We are also indebted with all the 
participants of the seminars and minicourses
from which this set of notes generated
for the nice feedback received, and we hope that this work, 
though somehow sketchy and informal, can be useful to stimulate new 
discussions and further develop this rich and interesting subject.}
\subjclass[2010]{35R11, 60G22, 26A33.}
\keywords{Fractional diffusion, fractional Laplacian, nonlocal minimal surfaces, nonlocal phase transitions, nonlocal quantum mechanics.}

\date{}

\begin{abstract}
We consider the fractional Laplace framework and provide models and theorems related to nonlocal diffusion phenomena. Some applications are presented, including: a simple probabilistic interpretation, water waves, crystal dislocations, nonlocal phase transitions, nonlocal minimal surfaces and Schr\"{o}dinger equations. Furthermore, an example of an $s$-harmonic function, the harmonic extension and some insight on a fractional version of a classical
conjecture formulated by De Giorgi
are presented. Although this book aims at gathering some 
introductory material on the applications of the fractional Laplacian, some proofs and results are original. Also, the work is self contained, and the reader is invited to consult the rich bibliography for further details, whenever a subject is of interest.  
\end{abstract}

\maketitle

\tableofcontents

\chapter{Introduction}

In the recent years the fractional Laplace operator has received
much attention both in pure and in applied
mathematics.

The purpose of these pages is to collect a set of notes that are a result of 
several talks and minicourses delivered here and there in the world 
(Milan, Cortona, Pisa, Roma, Santiago del Chile, Madrid, Bologna, 
Porquerolles, Catania to name a few). 
We will present here some mathematical models related to nonlocal
equations, providing some introductory material and examples. 

Starting from the basics of the nonlocal equations, we will discuss in
detail some recent developments in four topics of research on which we focused our
attention, namely:
\begin{itemize}
\item a problem arising in crystal dislocation (which is related to a
classical model introduced by Peierls and Nabarro),
\item a problem arising in phase transitions
(which is related to
a nonlocal version of the classical
Allen–Cahn equation),
\item the limit interfaces arising in the above
nonlocal phase transitions
(which turn out
to be nonlocal minimal surfaces,
as introduced by
Caffarelli, Roquejoffre and Savin), and
\item a nonlocal version of the Schr\"odinger equation
for standing waves (as introduced by Laskin).
\end{itemize}

This set of notes is organized as follows.
To start with, in Chapter~\ref{S:1}, we will give a motivation
for the fractional Laplacian (which is the typical nonlocal operator
for our framework), that
originates from probabilistic considerations.
As a matter of fact, no advanced knowledge of probability theory
is assumed from the reader, and the topic is dealt with
at an elementary level.

In Chapter~\ref{S:2}, we will recall some basic properties of
the fractional Laplacian, discuss some explicit examples in detail
and point out some structural inequalities, that are due to a fractional
comparison principle. This part continues with a quite surprising
result, which states that every function can be locally approximated
by functions with vanishing fractional Laplacian
(in sharp contrast with the rigidity of the classical harmonic functions). We also give an example of a function with constant fractional Laplacian on the ball.

In Chapter~\ref{S:3} we deal with extended problems.
It is indeed a quite remarkable fact that
in many occasions nonlocal operators can be equivalently
represented as local (though possibly degenerate or singular)
operators in one dimension more. Moreover, as a counterpart,
several models arising in a local framework
give rise to nonlocal equations, due to boundary effects.
So, to introduce the extension problem and give
a concrete intuition of it, we will present some models in physics
that are naturally set on an extended space
to start with, and will show their relation with the fractional Laplacian
on a trace space. We will also give a detailed justification
of this extension procedure by means of the Fourier transform.

As a special example of problems arising in physics
that produce a nonlocal equation, we consider a problem related to crystal dislocation, present some mathematical results
that have been recently obtained on this topic, and
discuss the relation between these results and
the observable phenomena.

Chapter~\ref{S:NP}, \ref{nlms} and~\ref{S:NP:2}
present topics of contemporary research.
We will discuss in particular:
some phase transition
equations of nonlocal type, 
their limit interfaces,
which (below a critical threshold of the fractional parameter)
are surfaces that minimize a nonlocal perimeter functional, and
some nonlocal equations arising in quantum mechanics.
\bigskip

We remark that the introductory part of these notes
is not intended to be separated from the one which is more
research oriented: namely,
even the chapters whose main goal is to develop the basics
of the theory contain some parts related to contemporary research trends.
\bigskip

Of course, these notes and the results presented
do not aim to be comprehensive and cannot take into
account all the material that would deserve to be included.
Even a thorough
introduction to nonlocal (or even just fractional) equations
goes way beyond the purpose of this book.

Many fundamental
topics slipped completely out of these notes:
just to name a few, the topological methods
and the fine regularity theory in the fractional cases are
not presented here, 
the fully nonlinear or singular/degenerate equations
are not taken into account,
only very few applications are discussed briefly,
important models such as
the quasi-geostrophic equation and the fractional
porous media equation are not covered in these notes,
we will not consider models arising in game theory
such as the nonlocal tug-of-war,
the parabolic equations are not taken into account in detail,
unique continuation and overdetermined problems will not be studied here
and the link to probability theory that we consider here
is not rigorous and only superficial
(the reader interested in these important topics
may look, for instance, at~\cite{MOLICA, MINGIONE, kuusi_nonlocal, kuusi_frac, FULLY-NONLINEAR,QUASI1, QUASI2, POROUSMEDIA1, POROUSMEDIA2,BONFVAZ,BON_SIR_VAZ, 
TUG-OF-WAR, PARABOLIC, Peral, UNIQUE-CONTINUATION, Fall-over, Soave,Athanasopoulos, BONVAZBDD, BONVAZBDDI,CAFFVAZ, CAFFVAZ_ASYM, chengrennest, Duvaut, grubb_fractional,kul,fractional_porous, dipierrocontdens}).
Also, a complete discussion of the nonlocal equations
in bounded domains is not available here (for this,
we refer to the recent
survey~\cite{RO15}).
In terms of surveys, collections
of results and open problems,
we also mention the very nice website~\cite{MWIKI},
which gets\footnote{It seems to be known that
Plato did not like books because they cannot respond to questions.
He might have liked websites.}
constantly updated.\bigskip

Using a metaphor with fine arts,
we could say that the picture that we painted here
is not even impressionistic, it is just na\"{\i}f.
Nevertheless, we hope that these pages may be of some help
to the young researchers of all ages who are willing to
have a look at the exciting nonlocal scenario
(and who are willing to tolerate
the partial and incomplete point of view offered by this modest observation
point).

\chapter{A probabilistic motivation}\label{S:1}

The fractional Laplacian will be the main operator
studied in this book.
We consider a function~$u\colon \Rn\to \R$ (which is supposed\footnote{To
write~\eqref{frlap2def} it is sufficient, for simplicity, to take
here~$u$ in the Schwartz space $\mathcal{S}(\Rn)$
of smooth and rapidly decaying functions, or in~$ C^2(\Rn)\cap
L^{\infty}(\Rn)$.
We refer to \cite{S05} for a refinement of the space
of definition.}
to be regular enough) and a fractional parameter~$s\in (0,1)$. Then, the 
fractional Laplacian of~$u$ is given by
\begin{equation}\label{frlap2def}
\frlap u(x)= \frac{C(n,s)}{2} \int_{\Rn} \frac{2 u(x) -u(x+y)-u(x-y)} {|y|^{n+2s} } \, dy,\end{equation} where $C(n,s)$ is a dimensional\footnote{The explicit
value of~$C(n,s)$ is usually unimportant. Nevertheless,
we will compute its value explicitly in formulas~\eqref{cnsgalattica}
and~\eqref{EXPL}.
The reason for which it is convenient to divide~$C(n,s)$ by a factor~$2$
in~\eqref{frlap2def} will be clear later on, in formula~\eqref{frlapdef}.}
constant.

One sees from~\eqref{frlap2def}
that $(-\Delta)^s$
is an operator of order~$2s$, namely, it arises from
a differential quotient of order~$2s$ weighted in the whole
space. Different fractional operators have been
considered in literature
(see e.g. \cite{CAPUTO, SeV14, MUSINA-NAZAROV}), and all of them
come from interesting problems in pure or/and applied
mathematics. We will focus here
on the operator in~\eqref{frlap2def}
and we will motivate it by probabilistic considerations
(as a matter of fact, many other motivations are possible).

The probabilistic model under consideration is a random process
that allows long jumps
(in further generality, it is known that the fractional Laplacian 
is an infinitesimal generator of L\`evy processes,
see e.g.~\cite{B96,Applebaum} for further details). 
A more detailed
mathematical introduction
to the fractional Laplacian
is then presented in the subsequent
Section \ref{sspn}.

\section{The random walk with arbitrarily long jumps}\label{srw}

We will show here that the fractional heat equation (i.e. the ``typical'' equation that drives the fractional diffusion and that can be written, up to dimensional constants, as~$\partial_t u+ (-\Delta)^s u=0$) naturally arises from a probabilistic process in which a particle moves randomly in the space subject to a probability that allows long jumps with a polynomial tail.

For this scope, we introduce a probability distribution on the natural numbers~$\N^*:=\{1,2,3,\cdots\}$ as follows. If $I\subseteq \N^*$, then the probability of $I$ is defined to be 
	\[ P(I):= c_s \, \sum_{k\in I} \frac{1}{|k|^{1+2s}}.\]
The constant $c_s$ is taken in order to normalize $P$ to be a probability measure. Namely, we take 
	\[ c_s:=\left( \sum_{k\in \N^*} \frac{1}{|k|^{1+2s}}\right)^{-1},\]
so that we have~$P(\N^*)=1$.

Now we consider a particle that moves in $\R^n$ according to a probabilistic process. The process will be discrete both in time and space
(in the end, we will formally take the limit when these time and space steps are small).
We denote by $\tau$ the discrete time step, and by $h$ the discrete space step. We will take the scaling $\tau=h^{2s}$ and we denote by $u(x,t)$ the probability of finding the particle at the point $x$ at time $t$.

The particle in $\R^n$ is supposed to move according to the following probabilistic law: at each time step~$\tau$, the particle selects randomly both a direction $v\in \partial B_1$, according to the uniform distribution on $\partial B_1$, and a natural number~$k\in\N^*$,
according to the probability law $P$, and it moves by a discrete space step $khv$. Notice that long jumps are allowed with small probability. Then, if the particle is at time~$t$ at the point~$x_0$ and, following the probability law, it picks up a direction~$v\in \partial B_1$
and a natural number~$k\in\N^*$, then the particle at time~$t+\tau$ will lie at~$x_0+khv$.

Now, the probability~$u(x,t+\tau)$ of finding the particle at~$x$ at time~$t+\tau$ is the sum of the probabilities of finding the
particle somewhere else, say at~$x+khv$, for some direction~$v\in \partial B_1$ and some natural number~$k\in\N^*$, times the probability of having selected such a direction and such a natural number.
\begin{center}
\begin{figure}[htpb]
	\hspace{0.5cm}
	\begin{minipage}[b]{0.90\linewidth}
	\centering
	\includegraphics[width=0.90\textwidth]{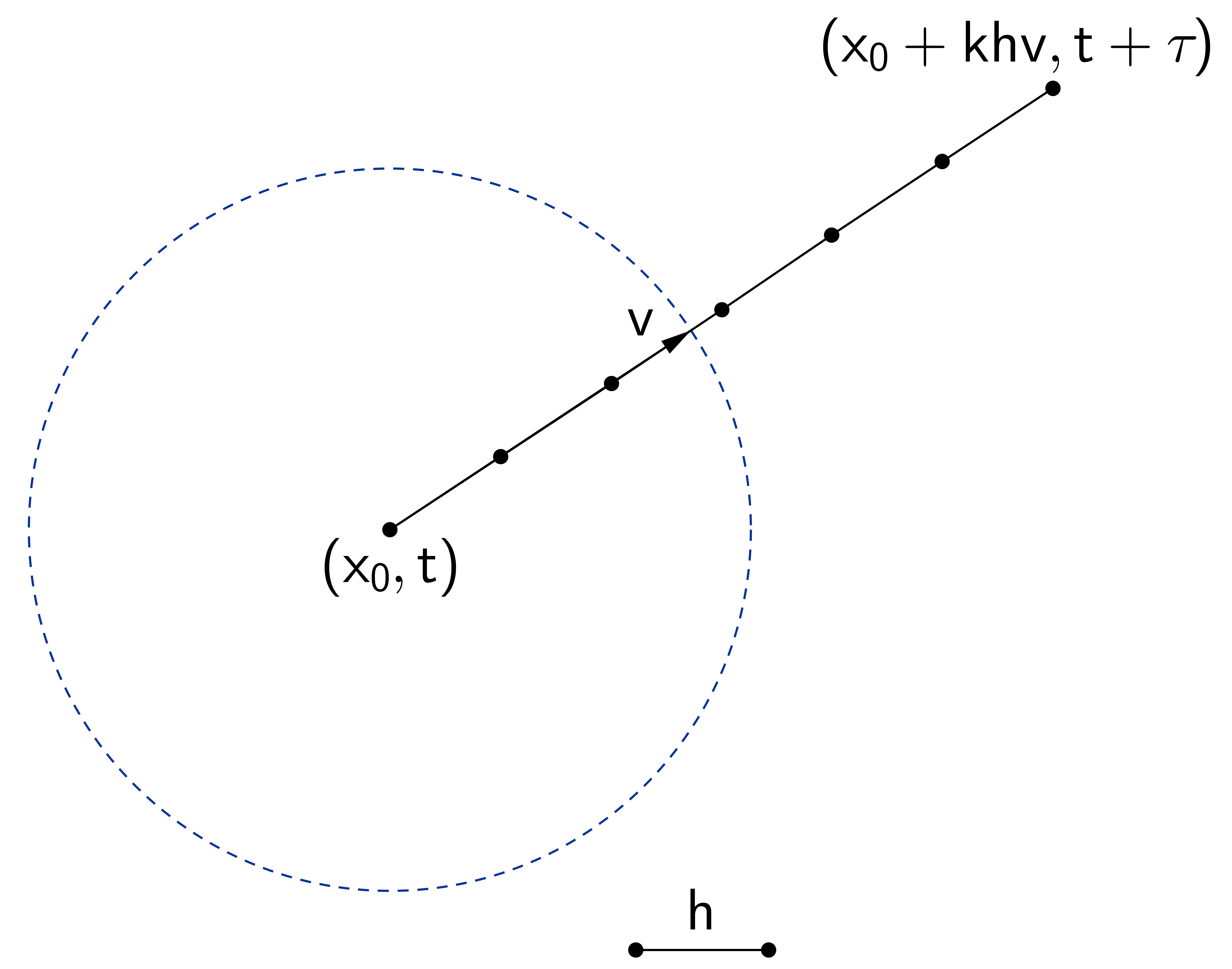}
	\caption{The random walk with jumps}   
	\label{fign:rndw}
	\end{minipage}
\end{figure} 
	\end{center}
This translates into 	
	\[ u(x,t+\tau) =\frac{c_s}{|\partial B_1|} \sum_{k\in\N^*} \int_{\partial B_1}
\frac{u(x+khv,t)}{|k|^{1+2s}} \,d\mathcal{H}^{n-1}(v).\]
Notice that the factor $c_s/|\partial B_1|$ is a normalizing probability
constant, hence we subtract~$u(x,t)$ and we obtain
	\bgs{
	u(x,t+\tau) -u(x,t)=\al
	\frac{c_s}{|\partial B_1|} \sum_{k\in\N^*} \int_{\partial B_1}
	 \frac{u(x+khv,t)}{|k|^{1+2s}} \,d\mathcal{H}^{n-1}(v) - u(x,t)\\
	=\al
	\frac{c_s}{|\partial B_1|} \sum_{k\in\N^*} \int_{\partial B_1}
	\frac{u(x+khv,t)-u(x,t)}{|k|^{1+2s}} \,d\mathcal{H}^{n-1}(v).
	}
As a matter of fact, by symmetry, we can change~$v$ to~$-v$ in the integral above, so we find that
	\[ u(x,t+\tau) - u(x,t) = \frac{c_s}{|\partial B_1|}  \sum_{k\in\N^*} \int_{\partial B_1} \frac{u(x-khv,t)-u(x,t)}{|k|^{1+2s}} \,d\mathcal{H}^{n-1}(v).\]
Then we can sum up these two expressions (and divide by $2$) and obtain that
	\bgs { u(&x,t+ \tau)  -u(x,t) \\
	\al = \frac{c_s}{2\,|\partial B_1|}  \sum_{k\in\N^*} \int_{\partial B_1} \frac{u(x+khv,t)+u(x-khv,t)-2u(x,t)}{|k|^{1+2s}}\,d\mathcal{H}^{n-1}(v) .} 
Now we divide by~$\tau=h^{2s}$, we recognize a Riemann sum, we take a formal limit and we use polar coordinates, thus obtaining:
	\bgs{
		\partial_t u(x,t)\simeq & \frac{ u(x,t+\tau)-u(x,t)}{\tau} 
		\\ =&
		\frac{c_s\,h}{2\,|\partial B_1|} \sum_{k\in\N^*} \int_{\partial B_1}
		\frac{u(x+khv,t)+u(x-khv,t)-2u(x,t)}{|hk|^{1+2s}}d\mathcal{H}^{n-1}(v) 
		\\ \simeq&  \frac{c_s}{2\,|\partial B_1|} \int_0^{+\infty}
		\int_{\partial B_1}
		\frac{u(x+rv,t)+u(x-rv,t)-2u(x,t)}{|r|^{1+2s}}d\mathcal{H}^{n-1}(v) \,dr \\
		=& \frac{c_s}{2\,|\partial B_1|} \int_{\R^n}
		\frac{u(x+y,t)+u(x-y,t)-2u(x,t)}{|y|^{n+2s}} \,dy\\
		=& - c_{n,s} \,(-\Delta)^s u(x,t)	}
for a suitable $c_{n,s}>0$.

This shows that, at least formally, for small time and space steps, the above probabilistic process approaches a fractional heat equation.\bigskip

We observe that processes of this type occur in nature quite
often, see in particular the biological observations
in~\cite{ALBA, MARINE}, other interesting observations in \cite{reynolds_levy,schouten,woycz} and the mathematical discussions
in~\cite{KAO, FRIEDMAN, MONTEFUSCO-PELLACCI-VERZINI, MASSACCESI,metzler}.

Roughly speaking, let us say that it is not unreasonable that
a predator may decide to use a nonlocal dispersive strategy
to hunt its preys more efficiently
(or, equivalently, that the natural selection may favor some kind of
nonlocal diffusion):
small fishes will not wait to be eaten by a big fish once they
have seen it, so it may be more convenient for the big fish
just to pick up a random direction, move rapidly
in that direction, stop quickly and eat the small fishes
there (if any) and then go on with the hunt. And this ``hit-and-run''
hunting procedure
seems quite related to that described in Figure~\ref{fign:rndw}.

\section{A payoff model}\label{pym}

Another probabilistic motivation for the fractional Laplacian arises from a payoff approach. Suppose to move in a domain~$\Omega$ according to a random walk with jumps as discussed in Section~\ref{srw}. Suppose also that exiting the domain~$\Omega$ for the first time by jumping to an outside point~$y\in\Rn \setminus\Omega$, means earning~$u_0(y)$ sestertii. A relevant question is, of course, how rich we expect to become
in this way. That is, if we start at a given point~$x\in\Omega$ and we denote by~$u(x)$ the amount of sestertii that we expect to gain, is there a way to obtain information on~$u$?

The answer is that 
(in the right scale limit of the random walk with jumps presented in Section~\ref{srw}) the expected payoff~$u$ is determined by the equation
	\begin{equation}\label{HAS}
			\begin{cases}
				(-\Delta)^s u =0 & {\mbox{ in }}\Omega,\\
				u=u_0 &{\mbox{ in }}\R^n\setminus\Omega.
			\end{cases}
	\end{equation}
To better explain this,
let us fix a point~$x\in\Omega$. The expected value of the payoff at~$x$ is the average of all the payoffs at the points~$\tilde x$ from which one can reach~$x$, weighted by the probability of the jumps. That is, by writing~$\tilde x=x+khv$, with~$v\in\partial B_1$, $k\in\N^*$ and~$h>0$, as in the previous Chapter~\ref {srw}, we have that the probability of jump is~$\displaystyle \frac{c_s}{|\partial B_1|\,|k|^{1+2s}}$. This leads to the formula 
	\[ u(x)= \frac{c_s}{|\partial B_1|} \sum_{k\in\N^*} \int_{\partial B_1} \frac{u(x+khv)}{|k|^{1+2s}}\,d{\mathcal{H}}^{n-1}(v).\]
By changing~$v$ into~$-v$ we obtain 
  \[ u(x)= \frac{c_s}{|\partial B_1|} \sum_{k\in\N^*}\int_{\partial B_1} \frac{u(x-khv)}{|k|^{1+2s}}\,d{\mathcal{H}}^{n-1}(v)\]
and so, by summing up,
	\[  2u(x)= \frac{c_s}{|\partial B_1|} \sum_{k\in\N^*}\int_{\partial B_1} \frac{u(x+khv)+u(x-khv)}{|k|^{1+2s}}\,d{\mathcal{H}}^{n-1}(v).\]
Since the total probability is~$1$, we can subtract~$2u(x)$ to both sides and obtain that
 \[ 0 = \frac{c_s}{|\partial B_1|} \sum_{k\in\N^*}\int_{\partial B_1} \frac{u(x+khv)+u(x-khv)-2u(x) }{|k|^{1+2s}}\,d{\mathcal{H}}^{n-1}(v). \]
We can now divide by~$h^{1+2s}$ and recognize a Riemann sum, which, after passing to the limit as~$h\searrow0$, gives~$0=-(-\Delta)^s
u(x)$,
that is~\eqref{HAS}.

\chapter{An introduction to the fractional Laplacian}\label{S:2}

We introduce here some preliminary notions on the fractional Laplacian and on fractional Sobolev spaces. Moreover, we present an 
explicit example of an $s$-harmonic function on the positive half-line $\R_{+}$, an example of a function with constant Laplacian on the ball, 
discuss some maximum principles and
a Harnack inequality, and 
present
a quite surprising
local density property
of $s$-harmonic functions into the space of smooth functions.

\section{Preliminary notions}\label{sspn}
We introduce here the fractional Laplace operator, the fractional 
Sobolev spaces and give some useful pieces of notation.
We also refer to~\cite{DNPV12} for further details related to the topic.

We consider the  Schwartz space of rapidly decaying functions defined as 
	\[ \mathcal{S} ({\Rn}): = \left \{  f \in C^\infty(\Rn)\;  \Big|   \; \forall \alpha, \, \beta \in \mathbf{N}^n_0, \,  \sup_{x\in {\Rn}} |x^{\alpha} \partial_{\beta} f(x)|<  \infty \right \}.  \]
For any $f \in \mathcal{S}(\Rn)$, denoting the space variable $x \in \Rn$ and the frequency variable $\xi \in \Rn$, the Fourier transform
and the inverse Fourier transform are defined,
respectively, as
		\begin{equation} 
		\widehat f(\xi) := \mathcal{F} f(\xi):= 
\int_{{\Rn}} f(x) e^{-  2\pi i x \cdot \xi} \, dx
		\label{transF} 
	\end{equation}
and
	\begin{equation}	\label{invF}
		f(x)=  \mathcal{F}^{-1} \widehat f(x)  = \int_{\Rn} \widehat f (\xi) e^{2\pi i x \cdot \xi} \, d\xi.
	\end{equation} 
Another useful notion is the one of principal value, namely we consider
the definition
\begin{equation}\label{PV-1} \mbox{P.V.}
\int_{\Rn}\frac{u(x) - u(y)}{|x- y|^{n+2s}}\, dy :=
\lim_{\eee \to 0} \int_{{\Rn}\setminus
B_{\eee}(x)} \frac{u(x) - u(y)}{|x- y|^{n+2s}}\, dy.\end{equation}
Notice indeed that the integrand above
is singular when~$y$ is in a neighborhood
of~$x$, and this singularity is, in general,
not integrable (in the sense of Lebesgue):
indeed notice that, near~$x$,
we have that~$u(x) - u(y)$ behaves at the first order
like~$\nabla u(x)\cdot(x-y)$,
hence the integral above behaves at the first order like
\begin{equation}\label{PV-2}
\frac{ \nabla u(x)\cdot(x-y) }{|x- y|^{n+2s}}\end{equation}
whose absolute value gives an infinite integral near~$x$
(unless either~$\nabla u(x)=0$
or~$s<1/2$).

The idea of the definition in~\eqref{PV-1}
is that the term in~\eqref{PV-2} averages out
in a neighborhood of~$x$
by symmetry, since the term is odd with respect to~$x$,
and so it does not contribute to the integral if we perform it
in a symmetric way. In a sense,
the principal value in~\eqref{PV-1}
kills the first order of the function at the numerator,
which produces a linear growth, and focuses
on the second order remainders.\bigskip

The notation in~\eqref{PV-1} allows us to write~\eqref{frlap2def}
in the following more compact form:
\begin{equation*}\begin{split}
	\frlap u(x)\,& =\,\frac{C(n,s)}{2}  \int_{\Rn} \frac{2 u(x) -u(x+y)-u(x-y)} {|y|^{n+2s}}\, dy\\
	&=\, \,\frac{C(n,s)}{2}\lim_{\eee \to 0} \int_{{\Rn}\setminus B_{\eee}}\frac{2 u(x) -u(x+y)-u(x-y)} {|y|^{n+2s}}\, dy \\
	&=\,\frac{C(n,s)}{2}	\lim_{\eee \to 0} \left[\int_{{\Rn}\setminus B_{\eee}}\frac{u(x) -u(x+y)} {|y|^{n+2s}}\, dy+\int_{{\Rn}\setminus B_{\eee}}\frac{u(x) -u(x-y)} {|y|^{n+2s}}\, dy\right]\\
	&=\,\frac{C(n,s)}{2}\lim_{\eee \to 0}\left[\int_{{\Rn}\setminus B_{\eee}(x)}\frac{u(x) -u(\eta)} {|x-\eta|^{n+2s}}\, d\eta+\int_{{\Rn}\setminus B_{\eee}(x)}\frac{u(x) -u(\zeta)} {|x-\zeta|^{n+2s}}\, d\zeta\right]\\
	&=\,C(n,s)\,\lim_{\eee \to 0}\int_{{\Rn}\setminus B_{\eee}(x)}\frac{u(x) -u(\eta)} {|x-\eta|^{n+2s}}\, d\eta,
	\end{split}\end{equation*}
where the changes of variable~$\eta:=x+y$ and~$\zeta:=x-y$
were used, i.e.
\begin{equation}\label{frlapdef}
	\frlap u(x) =C(n,s)\, \mbox{P.V.} \int_{\Rn}\frac{u(x) - u(y)}{|x- y|^{n+2s}}\, dy .\end{equation}
The simplification
above also explains why it was convenient to
write~\eqref{frlap2def} with the factor~$2$
dividing~$C(n,s)$.
Notice that the
expression in~\eqref{frlap2def}
does not require the P.V. formulation since, for instance, taking $u\in L^{\infty}(\Rn)$ and locally $C^2$, using a Taylor expansion of $u$ in $B_1$, one observes that 
	\[ \begin{split}
		\int_{\Rn}\al  \frac{|2 u(x) -u(x+y)-u(x-y)|} {|y|^{n+2s} } \, dy \\
		\leq \al   \|u\|_{L^{\infty}(\Rn)} \int_{\Rn \setminus B_1} |y|^{-n-2s} \, dy + \int_{B_1} \frac{|D^2u(x)| |y|^2}{|y|^{n+2s}}\, dy \\
	\leq \al \|u\|_{L^{\infty}(\Rn)}\int_{\Rn \setminus B_1}   |y|^{-n-2s}\, dy + \|D^2 u\|_{L^{\infty}(\Rn)}\int_{B_1} |y|^{-n-2s +2 }\, dy ,\\
\end{split} \] 
and the integrals above provide a finite quantity.\bigskip

Formula~\eqref{frlapdef}
has also a stimulating analogy with the classical Laplacian.
Namely, the classical Laplacian (up to normalizing constants)
is the measure of the infinitesimal displacement of
a function in average
(this is the ``elastic'' property
of harmonic functions, whose value at a given point
tends to revert to the average in a ball). Indeed,
by canceling the odd contributions, and using that
\begin{eqnarray*}&&\int_{B_r(x)} |x-y|^2 \,dy =
\sum_{k=1}^n \int_{B_r(x)} (x_k-y_k)^2 \,dy
= n \int_{B_r(x)} (x_i-y_i)^2 \,dy,
\\&&\qquad{\mbox{ for any }}i\in\{1,\dots,n\},\end{eqnarray*}
we see that
\begin{equation}\label{KJ896}
\begin{split}
& \lim_{r\to0} \frac{1}{r^2}
\left( u(x)-\frac{1}{|B_r(x)|}\, \int_{B_r(x)} u(y)\,dy\right)\\
=\;& 
\lim_{r\to0} -\frac{1}{r^2|B_r(x)| }
\int_{B_r(x)}\left( u(y)-u(x)\right) \,dy
\\
=\;& 
\lim_{r\to0} -\frac{1}{r^{n+2}\,|B_1|}\, \int_{B_r(x)} \nabla u(x)\cdot(x-y)
+\frac{1}{2} D^2u(x) (x-y)\cdot(x-y)
\\ &\qquad+{\mathcal{O}} (|x-y|^3)\,dy
\\
=\;&
\lim_{r\to0} -\frac{1}{2r^{n+2}\,|B_1|}\, \sum_{i,j=1}^n \int_{B_r(x)} 
\partial^2_{i,j} u(x)\,(x_i-y_i)(x_j-y_j) \,dy \\
=\;&
\lim_{r\to0} -\frac{1}{2r^{n+2}\,|B_1|}\, \sum_{i=1}^n \int_{B_r(x)}
\partial^2_{i,i} u(x)\,(x_i-y_i)^2 \,dy \\
=\;&
\lim_{r\to0} -\frac{1}{2n \,r^{n+2}\,|B_1|}\, \sum_{i=1}^n 
\partial^2_{i,i} u(x)\,
\int_{B_r(x)} |x-y|^2 \,dy \\
=\;& -C_n \Delta u(x),
\end{split}\end{equation}
for some~$C_n>0$. In this spirit,
when we compare the latter formula with~\eqref{frlapdef},
we can think that the fractional Laplacian corresponds to
a weighted average of the function's oscillation.
While the average in~\eqref{KJ896}
is localized in the vicinity of a point~$x$, the one in~\eqref{frlapdef}
occurs in the whole space (though it decays at infinity).
Also, the spacial homogeneity of the average in~\eqref{KJ896}
has an extra factor that is proportional to the space variables to
the power~$-2$, while the corresponding power in the average 
in~\eqref{frlapdef} is~$-2s$ (and this is consistent for~$s\to1$).
\bigskip

Furthermore, for $u \in \mathcal{S}(\Rn)$ the fractional Laplace
operator can be 
expressed in Fourier frequency variables multiplied
by~$(2\pi|\xi|)^{2s}$,
as stated in the following lemma. 
\begin{lemma}\label{frlaphdeflem}
We have that
	\begin{equation} 
		\frlap u(x)=\mathcal{F}^{-1} \big((2\pi|\xi|)^{2s} \widehat u (\xi)\big) .
		\label{frlaphdef} 
	\end{equation}
\end{lemma}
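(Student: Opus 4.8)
The plan is to compute the Fourier transform of $\frlap u$ directly from the singularity-free representation~\eqref{frlap2def}, since that form avoids the principal value and makes Fubini's theorem immediately applicable. First I would apply $\mathcal{F}$ to~\eqref{frlap2def}, writing
\[
\mathcal{F}\big(\frlap u\big)(\xi)
=\frac{C(n,s)}{2}\int_{\Rn}\frac{1}{|y|^{n+2s}}
\,\mathcal{F}\big(2u(x)-u(x+y)-u(x-y)\big)(\xi)\,dy,
\]
where the interchange of $\mathcal{F}$ with the $dy$-integral is justified by the $L^1$-in-$y$ bound already established in the excerpt (the Taylor-expansion estimate, combined with $|\widehat u|\le \|u\|_{L^1}$ for the tail). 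Using the translation rule $\mathcal{F}(u(\cdot+y))(\xi)=e^{2\pi i y\cdot\xi}\widehat u(\xi)$, the inner transform is $\big(2-e^{2\pi i y\cdot\xi}-e^{-2\pi i y\cdot\xi}\big)\widehat u(\xi)=2\big(1-\cos(2\pi y\cdot\xi)\big)\widehat u(\xi)$. Hence
\[
\mathcal{F}\big(\frlap u\big)(\xi)
=\widehat u(\xi)\,C(n,s)\int_{\Rn}\frac{1-\cos(2\pi y\cdot\xi)}{|y|^{n+2s}}\,dy.
\]

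The heart of the matter is then to show that the remaining integral equals $(2\pi|\xi|)^{2s}$ for an appropriate choice of $C(n,s)$ (indeed, this is precisely how $C(n,s)$ gets pinned down; cf.\ the footnote promising formulas~\eqref{cnsgalattica} and~\eqref{EXPL}). The key step is a scaling-and-rotation argument: set $J(\xi):=\int_{\Rn}\frac{1-\cos(2\pi y\cdot\xi)}{|y|^{n+2s}}\,dy$. For $\xi\ne 0$, choose a rotation $R$ with $R(|\xi|e_1)=\xi$ and substitute $y=Rz$; since the measure and $|y|$ are rotation invariant, $J(\xi)=\int_{\Rn}\frac{1-\cos(2\pi |\xi| z_1)}{|z|^{n+2s}}\,dz$. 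Next substitute $z=w/(2\pi|\xi|)$ (a dilation), which pulls out a factor $(2\pi|\xi|)^{2s}$ and leaves the universal constant
\[
\int_{\Rn}\frac{1-\cos w_1}{|w|^{n+2s}}\,dw=:\frac{1}{C(n,s)}\in(0,\infty).
\]
One must check this integral is finite: near $w=0$, $1-\cos w_1\le \tfrac12 w_1^2\le\tfrac12|w|^2$, so the integrand is $\lesssim |w|^{2-n-2s}$, integrable since $2s<2$; away from the origin the integrand is $\lesssim |w|^{-n-2s}$, integrable since $2s>0$. Positivity is clear since the integrand is nonnegative and not identically zero.

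I would close by remarking that once $C(n,s)$ is \emph{defined} to be the reciprocal of this integral, we obtain $\mathcal{F}(\frlap u)(\xi)=(2\pi|\xi|)^{2s}\widehat u(\xi)$, and applying $\mathcal{F}^{-1}$ yields~\eqref{frlaphdef}; this also shows $\frlap u\in\mathcal{S}'(\Rn)$ is well-defined and, since $(2\pi|\xi|)^{2s}\widehat u(\xi)$ need not be Schwartz at the origin for $s<1/2$ but decays rapidly, that $\frlap u$ is a smooth function, so applying $\mathcal{F}^{-1}$ pointwise is legitimate. The main obstacle is not any single computation but rather being careful about the order of integration and about the mild non-smoothness of the multiplier $(2\pi|\xi|)^{2s}$ at $\xi=0$; the rotation/dilation reduction itself is routine once set up, and all the integrability bounds are exactly the ones already displayed in the excerpt just above the statement.
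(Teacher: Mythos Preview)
Your proposal is correct and follows essentially the same route as the paper's proof: apply $\mathcal{F}$ to~\eqref{frlap2def}, use the translation rule to extract the factor $1-\cos(2\pi y\cdot\xi)$, and then reduce the resulting integral $J(\xi)$ to $(2\pi|\xi|)^{2s}$ times a dimensional constant via a rotation and a dilation, checking finiteness exactly as you do and defining $C(n,s)$ by~\eqref{cnsgalattica}. The only cosmetic difference is the order of the two substitutions (the paper scales first by $|\xi|$, then rotates, then scales by $2\pi$, whereas you rotate first and then dilate by $2\pi|\xi|$ in one step), which of course makes no difference.
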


Roughly speaking, formula~\eqref{frlaphdef}
characterizes the fractional Laplace operator in the Fourier
space, by taking the $s$-power of the multiplier associated
to the classical Laplacian operator.
Indeed, by using the inverse Fourier transform, one has that
\begin{eqnarray*}
&& -\Delta u(x) =-\Delta ({\mathcal{F}}^{-1}(\widehat u))(x)
=-\Delta
\int_{{\Rn}} \widehat u(\xi) e^{2\pi i x \cdot \xi} \, d\xi
\\ &&\qquad = 
\int_{{\Rn}} (2\pi |\xi|)^2
\widehat u(\xi) e^{2\pi i x \cdot \xi} \, d\xi
= {\mathcal{F}}^{-1} \big( (2\pi |\xi|)^2
\widehat u(\xi)\big),\end{eqnarray*}
which gives that the classical Laplacian acts in a Fourier space
as a multiplier of~$(2\pi |\xi|)^2$. {F}rom this and
Lemma~\ref{frlaphdeflem} it also follows
that the classical Laplacian is the limit case of the 
fractional one, namely for any $u \in \mathcal{S}(\Rn)$
	\[\lim_{s\to 1} \frlap u =-\Delta u  \quad \mbox{ and also }\quad \lim_{s\to 0}\frlap  u = u.\]

Let us now
prove that indeed the two formulations \eqref{frlap2def} and \eqref{frlaphdef} are equivalent.
\begin{proof}[Proof of Lemma \ref{frlaphdeflem}]
Consider identity \eqref{frlap2def} and apply the Fourier transform to obtain 
	\begin{equation}\label{3.7bis} \begin{split}
		  \mathcal{F} \Big(\frlap u(x)\Big)  = &\; \frac{C(n,s)}{2} \int_{\Rn} \frac{\mathcal{F}\Big(2u(x) -u(x+y)-u(x-y)\Big)}{|y|^{n+2s}} \, dy \\ 
		= &\; \frac{C(n,s)}{2} \int_{\Rn} \widehat{u}(\xi)   \frac{2- e^{2\pi
i\xi \cdot  y} - e^{-2\pi
i\xi \cdot  y} }  {|y|^{n+2s}}\, dy   \\
		= &\; C(n,s) \, \widehat{u}(\xi)  \int_{\Rn}   \frac{1-\cos(2\pi\xi \cdot y) }  {|y|^{n+2s}}\, dy.   
	\end{split}\end{equation} 
Now, we use the change of variable $z= |\xi| y$ and obtain that
		\[ \begin{split}
		J (\xi)  : = &\; \int_{\Rn} \frac{1-\cos(2\pi\xi \cdot  y)}{|y|^{n+2s}} \, dy\\
		= &\;|\xi|^{2s} \int_{\Rn} \frac{1-\cos\frac{2\pi\xi}{|\xi|}\cdot z} {|z|^{n+2s}} \, dz.	 
		\end{split}\] 
Now we use that~$J$ is rotationally invariant. More precisely,
we consider a rotation $R$ that sends~$e_1=(1,0,\dots,0)$ into~$\xi/|\xi|$,
that is~$Re_1= \xi/|\xi|$,
and we
call $R^T$ its transpose. Then, by using the change of variables $\omega=R^T z$ we have that
	\begin{equation*}
\begin{split}
		 J(\xi) = &\;	|\xi|^{2s} \int_{\Rn} \frac{1-\cos (2\pi
Re_1 \cdot z)}{|z|^{n+2s}}\, dz\\
				= &\;	|\xi|^{2s} \int_{\Rn} \frac{1-\cos(2\pi R^T z \cdot e_1)}{|R^Tz|^{n+2s}}\, dz \\
				= &\;|\xi|^{2s}  \int_{\Rn} \frac{1-\cos (2\pi
\omega_1)}{|\omega|^{n+2s}}\, d\omega.
	\end{split}\end{equation*}
Changing variables $\tilde \omega = 2\pi\omega$ (we still write $\omega $ as a variable of integration), we obtain that
	\eqlab{ \label{37BB}
	 	J(\xi)  = \left(2\pi|\xi|\right)^{2s}  \int_{\Rn} \frac{1-\cos \omega_1}{|\omega|^{n+2s}}\, d\omega.}
Notice that this latter integral is finite.
Indeed, integrating outside the ball $B_1$ we have that 
	\[ 	 \int_{{\Rn}\setminus B_1} \frac{|1-\cos\omega_1|}{|\omega|^{n+2s}}  \, d\omega  \leq \int_{{\Rn}\setminus B_1} \frac{2}{|\omega|^{n+2s}} < \infty, 	\]
while inside the ball we can use the Taylor expansion of the cosine function and observe that
	\[ \int_{B_1} \frac{|1-\cos\omega_1|}{|\omega|^{n+2s}}  \, d\omega \leq \int_{B_1} \frac{|\omega|^2}{|\omega|^{n+2s}}  \, d\omega \leq \int_{B_1} \frac{ d\omega}{|\omega|^{n+2s-2}} < \infty. \]
Therefore, by taking
	\begin{equation}
		 C(n, s) := \bigg(\int_{\Rn} \frac{1 - \cos\omega_1}{|\omega|^{n+2s}} \, d\omega\bigg)^{-1} 
		\label{cnsgalattica}
	\end{equation}
it follows from~\eqref{37BB} that
	\[J(\xi) = \frac{\left(2\pi |\xi|\right)^{2s}}{C(n,s)}.\]
By inserting this into~\eqref{3.7bis}, we obtain that
$$ \mathcal{F} \Big(\frlap u(x)\Big)= C(n,s)\,\widehat u(\xi)\,J(\xi)=
(2\pi|\xi|)^{2s}\widehat u(\xi),$$
which concludes the proof.
\end{proof}

Notice that the renormalization constant $C(n,s)$ introduced in~\eqref{frlap2def} is now computed in~\eqref{cnsgalattica}.

\bigskip

Another approach to the fractional Laplacian comes
from the theory of semigroups (or, equivalently from the fractional calculus
arising in subordination identities). This technique
is classical (see~\cite{YOSIDA}),
but it has also been efficiently used in recent research papers
(see for
instance~\cite{LLAVE-VALDINOCI-AIHP, STINGA-TORREA, STINGA-CAFFARELLI}).
Roughly speaking, the main idea underneath
the semigroup approach comes from the following explicit
formulas for the Euler's function: for any~$\lambda>0$, one
uses an integration by parts and the substitution~$\tau=\lambda t$
to see that
\bgs{  -s\Gamma(-s) =\al \Gamma(1-s) \\= \al\int_0^{+\infty} \tau^{-s} e^{-\tau}\,d\tau \\ 
	=\al - \int_0^{+\infty} \tau^{-s} \frac{d}{d\tau} (e^{-\tau}-1)\,d\tau
\\=\al
-s\int_0^{+\infty} \tau^{-s-1} (e^{-\tau}-1)\,d\tau 
\\ =\al  -s\lambda^{-s} \int_0^{+\infty} t^{-s-1} (e^{-\lambda t}-1)\,dt,}
that is
\begin{equation}\label{L-EQ} \lambda^s =
\frac{1}{\Gamma(-s)} \int_0^{+\infty} t^{-s-1} (e^{-\lambda t}-1)\,dt.\end{equation}
Then one applies formally this identity to~$\lambda:=-\Delta$. Of course, this formal step needs to be justified, but if
things go well one obtains
$$ (-\Delta)^s =
\frac{1}{\Gamma(-s)} \int_0^{+\infty} t^{-s-1} (e^{t\Delta}-1)\,dt,$$
that is (interpreting~$1$ as the identity operator)
\begin{equation}\label{FP}
(-\Delta)^s u(x) =
\frac{1}{\Gamma(-s)} \int_0^{+\infty} t^{-s-1} (e^{t\Delta} u(x)-u(x))\,dt.
\end{equation}
Formally, if~$U(x,t):=e^{t\Delta}u(x)$, we have that~$U(x,0)=u(x)$ and
$$ \partial_t U=\frac{\partial}{\partial t} ( e^{t\Delta} u(x) ) = \Delta
e^{t\Delta} u(x) =\Delta U,$$
that is~$U(x,t)=e^{t\Delta}u(x)$ can be interpreted as the solution
of the heat equation with initial datum~$u$.
We indeed point out that these formal computations can be justified:

\begin{lemma}\label{L32}
Formula~\eqref{FP} holds true. That is, if~$u\in{\mathcal{S}}(\R^n)$
and~$U=U(x,t)$ is
the solution of the heat equation
$$ \left\{
\begin{matrix}
\partial_t U=\Delta U & {\mbox{ in }}t>0,\\
U\big|_{t=0} = u,
\end{matrix}
\right. $$
then
\begin{equation}\label{EX-CO} (-\Delta)^s u(x) = 
\frac{1}{\Gamma(-s)} \int_0^{+\infty} t^{-s-1} (U(x,t)-u(x))\,dt.
\end{equation}
\end{lemma}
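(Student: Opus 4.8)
The plan is to verify \eqref{EX-CO} by working on the Fourier side, where both the heat semigroup and the fractional Laplacian act as simple multipliers, and then invoke \eqref{L-EQ}. First I would recall from Lemma~\ref{frlaphdeflem} that $\widehat{\frlap u}(\xi)=(2\pi|\xi|)^{2s}\widehat u(\xi)$, and from the standard solution of the heat equation that $\widehat{U(\cdot,t)}(\xi)=e^{-4\pi^2|\xi|^2 t}\,\widehat u(\xi)$ (this is the analogous multiplier computation to the one carried out for $-\Delta$ just above the statement, iterated/exponentiated, and is justified for $u\in\mathcal S(\Rn)$ since $U(\cdot,t)\in\mathcal S(\Rn)$ for every $t\ge 0$ with good dependence on $t$). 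Taking the Fourier transform of the right-hand side of \eqref{EX-CO} and formally exchanging $\mathcal F$ with the $t$-integral gives
\[
\frac{1}{\Gamma(-s)}\int_0^{+\infty} t^{-s-1}\bigl(e^{-4\pi^2|\xi|^2 t}-1\bigr)\,dt\;\widehat u(\xi).
\]
Then I would apply \eqref{L-EQ} with $\lambda:=4\pi^2|\xi|^2=(2\pi|\xi|)^2$, which turns the bracketed integral into $\lambda^s=(2\pi|\xi|)^{2s}$, so the whole expression equals $(2\pi|\xi|)^{2s}\widehat u(\xi)=\widehat{\frlap u}(\xi)$. Applying $\mathcal F^{-1}$ and using Lemma~\ref{frlaphdeflem} again yields \eqref{EX-CO}.

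The key steps, in order, are: (1) record the two multiplier identities, for $e^{t\Delta}$ and for $\frlap$; (2) justify that the right-hand side of \eqref{EX-CO} is a well-defined (absolutely convergent, for $u\in\mathcal S$) integral of $\mathcal S(\Rn)$-valued functions, so that its Fourier transform may be computed by passing $\mathcal F$ inside the integral via Fubini; (3) carry out the $t$-integration pointwise in $\xi$ using \eqref{L-EQ}; (4) conclude by Fourier inversion.

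The main obstacle is step (2): the convergence of $\int_0^{+\infty} t^{-s-1}(U(x,t)-u(x))\,dt$ and the legitimacy of interchanging this integral with the Fourier transform. Near $t=0$ one must control $U(x,t)-u(x)$: since $u\in\mathcal S(\Rn)$, one has $U(x,t)-u(x)=\int_0^t \Delta U(x,\sigma)\,d\sigma$ with $\|\Delta U(\cdot,\sigma)\|_{L^\infty}\le \|\Delta u\|_{L^\infty}$ (the heat semigroup is a contraction on $L^\infty$), so $|U(x,t)-u(x)|\le t\,\|\Delta u\|_{L^\infty}$, which against $t^{-s-1}$ is integrable at $0$ because $s<1$. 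Near $t=+\infty$ one uses $|U(x,t)-u(x)|\le |U(x,t)|+|u(x)|$ and a decay estimate such as $\|U(\cdot,t)\|_{L^\infty}\le C t^{-n/2}\|u\|_{L^1}$ (or simply $\le\|u\|_{L^\infty}$ combined with, say, an $L^2$ decay), making $t^{-s-1}|U(x,t)-u(x)|$ integrable at infinity because $s>0$. The same bounds, uniform in $x$, together with $\widehat{U(\cdot,t)}(\xi)-\widehat u(\xi)=(e^{-4\pi^2|\xi|^2 t}-1)\widehat u(\xi)$ dominated by $\min\{1,4\pi^2|\xi|^2 t\}|\widehat u(\xi)|$, give the dominated-convergence/Fubini justification for commuting $\mathcal F$ with $\int_0^{+\infty}dt$. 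With these estimates in hand the remaining computation is exactly the substitution already displayed in \eqref{L-EQ}, so no further difficulty arises.
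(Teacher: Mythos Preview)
Your argument is correct and complete: the Fourier-multiplier computation together with~\eqref{L-EQ} gives~\eqref{EX-CO}, and your integrability bounds near~$t=0$ and~$t=+\infty$ are the right ones to justify the Fubini step (the case~$\xi=0$ is harmless since both sides vanish there).

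However, this is a genuinely different route from the paper's own proof of Lemma~\ref{L32}. The paper works entirely on the spatial side: it plugs in the Gaussian kernel representation~$U(x,t)=\int_{\R^n}G(y,t)\,u(x-y)\,dy$, performs the change of variable~$\tau=|y|^2/(4t)$ in the double integral~$\int_0^{+\infty}\!\int_{\R^n}$, and lands directly on the second-difference integral defining~$(-\Delta)^s u$ in~\eqref{frlap2def}. The point of doing it this way is that the computation produces, as a byproduct, the \emph{explicit} value of the normalizing constant,
\[
C(n,s)=\frac{2^{2s}\,s\,\Gamma\!\left(\frac{n}{2}+s\right)}{\pi^{n/2}\,\Gamma(1-s)},
\]
see~\eqref{EXPL}; this is then used in Lemma~\ref{LM-COST} to reconcile the two expressions~\eqref{cnsgalattica} and~\eqref{EXPL} for~$C(n,s)$. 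Your Fourier proof, by contrast, takes~$C(n,s)$ as already fixed by Lemma~\ref{frlaphdeflem} and does not yield~\eqref{EXPL}. Amusingly, your computation is exactly what the paper carries out later, in the proof of Lemma~\ref{LM-COST}, where it Fourier-transforms~$U(x,t)-u(x)$, integrates against~$t^{-s-1}$, and invokes~\eqref{L-EQ} to close the loop between the two constants. So your approach is perfectly valid for the lemma as stated, but it trades away the explicit constant that the paper's kernel computation is designed to extract.
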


\begin{proof} {F}rom Theorem~1 on page~47
in~\cite{EVANS}
we know that~$U$ is obtained by Gaussian convolution
with unit mass, i.e.
\eqlab{\label{G-EQ} 
& U(x,t)=\int_{\R^n} G(x-y,t)\,u(y)\,dy=\int_{\R^n} G(y,t)\,u(x-y)\,dy,\\
&\qquad{\mbox{ where }}\;
G(x,t):= (4\pi t)^{-n/2} e^{-|x|^2/(4t)}.}
As a consequence, using the substitution~$\tau:=|y|^2/(4t)$,
\bgs{ \al  \int_0^{+\infty} t^{-s-1} (U(x,t)-u(x))\,dt \\ =\al  \int_0^{+\infty}\left[ \int_{\R^n} t^{-s-1}  G(y,t)\,\big( u(x-y)-u(x)\big) \,dy \right]\,dt \\
	=\al \int_0^{+\infty}\left[ \int_{\R^n} (4\pi t)^{-n/2} t^{-s-1}  e^{-|y|^2/(4t)} \,\big( u(x-y)-u(x)\big) \,dy \right]\,dt\\
	=\al \int_0^{+\infty}\left[\int_{\R^n} \tau^{n/2} (\pi |y|^2)^{-n/2} |y|^{-2s} (4\tau)^{s+1}  e^{-\tau} \,\big( u(x-y) - u(x)\big) \,dy \right]\,\frac{ d\tau }{4\tau^2 } \\
	=\al 2^{2s-1}\pi^{-n/2} \int_0^{+\infty}\left[ \int_{\R^n}  \tau^{\frac{n}{2}+s-1} e^{-\tau}\frac{ u(x+y)+u(x-y)-2u(x) }{ |y|^{n+2s} }\,dy \right]\,d\tau.
}
Now we notice that
$$ \int_0^{+\infty}\tau^{\frac{n}{2}+s-1} e^{-\tau}\,d\tau=\Gamma\left(
\frac{n}{2}+s\right),$$
so we obtain that
\begin{eqnarray*} &&\int_0^{+\infty} t^{-s-1} (U(x,t)-u(x))\,dt
\\&=& 2^{2s-1}\pi^{-n/2}\Gamma\left(\frac{n}{2}+s\right)
\int_{\R^n} \frac{ u(x+y)+u(x-y)-2u(x) }{ |y|^{n+2s} } \,dy
\\&=& -\frac{ 2^{2s}\,\pi^{-n/2}\,\Gamma\left(\frac{n}{2}+s\right) }{ C(n,s) }(-\Delta)^s u(x).\end{eqnarray*}
This proves~\eqref{EX-CO}, by choosing $C(n,s)$ appropriately.
And, as a matter of fact, gives the explicit value of the constant~$C(n,s)$
as
\begin{equation}\label{EXPL}
C(n,s)=-\frac{2^{2s}\,\Gamma\left( \frac{n}{2}+s\right)}{\pi^{n/2} 
\Gamma(-s)}
=
\frac{2^{2s}\,s\,\Gamma\left( \frac{n}{2}+s\right)}{\pi^{n/2}
\Gamma(1-s)},
\end{equation}
where we have used again that~$\Gamma(1-s)=-s\Gamma(-s)$, for any~$s\in(0,1)$.
\end{proof}

It is worth pointing out that the renormalization constant~$C(n,s)$
introduced in~\eqref{frlap2def}
has now been explicitly computed in~\eqref{EXPL}.
Notice that the choices of~$C(n,s)$
in~\eqref{cnsgalattica}
and~\eqref{EXPL} must agree (since we have computed
the fractional Laplacian in two different ways):
for a computation that shows that the quantity
in~\eqref{cnsgalattica} coincides with the one
in~\eqref{EXPL}, see Theorem~3.9
in \cite{B15}. For completeness, we give below a direct proof
that the settings in~\eqref{cnsgalattica}
and~\eqref{EXPL} are the same, by using
Fourier methods and~\eqref{L-EQ}.

\begin{lemma}\label{LM-COST}
For any~$n\in\N$, $n\ge1$, and~$s\in(0,1)$, we have that
\begin{equation}\label{EQW-IO-1}
\int_{\R^n} \frac{1-\cos(2\pi\omega_1)}{|\omega|^{n+2s}}\,d\omega
=\frac{\pi^{\frac{n}{2}+2s}\,\Gamma(1-s)}{
s\,\Gamma\left( \frac{n}{2}+s\right)}.\end{equation}
Equivalently, we have that
\begin{equation}\label{EQW-IO-2}
\int_{\R^n} \frac{1-\cos\omega_1}{|\omega|^{n+2s}}\,d\omega
=\frac{\pi^{\frac{n}{2}}\,\Gamma(1-s)}{
	2^{2s}s\,\Gamma\left( \frac{n}{2}+s\right)}. \end{equation}
\end{lemma}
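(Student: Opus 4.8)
The plan is to evaluate the integral in \eqref{EQW-IO-1} directly, reducing it to a single one-dimensional integral which is then computed by means of \eqref{L-EQ}; the equivalent form \eqref{EQW-IO-2} will follow immediately from \eqref{EQW-IO-1} through the rescaling $\omega\mapsto\omega/(2\pi)$, exactly as in the passage leading to \eqref{37BB}.

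First I would write the homogeneous kernel as a superposition of Gaussians via the elementary identity
\[
\frac{1}{|\omega|^{n+2s}}=\frac{1}{\Gamma\!\left(\tfrac n2+s\right)}\int_0^{+\infty}t^{\frac n2+s-1}e^{-t|\omega|^2}\,dt,
\]
which is nothing but the definition of the Euler Gamma function after the substitution $\tau=t|\omega|^2$. Inserting this into $\int_{\Rn}\frac{1-\cos(2\pi\omega_1)}{|\omega|^{n+2s}}\,d\omega$ and exchanging the order of integration --- which is legitimate by Tonelli's theorem, since $1-\cos(2\pi\omega_1)\ge 0$ and the whole integrand is nonnegative --- one is left with the two Gaussian integrals
\[
\int_{\Rn}e^{-t|\omega|^2}\,d\omega=\Bigl(\tfrac{\pi}{t}\Bigr)^{n/2},\qquad
\int_{\Rn}\cos(2\pi\omega_1)\,e^{-t|\omega|^2}\,d\omega=\Bigl(\tfrac{\pi}{t}\Bigr)^{n/2}e^{-\pi^2/t},
\]
the latter being the classical computation of the Fourier transform of a Gaussian in the $\omega_1$ variable. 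This collapses the problem to
\[
\int_{\Rn}\frac{1-\cos(2\pi\omega_1)}{|\omega|^{n+2s}}\,d\omega=\frac{\pi^{n/2}}{\Gamma\!\left(\tfrac n2+s\right)}\int_0^{+\infty}t^{s-1}\bigl(1-e^{-\pi^2/t}\bigr)\,dt,
\]
and the fact that $s\in(0,1)$ is exactly what guarantees convergence of this last integral, at $t\to 0^+$ (where $1-e^{-\pi^2/t}\to 1$) and at $t\to+\infty$ (where $1-e^{-\pi^2/t}\sim\pi^2 t^{-1}$).

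It then remains to substitute $\sigma=\pi^2/t$, which turns the one-dimensional integral into $\pi^{2s}\int_0^{+\infty}\sigma^{-s-1}(1-e^{-\sigma})\,d\sigma$, and to recall from \eqref{L-EQ}, taken with $\lambda=1$, that $\int_0^{+\infty}\sigma^{-s-1}(e^{-\sigma}-1)\,d\sigma=\Gamma(-s)=-\Gamma(1-s)/s$. Collecting the constants yields \eqref{EQW-IO-1}, and then the change of variables $\omega=2\pi\eta$ gives $\int_{\Rn}\frac{1-\cos\omega_1}{|\omega|^{n+2s}}\,d\omega=(2\pi)^{-2s}\int_{\Rn}\frac{1-\cos(2\pi\eta_1)}{|\eta|^{n+2s}}\,d\eta$, that is \eqref{EQW-IO-2}. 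No genuine obstacle arises along the way: once the Gaussian superposition is set up everything is routine, the only points deserving a line of care being the nonnegativity that legitimizes Tonelli and the convergence of the intermediate $t$-integral. As an alternative route, one may instead directly match the two expressions \eqref{cnsgalattica} and \eqref{EXPL} of $C(n,s)$: the proof of Lemma~\ref{frlaphdeflem} shows that the normalization \eqref{cnsgalattica} is the unique one for which \eqref{frlaphdef} holds, while Fourier transforming \eqref{EX-CO} and using $\mathcal{F}\bigl(U(\cdot,t)\bigr)(\xi)=e^{-4\pi^2 t|\xi|^2}\widehat u(\xi)$ together with \eqref{L-EQ} applied to $\lambda=4\pi^2|\xi|^2$ shows that the normalization \eqref{EXPL} also makes \eqref{frlaphdef} hold; hence \eqref{cnsgalattica} and \eqref{EXPL} coincide, which is precisely \eqref{EQW-IO-1}.
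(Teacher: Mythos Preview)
Your proof is correct. Your main argument --- writing $|\omega|^{-(n+2s)}$ as a Gamma--Gaussian superposition, exchanging via Tonelli, and reducing to a single $t$-integral evaluated through \eqref{L-EQ} --- is a genuinely different and more direct route than the paper's. The paper's proof in the body never touches the integral \eqref{EQW-IO-1} directly: it instead argues that the normalizations \eqref{cnsgalattica} and \eqref{EXPL} must coincide, by Fourier-transforming the semigroup identity \eqref{EX-CO} (using $\mathcal{F}(U(\cdot,t))(\xi)=e^{-4\pi^2 t|\xi|^2}\widehat u(\xi)$ and \eqref{L-EQ} with $\lambda=4\pi^2|\xi|^2$) and matching against \eqref{PoP-0}. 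This is exactly the ``alternative route'' you sketch at the end of your proposal. The paper also gives a second proof in the appendix via modified Bessel function asymptotics and Beta function identities, which is considerably heavier. Your Gaussian-superposition approach has the advantage of being entirely self-contained --- it does not rely on the prior Lemmas~\ref{frlaphdeflem} and~\ref{L32} nor on any special-function machinery --- while the paper's main proof has the conceptual advantage of explaining \emph{why} the two constants agree (they both realize the same Fourier multiplier).
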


\begin{proof} Of course, formula~\eqref{EQW-IO-1}
is equivalent to~\eqref{EQW-IO-2} (after the substitution~$\tilde\omega:=
2\pi\omega$).
Strictly speaking, in Lemma~\ref{frlaphdeflem}
(compare \eqref{frlap2def}, \eqref{frlaphdef}, and~\eqref{cnsgalattica})
we have proved that 
\eqlab{ \label{PoP-0} \frac{1} {2\displaystyle \int_{\Rn}  \displaystyle \frac{1 -\cos\omega_1}{|\omega|^{n+2s}} \, d\omega } \, \int_{\Rn} \frac{2 u(x) -u(x+y)-u(x-y)} {|y|^{n+2s} } \, dy 
=  {\mathcal{F}}^{-1} \big((2\pi|\xi|)^{2s} \widehat u(\xi)\big).}
Similarly, by means of Lemma~\ref{L32}
(compare \eqref{frlap2def}, 
\eqref{EX-CO} and~\eqref{EXPL}) we know that
\begin{equation}\label{PoP-1}\begin{split}&
\frac{2^{2s-1}\,s\,\Gamma\left( \frac{n}{2}+s\right)}{\pi^{n/2}
\Gamma(1-s)} \int_{\Rn} \frac{2 u(x) -u(x+y)-u(x-y)} {|y|^{n+2s} } \, dy\\&\qquad=
\frac{1}{\Gamma(-s)} \int_0^{+\infty} t^{-s-1} (U(x,t)-u(x))\,dt.\end{split}\end{equation}
Moreover (see~\eqref{G-EQ}), we have that~$U(x,t):=\Gamma_t * u(x)$, where
$$ \Gamma_t(x):=G(x,t)=(4\pi t)^{-n/2} e^{-|x|^2/(4t)}.$$
We recall that the Fourier transform of
a Gaussian is a Gaussian itself, namely
$$ {\mathcal{F}}(e^{-\pi|x|^2}) =e^{-\pi|\xi|^2},$$
therefore, for any fixed~$t>0$, using the substitution~$y:=x/\sqrt{4\pi t}$,
\begin{eqnarray*}
{\mathcal{F}}\,\Gamma_t (\xi) &=& 
\frac{1}{(4\pi t)^{n/2}} \int_{\R^n} 
e^{-|x|^2/(4t)} e^{-2\pi ix\cdot \xi}\,dx
\\ &=& 
\int_{\R^n}
e^{-\pi|y|^2} e^{-2\pi iy\cdot (\sqrt{4\pi t}\xi)}\,dy
\\ &=& e^{-4\pi^2 t|\xi|^2}.
\end{eqnarray*}
As a consequence
\begin{eqnarray*}
&& {\mathcal{F}} \big(U(x,t)-u(x)\big)
={\mathcal{F}}\big( \Gamma_t*u(x)-u(x)\big)
\\ &&\qquad ={\mathcal{F}}(\Gamma_t*u)(\xi)-\widehat u(\xi)
=\big({\mathcal{F}}\,\Gamma_t(\xi)-1\big)\widehat u(\xi)
\\ &&\qquad = (e^{-4\pi^2 t|\xi|^2}-1)\widehat u(\xi).
\end{eqnarray*}
We multiply by~$ t^{-s-1}$ and integrate over~$t>0$,
and we obtain
\begin{eqnarray*}
{\mathcal{F}} \int_0^{+\infty} t^{-s-1}\big(U(x,t)-u(x)\big)\,dt
&=& \int_0^{+\infty} t^{-s-1}(e^{-4\pi^2 t|\xi|^2}-1)\,dt\,
\widehat u(\xi)
\\&=&\Gamma(-s)\, (4\pi^2 |\xi|^2)^s\,\widehat u(\xi),\end{eqnarray*}
thanks to~\eqref{L-EQ} (used here with~$\lambda:=4\pi^2 |\xi|^2$).
By taking the inverse Fourier transform, we have
$$ \int_0^{+\infty} t^{-s-1}\big(U(x,t)-u(x)\big)\,dt =
\Gamma(-s)\, (2 \pi)^{2s} {\mathcal{F}}^{-1}
\big(|\xi|^{2s}\,\widehat u(\xi)\big).$$
We insert this information into~\eqref{PoP-1}
and we get
$$ \frac{2^{2s-1}\,s\,\Gamma\left( \frac{n}{2}+s\right)}{\pi^{n/2}
\Gamma(1-s)} \int_{\Rn} \frac{2 u(x) -u(x+y)-u(x-y)} {|y|^{n+2s} } \, dy=
(2 \pi)^{2s} {\mathcal{F}}^{-1}
\big(|\xi|^{2s}\,\widehat u(\xi)\big).$$
Hence, recalling~\eqref{PoP-0},
\bgs{ &\frac{2^{2s-1}\,s\,\Gamma\left( \frac{n}{2}+s\right)}{\pi^{n/2}
\Gamma(1-s)} \int_{\Rn} \frac{2 u(x) -u(x+y)-u(x-y)} {|y|^{n+2s} } \, dy
\\ &\qquad=
\frac{1}{2\displaystyle\int_{\Rn} \displaystyle \frac{1 -
\cos\omega_1}{|\omega|^{n+2s}} \, d\omega}\,
\int_{\Rn} \frac{2 u(x) -u(x+y)-u(x-y)} {|y|^{n+2s} } \, dy
,}
which gives the desired result.
\end{proof}

For the sake of completeness,
a different proof of Lemma~\ref{LM-COST} will be given in
Appendix~\ref{APP-APP}. There, to prove Lemma~\ref{LM-COST},
we will use the theory of special functions rather than
the fractional Laplacian. For other approaches to the proof of
Lemma~\ref{LM-COST} see also the recent PhD dissertations~\cite{Felsinger} (and related \cite{Felsinger2})
and~\cite{Jarohs}.\bigskip

\section{Fractional Sobolev Inequality and Generalized Coarea Formula} \label{sobineq}
Fractional Sobolev spaces enjoy quite
a number of important functional inequalities.
It is almost impossible to list here all the results
and the possible applications, therefore we will
only present two important inequalities which have a simple and nice proof,
namely the fractional Sobolev Inequality and the Generalized Coarea Formula.

The fractional Sobolev Inequality can be written as follows:

\begin{thm}
For any~$s\in(0,1)$, $p\in\left(1,\frac{n}{s}\right)$ and~$u\in C^\infty_0(\R^n)$,
\begin{equation}\label{OK:sob:p}
\| u \|_{L^{\frac{np}{n-sp}}(\R^n)}\le C\left(
\int_{\R^n}\int_{\R^n}\frac{|u(x)-u(y)|^p}{|x-y|^{n+sp}}
\,dx\,dy\right)^{\frac{1}{p}},\end{equation}
for some~$C>0$, depending only on~$n$ and~$p$.
\end{thm}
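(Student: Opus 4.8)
The plan is to deduce the fractional Sobolev inequality from the classical Sobolev--Gagliardo--Nirenberg inequality together with the generalized coarea formula that is stated alongside it in this section; equivalently, one can run the Nirenberg--type interpolation argument directly on the Gagliardo seminorm. I would set
\[
[u]_{W^{s,p}}^p := \int_{\R^n}\int_{\R^n}\frac{|u(x)-u(y)|^p}{|x-y|^{n+sp}}\,dx\,dy,
\]
and first reduce to the case $u\ge 0$, since $[\,|u|\,]_{W^{s,p}}\le [u]_{W^{s,p}}$ by the triangle inequality $\big||u(x)|-|u(y)|\big|\le |u(x)-u(y)|$, and $\||u|\|_{L^{q}}=\|u\|_{L^{q}}$ with $q=\frac{np}{n-sp}$.

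The main step is a dyadic/layer-cake decomposition. Writing $u=\sum_{k\in\Z} u_k$ where $u_k$ is (a smoothed version of) the piece of $u$ living at height scale $2^k$, i.e. $u_k:=\min\{\max\{u-2^k,0\},2^k\}$ or a similar truncation, one estimates $\|u\|_{L^q}$ from above by $\big(\sum_k 2^{kq}\,|\{u>2^k\}|\big)^{1/q}$ up to a dimensional constant. The task is then to bound each $|\{u>2^k\}|$ in terms of the Gagliardo energy. For this I would use the Sobolev inequality on the level sets: the key inequality is that for the truncation $u_k$ one has $\|u_k\|_{L^{p^*_s}}^p \lesssim [u_k]_{W^{s,p}}^p$ when $p=1$ by the isoperimetric/coarea route, and then one upgrades to general $p$ by applying the $p=1$ case to a suitable power of $u$ — concretely, test the $s'=sp/1$-fractional Sobolev inequality at exponent $1$ with $v=u^{\gamma}$ for an appropriate $\gamma>1$, and control $[u^\gamma]_{W^{s',1}}$ by $[u]_{W^{s,p}}^{1/p}\,\|u\|_{L^q}^{1-1/p}$ via Hölder's inequality applied to the difference $|u(x)^\gamma-u(y)^\gamma|\le \gamma\big(u(x)^{\gamma-1}+u(y)^{\gamma-1}\big)|u(x)-u(y)|$. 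Choosing $\gamma$ so that the resulting exponent on the left is exactly $q=\frac{np}{n-sp}$ and the Lebesgue exponents on the right match, one arrives at $\|u\|_{L^q}^{\gamma}\lesssim [u]_{W^{s,p}}^{1/p}\,\|u\|_{L^q}^{\gamma-1}$, and dividing by $\|u\|_{L^q}^{\gamma-1}$ (which is finite and nonzero for $u\not\equiv0$, $u\in C_0^\infty$) yields \eqref{OK:sob:p}.

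Thus the whole argument rests on the base case $p=1$: the inequality $\|u\|_{L^{n/(n-s)}(\R^n)}\lesssim [u]_{W^{s,1}(\R^n)}$. I would prove this via the generalized coarea formula of this section, which expresses $[u]_{W^{s,1}}$ as $\int_{\R} \mathrm{Per}_s(\{u>t\})\,dt$, together with the fractional isoperimetric inequality $|\{u>t\}|^{(n-s)/n}\lesssim \mathrm{Per}_s(\{u>t\})$ and the layer-cake bound $\|u\|_{L^{n/(n-s)}}\le \big(\int_{\R} |\{u>t\}|^{(n-s)/n}\,dt\big)$ (the latter being Minkowski's integral inequality in $L^{n/(n-s)}$, using that $n/(n-s)>1$). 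The fractional isoperimetric inequality itself can either be cited, or derived from the $s\to$ fixed estimate by noting $\mathrm{Per}_s(E)$ dominates a fractional-Sobolev norm of $\chi_E$; but for a clean self-contained line I would simply invoke the coarea formula proved earlier in this section as the engine, leaving the isoperimetric inequality as the one external ingredient. The hard part — the step I expect to cost the most care — is the bookkeeping in the Hölder step: verifying that the exponent $\gamma-1$ appearing as the power of $\|u\|_{L^q}$ on the right is strictly less than $\gamma$, that all Lebesgue exponents invoked lie in the admissible ranges (this forces $p<n/s$), and that every quantity divided out is finite, which is where the hypothesis $u\in C_0^\infty(\R^n)$ is used.
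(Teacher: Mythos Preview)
Your approach has a genuine gap at the bootstrap step. The Nirenberg trick that upgrades the $p=1$ Sobolev inequality to general $p$ works classically because $\|\nabla(u^\gamma)\|_{L^1}=\gamma\int |u|^{\gamma-1}|\nabla u|\,dx$ is a \emph{single} integral, so H\"older in $x$ separates $|u|^{\gamma-1}$ from $|\nabla u|$. In the fractional setting the seminorm is a double integral with the singular weight $|x-y|^{-(n+sp)}$; when you write
\[
[u^\gamma]_{W^{sp,1}}\le 2\gamma\iint \frac{u(x)^{\gamma-1}\,|u(x)-u(y)|}{|x-y|^{n+sp}}\,dx\,dy
\]
and apply H\"older with exponents $p,p'$ against this weight, the factor carrying $u(x)^{(\gamma-1)p'}$ is
\[
\iint \frac{u(x)^{(\gamma-1)p'}}{|x-y|^{n+sp}}\,dx\,dy
=\int u(x)^{(\gamma-1)p'}\Bigl(\int\frac{dy}{|x-y|^{n+sp}}\Bigr)\,dx=+\infty,
\]
so the step ``control $[u^\gamma]_{W^{s',1}}$ by $[u]_{W^{s,p}}\cdot\|u\|_{L^q}^{\cdots}$ via H\"older'' fails as written. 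There is no obvious rearrangement of the exponents that rescues this: any H\"older split of the double integral leaves a non-integrable piece of the kernel.

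There are two further structural problems. First, your base case is stated at parameter $s'=sp$, but the coarea formula and the fractional perimeter used to prove it require $s'\in(0,1)$; since the theorem allows $p$ up to $n/s$, one can have $sp\ge 1$ (e.g.\ $n=2$, $s=1/2$, $p=3$), so the base case is not available in the full range. Second, the ``one external ingredient'' you invoke, the fractional isoperimetric inequality $|E|^{(n-s)/n}\lesssim\mathrm{Per}_s(E)$, is essentially the $p=1$ case of the inequality you are trying to prove (take $u=\chi_E$), so there is a circularity risk unless you supply an independent proof of it.

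The paper's argument is entirely different and avoids all of this. It is a direct pointwise estimate: for fixed $x$ and $r>0$, integrate $|u(x)|\le|u(x)-u(y)|+|u(y)|$ over $B_r(x)$, insert $|x-y|^a\cdot|x-y|^{-a}$ with $a=(n+sp)/p$, apply H\"older twice to get
\[
|u(x)|\le C r^s\Bigl(\alpha(x)^{1/p}+r^{-n/p}\,\|u\|_{L^{p^*_s}}\Bigr),\qquad
\alpha(x)=\int_{\R^n}\frac{|u(x)-u(y)|^p}{|x-y|^{n+sp}}\,dy,
\]
then optimize in $r$ to obtain $|u(x)|^{p^*_s}\le C\,\alpha(x)\,\|u\|_{L^{p^*_s}}^{sp\cdot p^*_s/n}$, integrate in $x$, and simplify. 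No base case, no isoperimetric input, no bootstrap.
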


\begin{proof} We follow here the very nice proof given in~\cite{PONCE} (where, in turn, the proof is attributed to Ha\"{\i}m Brezis).
We fix~$r>0$, $a>0$, $P>0$ and~$x\in\R^n$. Then, for any~$y\in\R^n$,
$$ |u(x)|\le |u(x)-u(y)|+|u(y)|,$$
and so, integrating over~$B_r(x)$, we obtain
\begin{eqnarray*}
|B_r|\,|u(x)|&\le& \int_{B_r(x)} |u(x)-u(y)|\,dy+\int_{B_r(x)}|u(y)|\,dy
\\ &=&
\int_{B_r(x)} \frac{|u(x)-u(y)|}{|x-y|^a}\cdot|x-y|^a\,dy+
\int_{B_r(x)}|u(y)|\,dy\\
&\le&
r^a \int_{B_r(x)} \frac{|u(x)-u(y)|}{|x-y|^a}\,dy+
\int_{B_r(x)}|u(y)|\,dy.
\end{eqnarray*}
Now we choose~$a:=\frac{n+sp}{p}$
and we make use of the H\"older Inequality (with
exponents~$p$ and~$\frac{p}{p-1}$
and with exponents~$\frac{np}{n-sp}$ and~$\frac{np}{n(p-1)+sp}$),
to obtain
\begin{eqnarray*}
|B_r|\,|u(x)|
&\le&
r^{\frac{n+sp}{p}} \int_{B_r(x)} \frac{|u(x)-u(y)|}{
|x-y|^{\frac{n+sp}{p}}}\,dy+
\int_{B_r(x)}|u(y)|\,dy \\
&\le&
r^{\frac{n+sp}{p}} \left( \int_{B_r(x)} \frac{|u(x)-u(y)|^p}{
|x-y|^{{n+sp}}}\,dy\right)^\frac{1}{p} 
\left( \int_{B_r(x)} \,dy\right)^\frac{p-1}{p}\\&&\qquad
+
\left( \int_{B_r(x)}|u(y)|^{\frac{np}{n-sp}}\,dy\right)^{\frac{n-sp}{np}}
\left( \int_{B_r(x)}\,dy\right)^{\frac{n(p-1)+sp}{np}}
\\ &\le& C r^{n+s}\,\left( \int_{B_r(x)} \frac{|u(x)-u(y)|^p}{
|x-y|^{{n+sp}}}\,dy\right)^\frac{1}{p} 
\\&&\qquad+ Cr^{\frac{n(p-1)+sp}{p}}
\left( \int_{B_r(x)}|u(y)|^{\frac{np}{n-sp}}\,dy\right)^{\frac{n-sp}{np}}
,\end{eqnarray*}
for some~$C>0$. So, we divide by~$r^n$ and we possibly rename~$C$.
In this way, we obtain
$$ |u(x)|\le 
C r^{s} \left[
\,\left( \int_{B_r(x)} \frac{|u(x)-u(y)|^p}{
|x-y|^{{n+sp}}}\,dy\right)^\frac{1}{p}
+ r^{-\frac{n}{p}}
\left( \int_{B_r(x)}|u(y)|^{\frac{np}{n-sp}}\,dy\right)^{\frac{n-sp}{np}}
\right].$$
That is, using the short notation
\begin{eqnarray*}
&&\alpha:= \int_{\R^n} \frac{|u(x)-u(y)|^p}{
|x-y|^{{n+sp}}}\,dy\\
{\mbox{and }}&&\beta:=\int_{\R^n}|u(y)|^{\frac{np}{n-sp}}\,dy
\end{eqnarray*}
 we have that
\bgs{ \label{OPKgg89} |u(x)|\leq Cr^s \left( \alpha^\frac{1}p + r^{-\frac{n}p }\beta ^{\frac{n-sp}{np}} \right),}
hence, raising both terms at the appropriate
power $\frac{np}{n-sp}$ and renaming $C$
\eqlab{ \label{OPKgg89}  
 |u(x)|^{\frac{np}{n-sp}} 
&\leq& C r^{\frac{nsp}{n-sp}} \left( \alpha^{\frac{1}p}+ r^{-\frac{n}p }\beta ^{\frac{n-sp}{np}} \right)^{\frac{np}{n-sp}}.}
We take now
$$ r:=\frac{ \beta^{\frac{n-sp}{n^2}} }{\alpha^{\frac1n}}.$$
With this setting, we have that $r^{-\frac{n}p}\beta^{\frac{n-sp}{np}} $ is equal to~$
\alpha^{\frac{1}{p}}$. Accordingly,  possibly renaming~$C$, we infer from~\eqref{OPKgg89}
that
\begin{eqnarray*}
 |u(x)|^{\frac{np}{n-sp}} &\le& C \alpha\,\beta^{\frac{sp}{n}} \\
&=& C \,
\int_{\R^n} \frac{|u(x)-u(y)|^p}{
|x-y|^{{n+sp}}}\,dy\;
\left(
\int_{\R^n}|u(y)|^{\frac{np}{n-sp}}\,dy\right)^{\frac{sp}{n}},\end{eqnarray*}
for some~$C>0$, and so, integrating over~$x\in\R^n$,
$$ \int_{\R^n}|u(x)|^{\frac{np}{n-sp}}\,dx\le
C \left(\int_{\R^n}
\int_{\R^n} \frac{|u(x)-u(y)|^p}{
|x-y|^{{n+sp}}}\,dx\,dy\right)\,
\left(
\int_{\R^n}|u(y)|^{\frac{np}{n-sp}}\,dy\right)^{\frac{sp}{n}}.$$
This, after a simplification, gives~\eqref{OK:sob:p}.
\end{proof}

What follows is the Generalized Co-area Formula of~\cite{VISENTIN}
(the link with the classical Co-area Formula will be indeed
more evident in terms of the fractional perimeter functional
discussed in Chapter~\ref{nlms}).

\begin{thm}
For any~$s\in(0,1)$ and any measurable function~$u:\Omega\to[0,1]$,
$$ \frac12 \int_\Omega\int_\Omega \frac{|u(x)-u(y)|}{|x-y|^{n+s}}\,dx\,dy
=\int_0^1 \left(\int_{\{x\in\Omega,\;
u(x)>t\}}\int_{\{y\in\Omega,\;u(y)\le t\}}
\frac{dx\,dy}{|x-y|^{n+s}}\right)\,dt.$$
\end{thm}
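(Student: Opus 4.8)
The plan is to prove the Generalized Coarea Formula by rewriting the left-hand side as a double integral of $|u(x)-u(y)|$ weighted by the singular kernel, and then using the layer-cake (Cavalieri) representation of the absolute value of a difference. The essential pointwise identity is that, for any $a,b\in[0,1]$,
\[
|a-b|=\int_0^1 \bigl(\chi_{\{a>t\}}(t)\,\chi_{\{b\le t\}}(t)+\chi_{\{b>t\}}(t)\,\chi_{\{a\le t\}}(t)\bigr)\,dt,
\]
which one sees by assuming without loss of generality $a\ge b$, so the integrand is $1$ exactly for $t\in[b,a)$ and $0$ otherwise, giving $a-b=|a-b|$. First I would substitute this identity with $a=u(x)$, $b=u(y)$ into
\[
\frac12\int_\Omega\int_\Omega\frac{|u(x)-u(y)|}{|x-y|^{n+s}}\,dx\,dy,
\]
and then apply Tonelli's theorem to exchange the order of the $dt$ integration with the $dx\,dy$ integration; this is legitimate because the integrand is nonnegative and measurable (the set $\{(x,y,t):u(x)>t,\ u(y)\le t\}$ is measurable since $u$ is measurable).

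After the exchange, the right-hand side becomes
\[
\frac12\int_0^1\left(\int_{\{u(x)>t\}}\int_{\{u(y)\le t\}}\frac{dx\,dy}{|x-y|^{n+s}}+\int_{\{u(x)\le t\}}\int_{\{u(y)>t\}}\frac{dx\,dy}{|x-y|^{n+s}}\right)dt.
\]
The two inner double integrals are equal for each fixed $t$, by the symmetry of the kernel $|x-y|^{-n-s}$ under swapping the names of the integration variables $x\leftrightarrow y$. Hence the sum equals twice the first term, the factor $\frac12$ cancels, and one is left precisely with
\[
\int_0^1\left(\int_{\{x\in\Omega,\ u(x)>t\}}\int_{\{y\in\Omega,\ u(y)\le t\}}\frac{dx\,dy}{|x-y|^{n+s}}\right)dt,
\]
which is the claimed formula.

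The only genuine subtlety — and hence the step I would be most careful about — is the measurability and integrability bookkeeping needed to justify the Tonelli exchange: one must check that the three-variable integrand is jointly measurable on $\Omega\times\Omega\times[0,1]$ and nonnegative, so that Tonelli applies with no finiteness hypothesis and the identity holds as an equality in $[0,+\infty]$ regardless of whether either side is finite. Everything else is a routine verification of the elementary layer-cake identity for $|a-b|$ and the trivial symmetry of the kernel; no regularity of $u$ beyond measurability, and no cancellation or principal-value considerations, enter the argument.
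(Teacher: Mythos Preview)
Your proposal is correct and follows essentially the same approach as the paper: both establish the pointwise layer-cake identity $|u(x)-u(y)|=\int_0^1(\chi_{\{u>t\}}(x)\chi_{\{u\le t\}}(y)+\chi_{\{u\le t\}}(x)\chi_{\{u>t\}}(y))\,dt$, then integrate against the kernel over $\Omega\times\Omega$ and use the symmetry under $x\leftrightarrow y$ to collapse the two terms. Your explicit invocation of Tonelli and the measurability check is a welcome clarification, but the argument is otherwise identical to the paper's.
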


\begin{proof} We claim that for any~$x$, $y\in\Omega$
\begin{equation}\label{CV-OA:1}
|u(x)-u(y)|=\int_0^1 \Big(
\chi_{\{u>t\}}(x) \,\chi_{\{u\le t\}}(y)+
\chi_{\{u\le t\}}(x)\,\chi_{\{u>t\}}(y) \Big)\,dt.
\end{equation}
To prove this, we fix $x $ and $y$ in $\Omega$, and by possibly exchanging them, we can suppose that~$
u(x)\ge u(y)$. Then, we define
$$\varphi(t):=
\chi_{\{u>t\}}(x) \,\chi_{\{u\le t\}}(y)+
\chi_{\{u\le t\}}(x)\,\chi_{\{u>t\}}(y) .$$
By construction
\bgs{ \varphi(t)=\left\{\begin{matrix}
		0 & {\mbox{ if }} & &t<u(y) {\mbox{ and }} t\ge u(x),\\
		1 & {\mbox{ if }} && u(y) \leq t<u(x) ,\\
		\end{matrix}
\right.}
therefore
$$ \int_0^1 \varphi(t)\,dt =
\int_{u(y)}^{u(x)} \,dt=u(x)-u(y),$$
which proves \eqref{CV-OA:1}.

So, multiplying by the singular kernel
and integrating~\eqref{CV-OA:1} over~$\Omega\times\Omega$, we obtain
that
\begin{eqnarray*}&&
\int_\Omega\int_\Omega \frac{|u(x)-u(y)|}{|x-y|^{n+s}}\,dx\,dy\\&=&
\int_0^1 \left(
\int_\Omega\int_\Omega
\frac{
\chi_{\{u>t\}}(x) \,\chi_{\{u\le t\}}(y)+
\chi_{\{u\le t\}}(x)\,\chi_{\{u>t\}}(y) }{|x-y|^{n+s}}\,dx\,dy\right)\,dt
\\ &=&
\int_0^1 \left(
\int_{\{u>t\}}\int_{\{u\le t\}}
\frac{dx\,dy}{|x-y|^{n+s}}
+\int_{\{u\le t\}}
\int_{\{u>t\}}
\frac{dx\,dy}{|x-y|^{n+s}}
\right)\,dt
\\ &=& 2\int_0^1\left( \int_{\{u>t\}}\int_{\{u\le t\}}
\frac{dx\,dy}{|x-y|^{n+s}}\right)\,dt,
\end{eqnarray*}
as desired.
\end{proof}

\section{Maximum Principle and Harnack Inequality}\label{S:PGRAG}

The Harnack Inequality and \label{PGRAG} the Maximum Principle for harmonic functions are classical topics in elliptic regularity theory. Namely, in the classical case, if a non-negative function is harmonic in~$B_1$ and~$r\in(0,1)$, then its minimum and maximum in~$B_{r}$ must always be comparable (in particular, the function cannot touch the level zero
in~$B_r$).

It is worth pointing out that the fractional counterpart of these facts is, in general, false, as this next simple
result shows (see \cite{K15}):

\begin{thm} \label{NH}
There exists a bounded function~$u$ which does not vanish identically on $\Rn$, is~$s$-harmonic in~$B_1$, non-negative in~$B_1$, but such that~$\displaystyle \inf_{B_\frac12} u =0$.
\end{thm}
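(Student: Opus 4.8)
The plan is to construct an explicit counterexample, exploiting the nonlocal nature of the fractional Laplacian: since $s$-harmonicity in $B_1$ only constrains the function inside $B_1$ while leaving complete freedom in $\R^n\setminus B_1$, we can prescribe boundary data outside $B_1$ that forces the solution to dip arbitrarily close to zero somewhere inside $B_1$. Concretely, I would fix a point $x_0\in B_1$ (say $x_0=0$) and, for each small $\eps>0$, choose an exterior datum $u_0=u_0^{(\eps)}\ge0$ that vanishes on a large region of $\R^n\setminus B_1$ and is supported far away, then solve
\begin{equation*}
\begin{cases}
\frlap u=0 & \text{in }B_1,\\
u=u_0 & \text{in }\R^n\setminus B_1,
\end{cases}
\end{equation*}
which by the payoff interpretation of Section~\ref{pym} (or by the Poisson kernel representation for the ball) has a unique bounded nonnegative solution $u^{(\eps)}$. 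The value $u^{(\eps)}(0)$ is obtained by integrating $u_0^{(\eps)}$ against the Poisson kernel of $B_1$, which decays at infinity, so by pushing the support of $u_0^{(\eps)}$ out to infinity (while keeping it bounded, e.g. $0\le u_0^{(\eps)}\le1$) we get $u^{(\eps)}(0)\to0$. Then a diagonal/limiting argument, or simply taking $u_0$ supported on an unbounded set such as a far-away half-space intersected with a thin shell, produces a single bounded nonnegative $s$-harmonic function in $B_1$ with $\inf_{B_1}u=0$.

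More explicitly, and avoiding any appeal to the Poisson kernel, I would take the radial ``bump at infinity'' construction: let $u_k$ solve the above problem with $u_0=\chi_{A_k}$ where $A_k=\{x:|x-ke_1|<1\}$ is a unit ball centered at distance $k$ from the origin. Each $u_k$ is bounded (by $1$, by the maximum principle), nonnegative in $B_1$, and $s$-harmonic in $B_1$; as $k\to\infty$ the influence of the far bump on $B_1$ tends to $0$, so $\sup_{B_1}u_k\to0$. This does not yet give a single function, so the cleanest route is instead to use a datum with infinitely many such bumps placed with rapidly increasing spacing, or — even simpler — to directly exhibit one function. A convenient explicit choice is to let $u$ be the bounded $s$-harmonic function in $B_1$ with exterior datum $u_0(x):=\bigl(1-|x|^{-1}\bigr)_+$ truncated, or, following \cite{K15}, to take something like $u_0(x)=\log|x|\cdot\chi_{\{|x|>1\}}$ suitably modified to be bounded; the point in every variant is the same.

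The key steps in order: (i) recall that for nonnegative bounded exterior data the Dirichlet problem in $B_1$ has a nonnegative bounded solution (monotonicity/maximum principle, as discussed around~\eqref{HAS}); (ii) observe that the value at an interior point depends on the exterior datum only through a kernel that vanishes at infinity, so mass placed far away contributes negligibly; (iii) arrange the exterior datum so that near some interior point the only "visible" exterior contribution is arbitrarily small while the interior $s$-harmonicity (a weighted average identity) then forces the interior infimum down to $0$; (iv) conclude that $\inf_{B_1}u=0$ while $u\ge0$ and $u$ bounded, and check the infimum is not attained — which is automatic since a nonnegative $s$-harmonic function in $B_1$ attaining an interior zero would, by the strong maximum principle for $\frlap$, be identically zero in $\R^n$, contradicting the nontrivial exterior datum.

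The main obstacle is making the informal "mass far away contributes negligibly" rigorous without invoking machinery not developed in the excerpt: the honest fix is to use the payoff/probabilistic representation of Section~\ref{pym} together with the explicit Poisson kernel for the ball (whose decay at infinity is classical), or, staying within the comparison-principle toolkit, to build $u$ as a decreasing limit of barriers. A secondary subtlety is exhibiting a \emph{single} function (rather than a sequence) realizing $\inf_{B_1}u=0$; this is handled by choosing an exterior datum supported on a sequence of unit balls escaping to infinity with gaps growing fast enough that their combined Poisson contribution to a suitable sequence of interior points still tends to zero, which is a routine geometric estimate once the kernel decay is in hand.
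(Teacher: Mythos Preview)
Your approach is correct in principle and is genuinely different from the paper's, though your writeup wanders among several variants when one simple construction suffices. The cleanest version of your idea: take any nontrivial bounded $u_0\ge0$ supported in $\{|y|\ge 2\}$ (a single bump suffices---you do not need bumps escaping to infinity, a diagonal argument, or rapidly growing gaps), and let $u$ solve the Dirichlet problem in $B_1$. Then $u$ is bounded, $s$-harmonic, and strictly positive in $B_1$ by the strong maximum principle; the Poisson kernel for the ball carries the factor $(1-|x|^2)^s$, so $u(x)\le C(1-|x|)^s\to0$ as $|x|\to1^-$, whence $\inf_{B_1}u=0$. Note that your ``suitable sequence of interior points'' must tend to $\partial B_1$ for this to work---the decay of the kernel as $|y|\to\infty$ is a red herring; what matters is its vanishing in $x$ at the boundary.

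The paper takes a quite different route. It considers the one-parameter family $u_M$ that is $s$-harmonic in $B_1$, equals $1$ on most of $\R^n\setminus B_1$, but equals $1-M$ (hence \emph{negative} for $M>1$) on the annulus $B_3\setminus B_2$. A short contradiction argument, using only the pointwise integral definition of $(-\Delta)^s$ at a maximum point, shows that $\inf_{B_1}u_M\to-\infty$ as $M\to\infty$; since $u_0\equiv1$, continuity in $M$ produces some $M_\star$ with $u_{M_\star}\ge0$ in $B_1$ and $\inf_{B_1}u_{M_\star}=0$. The paper's argument is thus entirely self-contained---no Poisson kernel, no boundary regularity---which is exactly the machinery your ``honest fix'' paragraph admits you need to import. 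On the other hand, your construction (once simplified) yields an example that is nonnegative on all of $\R^n$, not just in $B_1$, and it makes transparent the boundary mechanism $u\sim(\mathrm{dist}(\cdot,\partial B_1))^s$ that underlies the failure of the classical Harnack inequality.
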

\begin{proof}[Sketch of the proof]
The main idea is that we are able to take the datum of~$u$ outside~$B_1$ in a suitable way as to ``bend down'' the function inside~$B_{\frac12}$ until it reaches the level zero. Namely, let~$M\ge 0$ and we take~$u_M$ to be the function satisfying
	\begin{equation}\label{e56}
		\begin{cases}
			(-\Delta)^s u_M =0 & {\mbox{ in }} B_1,\\
			u_M = 1-M & {\mbox{ in }} B_3\setminus B_2,\\
			u_M =1 & {\mbox{ in }} \R^n\setminus \big((B_3\setminus B_2)\cup B_1\big).
	\end{cases}
	\end{equation}
When~$M=0$, the function~$u_M$ is identically~$1$.
When~$M>0$, we expect~$u_M$ to bend down, since the fact that the fractional Laplacian vanishes in $B_1$ forces the second order quotient to vanish 
in average (recall~\eqref{frlap2def},
or the equivalent formulation in~\eqref{frlapdef}).
Indeed, we claim that there exists~$M_\star>0$ such that~$u_{M_\star}\ge0$ in~$B_1$ with~$\displaystyle \inf_{B_\frac12} u_{M_\star}=0$. Then, the result of
Theorem~\ref{NH} would be reached by taking~$u:=u_{M_\star}$.

To check the existence of such~$M_\star$,
we show that
\[ \inf_{B_\frac12} u_M\to -\infty\; \mbox{ as } M\to+\infty.\] 
Indeed, we argue by contradiction and suppose this cannot happen. Then, for any~$M\ge0$, we would have that
\begin{equation}\label{e1234}
\inf_{B_\frac12} u_M\ge -a,\end{equation} for some fixed~$a\in\R$.
We set
	\[  v_M:= \frac{u_M+M-1}{M}.\]
Then, by~\eqref{e56}, 
	\[  \begin{cases}
			(-\Delta)^s v_M =0 & {\mbox{ in }} B_1,\\
			v_M = 0 & {\mbox{ in }} B_3\setminus B_2,\\
			v_M =1 & {\mbox{ in }} \R^n\setminus \big((B_3\setminus B_2)\cup B_1\big).
	\end{cases}\]
Also, by~\eqref{e1234}, for any~$x\in {B_\frac12}$,
	\[  v_M(x)\ge \frac{-a+M-1}{M}.\]
By taking limits, one obtains that~$v_M$ approaches a function~$v_\infty$ that satisfies
 	\[ \begin{cases}
(-\Delta)^s v_\infty =0 & {\mbox{ in }} B_1,\\
v_\infty= 0& {\mbox{ in }} B_3\setminus B_2,\\
v_\infty = 1 & {\mbox{ in }} \R^n\setminus \big((B_3\setminus B_2)\cup 
B_1\big)
\end{cases} \]
and, for any~$x\in {B_\frac12}$,	
		\[ v_\infty (x)\ge 1.\] 
		On the other hand, by the Maximum Principle (that we prove in the next Theorem \ref{THM-MA-1}), we have that in $B_1$
		\[ v_\infty(x)\leq 1.\]  Thus the maximum of $v_\infty$ is attained at some point~$x_\star\in {B_\frac12}$, with~$v_\infty(x_\star)\ge 1$. Accordingly, 
	\begin{eqnarray*}
&& 0 = P.V. \int_{\R^n} \frac{v_\infty(x_\star)-v_\infty(y)}{|x_\star-y|^{n+2s}}\,dy
\ge P.V. \int_{B_3\setminus B_2} 
\frac{v_\infty(x_\star)-v_\infty(y)}{|x_\star-y|^{n+2s}}\,dy
\\ &&\qquad\ge
P.V. \int_{B_3\setminus B_2} \frac{1-0}{|x_\star-y|^{n+2s}}\,dy>0,
\end{eqnarray*}
which is a contradiction.
\end{proof}

The example provided by Theorem~\ref{NH} is not the end
of the story concerning the Harnack Inequality
in the fractional setting.
On the one hand, Theorem~\ref{NH} is just a particular case of the very dramatic effect that the datum at infinity may have on the fractional Laplacian (a striking example of this phenomenon will be given in Section \ref{afsh}).
On the other hand, the Harnack Inequality and the Maximum Principle hold true if, for instance, the sign of the function~$u$ is controlled in the whole of~$\Rn$.

We refer to \cite{BASS-KASSMANN, SILV-HOELDER, K15,FerrariFranchi} and to the references therein for a detailed introduction to the fractional Harnack Inequality, and to \cite{DCKP14} for general estimates of this type.

Just to point out
the validity of a global Maximum Principle, 
we state in detail the following simple result:

\begin{thm}\label{THM-MA-1}
If~$(-\Delta)^s u\ge0$ in~$B_1$ and~$u\ge0$ in~$\R^n\setminus B_1$,
then~$u\ge 0$ in~$B_1$.\end{thm}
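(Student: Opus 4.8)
The plan is to argue by contradiction using a maximum-principle-type computation based directly on the integral formula~\eqref{frlapdef}. Suppose $u\ge0$ outside $B_1$ but $u$ becomes negative somewhere in $B_1$; since (as we may assume) $u$ is continuous and, being $s$-harmonic, bounded, the infimum of $u$ over $\overline{B_1}$ is attained at some point $x_\star\in B_1$ with $u(x_\star)=m<0$. At this interior minimum point, the principal-value integral $\mathrm{P.V.}\int_{\R^n}\frac{u(x_\star)-u(y)}{|x_\star-y|^{n+2s}}\,dy$ should be $\le 0$: the contributions from $y\in B_1$ are nonpositive because $u(x_\star)\le u(y)$ there (and the P.V.\ is harmless since the first-order term averages out), while for $y\in\R^n\setminus B_1$ we have $u(x_\star)=m<0\le u(y)$, so that integrand is strictly negative and contributes a strictly negative amount (the set $\R^n\setminus B_1$ is unbounded but the kernel is integrable there and $u$ is bounded, so this is a genuine negative number). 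Hence $(-\Delta)^s u(x_\star)<0$, contradicting the hypothesis $(-\Delta)^s u\ge0$ in $B_1$. This gives $u\ge0$ in $B_1$.

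The key steps, in order, would be: (i) reduce to the case of a strict interior negative minimum — if $\inf_{B_1}u\ge0$ we are done, otherwise pick $x_\star$ attaining the (negative) minimum over the closed ball; (ii) justify writing $(-\Delta)^s u(x_\star)$ via the pointwise formula~\eqref{frlapdef}, which requires enough regularity of $u$ near $x_\star$ (interior regularity for $s$-harmonic functions, or simply assume $u$ smooth enough, as the statement is meant to be a simple illustration); (iii) split the integral over $B_1$ and over its complement and estimate each piece as above; (iv) observe the complement piece is bounded below away from $0$ by $(-m)\int_{\R^n\setminus B_1}|x_\star-y|^{-n-2s}\,dy>0$, forcing a strict sign, and conclude.

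The main obstacle is the technical handling of the principal value at the minimum point: near $x_\star$ one cannot split $\int_{B_1}\frac{u(x_\star)-u(y)}{|x_\star-y|^{n+2s}}\,dy$ into $\int\frac{u(x_\star)}{\dots}-\int\frac{u(y)}{\dots}$ since each piece diverges, so one must keep the symmetric combination $2u(x_\star)-u(x_\star+z)-u(x_\star-z)$, which is $\le0$ at a minimum whenever both $x_\star\pm z$ lie in $B_1$ — but for $z$ large enough one of $x_\star\pm z$ leaves $B_1$, and there $u$ could be positive, which only helps (makes the combination more negative relative to how we want to bound it — actually it makes $2u(x_\star)-u(x_\star+z)-u(x_\star-z)$ even more negative since $u(x_\star)$ is the global-over-$B_1$ infimum and the outside values are $\ge0>m$). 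So the cleanest route is: write $(-\Delta)^s u(x_\star)=\frac{C(n,s)}{2}\int_{\R^n}\frac{2u(x_\star)-u(x_\star+z)-u(x_\star-z)}{|z|^{n+2s}}\,dz$ using~\eqref{frlap2def}, note $2u(x_\star)-u(x_\star+z)-u(x_\star-z)\le 0$ for \emph{every} $z$ (since $u(x_\star)\le u(x_\star\pm z)$: this holds for $x_\star\pm z\in\overline{B_1}$ by minimality, and for $x_\star\pm z\notin B_1$ because there $u\ge0>m=u(x_\star)$), and that the inequality is strict on a set of positive measure (those $z$ with, say, $x_\star+z\notin B_1$). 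Then $(-\Delta)^s u(x_\star)<0$, the contradiction. I would present the argument this way to sidestep the P.V.\ bookkeeping entirely.
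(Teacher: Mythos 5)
Your proposal is correct and, in its final ``cleanest route'' form, is essentially the same argument as the paper's: take the negative interior minimum $x_\star$, observe it is a global minimum because $u\ge0$ outside $B_1$, use the symmetric second-difference formula~\eqref{frlap2def} so that the integrand $2u(x_\star)-u(x_\star+z)-u(x_\star-z)$ is $\le0$ everywhere, and extract strict negativity from the region where $x_\star\pm z$ lie outside $B_1$ (the paper integrates over $\R^n\setminus B_2$ for this, which guarantees both points leave $B_1$). Your preliminary discussion of the P.V.\ bookkeeping is a detour, but you correctly identify that the formulation in~\eqref{frlap2def} sidesteps it, which is precisely how the paper proceeds.
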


\begin{proof} Suppose by contradiction that the minimal point~$x_\star\in B_1$ satisfies $u(x_\star)<0$. Then $u(x_*)$ is a minimum in $\Rn$ (since $u$ is positive outside $B_1$), if $y \in B_2$ we have that $2u(x_\star) -u(x_\star+y)-u(x_\star-y)  \leq 0$. On the other hand, in $\Rn \setminus B_2$ we have that $x_\star\pm y \in \Rn \setminus B_1$, hence $u(x_\star\pm y)\geq 0$. We thus have	\[\begin{split}  0\leq \al \int_{\R^n} \frac{2u(x_\star)-u(x_\star+y)-u(x_\star-y)}{|y|^{n+2s}}\,dy\\
		\leq \al  \int_{\R^n\setminus B_2} \frac{2u(x_\star)-u(x_\star+y)-u(x_\star-y)}{|y|^{n+2s}}\,dy \\ 
		\leq \al \int_{\R^n\setminus B_2}  \frac{2u(x_\star)}{|y|^{n+2s}}\,dy<0. \end{split}\]
This leads to a contradiction.\end{proof}

Similarly to Theorem~\ref{THM-MA-1}, one can prove a Strong Maximum Principle,
such as:

\begin{thm}\label{THM-MA-1-STRONG}
If~$(-\Delta)^s u\ge0$
in~$B_1$ and~$u\ge0$ in~$\R^n\setminus B_1$,
then~$u>0$ in~$B_1$, unless~$u$ vanishes identically.\end{thm}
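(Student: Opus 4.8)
The plan is to combine the weak Maximum Principle of Theorem~\ref{THM-MA-1} with a contradiction argument that exploits the strictly positive contribution of the exterior datum whenever $u$ is not identically zero. First I would invoke Theorem~\ref{THM-MA-1} to conclude that $u\ge0$ in $B_1$, hence $u\ge0$ on all of $\R^n$. Suppose, for contradiction, that $u$ does not vanish identically but that there exists a point $x_\star\in B_1$ with $u(x_\star)=0$. Since $u\ge0$ everywhere, $x_\star$ is a global minimum of $u$, so for every $y\in\R^n$ we have $2u(x_\star)-u(x_\star+y)-u(x_\star-y)=-u(x_\star+y)-u(x_\star-y)\le0$. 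Plugging this into~\eqref{frlap2def} and using $(-\Delta)^s u(x_\star)\ge0$ forces
\[
0\le (-\Delta)^s u(x_\star)=-\frac{C(n,s)}{2}\int_{\R^n}\frac{u(x_\star+y)+u(x_\star-y)}{|y|^{n+2s}}\,dy\le0,
\]
so the integral must be zero. Because the integrand is nonnegative, this means $u(x_\star+y)+u(x_\star-y)=0$ for a.e.\ $y$, and since both terms are nonnegative we get $u\equiv0$ a.e.\ on $\R^n$, contradicting the assumption that $u$ is not identically zero.

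There is a mild technical subtlety I would need to address: the point $x_\star$ should genuinely realize the infimum of $u$, and the integral $\int_{\R^n}\frac{u(x_\star+y)+u(x_\star-y)}{|y|^{n+2s}}\,dy$ should be interpreted correctly near $y=0$. Since $u$ is assumed regular enough for $(-\Delta)^s u$ to make sense pointwise (say $u\in C^2$ locally and bounded, as in the footnote to~\eqref{frlap2def}), the integrand $2u(x_\star)-u(x_\star+y)-u(x_\star-y)$ is controlled by $|D^2u||y|^2$ near the origin, so no principal value is needed and the expression in~\eqref{frlap2def} is an absolutely convergent integral. Thus the manipulation above is legitimate, and the conclusion ``$u\equiv0$ a.e., hence by continuity $u\equiv0$'' is valid. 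For the existence of a minimizing point $x_\star$ in the open ball $B_1$: if $\inf_{B_1}u=0$ is not attained inside $B_1$, then by continuity of $u$ and $u\ge0$ on $\partial B_1$ and outside, the infimum over the closed ball $\overline{B_1}$ is attained; if it is attained only on $\partial B_1$ (where $u$ might equal $0$ from the exterior datum), one either argues the infimum is positive inside — in which case $u>0$ in $B_1$ and we are done — or the infimum is $0$ and attained at some interior point by continuity after all.

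The main obstacle I expect is purely bookkeeping around this last point, namely cleanly distinguishing the case ``$\inf_{B_1}u>0$'' (trivial) from ``$\inf_{B_1}u=0$'' and ensuring the zero is attained at an \emph{interior} point so that $x_\star\pm y$ can range over a genuine neighborhood and the symmetric integral argument applies. A clean way to organize this is: let $m:=\inf_{B_1}u\ge0$; if $m>0$ we are done, so assume $m=0$; by lower semicontinuity (here continuity) of $u$ and the fact that $u\ge0$ on $\R^n$, pick $x_\star\in\overline{B_1}$ with $u(x_\star)=0$; if $x_\star\in\partial B_1$ we may still run the argument since $x_\star$ is a global minimum of $u$ on $\R^n$ and~\eqref{frlap2def} is evaluated at points where $(-\Delta)^su\ge0$ holds — but to be safe one typically takes $x_\star$ in the open set; alternatively, a standard trick is to note that if $u(x_\star)=0$ at a boundary point but $u>0$ inside, that already contradicts nothing, so the only problematic case has an interior zero, to which the displayed computation applies verbatim. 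Everything else is a direct echo of the proof of Theorem~\ref{THM-MA-1}.
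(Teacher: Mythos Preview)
Your argument is correct and is essentially the paper's own proof: invoke Theorem~\ref{THM-MA-1} to get $u\ge0$ on all of $\R^n$, take an interior zero $x_\star\in B_1$, and use~\eqref{frlap2def} together with the sign of $(-\Delta)^s u(x_\star)$ to force $u\equiv0$. The long discussion about attainment of the infimum is unnecessary: if $u>0$ fails in the open ball $B_1$, then by definition there already exists $x_\star\in B_1$ with $u(x_\star)\le0$, hence $u(x_\star)=0$, so no compactness or boundary analysis is needed.
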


\begin{proof}
We observe that we already know that~$u\ge0$
in the whole of~$\R^n$, thanks to Theorem~\ref{THM-MA-1}.
Hence, if $u$ is not strictly positive, there
exists~$x_0\in B_1$
such that~$u(x_0)=0$. This gives that
\[  0\le \int_{\R^n} \frac{2u(x_0)-u(x_0+y)-u(x_0-y)}{|y|^{n+2s}}\,dy
=- \int_{\Rn}\frac{u(x_0+y)+u(x_0-y)}{|y|^{n+2s}}\,dy.\]
Now both $u(x_0+y)$ and  $u(x_0-y)$ 
are non-negative, hence the latter
integral is less than or equal to zero,
and so it must vanish identically, proving that~$u$ also
vanishes identically.
\end{proof}

A simple version of a Harnack-type
inequality in the fractional setting can be also obtained as follows:

\begin{prop}
Assume that~$(-\Delta)^s u\ge0$ in~$B_2$, with~$u\ge0$ in
the whole of~$\R^n$. Then
$$ u(0)\ge c\int_{B_1} u(x)\,dx,$$
for a suitable~$c>0$.
\end{prop}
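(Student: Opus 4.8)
The plan is to prove a pointwise-to-integral estimate by combining the mean value property of $s$-harmonic functions with a covering argument. First I would recall that, since $(-\Delta)^s u\ge 0$ in $B_2$, the function $u$ is $s$-superharmonic there, so by iterating the supermean value inequality (or directly, by the divergence-type structure of the operator restricted to a small ball) one obtains for $x\in B_1$ and small $\rho>0$ a bound of the form
\begin{equation*}
u(x)\ge \int_{B_\rho(x)} u(y)\, d\mu_{x,\rho}(y),
\end{equation*}
where $d\mu_{x,\rho}$ is the (explicit) $s$-harmonic measure of $B_\rho(x)$. Alternatively, and perhaps more cleanly, I would use the fact that $(-\Delta)^s u(0)\ge0$ together with the representation \eqref{frlap2def}: writing the inequality $(-\Delta)^s u(0)\ge0$ out and dropping the negative contribution from $\R^n\setminus B_1$ (which is licit since $u\ge0$ everywhere), one gets
\begin{equation*}
0\le \frac{C(n,s)}{2}\int_{\R^n}\frac{2u(0)-u(y)-u(-y)}{|y|^{n+2s}}\,dy
\le \frac{C(n,s)}{2}\left(2u(0)\int_{B_1}\frac{dy}{|y|^{n+2s}}\cdot(\text{finite tail correction}) - \int_{B_1}\frac{u(y)+u(-y)}{|y|^{n+2s}}\,dy + 2u(0)\int_{\R^n\setminus B_1}\frac{dy}{|y|^{n+2s}}\right),
\end{equation*}
but this last route has the defect that $\int_{B_1}|y|^{-n-2s}\,dy=+\infty$, so the naive splitting fails and the P.V. structure must be respected.

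Because of that divergence, the correct approach is the harmonic-measure one. The key steps in order would be: (1) fix a radius $\rho\in(0,1)$, and for each $x\in B_1$ invoke the Poisson-type (mean value) formula for $s$-harmonic functions on $B_\rho(x)$ applied to the $s$-superharmonic $u$, which gives $u(x)\ge \int_{\R^n\setminus B_\rho(x)} P_\rho(x,y)\,u(y)\,dy$ with $P_\rho\ge0$; since $u\ge0$, restrict the integration domain to $B_1$ and use the explicit lower bound $P_\rho(x,y)\ge c_1>0$ for $x\in B_{1/2}$, $y\in B_1\setminus B_\rho(x)$ — but since we only need $u(0)$, take $x=0$ directly and get $u(0)\ge c_1\int_{B_1\setminus B_\rho} u(y)\,dy$; (2) control the leftover piece $\int_{B_\rho} u(y)\,dy$: here one uses again that $u\ge0$, so it is enough to note that $\int_{B_1}u = \int_{B_\rho}u + \int_{B_1\setminus B_\rho}u$ and that $\int_{B_\rho}u$ can be absorbed by a second application of the mean value inequality at $0$ on a slightly larger ball, or simply by choosing the constant to account for it after bounding $\int_{B_\rho}u(y)\,dy\le \|u\|$... — actually the cleanest fix is to apply the mean-value inequality on $B_{3/2}(0)$ so that $B_1$ is strictly interior and no excision near $0$ is needed, giving $u(0)\ge c\int_{B_1}u$ in one shot.

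So I would streamline to: apply the $s$-superharmonic mean value inequality for $u$ on the ball $B_{3/2}$ centered at $0$ (legitimate since $(-\Delta)^s u\ge0$ in $B_2\supset B_{3/2}$), obtaining $u(0)\ge \int_{\R^n\setminus B_{3/2}} P_{3/2}(0,y) u(y)\,dy$; then discard the part of the integral outside $B_1$ using $u\ge0$ and $P_{3/2}\ge0$; wait — $B_1\subset B_{3/2}$, so the integral is over the complement and $B_1$ is \emph{not} in the domain. I would instead use the interior representation: for an $s$-superharmonic function, $u(0)\ge \int_{B_{3/2}} G_{3/2}(0,y)\,(-\Delta)^s u(y)\,dy + (\text{Poisson term from outside})$, and more usefully the comparison with the mean value over $B_1$: set $w$ to be the $s$-harmonic replacement of $u$ in $B_{3/2}$ with $w=u$ outside; then $w\le u$ in $B_{3/2}$ by the comparison principle (Theorem~\ref{THM-MA-1} applied to $u-w$), and $w$ satisfies the mean value property, so $w(0)\ge c\int_{B_1}w$ — but I still need $w\ge u$ on $B_1$, which is false. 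The main obstacle, and the step I expect to be delicate, is exactly this: producing the positive weight on $B_1$ from below. The resolution is to use that $u$ itself (not a replacement) satisfies, for $x\in B_1$, $u(x)\ge \int_{\R^n\setminus B_1(x)} P_1(x,z)u(z)\,dz$ where $B_1(x)\subset B_2$; iterating once more ($u(z)\ge$ Poisson average of $u$ on $B_1(z)$ for suitable $z$) and using the explicit positivity and lower bounds of the fractional Poisson kernel $P_1(x,z)=c_{n,s}\bigl(\tfrac{1-|x|^2}{|z|^2-1}\bigr)^s|x-z|^{-n}$, one chains the inequalities to land all the mass inside $B_1$ and deduce $u(0)\ge c\int_{B_1}u(x)\,dx$ with $c>0$ depending only on $n$ and $s$. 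The bookkeeping of these two Poisson-kernel applications, and checking the uniform positive lower bound of the kernel on the relevant annular region, is where the real work lies; everything else is a direct consequence of $u\ge0$ and monotonicity of the integral.
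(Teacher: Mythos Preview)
Your write-up is not a proof but a sequence of attempts, most of which you yourself correctly reject. The final strategy---apply the $s$-superharmonic Poisson inequality on $B_1(0)$, then for each exterior point $z$ apply it again on $B_1(z)$ so that part of the mass falls back into $B_1$, and chain the two kernel lower bounds---is plausible, but you stop exactly where the argument begins: you never verify that the composed kernel $\int P_1(0,z)P_1(z,w)\,dz$ (restricted to $w\in B_1$ and $z$ in an appropriate annulus contained in $B_2$) is bounded below by a positive constant, and you never check that the second application is legitimate, which requires $B_1(z)\subset B_2$, i.e.\ $|z|<1$, contradicting $|z|>1$ from the first step. So either the radii must be adjusted (e.g.\ first apply on $B_{1/2}(0)$, then on balls of radius $1/2$ centered in the annulus $1/2<|z|<3/2$) or the scheme as written fails. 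In short, the ``bookkeeping'' you defer is the entire proof, and as stated the radii do not fit inside $B_2$.

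The paper takes a completely different and more self-contained route: it slides a smooth bump barrier from below. One fixes $\Gamma\in C^\infty_0(B_{1/2})$ with $\Gamma(0)=1$, sets $\eta:=u(0)+\epsilon$, and considers the family $\Gamma_a(x):=2\eta\,\Gamma(x)-a$. For $a$ large this lies below $u$; one raises it (decreases $a$) until it first touches $u$ at some $x_0\in\overline{B_{1/2}}$. At the contact point $\Gamma_{a_*}(x_0)=u(x_0)$ and $\Gamma_{a_*}\le u$ everywhere, so
\[
(-\Delta)^s\Gamma_{a_*}(x_0)-(-\Delta)^s u(x_0)
= C(n,s)\int_{\R^n}\frac{u(x_0+y)-\Gamma_{a_*}(x_0+y)}{|y|^{n+2s}}\,dy\ge0.
\]
Since $|(-\Delta)^s\Gamma_{a_*}|\le C\eta$ and $(-\Delta)^s u(x_0)\ge0$, the left side is $\le C\eta$. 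Restricting the integral to $y\in B_1(-x_0)$ (where $|y|<2$, so the kernel is bounded below) and using $\Gamma_{a_*}\le\eta$, one gets
\[
C\eta\ \ge\ c\int_{B_1}\bigl(u(x)-\eta\bigr)\,dx,
\]
hence $u(0)+\epsilon=\eta\ge c\int_{B_1}u$ after rearranging, and one sends $\epsilon\to0$. No Poisson kernel, no iteration, no external representation formula---just a touching-point argument with a compactly supported test function.
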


\begin{proof} Let~$\Gamma\in C^\infty_0(B_{1/2})$,
with~$\Gamma(x)\in[0,1]$ for any~$x\in\R^n$, and~$\Gamma(0)=1$.
We fix~$\epsilon>0$, to be taken arbitrarily small at the end of this
proof and set
\begin{equation}\label{78HJP}
\eta:=u(0)+\epsilon>0.\end{equation}
We define~$\Gamma_a(x):= 2\eta\,\Gamma(x)-a$.
Notice that if~$a>2\eta$, then~$\Gamma_a (x) \le2\eta-a<0\leq u(x)$
in the whole of~$\R^n$, hence the set $\{ \Gamma_a<u {\mbox{ in $\R^n$}}\}$ is not empty, and we can define
$$ a_* := \inf_{a\in \R} \{ \Gamma_a<u {\mbox{ in $\R^n$}}\}.$$
By construction
\begin{equation}\label{89U}
a_* \le 2\eta.
\end{equation}
If~$a<\eta$ then~$\Gamma_a(0)=2\eta-a > \eta >u(0)$, therefore
\begin{equation}\label{U7-a-be}
a_* \ge \eta.
\end{equation}
Notice that
\begin{equation}\label{x0EX0}
{\mbox{$\Gamma_{a_*}\le u$ in the whole of $\R^n$.}}\end{equation}
We claim that 
\begin{equation}\label{x0EX}
{\mbox{there exists~$x_0\in \overline{B_{1/2}}$ such that $\Gamma_{a_*}(x_0)=u(x_0)$.}}
\end{equation}
To prove this, we suppose by contradiction that~$u>\Gamma_{a_*} $
in~$\overline{B_{1/2}}$, i.e.
$$ \mu:=\min_{ \overline{B_{1/2}} } (u-\Gamma_{a_*} )>0.$$
Also, if~$x\in \R^n\setminus \overline{B_{1/2}}$, we have that
$$ u(x)-\Gamma_{a_*}(x) = u(x)
-2\eta\,\Gamma(x)+a_* = u(x)+a_*\ge a_*\ge\eta,$$
thanks to~\eqref{U7-a-be}.
As a consequence, for any~$x\in\R^n$,
$$ u(x)-\Gamma_{a_*}(x) \ge\min\{\mu,\eta\}=:\mu_*>0.$$
So, if we define~$a_\sharp:= a_*-(\mu_*/2)$,
we have that~$a_\sharp<a_*$ and
$$ u(x)-\Gamma_{a_\sharp}(x) = u(x)-\Gamma_{a_*}(x) -\frac{\mu_*}{2}
\ge \frac{\mu_*}{2} >0.$$
This is in contradiction with the definition of~$a_*$
and so it proves~\eqref{x0EX}.

{F}rom~\eqref{x0EX} we have that $x_0\in \overline{ B_{1/2}}$, hence ~$(-\Delta)^s u(x_0)\ge0$.
Also~$|(-\Delta)^s \Gamma_{a_*}(x)|=
2\eta\,|(-\Delta)^s\Gamma(x)|\le C\eta$, for any~$x\in\R^n$, therefore,
recalling~\eqref{x0EX0} and~\eqref{x0EX},
\begin{eqnarray*}
C\eta &\ge& (-\Delta)^s \Gamma_{a_*}(x_0) -(-\Delta)^s u(x_0)\\
&=& C(n,s)\,\mbox{P.V.} \int_{\R^n} 
\frac{ \big[ \Gamma_{a_*}(x_0)-\Gamma_{a_*}(x_0+y) \big]- 
\big[ u(x_0)-u(x_0+y)\big] }{|y|^{n+2s}}\,dy \\
&=& C(n,s)\,\mbox{P.V.} \int_{\R^n} 
\frac{ u(x_0+y)-\Gamma_{a_*}(x_0+y)}{|y|^{n+2s}}\,dy
\\ &\ge& C(n,s)\,\mbox{P.V.} \int_{B_1(-x_0)}
\frac{ u(x_0+y)-\Gamma_{a_*}(x_0+y)}{|y|^{n+2s}}\,dy
.\end{eqnarray*}
Notice now that if~$y\in B_1(-x_0)$, then~$|y|\le |x_0|+1<2$,
thus we obtain
$$ C\eta \ge \frac{C(n,s)}{2^{n+2s}}\,\int_{B_1(-x_0)}
\big[ u(x_0+y)-\Gamma_{a_*}(x_0+y) \big]\,dy.$$
Notice now that $\Gamma_{a_*}(x)= 2\eta\Gamma(x) -a_* \le \eta$, due to~\eqref{U7-a-be},
therefore we conclude that
$$ C\eta \ge \frac{C(n,s)}{2^{n+2s}}\,\left( \int_{B_1(-x_0)}
u(x_0+y)\,dy-\eta|B_1|\right).$$
That is,
using the change of variable~$x:=x_0+y$, recalling~\eqref{78HJP}
and renaming the constants, we have
$$ C\big(u(0)+\epsilon\big)=
C\eta\ge \int_{B_1} u(x)\,dx,$$
hence the desired result follows by sending~$\epsilon\to0$.
\end{proof}

\section{An $s$-harmonic function} \label{shf1}

We provide here an explicit
example of a function that is $s$-harmonic on the positive line $\R_+:=
(0,+\infty)$. Namely, we prove the following result: 
\begin{thm}\label{G1}
For any $x\in\R$, let $w_s(x):=x_+^s=\max\{x,0\}^s$. Then
	\[ (-\Delta)^s w_s (x) = \left\{
	\begin{matrix}
		-c_s |x|^{-s} & \mbox{ if  } x<0,\\
		0 &\mbox{ if } x>0,
	\end{matrix}
	\right.\]
for a suitable constant~$c_s>0$.
\end{thm}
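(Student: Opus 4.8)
The function $w_s$ is positively homogeneous of degree $s$, i.e. $w_s(\lambda x)=\lambda^s w_s(x)$ for every $\lambda>0$. Since $\frlap$ is an operator of order $2s$, it follows (once the defining integral is known to converge) that $\frlap w_s$ is positively homogeneous of degree $s-2s=-s$, that is $\frlap w_s(\lambda x)=\lambda^{-s}\frlap w_s(x)$ for all $\lambda>0$, $x\ne 0$. Hence it suffices to evaluate $\frlap w_s$ at the two points $x=1$ and $x=-1$ and to show that $\frlap w_s(1)=0$ while $\frlap w_s(-1)$ is a strictly negative number, which we call $-c_s$. Convergence of the integrals is not an issue: $w_s$ is smooth away from the origin, so the principal value near the base point is well defined, and $w_s(y)\sim|y|^s$ with $s<2s$, so the kernel $|y|^{-1-2s}$ makes the integral convergent at infinity.

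\textbf{The point $x=-1$.} Here the computation is elementary, since $w_s$ vanishes on $(-\infty,0]$ and hence the integrand has no singularity:
\[ \frlap w_s(-1)=C(1,s)\int_{\R}\frac{-w_s(y)}{|1+y|^{1+2s}}\,dy=-C(1,s)\int_0^{+\infty}\frac{y^{s}}{(1+y)^{1+2s}}\,dy. \]
The last integral is an Euler Beta integral, equal to $B(s+1,s)=\Gamma(s+1)\Gamma(s)/\Gamma(2s+1)\in(0,+\infty)$. Setting $c_s:=C(1,s)\,B(s+1,s)>0$ gives $\frlap w_s(-1)=-c_s$, whence $\frlap w_s(x)=-c_s|x|^{-s}$ for $x<0$.

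\textbf{The point $x=1$.} This is the substantial part. Writing out the principal value, using $w_s(y)=0$ for $y\le0$ and computing the elementary integral over $(-\infty,0)$ (which equals $1/(2s)$), everything reduces to
\[ \frlap w_s(1)=C(1,s)\left(\frac{1}{2s}+\mathrm{P.V.}\int_0^{+\infty}\frac{1-y^{s}}{|1-y|^{1+2s}}\,dy\right), \]
so the point is to show that the remaining principal value equals $-\tfrac1{2s}$. The plan is to fold the tail $(1,+\infty)$ onto $(0,1)$ via $y\mapsto 1/y$; a short computation turns $\int_1^{+\infty}\frac{1-y^s}{(y-1)^{1+2s}}\,dy$ into $\int_0^1\frac{y^{2s-1}-y^{s-1}}{(1-y)^{1+2s}}\,dy$, so that the principal value over $(0,+\infty)$ becomes (the principal value of) $\int_0^1\frac{1-y^{s}+y^{2s-1}-y^{s-1}}{(1-y)^{1+2s}}\,dy$. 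The algebraic key is the factorization $1-y^{s}+y^{2s-1}-y^{s-1}=(1-y^{s-1})(1-y^{s})$: near $y=1$ this numerator is $O((1-y)^2)$, which against $(1-y)^{-1-2s}$ leaves the locally integrable power $(1-y)^{1-2s}$ (this is exactly where $s<1$ is used), so the singularity at $y=1$ has disappeared and the expression is now an ordinary convergent integral. To evaluate it I would expand into the monomials $y^{a-1}$ and use the Beta integral $\int_0^1 y^{a-1}(1-y)^{b-1}\,dy=\Gamma(a)\Gamma(b)/\Gamma(a+b)$ together with its meromorphic continuation at $b=-2s$ (equivalently: first replace the exponent $-1-2s$ by a variable $b-1$ with $b>0$, sum the four Beta functions, then let $b\to-2s$, which is legitimate by dominated convergence on the combined integrand): the monomial $1$ gives $B(1,-2s)=-\tfrac1{2s}$, the monomial $y^{2s-1}$ gives $\Gamma(2s)\Gamma(-2s)/\Gamma(0)=0$, and the two remaining monomials give $-\Gamma(-2s)\big(\tfrac{\Gamma(s+1)}{\Gamma(1-s)}+\tfrac{\Gamma(s)}{\Gamma(-s)}\big)$, which vanishes because the reflection identity $\Gamma(1-s)=-s\,\Gamma(-s)$ yields $\Gamma(s)/\Gamma(-s)=-\Gamma(s+1)/\Gamma(1-s)$. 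Hence the principal value equals $-\tfrac1{2s}$, so $\frlap w_s(1)=0$ and $\frlap w_s\equiv0$ on $(0,+\infty)$.

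\textbf{Main obstacle.} The delicate point is making the folding argument rigorous: one must match the symmetric cutoffs $\int_0^{1-\eee}$ and $\int_{1+\eee}^{+\infty}$, apply $y\mapsto 1/y$ to the second, and check that the mismatch of the two resulting endpoints contributes nothing as $\eee\to0$ — which holds precisely because the combined integrand blows up only like $(1-y)^{1-2s}$. A secondary subtlety is the borderline exponent $s=\tfrac12$, where $\Gamma(-2s)$ has a pole; there the bracket that multiplies it is identically zero, so one should either keep the exponent $b$ generic until the very end, or simply observe that $s\mapsto\frlap w_s(1)$ is continuous on $(0,1)$ and therefore also vanishes at $s=\tfrac12$. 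Tracking the constants along the way gives the explicit value $c_s=C(1,s)\,\Gamma(s+1)\Gamma(s)/\Gamma(2s+1)$.
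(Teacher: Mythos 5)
Your proof is correct, and it is a genuinely different argument from the one in the paper. Both reduce by scaling to the two evaluations at $x=\pm 1$, and both handle $x=-1$ by elementary positivity (you go further and extract the explicit constant $c_s=C(1,s)\,\Gamma(s+1)\Gamma(s)/\Gamma(2s+1)$ via the Beta integral, which the paper does not do). The substantive point, $\frlap w_s(1)=0$, is where the routes diverge. The paper proves it via Lemma~\ref{L1}, an identity obtained by a careful integration by parts followed by the substitution $\tilde t=t/(1-t)$; the mechanism is that integration by parts lowers the order of the singularity at $t=0$, after which everything is elementary and no special-function machinery is needed. You instead fold $(1,+\infty)$ onto $(0,1)$ by $y\mapsto 1/y$, exploit the factorization $1-y^s-y^{s-1}+y^{2s-1}=(1-y^{s-1})(1-y^s)$ to kill the non-integrable singularity at $y=1$ before integrating, and then evaluate through the Beta function with an analytic continuation in the second parameter. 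Your computation is right: the monomial $1$ contributes $-1/(2s)$, the monomial $y^{2s-1}$ contributes $\Gamma(2s)\Gamma(-2s)/\Gamma(0)=0$, and the two middle monomials cancel each other by $\Gamma(1-s)=-s\,\Gamma(-s)$. Your handling of the two technical subtleties is also sound: the mismatch between the cutoffs $1-\eee$ and $1/(1+\eee)$ is $O(\eee^2)$, and since the integrand $g(y)=\tfrac{y^{2s-1}-y^{s-1}}{(1-y)^{1+2s}}$ is only $O((1-y)^{-2s})$ near $y=1$, the excess contributes $O(\eee^{2-2s})\to 0$ (your sentence attributes this to the $(1-y)^{1-2s}$ behavior of the combined integrand, which is slightly imprecise — it is the $O(\eee^2)$-length of the interval against the $(1-y)^{-2s}$ blowup of $g$ alone that does the job — but the conclusion stands); and the $s=\tfrac12$ pole of $\Gamma(-2s)$ is correctly dispatched either by keeping $b$ generic and taking the limit of the full combination, or by continuity in $s$. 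As a tradeoff, your route uses the Beta function and its meromorphic continuation, which the paper deliberately avoids; the paper's Lemma~\ref{L1} is more self-contained but less transparent about where the cancellation comes from. Your factorization identity makes the cancellation structurally visible in a way the paper's computation does not, and it also yields the explicit value of $c_s$ for free. (The paper's Appendix~\ref{1.APP-APP-TH} offers two further alternatives, via distributional Fourier transforms of $|x|^q$ and via Paley–Wiener; yours is distinct from those as well.)
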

\begin{center}
\begin{figure}[htpb]
	\hspace{0.8cm}
	\begin{minipage}[b]{0.95\linewidth}
	\centering
	\includegraphics[width=0.95\textwidth]{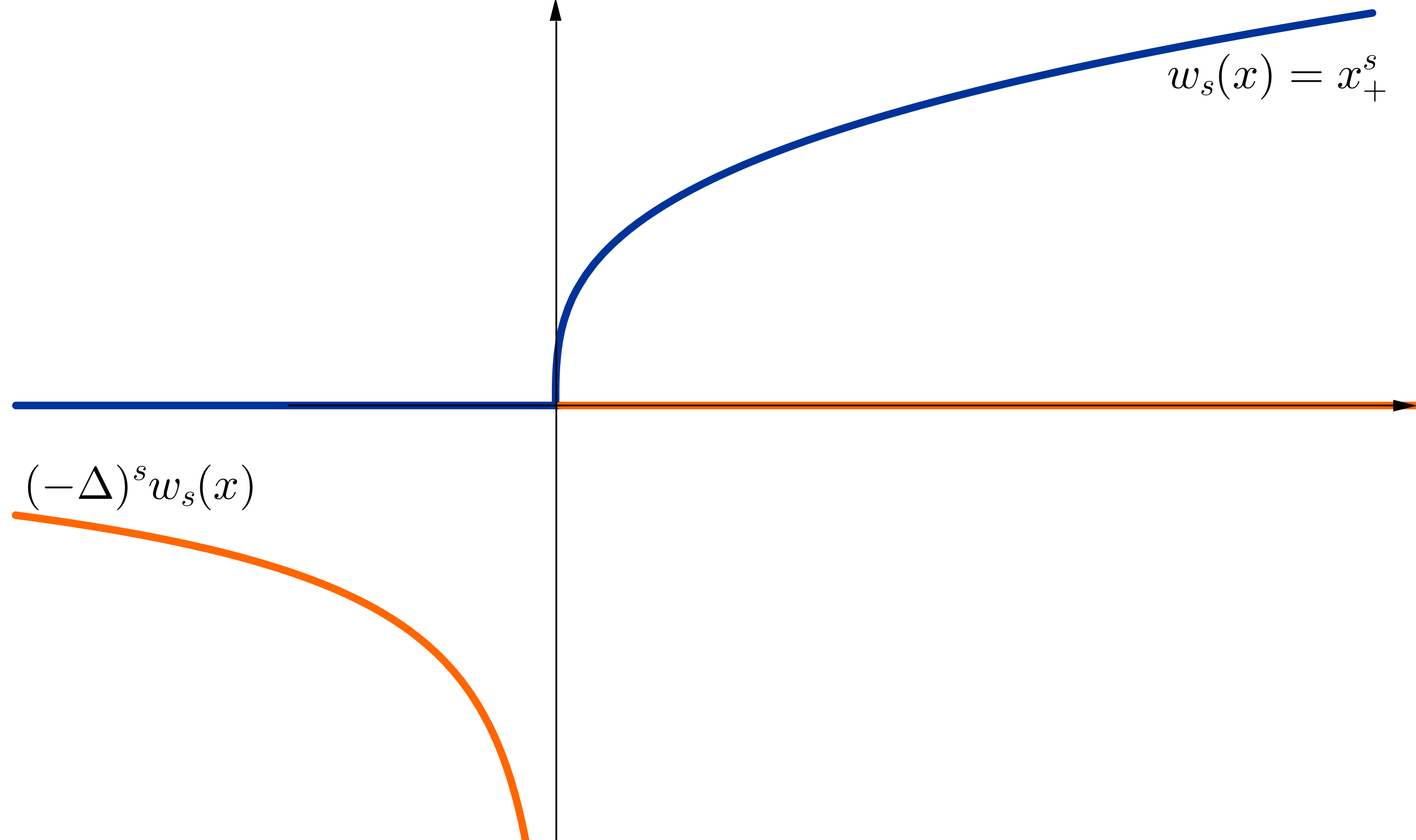}
	\caption{An $s$-harmonic function}   
	\label{fign:sh}
	\end{minipage}
\end{figure}
\end{center}
At a first glance, it may be quite surprising that the function~$x_+^s$ is $s$-harmonic in~$(0,+\infty)$, since such function is not smooth (but only continuous) uniformly up to the boundary, so let us try to give some heuristic explanations for it. 

We try to understand why the function~$x_+^s$ is $s$-harmonic in, say, the interval~$(0,1)$ when~$s\in(0,1]$. From the discussion in Section~\ref{pym}, we know that the $s$-harmonic function in~$(0,1)$ that agrees with~$x_+^s$ outside~$(0,1)$ coincides with the expected value of a payoff, subject to a random walk (the random walk is classical when~$s=1$ and it presents jumps when~$s\in(0,1)$). If~$s=1$ and we start from the middle of the interval, we have the same probability of being moved randomly to the left and to the right. This means that we have the same probability of exiting the interval~$(0,1)$ to the right (and so ending the process at~$x=1$, which gives~$1$ as payoff) or to the left~(and so ending the process at~$x=0$, which gives~$0$ as payoff). Therefore the expected value starting at~$x=1/2$ is exactly the average between~$0$ and~$1$, which is~$1/2$. Similarly, if we start the process at the point~$x=1/4$, we have the same probability of reaching the point~$0$ on the left and the point~$1/2$ to the right. Since we know that the payoff at~$x=0$ is~$0$ and the expected value of the payoff at~$x=1/2$ is~$1/2$, we deduce in this case that the expected value for the process starting at~$1/4$ is the average between~$0$ and~$1/2$, that is exactly~$1/4$. We can repeat this argument over and over, and obtain the (rather obvious)
fact that the linear function is indeed harmonic in the classical sense.

The argument above, which seems either
trivial or unnecessarily complicated in the classical case,
can be adapted when~$s\in(0,1)$ and it can give a qualitative picture of the corresponding~$s$-harmonic function. Let us start again the random walk, this time with jumps, at~$x=1/2$: in presence of jumps, we have the same probability of reaching the left interval~$(-\infty,0]$ and the right interval~$[1,+\infty)$. Now, the payoff at~$(-\infty,0]$ is~$0$, while the payoff at~$[1,+\infty)$ is {\em bigger} than~$1$. This implies that the expected value at~$x=1/2$ is the average between~$0$ and something bigger than~$1$, which produces a value larger than~$1/2$. When repeating this argument over and over, we obtain a concavity property enjoyed by the~$s$-harmonic functions in this case (the exact values prescribed in~$[1,+\infty)$ are not essential here, it would be enough that these values were monotone increasing and larger than~$1$).
\begin{center}
\begin{figure}[htpb]
	\hspace{0.8cm}
	\begin{minipage}[b]{1.00\linewidth}
	\centering
	\includegraphics[width=1.00\textwidth]{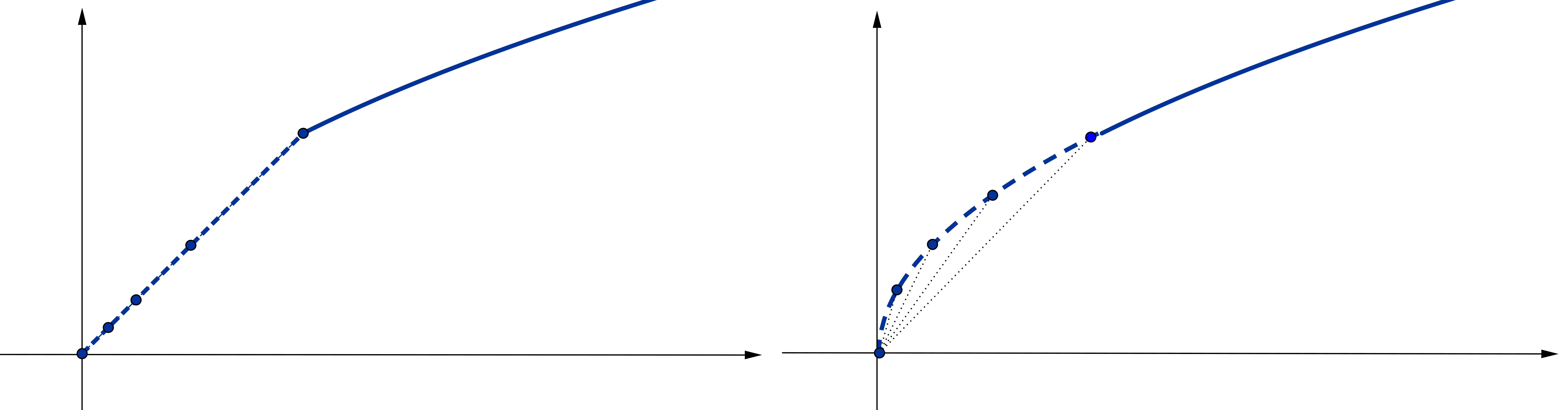}
	\caption{A payoff model: case $ s=1$ and $s \in (0,1)$}   
	\label{fign:py}
	\end{minipage}
\end{figure}
\end{center}

In a sense, therefore, this concavity properties and loss of Lipschitz regularity near the minimal boundary values is typical of nonlocal diffusion and it is due to the possibility of ``reaching far away points'', which may increase the expected payoff.

\bigskip

Now we present a complete, elementary proof
of Theorem \ref{G1}. The proof originated from some pleasant discussions
with Fernando Soria and it is based on some rather surprising
integral cancellations.
The reader who wishes to skip this proof
can go directly to Section~\ref{S:PGRAG}
on page~\pageref{PGRAG}. Moreover, a shorter, but
technically more advanced proof,
is presented in Appendix~\ref{1.APP-APP-TH}.

Here, we start with
some preliminary computations.  

\begin{lemma}\label{L1}
For any $s\in(0,1)$
\[ \int_0^{1} \frac{(1+t)^s+(1-t)^s-2}{t^{1+2s}}\,dt+ \int_1^{+\infty}\frac{(1+t)^s}{t^{1+2s}}\, dt = \frac{1}{s}.\]
\end{lemma}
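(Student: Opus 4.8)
The plan is to reduce the claimed identity to the value of a single elementary integral by combining the two pieces into one and exploiting the scaling structure of the kernel $t^{-1-2s}$. First I would observe that the integrand in the first integral, namely $\frac{(1+t)^s+(1-t)^s-2}{t^{1+2s}}$, is integrable near $t=0$: by Taylor expansion $(1+t)^s+(1-t)^s-2 = s(s-1)t^2 + O(t^4)$ as $t\to 0$, so the singularity $t^{-1-2s}$ is tamed down to $t^{1-2s}$, which is integrable for $s<1$. Near $t=1$ the term $(1-t)^s$ is continuous (with value $0$), so there is no issue there either, and the second integral converges at infinity since $(1+t)^s t^{-1-2s}\sim t^{-1-s}$.

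Next I would rewrite the left-hand side by splitting the first integral into the part with $(1+t)^s$, the part with $(1-t)^s$, and the part with $-2$, being careful about convergence: since $(1+t)^s t^{-1-2s}$ behaves like $t^{-1-s}$ at infinity it is separately integrable on $(0,\infty)$, but $(1-t)^s t^{-1-2s}$ and $2 t^{-1-2s}$ are not integrable near $0$, so these two must be kept together or regularized with a parameter $\eee$. I would therefore write, for $\eee\in(0,1)$,
\[
\int_\eee^1 \frac{(1-t)^s - 1}{t^{1+2s}}\,dt - \int_\eee^1 \frac{1}{t^{1+2s}}\,dt + \int_\eee^1\frac{(1+t)^s}{t^{1+2s}}\,dt + \int_1^{+\infty}\frac{(1+t)^s}{t^{1+2s}}\,dt,
\]
combine the last two into $\int_\eee^{+\infty}(1+t)^s t^{-1-2s}\,dt$, compute $\int_\eee^1 t^{-1-2s}\,dt = \frac{\eee^{-2s}-1}{2s}$ explicitly, and handle $\int_\eee^1\big((1-t)^s-1\big)t^{-1-2s}\,dt$, whose divergent part as $\eee\to 0$ is $-\frac{\eee^{-2s}}{2s}+O(1)$, so that the two divergent contributions cancel. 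The remaining task is to evaluate $\int_0^{+\infty}(1+t)^s t^{-1-2s}\,dt$ together with the finite remainders; the substitution $t=\frac{u}{1-u}$ (equivalently $u=\frac{t}{1+t}$) turns $\int_0^{+\infty}(1+t)^s t^{-1-2s}\,dt$ into a Beta integral $\int_0^1 u^{-2s}(1-u)^{2s-s-1}\,du = B(1-2s,\,s)$, valid for $s<1/2$, and by analytic continuation in $s$ the resulting closed form extends to all $s\in(0,1)$.

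The main obstacle I expect is bookkeeping the cancellation of the $\eee^{-2s}$ singularities cleanly and assembling the Beta/Gamma values into exactly $\frac1s$ rather than merely a constant times it; in particular one must verify that the $O(1)$ remainder from $\int_\eee^1\big((1-t)^s-1\big)t^{-1-2s}\,dt$ combines with $B(1-2s,s)=\frac{\Gamma(1-2s)\Gamma(s)}{\Gamma(1-s)}$ and with the elementary pieces to collapse — via the reflection formula $\Gamma(s)\Gamma(1-s)=\pi/\sin(\pi s)$ and the duplication-type simplifications — to the single term $\frac1s$. An alternative, perhaps cleaner, route that avoids the $\eee$-regularization altogether is to integrate by parts in the first integral: write $t^{-1-2s}\,dt = -\frac{1}{2s}\,d(t^{-2s})$, so that
\[
\int_0^1\frac{(1+t)^s+(1-t)^s-2}{t^{1+2s}}\,dt = -\frac{1}{2s}\Big[t^{-2s}\big((1+t)^s+(1-t)^s-2\big)\Big]_0^1 + \frac{1}{2}\int_0^1 t^{-2s}\big((1+t)^{s-1}-(1-t)^{s-1}\big)\,dt,
\]
where the boundary term at $0$ vanishes thanks to the quadratic vanishing of the numerator and the one at $1$ gives $-\frac{1}{2s}(2^s-2)$; a parallel integration by parts on $\int_1^{+\infty}(1+t)^s t^{-1-2s}\,dt$ produces a boundary term $\frac{1}{2s}2^s$ and an integral $\frac12\int_1^{+\infty}t^{-2s}(1+t)^{s-1}\,dt$. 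The two boundary contributions sum to $\frac1s$, so it remains only to check that the leftover integrals $\frac12\int_0^1 t^{-2s}\big((1+t)^{s-1}-(1-t)^{s-1}\big)\,dt + \frac12\int_1^{+\infty}t^{-2s}(1+t)^{s-1}\,dt$ cancel, which follows from the substitution $t\mapsto 1/t$ in the $(1+t)^{s-1}$ pieces together with the observation that $\int_0^1 t^{-2s}(1-t)^{s-1}\,dt$ matches the transformed $(1+t)^{s-1}$ integral after a further change of variables. I would pursue this integration-by-parts approach first, since it isolates the answer $\frac1s$ as a pure boundary term and leaves a manifestly cancelling remainder.
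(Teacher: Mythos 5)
Your integration-by-parts route is essentially the paper's own proof: both pair $t^{-1-2s}\,dt$ with $-\frac{1}{2s}\,d\big(t^{-2s}\big)$, extract $\frac1s$ from the boundary contributions, and are left with a remainder that must vanish; the paper simply presents it asymmetrically (regularizing from $\eee>0$, integrating by parts on the first integral only, changing variable, and then re-integrating by parts on $\int_1^\infty(1+t)^{s-1}t^{-2s}\,dt$ to recover the second term plus its boundary value $\frac{2^s}{2s}$), while you propose doing both integrations by parts at once and then exhibiting the cancellation.

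Your symmetric presentation is conceptually cleaner, but your description of the cancellation step is too vague and, taken literally, would fail. For $s\ge\frac12$ you cannot split $\int_0^1 t^{-2s}\big[(1+t)^{s-1}-(1-t)^{s-1}\big]\,dt$ into the two integrals you name, because each of $\int_0^1 t^{-2s}(1+t)^{s-1}\,dt$ and $\int_0^1 t^{-2s}(1-t)^{s-1}\,dt$ diverges at $t=0$; the difference must be kept intact or regularized with a lower limit $\eee>0$, exactly as the paper does. Furthermore, the change of variable that actually closes the loop is the M\"obius map $\tilde t=t/(1-t)$ applied to the $(1-t)^{s-1}$ piece: one checks $(1-t)^{s-1}t^{-2s}\,dt=(1+\tilde t)^{s-1}\tilde t^{-2s}\,d\tilde t$ and that $(\eee,1)$ maps to $\big(\eee/(1-\eee),\infty\big)$, after which the three remainder pieces telescope to $\int_{\eee}^{\eee/(1-\eee)}(1+t)^{s-1}t^{-2s}\,dt\to0$. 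Your proposed $t\mapsto 1/t$ turns $\int_1^\infty(1+t)^{s-1}t^{-2s}\,dt$ into $\int_0^1 u^{s-1}(1+u)^{s-1}\,du$, which does not match any of the other remainders without further work; pinning down the unspecified "further change of variables" and handling the singularity at $0$ would lead you back to the paper's argument.
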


\begin{proof} Fixed $\eee>0$, we integrate by parts:
	\eqlab{ \label{6f7gg}		
	\al \int_\eee^{1}  \frac{(1+t)^s+(1-t)^s-2}{t^{1+2s}}\,dt \;\\\ =\al  -\frac{1}{2s} \int_\eee^{1} \Big[ (1+t)^s+(1-t)^s-2\Big] \frac{d}{dt} t^{-2s}\,dt \\
	 =&\;\frac{1}{2s} \bigg[\frac{ (1+\eee)^s+(1-\eee)^s-2 }{ \eee^{2s} } -2^s+2\bigg]
	+\frac{1}{2} \int_\eee^{1} \frac{ (1+t)^{s-1}-(1-t)^{s-1}}{ t^{2s}} \,dt \\ 
	 =&\;\frac{1}{2s} \left[ o(1) -2^s+2\right]+\frac{1}{2}\bigg( \int_\eee^{1} (1+t)^{s-1} t^{-2s}\,dt
 	-\int_\eee^{1}(1-t)^{s-1} t^{-2s}\,dt\bigg), }
		with~$o(1)$ infinitesimal as~$\eee\searrow0$.
Moreover, by changing variable~$\tilde t:=t/(1-t)$, that is~$t:=\tilde t/(1+\tilde t)$,
we have that
\begin{equation*}
	\int_\eee^{1} (1-t)^{s-1} t^{-2s}\,dt
= \int_{\eee/(1-\eee)}^{+\infty} (1+\tilde t)^{s-1}\tilde t^{-2s}\,d\tilde t.
\end{equation*}
Inserting this into \eqref{6f7gg} (and writing $t$ instead of~$\tilde t$ as variable of integration), we obtain
	\begin{equation}\label{98-1}
		\begin{split}
		&\int_\eee^{1} \frac{(1+t)^s+(1-t)^s-2}{t^{1+2s}}\,dt \\
		&\quad=\frac{1}{2s} \big[o(1)	-2^s+2\big]+\frac{1}{2}\bigg[\int_\eee^{1} (1+t)^{s-1} t^{-2s}\,dt	- \int_{\eee/(1-\eee)}^{+\infty}(1+t)^{s-1}t^{-2s}\,dt\bigg]\\
		&\quad=	\frac{1}{2s} \big[ o(1)	-2^s+2\big]+\frac{1}{2}\bigg[	\int_\eee^{\eee/(1-\eee)} (1+t)^{s-1} t^{-2s}\,dt- \int_1^{+\infty}(1+t)^{s-1}t^{-2s}\,dt\bigg] .\end{split}\end{equation}
Now we remark that
	\[ \int_\eee^{\eee/(1-\eee)} (1+t)^{s-1} t^{-2s}\,dt\le  \int_\eee^{\eee/(1-\eee)} (1+\eee)^{s-1} \eee^{-2s}\,dt
	=\eee^{2-2s}(1-\eee)^{-1}(1+\eee)^{s-1} ,\]
therefore
	\[\lim_{\eee\searrow0} \int_\eee^{\eee/(1-\eee)} (1+t)^{s-1} t^{-2s}\,dt =0.\]
So, by passing to the limit in~\eqref{98-1}, we get
	\begin{equation}\label{98c}
		\int_0^{1} \frac{(1+t)^s+(1-t)^s-2}{t^{1+2s}}\,dt=
		\frac{-2^s+2}{2s} -\frac{1}{2}
		\int_1^{+\infty} (1+t)^{s-1}t^{-2s}\,dt.
	\end{equation}
Now, integrating by parts we see that
	\begin{eqnarray*}
		 \frac{1}{2}\int_1^{+\infty} (1+t)^{s-1}t^{-2s}\,dt
	&=& \frac{1}{2s}\int_1^{+\infty} t^{-2s} \frac{d}{dt}(1+t)^s\,dt
		\\ &=&-\frac{2^s}{2s}+
		\int_1^{+\infty} t^{-1-2s} (1+t)^s\,dt.
	\end{eqnarray*}
By plugging this into~\eqref{98c} we obtain that
\[\int_0^{1} \frac{(1+t)^s+(1-t)^s-2}{t^{1+2s}}\,dt=
\frac{-2^s+2}{2s}+
\frac{2^s}{2s}-
\int_1^{+\infty} t^{-1-2s} (1+t)^s\,dt , \]
which gives the desired result.
\end{proof}

 {F}rom Lemma~\ref{L1} we deduce the following (somehow unexpected)
cancellation property:

\begin{cor}\label{L2}
Let~$w_s$ be as in the statement of Theorem~\ref{G1}. Then
$$ (-\Delta)^s w_s (1) = 0.$$
\end{cor}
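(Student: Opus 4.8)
The plan is to compute $(-\Delta)^s w_s(1)$ directly from the symmetric form of the fractional Laplacian in \eqref{frlap2def}, specialized to dimension $n=1$, and to show that the resulting integral is precisely the combination that Lemma~\ref{L1} evaluates to zero. Concretely, since $w_s(1)=1$, $w_s(1+y)=(1+y)^s_+$ and $w_s(1-y)=(1-y)^s_+$, we have
\[
(-\Delta)^s w_s(1)=\frac{C(1,s)}{2}\int_{\R}\frac{2-(1+y)^s_+-(1-y)^s_+}{|y|^{1+2s}}\,dy.
\]
First I would fold the integral over $\R$ into an integral over $(0,+\infty)$ using the symmetry $y\mapsto -y$ of the measure $|y|^{-1-2s}\,dy$ together with the fact that the numerator, as a function of $y$, is already even (swapping $y$ and $-y$ interchanges the two terms $(1+y)^s_+$ and $(1-y)^s_+$). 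This gives $(-\Delta)^s w_s(1)=C(1,s)\int_0^{+\infty}\frac{2-(1+t)^s-(1-t)^s_+}{t^{1+2s}}\,dt$.

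Next I would split this integral at $t=1$, the point where the positive part in $(1-t)^s_+$ switches off. On $(0,1)$ we have $(1-t)^s_+=(1-t)^s$, so the integrand is $\frac{2-(1+t)^s-(1-t)^s}{t^{1+2s}}=-\frac{(1+t)^s+(1-t)^s-2}{t^{1+2s}}$. On $(1,+\infty)$ we have $(1-t)^s_+=0$, so the integrand is $\frac{2-(1+t)^s}{t^{1+2s}}$; here the $\frac{2}{t^{1+2s}}$ piece integrates to $\frac{1}{s}$ over $(1,+\infty)$. Collecting terms,
\[
(-\Delta)^s w_s(1)=C(1,s)\left(-\int_0^1\frac{(1+t)^s+(1-t)^s-2}{t^{1+2s}}\,dt-\int_1^{+\infty}\frac{(1+t)^s}{t^{1+2s}}\,dt+\frac{1}{s}\right),
\]
and the quantity in parentheses vanishes by Lemma~\ref{L1}. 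A small point to check along the way is that the integral near $t=0$ converges (so that the split is legitimate): this follows from the Taylor expansion $(1+t)^s+(1-t)^s-2=s(s-1)t^2+O(t^4)$, making the integrand $O(t^{1-2s})$, which is integrable on $(0,1)$ for $s\in(0,1)$; similarly the tail behaves like $t^{s-1-2s}=t^{-1-s}$, which is integrable at infinity.

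I do not anticipate a serious obstacle here, since the heavy lifting—the nonobvious cancellation—has already been packaged into Lemma~\ref{L1}; the corollary is essentially a matter of correctly assembling the pieces and tracking the switch-off of the positive part at $t=1$. The one place to be careful is the bookkeeping of signs and of the $\frac{1}{s}$ term that arises from $\int_1^{+\infty}2\,t^{-1-2s}\,dt$, making sure the final bracket matches the left-hand side of Lemma~\ref{L1} exactly. The constant $C(1,s)>0$ (equivalently $c_s$ in the statement) plays no role in the vanishing and can simply be carried along.
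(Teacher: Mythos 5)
Your argument is correct and is essentially the paper's own: both reduce $(-\Delta)^s w_s(1)$ to the exact combination of integrals that Lemma~\ref{L1} evaluates, with the $\tfrac1s$ term coming from $\int_1^{+\infty}2t^{-1-2s}\,dt$. The only cosmetic difference is that you fold to $(0,+\infty)$ first by observing the numerator $2-(1+y)_+^s-(1-y)_+^s$ is already even and then split at $t=1$, whereas the paper splits $\R$ into $(-\infty,-1)$, $(-1,1)$, $(1,+\infty)$ and uses a change of variable $\tilde t=-t$ on the left piece before recombining; the bookkeeping and the key cancellation are the same.
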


\begin{proof} The function~$t\mapsto (1+t)^s+(1-t)^s-2$
is even, therefore
$$ \int_{-1}^1 \frac{(1+t)^s+(1-t)^s-2}{|t|^{1+2s}}\,dt=2
\int_{0}^1 \frac{(1+t)^s+(1-t)^s-2}{t^{1+2s}}\,dt.$$
Moreover, by changing variable~$\tilde t:=-t$, we have that
$$ \int_{-\infty}^{-1}\frac{(1-t)^s-2}{|t|^{1+2s}}\,dt
=\int_1^{+\infty}\frac{(1+\tilde t)^s-2}{\tilde t^{1+2s}}\,d\tilde t.$$
Therefore
\begin{eqnarray*}
&& \int_{-\infty}^{+\infty}\frac{w_s(1+t)+w_s(1-t)-2w_s(1)}{|t|^{1+2s}}\,dt\\
&=& 
\int_{-\infty}^{-1}\frac{(1-t)^s-2}{|t|^{1+2s}}\,dt
+
\int_{-1}^{1}\frac{(1+t)^s+(1-t)^s-2}{|t|^{1+2s}}\,dt
+
\int_{1}^{+\infty}\frac{(1+t)^s-2}{|t|^{1+2s}}\,dt
\\ &=&
2\int_{0}^{1}\frac{(1+t)^s+(1-t)^s-2}{t^{1+2s}}\,dt
+2\int_{1}^{+\infty}\frac{(1+t)^s-2}{t^{1+2s}}\,dt
\\ &=&
2\left[
\int_{0}^{1}\frac{(1+t)^s+(1-t)^s-2}{t^{1+2s}}\,dt
+\int_{1}^{+\infty}\frac{(1+t)^s}{t^{1+2s}}\,dt
-2\int_{1}^{+\infty}\frac{dt}{t^{1+2s}}
\right]
\\ &=&
2\left[\frac1s-2\int_{1}^{+\infty}\frac{dt}{t^{1+2s}}\right],
\end{eqnarray*}
where Lemma~\ref{L1} was used in the last line.
Since
$$ \int_{1}^{+\infty}\frac{dt}{t^{1+2s}} = \frac1{2s},$$
we obtain that
$$ \int_{-\infty}^{+\infty}\frac{w_s(1+t)+w_s(1-t)-2w_s(1)}{|t|^{1+2s}}\,dt=0,$$
that proves the desired claim.
\end{proof}

The counterpart of Corollary~\ref{L2} is given by the following
simple observation:

\begin{lemma}\label{L3}
Let~$w_s$ be as in the statement of Theorem~\ref{G1}. Then
$$ -(-\Delta)^s w_s (-1) > 0.$$
\end{lemma}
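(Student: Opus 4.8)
The plan is to compute the defining singular integral of $(-\Delta)^s w_s$ at the point $x=-1$, exploit that $w_s(-1)=0$, and thereby reduce the claim to the positivity of the integral of a nonnegative function that is not identically zero.

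First I would use~\eqref{frlap2def} in dimension $n=1$ together with $w_s(-1)=\max\{-1,0\}^s=0$ to write
\[
(-\Delta)^s w_s(-1)=\frac{C(1,s)}{2}\int_{\R}\frac{-w_s(-1+t)-w_s(-1-t)}{|t|^{1+2s}}\,dt,
\]
so that, since $C(1,s)>0$,
\[
-(-\Delta)^s w_s(-1)=\frac{C(1,s)}{2}\int_{\R}\frac{w_s(-1+t)+w_s(-1-t)}{|t|^{1+2s}}\,dt .
\]
The next step is to check that the integral on the right is finite. There is in fact no singularity at $t=0$: if $|t|<1$ then $-1+t<0$ and $-1-t<0$, hence $w_s(-1+t)=w_s(-1-t)=0$ and the integrand vanishes identically on $(-1,1)$. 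For $|t|\ge1$ one has $w_s(-1\pm t)\le(1+|t|)^s$, so the integrand is bounded by $2(1+|t|)^s|t|^{-1-2s}=O(|t|^{-1-s})$, which is integrable at infinity because $s>0$. (This bound also justifies applying~\eqref{frlap2def} at $x=-1$: there $w_s$ is smooth in a neighborhood of $-1$ and grows at most like $|x|^s$ at infinity, so the singular integral converges.)

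Finally I would observe that $w_s(-1+t)+w_s(-1-t)\ge0$ for every $t\in\R$, and that it is strictly positive on $(1,+\infty)$, where $w_s(-1+t)=(t-1)^s>0$. Hence the integral above is a strictly positive real number, which yields $-(-\Delta)^s w_s(-1)>0$, as claimed. The argument has essentially no hard step; the only point that deserves a line of care is that $w_s\notin L^\infty(\R)$, so the absolute-convergence estimate used right after~\eqref{frlapdef} does not apply verbatim — this is replaced by the explicit tail bound $w_s(-1\pm t)\le(1+|t|)^s$ together with the exact vanishing of the numerator for $|t|<1$.
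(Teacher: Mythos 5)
Your proof is correct and takes essentially the same route as the paper: both use~\eqref{frlap2def} at $x=-1$, note $w_s(-1)=0$, and conclude from the fact that $w_s(-1+t)+w_s(-1-t)=(-1+t)_+^s+(-1-t)_+^s$ is nonnegative and not identically zero. You merely supply the convergence check (vanishing on $|t|<1$ and the $|t|^{-1-s}$ tail bound) explicitly, which the paper leaves implicit.
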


\begin{proof} We have that
$$ w_s(-1+t)+w_s(-1-t)-2w_s(-1)=(-1+t)_+^s+(-1-t)_+^s\ge0$$
and not identically zero, which implies the desired result.
\end{proof}

We have now all the elements to proceed to the proof of Theorem~\ref{G1}.
\begin{proof} [Proof of Theorem~\ref{G1}]

We let~$\sigma\in\{+1,-1\}$ denote the sign of a fixed $x\in\R\setminus\{0\}$.
We claim that
\begin{equation}\label{Si}\begin{split}
&\int_{-\infty}^{+\infty}
\frac{w_s(\sigma(1+t))+w_s(\sigma(1-t))-2w_s(\sigma)}{ |t|^{1+2s}}\,dt
\\=\;&\int_{-\infty}^{+\infty}
\frac{w_s(\sigma+t)+w_s(\sigma-t)-2w_s(\sigma)}{ |t|^{1+2s}}\,dt.
\end{split}\end{equation}
Indeed, the formula above is obvious when~$x>0$ (i.e. $\sigma=1$),
so we suppose~$x<0$ (i.e. $\sigma=-1$) and we change variable~$\tau:=-t$,
to see that, in this case,
\begin{equation*}
	\begin{split}
&\int_{-\infty}^{+\infty}
\frac{w_s(\sigma(1+t))+w_s(\sigma(1-t))-2w_s(\sigma)}{ |t|^{1+2s}}\,dt\\ =\; &
\int_{-\infty}^{+\infty}
\frac{w_s(-1-t)+w_s(-1+t)-2w_s(\sigma)}{ |t|^{1+2s}}\,dt \\
 =\; &\int_{-\infty}^{+\infty}
\frac{w_s(-1+\tau)+w_s(-1-\tau)-2w_s(\sigma)}{ |\tau|^{1+2s}}\,d\tau
\\=\; &\int_{-\infty}^{+\infty}
\frac{w_s(\sigma+\tau)+w_s(\sigma-\tau)-2w_s(\sigma)}{ |\tau|^{1+2s}}\,d\tau
,\end{split}
	\end{equation*}
thus checking~\eqref{Si}.

Now we observe that, for any~$r\in\R$,
$$ w_s(|x|r)=(|x|r)_+^s =|x|^s r_+^s =|x|^s w_s(r).$$
That is
$$ w_s(xr)=w_s(\sigma |x| r)=|x|^sw_s(\sigma r).$$ 
So we change variable~$y=tx$ and we obtain that
\begin{equation*}
	\begin{split}
& \int_{-\infty}^{+\infty}
\frac{w_s(x+y)+w_s(x-y)-2w_s(x)}{|y|^{1+2s}}\,dy\\=\; &
\int_{-\infty}^{+\infty}
\frac{w_s(x(1+t))+w_s(x(1-t))-2w_s(x)}{|x|^{2s} |t|^{1+2s}}\,dt
\\=\; &|x|^{-s} \int_{-\infty}^{+\infty}
\frac{w_s(\sigma(1+t))+w_s(\sigma(1-t))-2w_s(\sigma)}{ |t|^{1+2s}}\,dt
\\=\; & |x|^{-s} \int_{-\infty}^{+\infty}
\frac{w_s(\sigma+t)+w_s(\sigma-t)-2w_s(\sigma)}{ |t|^{1+2s}}\,dt,
\end{split}
	\end{equation*}
where~\eqref{Si} was used in the last line.
This says that
$$ (-\Delta)^s w_s(x)
= \left\{\begin{matrix}
|x|^{-s} \,(-\Delta)^s w_s(-1)& {\mbox{ if $x<0$,}}\\
|x|^{-s}\,(-\Delta)^s w_s(1)&{\mbox{ if $x>0$,}}\end{matrix}
\right.$$
hence the result in Theorem~\ref{G1} follows from Corollary~\ref{L2}
and Lemma~\ref{L3}.
\end{proof}

\section[All functions are locally $s$-harmonic up to a small error]{All functions are locally $s$-harmonic up to a small error } \label{afsh}

Here we will show
that $s$-harmonic functions can locally approximate any given function, without any geometric constraints.
This fact is rather surprising and it
is a purely nonlocal feature, in the sense that it has no classical
counterpart.
Indeed, in the classical setting, harmonic functions are quite rigid, for instance they cannot have a strict local maximum, and therefore cannot approximate a function with a strict local maximum. The nonlocal picture is, conversely, completely different, as the oscillation of a function ``from far'' can make the function locally harmonic, almost independently from its local behavior.

We want to give here some
hints on the proof of this approximation result:

\begin{thm}\label{ALL FUNCTIONS}
Let $k\in \N$ be fixed. Then for any $f\in C^k(\overline{B_1})$ and any $\eee>0$ there exists $R>0$ and $u\in H^s(\Rn)\cap C^s(\Rn)$ such that
	\begin{equation}\label{cc1} \begin{cases} \frlap u(x)= 0 \quad &\mbox{ in }  B_1\\
					u=0 \quad &\mbox{ in }  \Rn \setminus  B_R	
	\end{cases}\end{equation}
and \[ \|f-u\|_{C^k(\overline{B_1})} \leq \eee.\]					
\end{thm}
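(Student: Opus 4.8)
The plan is to build the approximating $s$-harmonic function in two stages. First I would reduce the problem to a statement about polynomials: since $f\in C^k(\overline{B_1})$, by Taylor's theorem it is uniformly approximated on $\overline{B_1}$, together with its derivatives up to order $k$, by a polynomial $P$; so it suffices to approximate polynomials in the $C^k(\overline{B_1})$ norm by functions that are $s$-harmonic in $B_1$ and compactly supported. Hence I reduce to showing: for every polynomial $P$ and every $\eee>0$ there is $R>0$ and $u\in H^s(\Rn)\cap C^s(\Rn)$ with $\frlap u=0$ in $B_1$, $u\equiv 0$ outside $B_R$, and $\|P-u\|_{C^k(\overline{B_1})}\le\eee$.

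The core mechanism I would exploit is the $s$-harmonic function $w_s(x)=x_+^s$ from Theorem~\ref{G1}, together with its translates, dilations, and (in $\Rn$) tensorized analogues $(\text{e}_j\cdot x - b)_+^s$, which are $s$-harmonic on the appropriate half-spaces. The key algebraic fact to establish is that the span of such building blocks, evaluated near the origin, contains all polynomials up to an arbitrarily high order: expanding $(t-b)_+^s$ for large $b$ gives, modulo a normalizing constant, $b^s\big(1 - s t/b + \tfrac{s(s-1)}{2}(t/b)^2 - \cdots\big)$, so finite linear combinations with suitably chosen coefficients and shifts $b$ reproduce $1, t, t^2,\dots,t^m$ on a neighborhood of the origin up to an error controlled by the next Taylor term. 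Rescaling so that this neighborhood contains $B_1$ and iterating in each coordinate direction, one obtains a function $v$ that is $s$-harmonic in $B_1$ (as a finite sum of functions each $s$-harmonic there) and satisfies $\|P-v\|_{C^k(\overline{B_1})}\le\eee/2$.

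The remaining step is to cut off $v$ far away so as to make it compactly supported without spoiling $s$-harmonicity in $B_1$. Here I would invoke a standard perturbation/capacity argument: take a smooth truncation $\chi$ that equals $1$ on a huge ball $B_\rho$ and vanishes outside $B_{2\rho}$, and correct $\chi v$ by solving the exterior Dirichlet problem $\frlap \psi = -\frlap(\chi v)$ in $B_1$ with $\psi=0$ outside $B_R$ (for $R=2\rho$); since $\frlap(\chi v)=\frlap v - \frlap((1-\chi)v)=-\frlap((1-\chi)v)$ in $B_1$, and $(1-\chi)v$ is supported at distance $\ge\rho$ from $B_1$, the right-hand side is $O(\rho^{-n-2s+\deg v})$ in $C^k(\overline{B_1})$, hence can be made $\le\eee/2$ by choosing $\rho$ large. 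Setting $u:=\chi v+\psi$ then gives $\frlap u=0$ in $B_1$, $u\equiv0$ outside $B_R$, $u\in H^s\cap C^s$, and $\|f-u\|_{C^k(\overline{B_1})}\le\eee$.

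The main obstacle I anticipate is the purely algebraic heart of the argument: proving that linear combinations of translated/dilated copies of $w_s$ really do span all polynomials locally, with quantitative $C^k$ control of the remainder. This is a Vandermonde-type nondegeneracy statement — one must choose the shifts $b_1,\dots,b_N$ and coefficients so that the leading Taylor coefficients match $P$ while higher-order corrections are small, which requires checking invertibility of a suitable matrix built from the $b_j^{s-\ell}$ and then taking $b_j\to\infty$ appropriately. Once this is in hand, the Taylor reduction and the far-away cutoff are comparatively routine, relying only on interior regularity estimates for $\frlap$ and the decay of the kernel $|y|^{-n-2s}$.
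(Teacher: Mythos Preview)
Your strategy uses the same key ingredient as the paper---the function $x_+^s$ and the fact that, since $s\notin\Z$, the products $s(s-1)\cdots(s-j+1)$ never vanish---but the organization is different. The paper first reduces to monomials (via Stone--Weierstrass and linearity, not Taylor), then runs an abstract linear-algebra argument: the space of jets $\big(u(x),Du(x),\dots,D^m u(x)\big)$, as $(u,x)$ ranges over compactly supported locally $s$-harmonic pairs, must be all of $\R^{m+1}$, for otherwise some nonzero $c$ would satisfy $\sum_j c_j D^j u(x)=0$ for every such $u$, and testing with (a compactly supported modification of) $x_+^s$ yields the polynomial identity $\sum_j c_j\, s(s-1)\cdots(s-j+1)\,x^{m-j}=0$, a contradiction. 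This produces a $u$ with $D^j u(0)=\delta_{jm}$; then the blow-up $u_\lambda(x)=\lambda^{-m}u(\lambda x)=x^m+\lambda\,\mathcal O(x^{m+1})$ converges to $x^m$ in $C^k$ as $\lambda\to0$. Your Vandermonde construction is the constructive dual of the contradiction step---you invert the jet map directly rather than showing its orthogonal complement is trivial---but the paper's rescaling absorbs in one line the error control that in your version must be done by hand (tracking how the coefficients $c_j$ blow up with the shifts $b_j$, and balancing this against the $(m+1)$-st Taylor term).

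Two points need repair. First, in $\R^n$ with $n\ge2$, summing functions of the form $(e_j\cdot x-b)_+^s$ and ``iterating in each coordinate direction'' cannot produce mixed monomials such as $x_1x_2$: a finite sum of one-variable functions is never $x_1x_2$. You need either all directions $(v\cdot x-b)_+^s$ with $v\in S^{n-1}$ (then polarization recovers mixed terms), or to adopt the paper's jet argument, which handles all multi-indices simultaneously. Second, in your cutoff step the exponent ``$\deg v$'' is meaningless since $v$ is not a polynomial; the actual growth is $|x|^s$ times coefficients that may be large (they came from inverting a Vandermonde), and you must check that this does not defeat the $\rho^{-2s}$ gain from the kernel. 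The paper's sketch in fact glosses over this same compactification issue (``one can construct a compactly supported $s$-harmonic function with the same behavior near the origin''), deferring to \cite{DSV14}; your Dirichlet-correction idea is a reasonable way to make it explicit, once the growth bookkeeping is done correctly.
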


\begin{proof}[Sketch of the proof]
For the sake of convenience, we divide
the proof into three steps. Also, for simplicity, we give the sketch of the proof in the one-dimensional case. See \cite{DSV14} for the entire and more general proof.\\
\textbf{Step 1. Reducing to monomials}\\
Let $k\in \N$ be fixed. We use first of all the Stone-Weierstrass Theorem and we have that for any $\eee>0$ and any $f \in C^k \big ([0,1]\big)$ there exists a polynomial $P$ such that
\[ \|f-P\|_{C^k{(\overline{B_1})}} \leq \eee.\]
Hence it is enough to prove Theorem~\ref{ALL FUNCTIONS}
for polynomials. Then, by linearity, it is enough to prove it
for monomials. Indeed, if~$P(x)= \displaystyle \sum_{m=0}^N c_m x^{m} $
and one finds
an $s$-harmonic function  $u_m$ such that  
	\[  \|u_m - x^{m} \|_{C^k{(\overline{B_1})}} \leq \frac{\eee}{|c_m|}, \]
then by taking $u:=\displaystyle\sum_{m=1}^N c_m u_m $ we have that 
	\[ \| u-P\|_{C^k{(\overline{B_1})}} 
\leq \sum_{m=1}^N |c_m| \|u_m -x^{m} \|_{C^k{(\overline{B_1})}} \leq {\eee}.\] 
Notice that the function $u$ is still $s$-harmonic, since the fractional Laplacian is a linear operator. 
  \bigskip

\noindent \textbf{Step 2. Spanning the derivatives}\\ We prove
the existence of an $s$-harmonic function in $B_1$, vanishing outside a compact set and with arbitrarily large number of derivatives prescribed.
That is, we show that for any~$m\in\N$
there exist $R > r > 0$, a point~$x\in\R$ and a function~$u$
such that
\eqlab{ \label{SPA}
& (-\Delta)^s u=0 {\mbox{ in }} (x-r,x+r),\\
& u=0 {\mbox{ outside }} (x-R,x+R),}
and 
\eqlab{ \label{spnd} & D^j u(x)=0 {\mbox{ for any }} j\in\{0,\dots,m-1\},\\
\;&
D^m u(x)=1.} 
To prove this,
we argue by contradiction.

We consider $\mathcal Z$ to be the set of all pairs $(u,x)$ of $s$-harmonic functions in a neighborhood of~$x$,
and points $x\in \R$ satisfying \eqref{SPA}.
 To any pair, we associate the vector \[\big(u(x), Du(x), \dots, D^m u(x) \big) \in \R^{m+1}\] and take $V$ to be the vector space 
spanned by this construction, i.e.
$$ V:= \Big\{
\big(u(x), Du(x), \dots, D^m u(x) \big),\;{\mbox{ for }}(u,x)\in {\mathcal Z}
\Big\}.$$
Notice indeed that
\begin{equation}\label{LIy}
{\mbox{$V$ is a linear space.}}\end{equation}
Indeed, let~$V_1$, $V_2\in V$ and~$a_1$, $a_2\in\R$.
Then, for any~$i\in\{1,2\}$,
we have that~$V_i = \big(u_i(x_i), Du_i(x_i), \dots, D^m u_i(x_i) \big)$,
for some~$(u_i,x_i)\in {\mathcal Z}$, i.e.~$u_i$ is $s$-harmonic
in~$(x_i-r_i,x_i+r_i)$ and vanishes outside~$(x_i-R_i,x_i+R_i)$, for some~$R_i\ge r_i>0$.
We set
$$ u_3(x):= a_1 u_1(x+x_1)+a_2u_2(x+x_2).$$
By construction, $u_3$ is $s$-harmonic in~$(-r_3,r_3)$,
and it vanishes outside~$(-R_3,R_3)$,
with~$r_3:=\min\{r_1,r_2\}$
and~$R_3:=\max\{R_1,R_2\}$, therefore~$(u_3,0)\in {\mathcal Z}$.
Moreover
$$ D^j u_3(x)= a_1 D^j u_1(x+x_1)+a_2D^ju_2(x+x_2)$$
and thus
\begin{eqnarray*}&&
a_1 V_1+a_2 V_2 \\&=& a_1\big(u_1(x_1), Du_1(x_1), \dots, D^m u_1(x_1) \big)
+a_2 \big(u_2(x_2), Du_2(x_2), \dots, D^m u_2(x_2) \big)
\\ &=& \big(u_3(0), Du_3(0), \dots, D^m u_3(0) \big).
\end{eqnarray*}
This establishes~\eqref{LIy}.

Now, to complete the proof of Step 2, it is enough to show that
\begin{equation}\label{PaV}
V =\R^{m+1}.\end{equation}
Indeed, if~\eqref{PaV}
holds true, then in particular~$(0,\dots, 0,1)\in V$,
which is the desired claim in Step~2.

To prove~\eqref{PaV}, we argue by contradiction: if
not, by~\eqref{LIy}, we have that~$V$ is a proper subspace
of~$\R^{m+1}$ and so it lies in a hyperplane.

Hence there exists a vector $c=(c_0, \dots, c_{m}) 
\in\R^{m+1}\setminus \{0\}
$ such that
$$ V\subseteq \left\{ \zeta\in\R^{m+1} {\mbox{ s.t. }} c\cdot\zeta=0\right\}.$$
That is, taking $(u,x)\in \mathcal Z$, the vector~$c=(c_0, \dots, c_{m})$ is
orthogonal to any vector $\big(u(x), Du(x), \dots, D^m u(x) \big)$, namely
\[ \sum_{j \leq m } c_j D^{j} u(x) =0.\] 
To find a contradiction, we now choose an appropriate
$s$-harmonic function~$u$
and we evaluate it at an appropriate point~$x$.
As a matter of fact,
a good candidate for the $s$-harmonic function is $x^s_+$,
as we know from Theorem~\ref{G1}:
strictly speaking, this function is not allowed
here, since it is not compactly supported,
but let us say that one can construct a compactly
supported $s$-harmonic function with the same 
behavior near the origin. With this slight caveat set aside,
we compute for a (possibly small)~$x$ in~$(0,1)$:
\[ D^{j} x^s=
s(s-1)\dots (s-j+1) x^{s-j} \] and multiplying the sum with $x^{m-s}$ (for $x\neq 0$) we have that 
	\[ \displaystyle \sum_{j\leq m} c_j  s(s-1)\dots (s-j+1) x^{m-j}  =0.\] But since $s\in (0,1)$ the product $ s(s-1)\dots (s-j+1)$ never vanishes. Hence the polynomial is identically null if and only if $c_j=0$ for any $j$, and we reach a contradiction.
This completes the proof of the existence of a function~$u$
that satisfies~\eqref{SPA} and~\eqref{spnd}.

\noindent \textbf{Step 3. Rescaling argument and completion of the proof}\\
By Step 2,
for any $m\in\N$ 
we are able to construct
a locally $s$-harmonic
function $u$ such that $u(x)=x^m +\mathcal{O}(x^{m+1})$ near the origin
(up to a translation).
By considering the blow-up 
\[ u_\lambda (x) = \frac{u(\lambda x) }{\lambda^m} = x^m +\lambda \mathcal{O}(x^{m+1})\]
 we have that for $\lambda$ small, $u_\lambda$ is arbitrarily close to the 
monomial $x^m$. As stated in Step 1, this concludes the proof
of Theorem~\ref{ALL FUNCTIONS}.
\end{proof}

It is worth pointing out that the flexibility
of $s$-harmonic functions given by Theorem~\ref{ALL FUNCTIONS}
may have concrete consequences.
For instance, as a byproduct of Theorem~\ref{ALL FUNCTIONS},
one has that a biological population with nonlocal dispersive attitudes
can better locally adapt to a given distribution of
resources (see e.g. Theorem~1.2 in~\cite{MASSACCESI}).
Namely, nonlocal biological species may efficiently use
distant resources and they can fit to the resources available nearby
by consuming them (almost) completely,
thus making more difficult for a different
competing species to come into place.

\section {A function with constant fractional Laplacian on the ball}\label{ctfrlap}
We complete this chapter with the explicit computation of the fractional Laplacian of the function $ \mathcal U(x) = (1-|x|^2)_+^s$. In $B_1$ we have that
	\[ \frlap \mathcal U(x) = C(n,s)\frac{\omega_n}{2} B(s, 1-s),\]
	where $C(n,s)$ is introduced in \eqref{cnsgalattica} and $B$ is the special Beta function (see section 6.2 in \cite{ABRAMOWITZ}). Just to give an idea of how such computation can be obtained, with small modifications respect to \cite{DYDA,DYDA1} we go through the proof of the result. The reader can find the more general result, i.e. for $\mathcal U(x)= (1-|x|^2)_+^p$ for $p>-1$, in the above mentioned \cite{DYDA,DYDA1}. 	
	  
 Let us take $u \colon \R \to \R$ as $u(x) = (1-|x|^2)^s_+$. Consider the regional  fractional Laplacian restricted to $(-1,1)$ 
	\[\Lm u(x) := P.V. \int_{-1}^1 \frac{u(x)-u(y)}{|x-y|^{1+2s}}\, dy  \] and compute its value at zero. By symmetry we have that
	\[	 		\Lm u(0) = 2 \lim_{\eee\to 0}\int_{\eee}^1 \frac{1-(1-y^2)^s}{y^{1+2s}}\, dy . 	\]
 Changing the variable $\omega= y^2$ and integrating by parts we get that
	\[
		\begin{split}
		\Lm u(0) =& \; 2 \lim_{\eee\to 0} \bigg( \int_{\eee}^1 y^{-1-2s}\, dy - \int_{\eee}^1 (1-y^2)^s y^{-1-2s}\, dy \bigg) \\
		 = & \; -\frac{1}{s} + \lim_{\eee\to 0} \bigg( \frac{\eee^{-2s}}{s}  - \int_{\eee^2}^1 (1-\omega)^s \omega^{-s-1}  \, d\omega\bigg) \\
	= & \; -\frac{1}{s} + \lim_{\eee\to 0} \bigg( \frac{\eee^{-2s}-\eee^{-2s} (1-\eee^2)^s}{s} + \int_{\eee^2}^1 \omega^{-s}(1-\omega)^{s-1}\, d\omega \bigg).
		\end{split} \] 
Using the integral representation of the Gamma function (see \cite{ABRAMOWITZ}, formula 6.2.1), i.e. 
\[ B(\alpha, \beta)= \int_0^1 t^{\alpha-1} (1-t)^{\beta-1} \, dt,\] it yields that
	\[
		\Lm u(0) =B(1-s,s) -\frac{1}{s}.
	\]	
	For $x\in B_1$ we use the change of variables $ \omega=\frac{x-y}{1-xy}$. We obtain that
\begin{equation}
		\begin{split}\label{llpux}
		\Lm u(x) =  &\; P.V. \int_{-1}^1 \frac{ (1-x^2)^s -(1-y^2)^s}{|x-y|^{1+2s}} \, dy \\
			= &\; (1-x^2)^{-s} P.V. \int_{-1}^1 \frac{(1-\omega x)^{2s-1} -(1-\omega^2)^s(1-\omega x)^{-1}} {|\omega|^{2s+1}}\, d\omega\\
			= &\; (1-x^2)^{-s}P.V. \Bigg( \int_{-1}^1 \frac{1-(1-\omega^2)^s}{|\omega|^{2s+1}} \, d\omega + \int_{-1}^1 \frac{(1-\omega x)^{2s-1}-1}{|\omega|^{2s+1}} \, d\omega    \\
			&\;  +\int_{-1}^1 \frac{(1-\omega^2)^s \Big ( 1- (1-\omega x)^{-1} \Big)}{|\omega|^{2s+1}} \, d\omega  \Bigg) \\
		=&\; (1-x^2)^{-s}\Bigg(  \Lm u(0) + J(x) +I(x) \Bigg),
		\end{split}
	\end{equation}
where we have recognized the regional fractional Laplacian and denoted
	\[ \begin{split} J(x) := &\;P.V. \int_{-1}^1 \frac{(1-\omega x)^{2s-1}-1}{|\omega|^{2s+1}} \, d\omega \qquad \qquad \mbox{ and } \\
	 I(x) :=&\; \text{P.V.} \int_{-1}^1 \frac{1-(1-\omega x)^{-1} } {|\omega|^{2s+1}} (1-\omega^2)^s \, d\omega. \end{split} \]
In $J(x)$ we have that 
	\[ \begin{split}
		J(x)  =&\; P.V. \bigg( \int_{-1}^1 \frac{(1-\omega x)^{2s-1}}{|\omega|^{2s+1}}  \, d\omega - \int_{-1}^1 |\omega|^{-1-2s} \, d\omega \bigg)\\
	=&\;  \lim_{\eee \to 0}\bigg( \int_{\eee}^1\frac{(1+\omega x)^{2s-1} +(1-\omega x)^{2s-1} }{|\omega|^{2s+1}} \, d\omega -  2 \int_{\eee}^1 \omega^{-1-2s} \, d\omega \bigg) .\end{split}  \] With the change of variable $t=\displaystyle \frac{1}{\omega}$
	\begin{equation}\label{jxah} \begin{split}
		J(x)  =&\; \frac{1}{s} + \lim_{\eee \to 0}   \bigg(\int_1^{1/\eee} \Big[ (t+x)^{2s-1} + (t-x)^{2s-1} \Big] \, dt - \frac{\eee^{-2s} }{s}\bigg)\\
		=&\; \frac{1}{s}  - \frac{(1+x)^{2s} +(1-x)^{2s}}{2s} + \frac{1}{2s} \lim_{\eee \to 0}\frac{ (1+\eee x)^{2s} +(1-\eee x)^{2s}   -2}{\eee^{2s}}\\
		=&\; \frac{1}{s}  - \frac{(1+x)^{2s} +(1-x)^{2s}}{2s}.
		\end{split} \end{equation}
To compute $I(x)$, with a Taylor expansion of $ (1-\omega x)^{-1}$ at $0$ we have that  
	\[	I(x) =  \text{P.V.} \int_{-1}^1 \displaystyle \frac{-\sum_{k=1}^\infty (x\omega)^k } {|\omega|^{2s+1}} (1-\omega^2)^s \, d\omega.\]
The odd part of the sum vanishes by symmetry, and so
	\begin{equation*}
		\begin{split}
		I(x)=\;&  -2 \lim_{\eee\to 0} 	\int_{\eee}^1 \displaystyle \frac{\sum_{k=1}^{\infty} (x\omega)^{2k} } {\omega^{2s+1}} (1-\omega^2)^s \, d\omega\\
			=\;&- 2 \lim_{\eee\to 0} \displaystyle  \sum_{k=1}^{\infty} x^{2k}  \int_{\eee}^1 \omega^{2k-2s-1} (1-\omega^2)^s\, d\omega. \\
				\end{split}	
	\end{equation*}
We change the variable $t =\omega^2$ and integrate by parts to obtain
	\begin{equation*}
		\begin{split}
	  I(x)=	\;&  - \lim_{\eee\to 0}   \sum_{k=1}^{\infty} x^{2k}   \int_{\eee^2 }^1 t^{k-s-1} (1-t)^s\, dt,\\
	     = \;&  \sum_{k=1}^{\infty} x^{2k} \lim_{\eee\to 0}   \bigg[ \frac{ \eee^{2k-2s}(1-\eee^2)^s}{k-s} - \frac{s}{k-s}  \int_{\eee^2}^1 t^{k-s}(1-t)^{s-1} \, dt \bigg].
		\end{split}	
	\end{equation*}
For $k\geq 1$, the limit for $\eee$ that goes to zero is null, and using the integral representation of the Beta function, we have that
	\begin{equation*}	
			I(x) = \sum_{k=1}^{\infty} x^{2k} \frac{-s}{k-s} B(k+1-s,s) .
		\end{equation*}
We use the Pochhammer symbol defined as
		\begin{equation}\label{Pochh}
		(q)_k = \begin{cases}   
			1   &\mbox{ for } k = 0 ,\\
  		q(q+1) \cdots (q+k-1) &\mbox{ for } k > 0
		 \end{cases}
		\end{equation} and with some manipulations, we get
	\[	\begin{split}
			\frac{-s}{k-s}  B(k+1-s,s) =  &\; \frac{(-s) \Gamma(k+1-s) \Gamma(s)}{(k-s) \Gamma(k+1) }\\
			 = &\; \frac{(-s) \Gamma(k-s)\Gamma(s)}{k!}\\
			= &\; B(1-s,s) \frac{(-s)_k}{k!}. 
		\end{split}\]	
And so
\[ I(x) =  B(1-s,s)  \sum_{k=1}^{\infty} x^{2k} \frac{(-s)_k}{k!}.\] By the definition of the hypergeometric function (see e.g. page~211 in~\cite{Oberhettinger}) we obtain 
 \[ \begin{split}
 I(x)  =&\; -B(1-s,s) + B(1-s,s) \sum_{k=0}^{\infty}  { (-s)_k} \frac{x^{2k}}{k!} \\
  = &\; B(1-s,s) \bigg(  F\Big(-s,\frac{1}{2},\frac{1}{2}, x^2\Big)- 1\bigg).\end{split}\]
Now, by 15.1.8 in \cite{ABRAMOWITZ} we have that
\[ F\Big(-s,\frac{1}{2},\frac{1}{2}, x^2\Big) = (1-x^2)^s\]
and therefore
	\begin{equation*}  I(x)  =  B(1-s,s) \Big( (1-x^2)^s-1\Big).\end{equation*}
Inserting this and \eqref{jxah} into \eqref{llpux} we obtain
	\begin{equation} \label{onedimfin} \Lm u(x) = B(1-s,s) -(1-x^2)^{-s}   \frac{(1+x)^{2s} +(1-x)^{2s}}{2s}.\end{equation}
Now we write the fractional Laplacian of $u$ as
	\[\begin{split}
	\frac{\frlap u(x) }{C(1,s)} =&\; \Lm u(x) + \int_{-\infty}^{-1}\frac {u(x)}{|x-y|^{1+2s}} \, dy + \int_1^{\infty}\frac {u(x)}{|x-y|^{1+2s}} \, dy \\
							 =&\;\Lm u(x)+ (1-x^2)^s \bigg(  \int_{-\infty}^{-1} |x-y|^{-1-2s} \, dy +\int_1^{\infty}|x-y|^{-1-2s}  \, dy \bigg) \\
							 =&\; \Lm u(x) +  (1-x^2)^s \frac {(1+x)^{-2s} +(1-x)^{-2s}}{2s}.
	\end{split}\]		 	
Inserting \eqref{onedimfin} into the computation, we obtain		 	
\begin{equation} \label{infrl122}	\frlap u(x) =C(1,s) B(1-s,s).\end{equation}
To pass to the $n$-dimensional case, without loss of generality and up to rotations, we consider $x=(0,0,\dots, x_n)$ with $x_n\geq 0$. We change into polar coordinates $x-y=th$, with $h\in \partial B_1$ and $t\geq0$. We have that
	\begin{equation}\label{lapcaln1} \begin{split}
			\frac{\frlap \mathcal U (x)}{C(n,s)} =&\; P.V. \int_{\Rn} \frac{(1-|x|^2)^s - (1-|y|^2)^s}{|x-y|^{n+2s}} \, dy\\
					=&\;\frac{1}{2}\int_{\partial B_1} \bigg( P.V. \int_{\R}  \frac{(1-|x|^2)^s - (1-|x+ht|^2)^s}{|t|^{1+2s}} \,dt\bigg) \, d\mathcal{H}^{n-1} (h).	
	\end{split}
		\end{equation}
Changing the variable $t= -|x|h_n +\tau \sqrt{|h_nx|^2-|x|^2+1}$, we notice that \[1-|x+ht|^2 = (1-\tau^2)(1-|x|^2 +|h_nx|^2)\] and so 
	 \[\begin{split}
	 P.V. \int_{\R}  &\frac{(1-|x|^2)^s - (1-|x+ht|^2)^s}{|t|^{1+2s}}  \, dt\\
	  = &P.V. \int_{\R} \frac{(1-|x|^2)^s - (1-\tau^2)^s(|h_nx|^2-|x|^2+1)^s } {\Big| -|x|h_n +\tau \sqrt{|h_nx|^2-|x|^2+1}\Big|^{1+2s}}\sqrt{|h_nx|^2-|x|^2+1} \, d\tau \\
	 	=& P.V. \int_{\R}  \frac{ \bigg(1-\displaystyle \frac{|x|^2h_n^2}{|h_nx|^2-|x|^2+1} \bigg)^s- (1-\tau^2)^s}{\bigg|\tau -\displaystyle\frac{|x|h_n}{ \sqrt{|h_nx|^2-|x|^2+1}}\bigg| ^{1+2s}}\, d\tau \\
	 		=& \frac{\frlap u\bigg( \displaystyle\frac{|x|h_n}{ \sqrt{|h_nx|^2-|x|^2+1}}\bigg)}{C(1,s)}\\
	 		= &  B(1-s,s),
	 \end{split}		\]
where the last equality follows from identity \eqref{infrl122}. Hence from \eqref{lapcaln1} we conclude that \[  \frlap \mathcal U(x) = C(n,s) B(1-s,s)\frac{\omega_n}{2}.\]
\chapter{Extension problems} \label{S:3}
We dedicate this part of the book
to the harmonic extension of the fractional Laplacian.
We present at first two applications, the water wave model and the
Peierls-Nabarro model related to crystal dislocations, making clear how the extension problem appears in these models. We conclude this part
by discussing\footnote{Though we do not develop this approach here,
it is worth mentioning that extended problems arise naturally
also from the probabilistic interpretation described in Section~\ref{S:1}.
Roughly speaking, a stochastic process with jumps in~$\R^n$
can often be seen as the ``trace''
of a classical stochastic process in~$\R^n\times [0,+\infty)$
(i.e., each time that the classical stochastic process 
in~$\R^n\times [0,+\infty)$
hits~$\R^n\times\{0\}$ it induces a jump process over~$\R^n$).
Similarly, stochastic process with jumps may also be seen
as classical processes at discrete, random, time steps.}
in detail the extension problem via the Fourier transform.

The harmonic extension of the fractional Laplacian in the framework
considered here
is due to Luis Caffarelli and Luis Silvestre (we
refer to \cite{CS07} for details). We also recall that this extension procedure
was obtained by S.~A. Mol{\v{c}}anov and E.~Ostrovski{\u\i}
in~\cite{MO-OS} by probabilistic methods (roughly speaking
``embedding'' a long jump random walk in~$\R^n$
into a classical random walk in one dimension more, see Figure \ref{fign:rwmm}).
\begin{center}
\begin{figure}[htpb]
	\hspace{0.6cm}
	\begin{minipage}[b]{0.95\linewidth}
	\centering
	\includegraphics[width=0.95\textwidth]{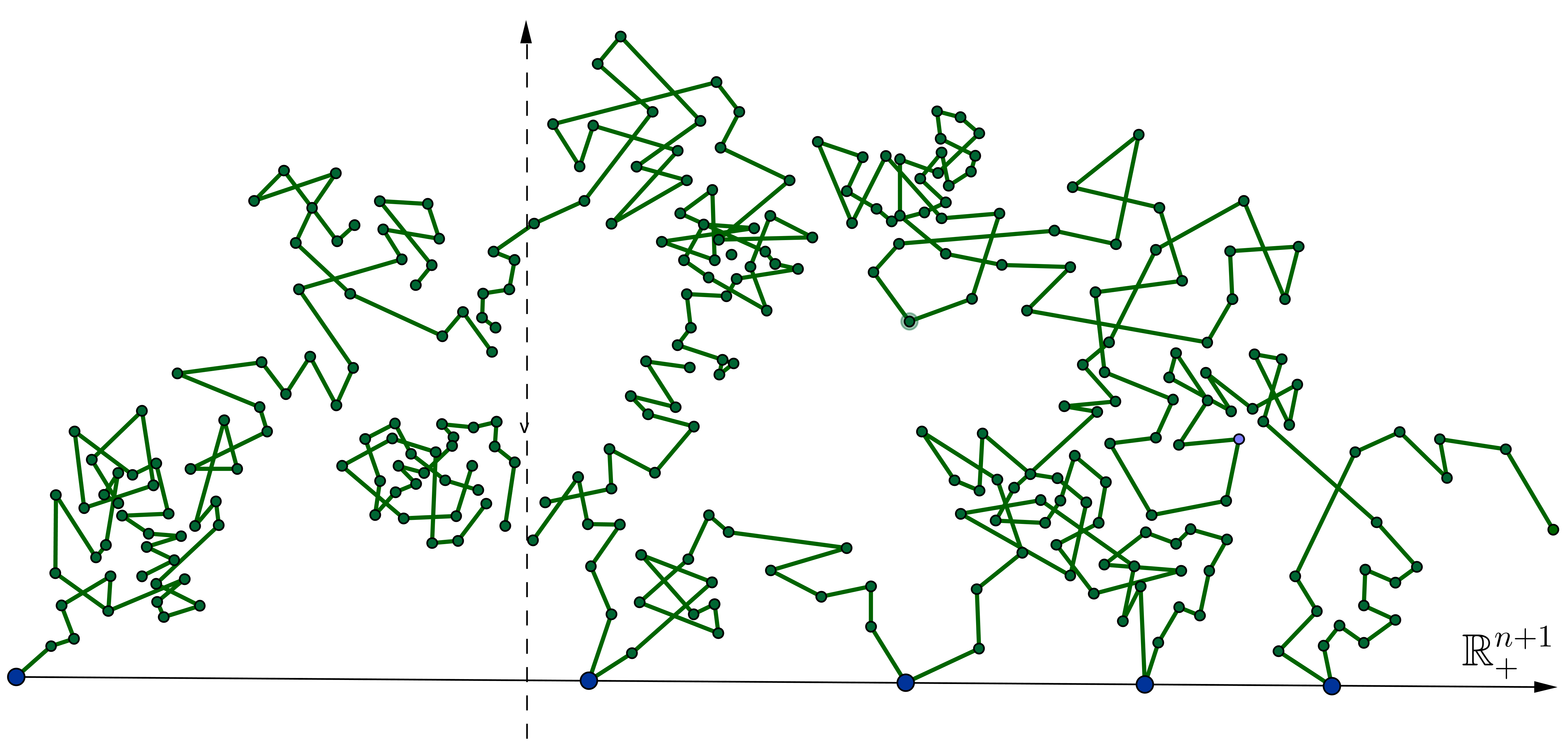}
	\caption{The random walk with jumps in $\Rn$ can be seen as a classical random walk in $\R^{n+1}$}   
	\label{fign:rwmm}
	\end{minipage}
\end{figure} 
\end{center}
The idea of this extension procedure
is that the nonlocal operator $\frlap $ acting on functions defined on $\Rn$ may be reduced to a local operator, acting on functions defined in the higher-dimensional half-space $\R^{n+1}_+ :=\Rn \times (0,+\infty).$ Indeed, take $U \colon \R^{n+1}_+  \to \R$ such that $U(x,0)=u(x)$ in $\Rn$, solution to the equation 	
\[ \text{div} \Big(y^{1-2s} \nabla U(x,y)\Big) =0 \quad \mbox{in}\quad \R^{n+1}_+.\] 
Then up to constants one has that\[ -\lim_{y\to 0^+} \Big( y^{1-2s} \partial_y U (x,y) \Big)= \frlap u(x).\]

\section{Water wave model}

Let us consider the half space $\R_{+}^{n+1} = {\Rn} \times (0,+\infty)$ endowed with the coordinates $x \in {\Rn}$ and $y\in (0,+\infty)$. We show that the half-Laplacian (namely when $s=  1/2 )$ arises when looking for a harmonic function in $\R_{+}^{n+1}$ with given data on $\Rn \times \{y=0\}$. Thus, let us consider the following local Dirichlet-to-Neumann problem: 
	\begin{equation*}
		\begin{cases}
		\Delta U = 0 & \text{ in }  \R_{+}^{n+1} ,\\
		U(x,0) =u(x) & \text{ for }  x\in  { \Rn}.
		\end{cases}
	\end{equation*}
The function $U$ is the harmonic extension of $u$, we write $U=E u$, and define the operator $\mathcal{L}$ as 
	\eqlab { \label{D1} \mathcal{L} u (x):= -\partial_y U (x,0) .}
We claim that 
	\eqlab{\label{D2} \mathcal{L} = \sqrt{-\Delta_x}, }
	in other words
	\[ \mathcal{L} ^2 = -\Delta_x. \]
Indeed, by using the fact that $E(\mathcal{L} u) = -\partial_y U$ (that can be proved, for instance, by using the Poisson kernel representation for the solution), we obtain that	
	\[ \begin{split}
		\mathcal{L}^2 u (x) =\; & \mathcal{L}\big( \mathcal{L}u \big) (x) \\
		=\; &-\partial_y E\big( \mathcal{L}u\big)  (x,0) \\
		=\; & -\partial_y \big( -\partial_y  U\big) (x,0) \\
		=\; &\big( \partial_{yy} U + \Delta_x U- \Delta_x U \big)(x,0)\\
		=\; &	\Delta U (x,0) - \Delta u (x)	\\
		=\; &  -\Delta u(x),
	\end{split}\]
which concludes the proof of~\eqref{D2}.

One  remark in the above calculation lies in the choice of the sign of the square root of the operator. Namely, if we set~$\tilde{\mathcal{L}}u(x):=\partial_y U(x,0)$, the same computation as above would give that~$\tilde{\mathcal{L}}^2 = -\Delta$. In a sense, there is no surprise that a quadratic equation offers indeed two possible solutions. But a natural question is how to choose the ``right'' one.

There are several reasons to justify the sign convention in \eqref{D1}. One 
reason
is given by spectral theory, that makes the (fractional) Laplacian a negative definite operator. Let us discuss a purely geometric justification, in the simpler $n=1$-dimensional case. We wonder how the solution of problem
\begin{equation}\label{EQ-D1-1}
 \begin{cases}
(-\Delta)^s u=\,1    &\mbox{ in }\;  (-1,1),\\
	 u=\,0   &\mbox{ in } \; \R \setminus (-1,1) \end{cases}.
\end{equation}
should look like in the extended variable~$y$. First of all, 
by Maximum Principle (recall Theorems~\ref{THM-MA-1}
and~\ref{THM-MA-1-STRONG}),
we have that~$u$ is positive\footnote{As a matter of fact,
the solution of~\eqref{EQ-D1-1} is explicit and it is given by~$(1-x^2)^s$,
up to dimensional constants. See~\cite{DYDA} for a list of functions
whose fractional Laplacian can be explicitly computed
(unfortunately, differently from the classical cases, explicit computations
in the fractional setting
are available only for very few functions).}
when~$x\in(-1,1)$
(since this is an $s$-superharmonic function,
with zero data outside). 

Then the harmonic extension~$U$ in~$y>0$ of a 
function~$u$ which is positive in~$(-1,1)$ and vanishes outside~$(-1,1)$ 
should have the shape of an elastic membrane over the halfplane~$\R^2_+$ 
that is constrained to the graph of~$u$ on the trace~$\{y=0\}$.
\begin{center}
\begin{figure}[htpb]
	\hspace{0.9cm}
	\begin{minipage}[b]{1.00\linewidth}
	\centering
	\includegraphics[width=1.00\textwidth]{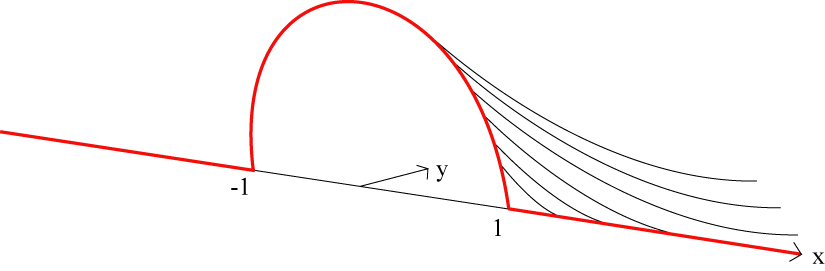}
	\caption{The harmonic extension}   
	\label{fign:FE}
	\end{minipage}
\end{figure} 
\end{center}
We give a picture
of this function~$U$ in Figure~\ref{fign:FE}. Notice from the picture that $\partial_y U(x,0)$ is negative, for any~$x\in(-1,1)$. Since $(-\Delta)^s u (x)$ is positive, we deduce that, to make our picture consistent with the maximum principle, we need to take the sign of~${\mathcal{L}}$ opposite to that of~$\partial_y U(x,0)$. This gives a geometric justification of~\eqref{D1}, which is only based on maximum principles (and on ``how classical harmonic functions look like''). 

\bigskip

\textbf{Application to the water waves.}

We show now that the operator $\mathcal{L}$ arises in the theory of water waves of irrotational, incompressible, inviscid fluids in the small amplitude, long wave regime. 

Consider a particle moving in the sea, which is, for us, the space $\Rn \times (0,1)$, where the bottom of the sea is set at level $1$ and the surface at level $0$ (see Figure \ref{fign:ww}). The velocity of the particle is $v\colon \Rn \times (0,1) \to \R^{n+1}$ and we write $v(x,y)=\big(v_x(x,y),v_y(x,y)\big)$, where $v_x \colon \Rn \times(0,1) \to \Rn$ is the horizontal component and $v_y\colon \Rn \times(0,1)\to \R$ is the vertical component. We are interested in the vertical velocity of the water at the surface of the sea. 
\begin{center}
\begin{figure}[htpb]
	\hspace{0.6cm}
	\begin{minipage}[b]{0.85\linewidth}
	\centering
	\includegraphics[width=0.85\textwidth]{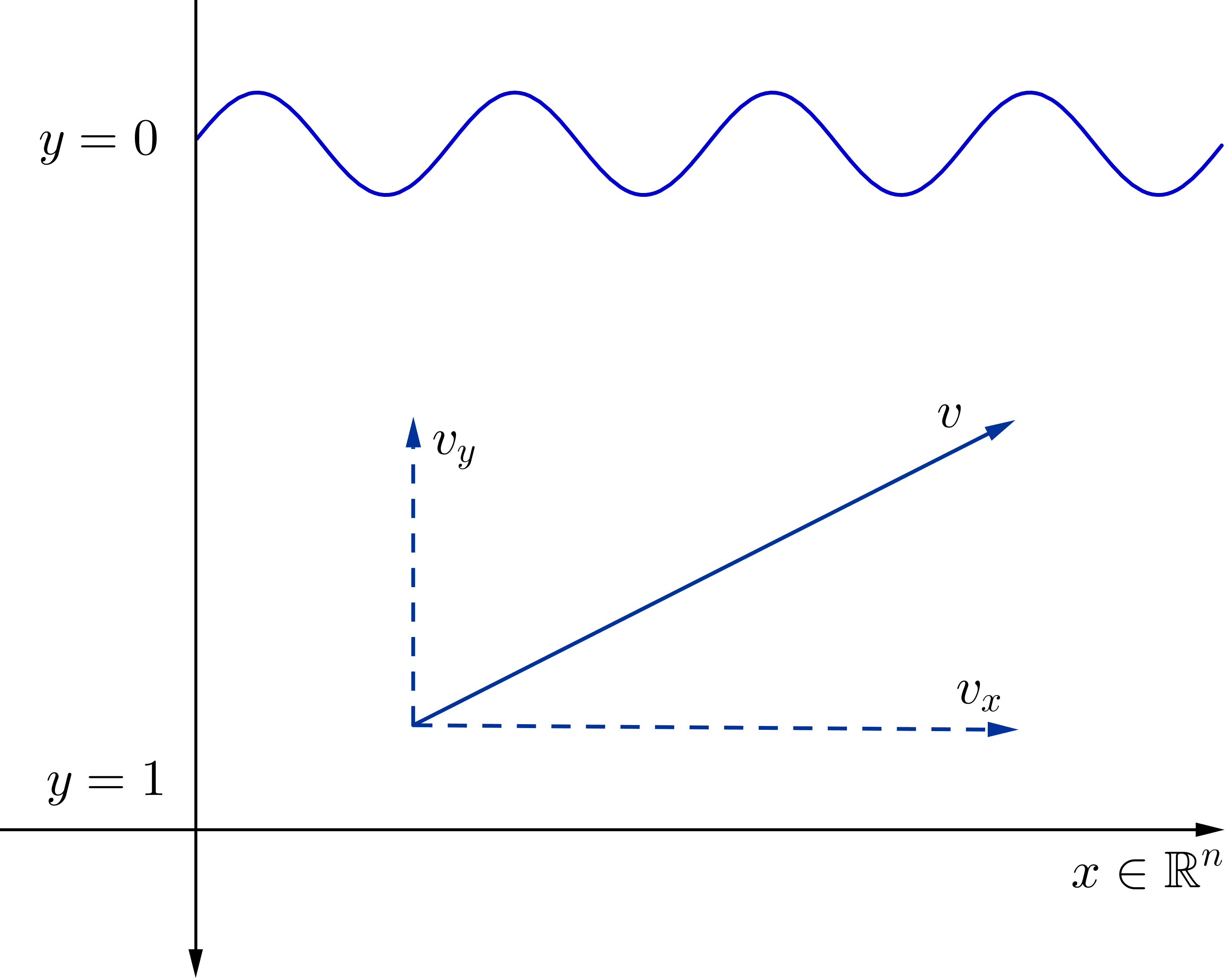}
	\caption{The water waves model}   
	\label{fign:ww}
	\end{minipage}
\end{figure} 
\end{center}
In our model, the water is incompressible, thus $\text{div } v=0 $ in $\Rn \times (0,1)$.  Furthermore, on the bottom of sea (since water cannot penetrate into the sand), the velocity has only a non-null horizontal component, hence $v_y(x,1)=0$. Also, in our model we
assume that there are no vortices: at a mathematical level,
this gives that~$v$ is irrotational, thus we may write it as the gradient of a function $U \colon \R^{n+1} \to \R$. This says that the vertical component of the velocity at the surface of the sea is $ v_y(x,0)=\partial_y U(x,0)$.
We are led to the problem 
	\begin{equation} \label{ww}
		\begin{cases}
		\Delta U = 0 & \text{ in }  \R_{+}^{n+1} ,\\
		 \partial_yU (x,1) =0 & \text{ in }  {\Rn}, \\
		U(x,0) =u(x) & \text{ in }  {\Rn}.
		\end{cases}
	\end{equation}

Let $\mathcal {L}$ be, as before, the operator $\mathcal{L}u(x):=-\partial_yU(x,0)$.
We solve the problem \eqref{ww} by using the Fourier transform and, up to a normalization factor, we obtain that
	\[ \mathcal{L}u=  \mathcal{F}^{-1} \bigg( |\xi| \frac{ e^{|\xi|} - e^{-|\xi|}}{e^{|\xi|}+e^{-|\xi|}} \widehat{u}(\xi) \bigg).\]
Notice that for large frequencies $\xi$, this operator is asymptotic to the square root of the Laplacian: 
	\[ \mathcal{L} u \simeq \mathcal{F}^{-1} \bigg( |\xi| \widehat{u}(\xi) \bigg) = \sqrt{-\Delta} u.\]

The operator $\mathcal{L}$ in the two-dimensional case has an interesting property, that is in analogy to a conjecture of
De Giorgi (the forthcoming Section \ref{sbsdg} 
will give further details about it):
more precisely,
one considers
entire, bounded, smooth, monotone solutions of the 
equation $\mathcal{L} u= f(u)$ for given $f$, and proves
that the solution only depends on one variable. More precisely:
\begin{thm}\label{DLV}
Let $f \in C^1(\R)$ and $u$ be a bounded smooth solution of 
 	\begin{equation*}
		\begin{cases}
		\mathcal{L}u = f(u)  & \text{ in } \R^2,\\
		\partial_{x_2} u >0   & \text{ in } \R^2.
		\end{cases}
	\end{equation*}
Then there exist a direction $\omega \in S^1$ and a function $u_0\colon \R \to \R$ such that, for any $x \in \R^2$, 
	\[ u(x)=u_0(x \cdot \omega).\]
\end{thm}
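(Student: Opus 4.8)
The plan is to exploit the extension already available in the water-wave model and then to run a Liouville-type argument in the spirit of Berestycki--Caffarelli--Nirenberg and Ghoussoub--Gui, which in two horizontal dimensions is closed by a logarithmic cut-off. Let $U\colon\R^2\times(0,1)\to\R$ be the bounded harmonic extension of $u$ solving~\eqref{ww} (with $n=2$), so that, recalling~\eqref{D1}, the equation $\mathcal{L}u=f(u)$ translates into the nonlinear Neumann condition $-\partial_yU(x,0)=f(u(x))$ on $\{y=0\}$, together with $\partial_yU(x,1)=0$ on $\{y=1\}$. Since $u$ is bounded and smooth and $f\in C^1$, the maximum principle and elliptic regularity (reflecting evenly across $\{y=1\}$ to handle the bottom face) give that $U$ is bounded, smooth in the open slab and $C^1$ up to its boundary. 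The first substantial step is to show that $V:=\partial_{x_2}U>0$ on the whole closed slab: comparing $U$ with its horizontal translates $(x_1,x_2,y)\mapsto U(x_1,x_2+t,y)$, one sees that the difference is a bounded harmonic function that is positive on $\{y=0\}$ (because $\partial_{x_2}u>0$) and has vanishing normal derivative on $\{y=1\}$, hence nonnegative by the Phragm\'en--Lindel\"of principle for slabs; letting $t\to0^+$ yields $V\ge0$, and the strong maximum principle together with Hopf's lemma at interior points of $\{y=1\}$ upgrade this to $V>0$ (the case $V\equiv0$ being excluded since $V|_{y=0}=\partial_{x_2}u>0$).

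Next, for $i\in\{1,2\}$ the derivatives $\partial_{x_i}U$ are harmonic in the slab and satisfy the linearized conditions $\partial_y(\partial_{x_i}U)=0$ on $\{y=1\}$ and $-\partial_y(\partial_{x_i}U)=f'(u)\,\partial_{x_i}U$ on $\{y=0\}$. Introduce the smooth quotient $\sigma:=\partial_{x_1}U/V$. Writing $\partial_{x_1}U=\sigma V$ and using that both $V$ and $\partial_{x_1}U$ are harmonic, a direct computation gives
\[ \operatorname{div}\!\big(V^2\nabla\sigma\big)=0\qquad\text{in }\ \R^2\times(0,1), \]
whereas subtracting the boundary identity for $V$ from that for $\partial_{x_1}U$ cancels the $f'(u)$ terms and produces $\partial_y\sigma=0$ on both $\{y=0\}$ and $\{y=1\}$. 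Thus $\sigma$ solves a homogeneous degenerate-elliptic equation with vanishing conormal data on the entire boundary of the slab.

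The heart of the matter is then a Liouville estimate for $\sigma$. Fix a logarithmic cut-off $\varphi_R=\varphi_R(x)$, equal to $1$ on $B_{\sqrt R}$, vanishing outside $B_R$, with $|\nabla\varphi_R|\le C(|x|\log R)^{-1}$, and test the equation above against $\sigma\varphi_R^2$ on $B_R\times(0,1)$. All boundary contributions vanish (the conormal derivative of $\sigma$ is zero on the horizontal faces, $\varphi_R$ is zero on the lateral one), and a Cauchy--Schwarz absorption leaves, using $V^2\sigma^2=(\partial_{x_1}U)^2$,
\[ \int_{B_R\times(0,1)}V^2\,|\nabla\sigma|^2\,\varphi_R^2\ \le\ 4\int_{B_R\times(0,1)}(\partial_{x_1}U)^2\,|\nabla\varphi_R|^2 . \]
To control the right-hand side I would first establish, by a Caccioppoli inequality for $U$ (multiply $\Delta U=0$ by $U\eta_R^2$ with a linear cut-off $\eta_R$, using that $U$ and $f(u)$ are bounded, the only growing term being the boundary one on $\{y=0\}$), the energy bound $\int_{B_R\times(0,1)}|\nabla U|^2\le CR^2$; splitting this over dyadic annuli and inserting the explicit size of $\nabla\varphi_R$ shows the right-hand side above is $O(1/\log R)\to0$ as $R\to\infty$. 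Hence $\nabla\sigma\equiv0$, i.e. $\sigma\equiv c$ for some constant $c\in\R$, so on the trace $\partial_{x_1}u\equiv c\,\partial_{x_2}u$; thus $\nabla u$ is everywhere parallel to the fixed vector $(c,1)$, which forces $u(x)=u_0(x\cdot\omega)$ with $\omega:=(c,1)/\sqrt{1+c^2}\in S^1$.

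I expect the two delicate points to be the main obstacles: the strict interior positivity of $V=\partial_{x_2}U$, which requires assembling the sliding method, the Phragm\'en--Lindel\"of principle on the slab, the strong maximum principle and Hopf's lemma; and the fact that the Liouville step closes precisely because the horizontal dimension equals $2$ --- the logarithmic cut-off argument would fail in $\R^n$ for $n\ge3$, which is exactly the reason the statement is confined to $\R^2$.
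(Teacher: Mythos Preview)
The paper does not provide its own proof of this theorem: it simply states the result and refers the reader to Corollary~2 in~\cite{LV09}. So there is no in-text argument to compare against, only the cited reference.

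That said, your approach is correct and is precisely the strategy one finds in the literature for such statements (including~\cite{LV09} and the earlier works~\cite{CM05,CS15,SV09} for the half-Laplacian). The skeleton is: (i) pass to the local extension problem in the slab~$\R^2\times(0,1)$, where the nonlinearity becomes a Neumann-type boundary condition; (ii) show that the vertical derivative~$V=\partial_{x_2}U$ is strictly positive throughout the closed slab; (iii) form the ratio~$\sigma=\partial_{x_1}U/V$, which satisfies~$\operatorname{div}(V^2\nabla\sigma)=0$ with zero conormal data on both horizontal faces (the~$f'(u)$ contributions cancel exactly, as you note); (iv) close with a Liouville argument using the logarithmic cut-off, which works because the horizontal dimension is two and the natural energy grows at most like~$R^2$. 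Your identification of the two genuinely delicate steps---the strict positivity of~$V$ in the interior and at~$\{y=1\}$, and the Caccioppoli-type energy bound that feeds the logarithmic Liouville estimate---is accurate; these are exactly where the work lies, and your outline for each (sliding plus Phragm\'en--Lindel\"of on the slab, then Hopf; and testing~$\Delta U=0$ against~$U\eta_R^2$) is the right one. The dyadic summation at the end is carried out correctly.

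In short: the paper defers the proof entirely to an external reference, and what you have written is a faithful and correct sketch of that standard argument.
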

See Corollary 2 in \cite{LV09} for a proof of Theorem \ref{DLV} and to Theorem 1 in \cite{LV09} for a more general result (in higher dimension).

\section{Crystal dislocation}

A crystal is a material whose atoms are displayed in a regular way.
Due to some impurities in the material or to an external stress,
some atoms may move from their rest positions. The system
reacts to small modifications by pushing back towards
the equilibrium. Nevertheless, slightly larger modifications may lead
to plastic deformations.
Indeed, if an atom dislocation is of the order
of the periodicity size of the crystal, it can be perfectly
compatible with the behavior of the material
at a large scale, and it can lead to a permanent modification.

Suitably superposed atom dislocations
may also produce macroscopic deformations of the material,
and the atom dislocations may be moved
by a suitable external force,
which may be more effective if it happens to be compatible with the periodic
structure of the crystal.

These simple considerations may be framed into a mathematical
setting, and they also have concrete
applications in many industrial branches
(for instance,
in the production of a soda can, in order to change the shape of an aluminium
sheet, it is reasonable to believe that applying the right force to it can be simpler and less expensive than melting the metal).

It is also quite popular (see e.g.~\cite{CATERPILLAR})
to describe
the atom dislocation motion in crystals
in analogy
with the movement of caterpillar 
(roughly speaking, it is less expensive for
the caterpillar to produce a defect in the alignment
of its body and to dislocate this displacement, rather
then rigidly translating his body on the ground).
\bigskip

The mathematical framework of crystal dislocation
presented here is related to the 
Peierls-Nabarro model, that is
a hybrid model in which a discrete dislocation 
occurring along a slide line is incorporated in a continuum medium.
The 
total energy in the Peierls-Nabarro model combines the elastic energy of 
the material in reaction to the single dislocations, and the potential energy of 
the misfit along the glide plane. The main result is that, at a 
macroscopic level, dislocations tend to concentrate at single points, 
following the natural periodicity of the crystal.
\begin{center}
	\begin{figure}[htpb]
	\hspace{0.8cm}
	\begin{minipage}[b]{0.85\linewidth}
	\centering
	\includegraphics[width=0.85\textwidth]{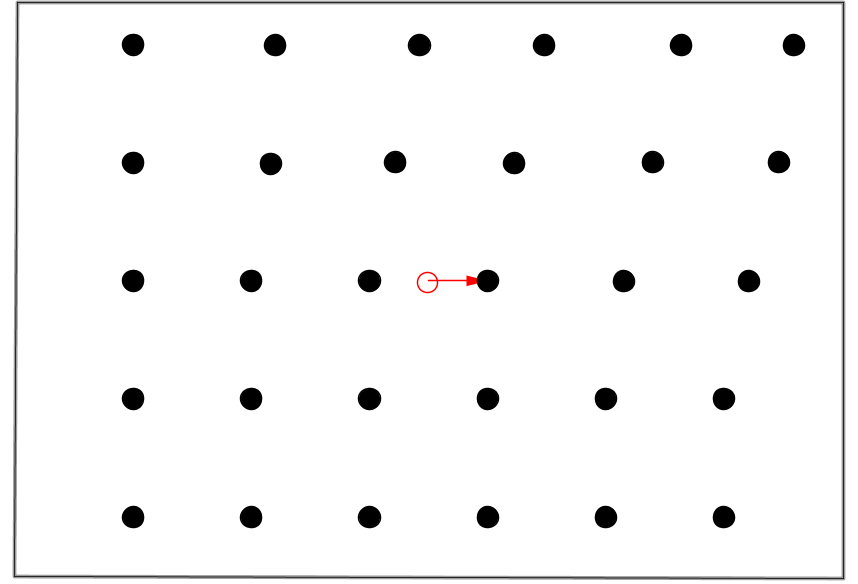}
	\caption{Crystal dislocation}   
	\label{fign:figure1}
	\end{minipage}
	\end{figure}
\end{center}

To introduce a mathematical framework
for crystal dislocation,
first, we ``slice'' the crystal with a plane. The mathematical setting will be then, by symmetry arguments, the half-plane $\R^2_{+} =\{ (x,y)\in   \R^2 \text{ s.t. } y \geq 0\}$ and the glide line will be the $x$-axis. In a crystalline structure, the atoms display periodically. Namely, the atoms on the $x$-axis have the preference of occupying integer sites. If atoms move out of their rest position due to a misfit, the material will have an elastic reaction, trying to restore the crystalline configuration. The tendency is to move back the atoms to their original positions, or to recreate, by translations, the natural periodic configuration. This effect may be modeled by defining $v^0(x):=v(x,0)$ to be the discrepancy between the position of the atom $x$ and its rest position. Then, the misfit energy is 
	\begin{equation}\label{YU-1}
\mathcal{M}(v^0):=\int_{\R} W\Big(v^0(x)\Big) \, dx,\end{equation}
where~$W$ is a smooth periodic potential,
normalized in such a way that~$W(u+1)=W(u)$ for any $u\in \R$ and~$0=W(0)<
W(u)$ for any~$u\in(0,1)$. We also assume that the minimum of~$W$
is nondegenerate, i.e.~$W''(0)>0$.

We consider
the dislocation function $v(x,y)$ on the half-plane $\R^2_{+}$. The
elastic energy of this model is given by
	\begin{equation}\label{YU-2}
 \mathcal{E}(v):=\frac{1}{2} \int_{\R^2_{+}} \Big|\nabla v(x,y)\Big|^2 \, dx \, dy.\end{equation}
The total energy of the system is therefore
	\begin{equation}\label{YU-3}
\mathcal{F} (v):= \mathcal{E}(v)+ \mathcal{M}(v^0) =\frac{1}{2} \int_{\R^2_{+}} \Big|\nabla v(x,y)\Big|^2 \, dx \, dy+\int_{\R} W\Big(v(x,0)\Big) \, dx.\end{equation}
Namely, the total energy of the system is the superposition
of the energy in~\eqref{YU-1},
which tends to settle all the atoms in their rest position
(or in another position equivalent to it from the
point of view of the periodic crystal),
and the energy in~\eqref{YU-2},
which is the elastic energy of the material itself.
\bigskip

Notice that some approximations have been performed
in this construction. For instance,
the atom dislocation changes the structure of the crystal itself:
to write~\eqref{YU-1}, one is making the assumption
that the dislocations of the single atoms do not destroy
the periodicity of the crystal at a large scale,
and it is indeed this ``permanent'' periodic structure
that produces the potential~$W$.

Moreover, in~\eqref{YU-2}, we are supposing that
a ``horizontal'' atom displacement
along the line~$\{y=0\}$ causes a horizontal
displacement at~$\{y=\epsilon\}$ as well.
Of course, in real life, if an atom at~$\{y=0\}$ moves, say, to the right,
an atom at level~$\{y=\epsilon\}$ is dragged to the right as well,
but also slightly downwards towards the slip line~$\{y=0\}$.
Thus, in~\eqref{YU-2} we are neglecting this ``vertical''
displacement. This approximation
is nevertheless reasonable since, on the one hand,
one expects the vertical displacement to be negligible with
respect to the horizontal one and, on the other hand,
the vertical periodic structure of the crystal tends to avoid
vertical displacements of the atoms outside the periodicity range
(from the mathematical point of view,
we notice that taking into account vertical displacements
would make the dislocation function vectorial,
which would produce a system of equations, rather than one single
equation for the system).

Also, the initial assumption of slicing the crystal
is based on some degree of simplification,
since this comes to studying dislocation curves in spaces
which are ``transversal'' to the slice plane.

In any case, we will take these (reasonable, after all)
simplifying assumptions
for granted, we will study their
mathematical consequences and see how the results
obtained agree with the physical experience.\bigskip

To find the Euler-Lagrange 
equation associated to~\eqref{YU-3}, let us consider a perturbation~$\phi \in C_0^{\infty}(\R^2)$, with $\varphi (x):=\phi(x,0)$ and let $v$ be a minimizer. Then
\[ \frac{d}{d\eee} \mathcal{F} (v+\eee \phi) \Big|_{\eee=0} =0,\]
which gives
\[ \int_{\R^2_+} \nabla v\cdot \nabla \phi \, dx \,dy + \int_{\R} W'(v^0)  \varphi \, dx=0.\]
Consider at first the case in which $\text{supp} \phi\cap \partial \R^2_+ = \emptyset$, thus $\varphi=0$. By the Divergence Theorem we obtain that 
	\[\int_{\R^2_+} \phi \, \Delta v \, dx \, dy=0 \quad \mbox{ for any } \phi\in C_0^{\infty}(\R^2),\] 
thus $\Delta v =0$ in $\R^2_+$. 
If $\text{supp} \phi\cap \partial \R^2_+ \neq \emptyset$ then we have that
	\[ \begin{split}
		0=&\;\int_{\R^2_+} \text{div} (\phi \nabla v)\, dx \, dy + \int_{\R} W'(v^0)  \varphi \, dx \\
		=&\; \int_{\partial \R^2_+} \phi \frac{\partial v}{\partial \nu} \, dx  + \int_{\R} W'(v^0)  \varphi \, dx \\
			=&\; - \int_{\R} \varphi \frac{\partial v}{\partial y} \, dx  + \int_{\R} W'(v^0)  \varphi \, dx 
			\end{split}
			\]
			for an arbitrary $\varphi \in C_0^{\infty}(\R)$ therefore $\displaystyle\frac{\partial v}{\partial y}(x,0)=W'(v^0(x))$ for $x \in \R$. 
Hence the critical points of $\mathcal{F}$ are solutions of the problem
	\begin{equation*}
		\begin{cases}
		\Delta v(x,y)=0 &\text{ for } x\in {\R} \text{ and } y>0, \\
		v(x,0)=v^0(x) &\text{ for  } x \in \R, \\
		\partial_y v(x,0)=W'\Big(v(x,0)\Big)  & \text{ for } x \in \R
		\end{cases}
	\end{equation*}
and up to a normalization constant, recalling \eqref{D1} and \eqref{D2}, we have that
	\[-\sqrt{-\Delta} v(x,0)=W'\big(v(x,0)\big), \text{ for any } x \in \R.\]
The corresponding parabolic evolution equation is $ \partial_t v (x,0)= -\sqrt{-\Delta} v(x,0)- W'\big(v(x,0)\big)$.\bigskip

After this discussion, one is lead to
consider the more general case of the fractional Laplacian $\frlap$ for any~$s\in(0,1)$
(not only the half Laplacian), and the corresponding parabolic equation
	\[\partial_t v=  - \frlap v - W'(v) +\sigma,\]
where $\sigma$ is a (small) external stress.

If we take the lattice of size $\epsilon$ and rescale $v$ and $\sigma$ as
	\[v_{\epsilon} (t,x)= v\bigg( \frac{t}{\epsilon^{1+2s}}, \frac{x}{\epsilon}\bigg)\quad \mbox{ and } \quad \sigma = \eee^{2s} \sigma \bigg( \frac{t}{\epsilon^{1+2s}}, \frac{x}{\epsilon}\bigg),\]
then the rescaled function satisfies 
	\begin{equation}
		\partial_t v_{\epsilon} = \displaystyle \frac{1}{\epsilon} \big( - \frlap v_{\epsilon} -\frac{1}{\epsilon^{2s}} W'(v_{\epsilon}) + \sigma \big)\text{ in } (0, +\infty) \times \R
		\label{pareq}
	\end{equation}
with the initial condition
	 \[v_{\epsilon}(0,x)= v_{\epsilon}^0(x) \text{ for } x \in \R.\]
To suitably choose the initial condition $v_{\epsilon}^0$, we introduce the basic layer\footnote{As a matter of fact,
the solution of~\eqref{allencahn} coincides
with the one
of a one-dimensional fractional Allen-Cahn equation, 
that will be discussed in further detail in the forthcoming
Section \ref{sbsac}.}
solution $u$, that is, the unique solution of the problem
	\begin{equation}
		\begin{cases}
		- \frlap u(x)=W'(u)  \quad \text { in } \R, \\
		u'>0 \text{ and } u(-\infty)=0, u(0) = 1/2, u(+\infty)=1.
		\label{allencahn}
		\end{cases}
	\end{equation}
For the existence of such solution and its main properties see \cite{PSV13} and \cite{CS15}. Furthermore, the solution decays polynomially at $\pm \infty$ (see \cite{DPV15} and \cite{DFV14}), namely
	\begin{equation}
		\bigg| u(x) - H(x) + \frac {1}{2sW''(0)}\frac{x}{|x|^{1+2s}} \bigg| \leq \frac{C}{|x|^{\vartheta}} \quad \text { for any } x \in \Rn,
		\label{layersol}
	\end{equation}
where $\vartheta>2s$ and~$H$ is the Heaviside step function
	\[H(x) =\begin{cases} 1, & x\geq 0\\
						0,& x<0.
			\end{cases} \]
We take the initial condition of the solution of~\eqref{pareq}
to be the superposition of transitions all occurring with the same orientation, i.e. we set
	\begin{equation} \label{cdin1} v_{\epsilon}(x,0):= \frac{\epsilon^{2s}}{W''(0)}\sigma(0,x)+ \sum_{i=1}^N u\bigg(\frac{x-x_i^0}{\epsilon}\bigg),\end{equation}
where $x_1^0,\dots, x_N^0$ are $N$ fixed points. 

\begin{center}\begin{figure}[htpb]
	\hspace{0.6cm}
	\begin{minipage}[b]{0.95\linewidth}
	\centering
	\includegraphics[width=0.95\textwidth]{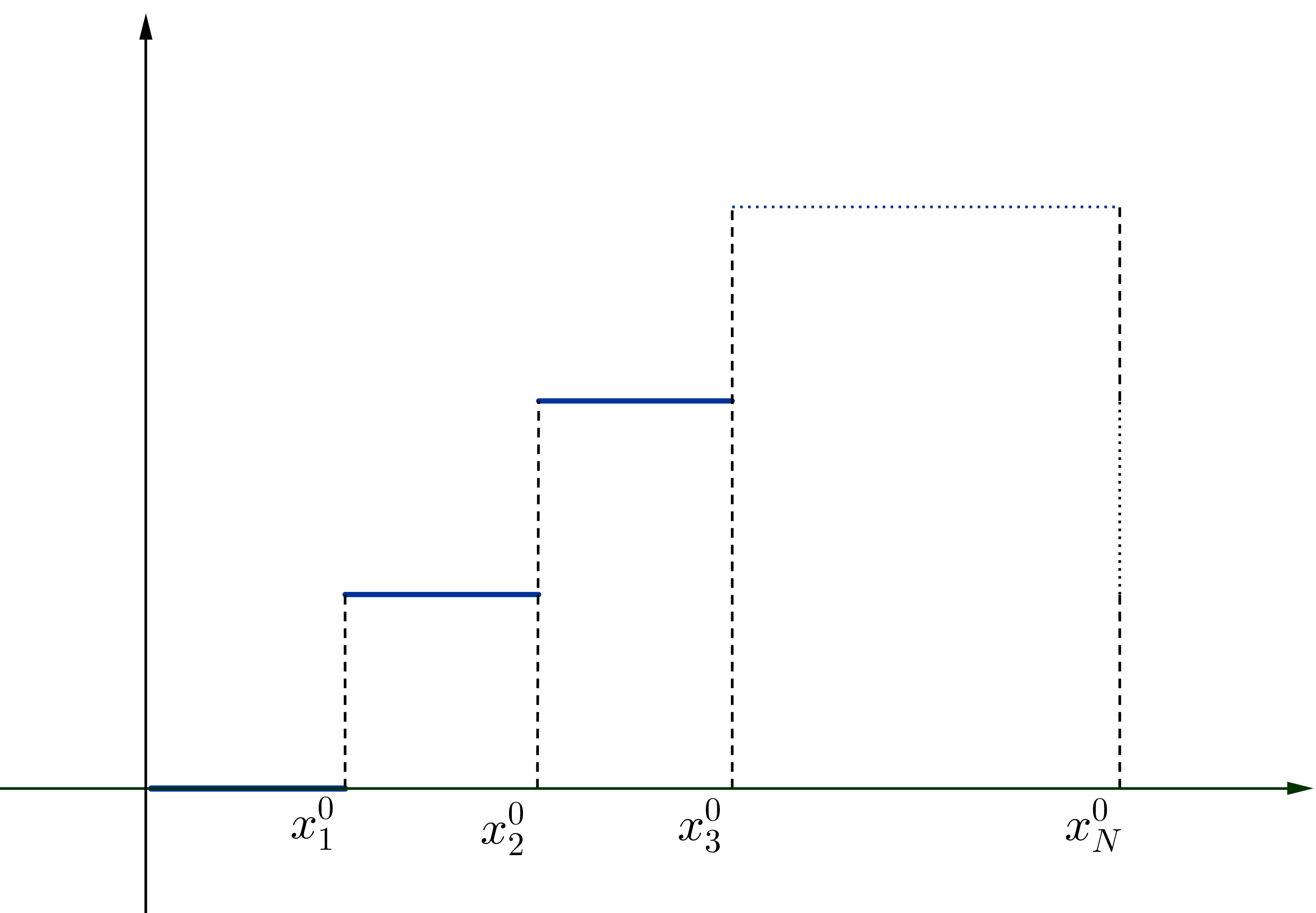}
	\caption{The initial datum when $\eee \to 0$}   
	\label{fign:cryind}
	\end{minipage}
	\end{figure}
\end{center}
The main result in this setting is that the solution~$v_\epsilon$ approaches,
as~$\epsilon\to0$, the superposition of step functions.
The discontinuities of the limit function occur at some points~$\big(
x_i(t)\big)_{i=1, \dots, N}$ which
move accordingly to the following\footnote{
The system of ordinary differential equations
in~\eqref{solx}
has been extensively studied in~\cite{MONNEAU-FORCADEL-IMBERT}.}
dynamical system
	\begin{equation}
		\begin{cases}
			\dot{x_i} = \gamma \bigg( -\sigma (t,x_i) +\displaystyle \sum_{j\neq i} \displaystyle \frac{x_i-x_j} {2s |x_i-x_j|^{2s+1}}\bigg)  \quad \text{ in } (0,+\infty) ,\\
			x_i(0)=x_i^0,
		\end{cases}
	\label{solx}
	\end{equation}
where 
	\begin{equation} \label{cdg}\gamma = \bigg(\int_{\R} (u')^2\bigg)^{-1}.\end{equation}

More precisely,
the main result obtained here is the following.

\begin{thm}\label{VISCOSITY}
There exists a
unique viscosity solution of 	
	\begin{equation*}
		\begin{cases}
		\partial_t v_{\epsilon} = \displaystyle \frac{1}{\epsilon} \Big( - \frlap v_{\epsilon} -\frac{1}{\epsilon^{2s}} W'(v_{\epsilon}) + \sigma \Big)&\text{ in } (0, +\infty) \times \R,\\
		v_{\epsilon}(0,x)=  \displaystyle \frac{\epsilon^{2s}}{W''(0)}\sigma(0,x)+ \sum_{i=1}^N u\bigg(\frac{x-x_i^0}{\epsilon}\bigg)&\text{ for } x \in \R
		\end{cases}
	\end{equation*}
such that 
	\begin{equation}\label{SENSE}
\lim_{\epsilon \to 0} v_{\epsilon} (t,x) = \sum_{i=1}^N H\big(x-x_i(t)\big),\end{equation}
where $\big(x_i(t)\big)_{i=1, \dots, N}$ is solution to \eqref{solx}.
\end{thm}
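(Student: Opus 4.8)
The plan is to separate the statement into two logically independent pieces: the well-posedness of the parabolic Cauchy problem, and the identification of the singular limit $\epsilon\to0$ by a barrier (sub/supersolution) argument together with the comparison principle. For the first piece I would work within the theory of viscosity solutions for nonlocal parabolic equations: the map $w\mapsto\frac1\epsilon\big(-\frlap w-\frac1{\epsilon^{2s}}W'(w)+\sigma\big)$ is degenerate elliptic in the viscosity sense and, since $W\in C^\infty$ has $W'$ Lipschitz on bounded sets, it obeys a comparison principle; Perron's method applied with the bounded continuous initial datum~\eqref{cdin1} then yields existence and uniqueness of $v_\epsilon$, and comparison with suitable constants (and with $\frac{\epsilon^{2s}}{W''(0)}\sigma\pm\text{const}$) gives an a priori bound $0\le v_\epsilon\le N+1+C\epsilon^{2s}$ uniform in $\epsilon$ and $t$.

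For the limit, the core is to trap $v_\epsilon$ between two functions built out of the basic layer $u$ of~\eqref{allencahn}, of the form
\[
v_\epsilon^\pm(t,x):=\frac{\epsilon^{2s}}{W''(0)}\,\sigma(t,x)\pm\epsilon^{2s}\psi(t)+\sum_{i=1}^N u\!\left(\frac{x-x_i^\pm(t)}{\epsilon}\right),
\]
where $\psi\ge0$ is a bounded corrector with bounded derivative and the centres $x_i^\pm(t)$ solve the system~\eqref{solx} perturbed by $\mp C\epsilon^\beta$, with initial data $x_i^0\mp C\epsilon^\beta$, for an appropriate $\beta>0$. The crucial ingredient here is the decay expansion~\eqref{layersol}: away from its own transition, each summand $u((x-x_j^\pm)/\epsilon)$ equals $H(x-x_j^\pm)$ plus the dipole term $-\frac{\epsilon^{2s}}{2sW''(0)}\frac{x-x_j^\pm}{|x-x_j^\pm|^{1+2s}}$ plus a remainder of order $\epsilon^\vartheta|x-x_j^\pm|^{-\vartheta}$ with $\vartheta>2s$, so that the mutual interaction of well separated fronts is governed, to leading order, exactly by the Riesz-type term in~\eqref{solx}.

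Next I would insert $v_\epsilon^+$ into the equation and localize near the $i$-th front, writing $x=x_i^+(t)+\epsilon z$. The time derivative produces $-\frac{\dot x_i^+}{\epsilon}u'(z)+\dots$; the spatial part $-\frac1\epsilon\big(\frlap v_\epsilon^++\frac1{\epsilon^{2s}}W'(v_\epsilon^+)\big)$ produces, by the layer equation~\eqref{allencahn}, a cancellation of the self-interaction, leaving the linearized operator $\mathcal L_u:=-\frlap-W''(u)$ acting on the error, the contributions of the other fronts (which by the previous step reduce to $\sum_{j\ne i}\frac{x_i^+-x_j^+}{2s|x_i^+-x_j^+|^{1+2s}}$ up to lower order), the external term $-\sigma(t,x_i^+)$, and the corrector $\psi$. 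Since $u'$ spans $\ker\mathcal L_u$ by translation invariance and $\mathcal L_u$ is self-adjoint, pairing the localized equation with $u'$ removes the $\mathcal L_u$-term and yields the solvability condition $\dot x_i^+\int_\R(u')^2=-\sigma(t,x_i^+)+\sum_{j\ne i}\frac{x_i^+-x_j^+}{2s|x_i^+-x_j^+|^{2s+1}}+O(\epsilon^\beta)$, i.e.~\eqref{solx} with $\gamma$ as in~\eqref{cdg} up to the built-in error; choosing $\psi$ and $C$ large enough (in terms of $W$, $\sigma$ and the minimal front separation, which stays bounded below on each compact time interval because equal-sign transitions repel) then upgrades the defect to a strict sign, so that $v_\epsilon^+$ is a supersolution, $v_\epsilon^-$ a subsolution, and $v_\epsilon^-\le v_\epsilon^+$ at $t=0$.

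Finally, the comparison principle from the first step gives $v_\epsilon^-\le v_\epsilon\le v_\epsilon^+$ for all $t\ge0$; as $\epsilon\to0$ one has $\frac{\epsilon^{2s}}{W''(0)}\sigma\pm\epsilon^{2s}\psi\to0$ uniformly, $u((x-x_i^\pm)/\epsilon)\to H(x-x_i^\pm)$ locally uniformly away from the fronts, and continuous dependence for~\eqref{solx} gives $x_i^\pm(t)\to x_i(t)$, so both barriers converge to $\sum_{i=1}^N H(x-x_i(t))$ and~\eqref{SENSE} follows. I expect the main obstacle to be the consistency computation: certifying that what remains after the $u'$-projection is a genuine strict sub/supersolution requires uniform quantitative control of the inter-front interaction (this is precisely where the gap $\vartheta>2s$ in~\eqref{layersol} is needed) and of the linearized operator $\mathcal L_u$ on the orthogonal complement of $u'$, and it is also where the nondegeneracy $W''(0)>0$ comes in. For the full details I would follow~\cite{PSV13, CS15} for the layer solution, \cite{DPV15, DFV14} for~\eqref{layersol}, and~\cite{MONNEAU-FORCADEL-IMBERT} for the dynamical system~\eqref{solx}.
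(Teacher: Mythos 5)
Your proposal is sound, and it is in fact considerably more ambitious than what the paper actually does for this statement. The paper does not prove Theorem~\ref{VISCOSITY} at all: it simply cites~\cite{GM12, DPV15, DFV14} for the three ranges of~$s$, and then offers only a \emph{formal} ``Justification of the ODE system~\eqref{solx}'', in which one posits the ansatz $v_\epsilon\simeq\sum_i u_i$ with $u_i(t,x)=u\big((x-x_i(t))/\epsilon\big)$, substitutes into the parabolic equation, multiplies by $u_k'$, integrates over $\R$, and uses the expansion~\eqref{layersol} term by term to read off~\eqref{solx}; the computation is explicitly declared non-rigorous and $\sigma$ is set to zero for simplicity. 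Your sketch embeds that same projection-onto-$u'$ computation as the \emph{solvability condition} that fixes the drift $\dot x_i^\pm$ of the barriers, but surrounds it with exactly the two things the paper omits: (i) the viscosity-solution well-posedness and comparison principle for the Cauchy problem, and (ii) the construction of genuine sub/supersolutions $v_\epsilon^\pm$ (layer sum with perturbed fronts plus a corrector $\psi$) so that trapping $v_\epsilon$ between them and passing to the limit yields~\eqref{SENSE}. This is indeed the scheme of the cited references, so what you propose buys a rigorous proof where the paper only offers heuristics; what the paper buys, by staying formal, is a much shorter and more transparent derivation of why~\eqref{solx} is the right limit dynamics, at the cost of not actually proving the theorem. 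Two small points worth flagging in your write-up: the paper's ansatz does not contain the $\frac{\epsilon^{2s}}{W''(0)}\sigma$ shift because it sets $\sigma\equiv0$, whereas you keep $\sigma$ and absorb it in the barrier, which is the correct move; and you rightly identify that upgrading the $O(\epsilon^\beta)$ defect to a strict sign needs a lower bound on front separation, which is precisely the issue that forces the case distinctions ($s\gtrless 1/2$) and the collision analysis treated separately in the paper in Theorems~\ref{VISCOSITY2}--\ref{mainthmbeforecoll3}.
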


We refer to \cite{GM12} for the case $\displaystyle s=\frac{1}{2}$, 
to~\cite{DPV15} for the case 
$s>\displaystyle \frac{1}{2}$, and \cite{DFV14} for the case~$s<\displaystyle 
\frac{1}{2}$
(in these papers, it is also carefully stated
in which sense the limit in~\eqref{SENSE} holds true).

We would like to give now a formal (not rigorous) justification of the ODE system in \eqref{solx} that drives the motion of the transition layers.

\begin{proof}[Justification of ODE system \eqref{solx}]
We assume for simplicity that the external stress $\sigma $ is null. We use the notation $\simeq$ to denote the equality up to negligible terms in $\epsilon$. Also, we denote
	\[ u_i(t,x):=u\bigg(\frac{x-x_i(t)}{\epsilon}\bigg)\] 
and, with a slight abuse of notation 
	\[u'_i(t,x):= u'\bigg(\frac{x-x_i(t)}{\epsilon}\bigg).\]
By $\eqref{layersol}$ we have that the layer solution is approximated by
	\begin{equation}
	 	u_i(t,x) \simeq H\bigg(\frac{x-x_i(t)}{\epsilon}\bigg) - \frac{\epsilon^{2s}\big(x-x_i(t)\big)}{2s W''(0)\big|x-x_i(t)\big|^{1+2s}}.
		\label{ui}
	\end{equation}
We use the assumption that the solution $v_\epsilon$ is well approximated by the sum of $N$ transitions and write
	\[v_\epsilon(t,x)\simeq \sum_{i=1}^Nu_i(t,x) = \sum_{i=1}^N u\Big(\frac{x-x_i(t)}{\epsilon}\Big).\]
For that
	\[\partial_t v_\epsilon (t,x)= -\frac{1}{\epsilon} \sum_{i=1}^N u'_i(t,x) \dot{x_i}(t)\]
and, since the basic layer solution $u$ is the solution of $\eqref{allencahn}$, we have that 		
	\[ \begin{split}
		 -\frlap v_\epsilon \simeq \; &-\sum_{i=1}^N \frlap u_i(t,x) \\
					= \;&- \frac{1}{\epsilon^{2s}}\sum_{i=1}^N  \frlap u \Big(\frac{x-x_i(t)}{\epsilon}\Big) \\
					= \;& \frac{1}{\epsilon^{2s}} \sum_{i=1}^N W' \bigg(u\Big(\frac{x-x_i(t)}{\epsilon}\Big)\bigg)\\
					= \;& \frac{1}{\epsilon^{2s}} \sum_{i=1}^N  W' \big(u_i(t,x)\big).		
		\end{split} \]
Now, returning to the parabolic equation $\eqref{pareq}$ we have that
	\begin{equation}
		 -\frac{1}{\epsilon}\sum_{i=1}^N u'_i(t,x) \dot{x_i}(t) = \frac{1}{\epsilon^{2s+1} } \bigg( \sum_{i=1}^N  W' \big(u_i(t,x)\big) - W'\Big(\sum_{i=1}^Nu_i(t,x) \Big) \bigg).
		\label{upareq}
	\end{equation}
Fix an integer $k$ between $1$ and $N$, multiply $\eqref{upareq}$ by $u_k'(t,x)$ and integrate over $\R$. We obtain
	\begin{equation}
		\begin{split}
		  -\frac{1}{\epsilon}& \sum_{i=1}^N   \dot{x_i}(t) \int_{\R} u'_i(t,x) u_k'(t,x) \, dx  \\ 
		  \;&=  \frac{1}{\epsilon^{2s+1} }  \bigg(  \sum_{i=1}^N \int_{\R}   W' \big(u_i(t,x)\big) u'_k(t,x) \, dx - \int_{\R}  W'\Big(\sum_{i=1}^Nu_i(t,x)\Big)u_k'(t,x) \, dx  \bigg).
		\label{ukpar}
		\end{split}
	\end{equation}
We compute the left hand side
of~\eqref{ukpar}. First, we take the $k^{\text{th}}$ term of the sum (i.e.
we consider the case~$i=k$). By using the change of variables
	\begin{equation}
		y:= \frac{x-x_k(t)}{\epsilon}
		\label{chvar}
	\end{equation}
we have that
	\begin{equation}
		\begin{split}
		 -\frac{1}{\epsilon}\dot{x_k}(t) \int_{\R} (u_k')^2(t,x) \,dx =&\;-\frac{1}{\epsilon}\dot{x_k}(t) \int_{\R} (u')^2\bigg(\frac{x-x_k(t)}{\epsilon}\bigg) \,dx \\	
												=&\;-\dot{x_k}(t) \int_{\R} (u')^2 (y) \, dy \\
												= &\; -\frac{\dot{x_k}(t)}{\gamma},
		\end{split}
		\label{first}
	\end{equation}
where $\gamma$ is defined by \eqref{cdg}.

Then, we consider
the $i^{\text{th}}$ term of the sum on the left hand side
of~\eqref{ukpar}.
By performing again the substitution $\eqref{chvar}$, we see that this term
is
	\[\begin{split}
			-\frac{1}{\epsilon}\dot{x_i}(t) \int_{\R} u_i'(t,x)u_k'(t,x) \,dx = \;&- \frac{1}{\epsilon} \dot{x_i}(t) \int_{\R} u'\bigg(\frac{x-x_i(t)}{\epsilon}\bigg)u'\bigg(\frac{x-x_k(t)}{\epsilon}\bigg) \, dx\\
			= \;& -\dot{x_i}(t) \int_{\R} u'\bigg(y+\frac{x_k(t)-x_i(t)}{\epsilon}\bigg)u'(y) \, dy \\
			\simeq\; & \, 0,
			\end{split}\]
where, for the last equivalence we have used that for  $\epsilon$ small, $ \displaystyle u' \bigg(y + \frac{x_k(t)-x_i(t)}{\epsilon}\bigg)$ is  asymptotic to $u'(\pm\infty) =0$.

We consider the first member on the right hand side of the identity \eqref{ukpar}, and, as before, take the $k^{\text{th}}$ term of the sum. We do the substitution \eqref{chvar} and have that
	\[ \begin{split}
	\frac{1}{\epsilon} \int_{\R} W' \big(u_k(t,x) \big) u_k'(t,x) \, dx =&\;\int_{\R} W' \big(u(y) \big) u'(y) \, dy	\\	=&\; W\big(u(y)\big)\Big|_{-\infty}^{+\infty}  \\
			=&\;W(1)-W(0)=0\end{split}\]
by the periodicity of $W$.
Now we use \eqref{ui}, the periodicity of $W'$ and we perform a Taylor expansion, noticing that $W'(0)=0$. We see that
		\[ \begin{split}
			 		W' \big(u_i(t,x)\big) \simeq & \;W' \Bigg( H\bigg(\frac{x-x_i(t)}{\epsilon}\bigg) - \frac{\epsilon^{2s}\big(x-x_i(t)\big)}{2s W''(0)\big|x-x_i(t)\big|^{1+2s}}\Bigg)\\
			 	\simeq & \;  W' \bigg( - \frac{\epsilon^{2s}\big(x-x_i(t)\big)}{2s W''(0)\big|x-x_i(t)\big|^{1+2s}}\bigg)\\
				\simeq & \;\frac{-\epsilon^{2s} \big(x-x_i(t)\big) }{ 2s \big|x-x_i(t)\big|^{1+2s} } .
		\end{split}\]		
Therefore, the $i^{\text{th}}$ term of the sum on the right hand side of the identity \eqref{ukpar} for $i\neq k$, by using the above approximation and doing one more time the substitution \eqref{chvar},  for $\epsilon$ small becomes
	\begin{equation}	
		\begin{split}\label{second}
		\frac{1}{\epsilon}\int_{\R} W' \big(u_i(t,x) \big) u_k'(t,x) \, dx = & \; -\frac{1}{\epsilon} \int_{\R} \frac{\epsilon^{2s} \big(x-x_i(t)\big) }{ 2s \big|x-x_i(t)\big|^{1+2s}} u' \bigg(\frac{x-x_k(t)}{\epsilon} \bigg) \, dx \\	
		=&  \;- \int_\R  \frac{\epsilon^{2s} \big( \epsilon y +x_k(t)-x_i(t)\big) }{ 2s \big|\epsilon y +x_k(t)-x_i(t)\big|^{1+2s}} u' (y) \, dy \\
		\simeq & \;- \frac{\epsilon^{2s} \big( x_k(t)-x_i(t)\big) }{ 2s \big|x_k(t)-x_i(t)\big|^{1+2s}} \int_\R u' (y) \, dy \\	
		= & \; - \frac{\epsilon^{2s} \big( x_k(t)-x_i(t)\big) }{ 2s \big|x_k(t)-x_i(t)\big|^{1+2s}}.
		\end{split}
	\end{equation}
We also observe that, for $\epsilon$ small, the second member on the right hand side of the identity \eqref{ukpar}, by using the change of variables \eqref{chvar}, reads
		\[ \begin{split}
		  		\frac{1}{\epsilon}\int_{\R} W'\Big(\sum_{i=1}^Nu_i(t,x)\Big)& u_k'(t,x) \, dx  \\ = &\;\frac{1}{\epsilon}\int_\R W'\Big(u_k(t,x) + \sum_{i\neq k} u_i(t,x)\Big)u_k'(t,x) \, dx \\ 
		=&\; \int_\R W'\bigg(u(y) + \sum_{i\neq k} u\Big(y + \frac{x_k(t)-x_i(t)}{\epsilon}\Big)\bigg) u'(y) \, dy.
		\end{split}\]
For $\epsilon$ small, $\displaystyle u\bigg(y + \frac{x_k(t)-x_i(t)}{\epsilon}\bigg) $ is asymptotic either to $u(+\infty)=1$ for $x_k>x_i$, or to $u(-\infty)=0$ for $x_k<x_i$. By using the periodicity of $W$, it follows that
	\begin{equation*}
		 \frac{1}{\epsilon}\int_{\R} W'\Big(\sum_{i=1}^Nu_i(t,x)\Big)u_k'(t,x) \, dx  = \int_\R W'\Big(u(y) \Big)u'(y) \, dy= W(1) -W(0)=0,
	\end{equation*}
again by the asymptotic behavior of $u$.
Concluding, by inserting the results \eqref{first} and \eqref{second} into \eqref{ukpar} we get that 
	\begin{equation*}
		\frac{\dot{x_k}(t)}{\gamma} =  \sum_{i\neq k} \frac{  x_k(t)-x_i(t) }{ 2s \big|x_k(t)-x_i(t)\big|^{1+2s}},
	\end{equation*}
which ends the justification of the system \eqref{solx}.
\end{proof}

We recall that, till now,
in Theorem~\ref{VISCOSITY}
we considered the initial data as a superposition of transitions all 
occurring with the same orientation (see \eqref{cdin1}),
i.e. the initial dislocation is a monotone function
(all the atoms are initially moved to the right).

Of course, for concrete applications,
it is interesting to consider also
the case in which
the atoms may dislocate in both directions,
i.e.
the transitions can occur with different orientations
(the atoms may be initially 
displaced to the left or to the right of their equilibrium position).

To model the different orientations of the dislocations,
we introduce a parameter $\xi_i \in \{-1,1\}$
(roughly speaking~$\xi_i=1$ corresponds to a dislocation to the right
and~$\xi_i=-1$ to a dislocation to the left). 

The main result in this case is the following (see~\cite{PV15}):

\begin{thm}\label{VISCOSITY2}
There exists a viscosity solution of 	
	\begin{equation*}
		\begin{cases}
		\partial_t v_{\epsilon} = \displaystyle \frac{1}{\epsilon} \Big( - \frlap v_{\epsilon} -\frac{1}{\epsilon^{2s}} W'(v_{\epsilon}) + \sigma_{\epsilon} \Big)&\text{ in } (0, +\infty) \times \R,\\
		v_{\epsilon}(0,x)=  \displaystyle \frac{\epsilon^{2s}}{W''(0)}\sigma(0,x)+ \sum_{i=1}^N u\bigg(\xi_i \frac{x-x_i^0}{\epsilon}\bigg)&\text{ for } x \in \R
		\end{cases}
	\end{equation*}
such that 
	\[\lim_{\epsilon \to 0} v_{\epsilon} (t,x) = \sum_{i=1}^N H\Big(\xi_i \big(x-x_i(t)\big)\Big),\]
where $\big(x_i(t)\big)_{i=1, \dots, N}$ is solution to 
\begin{equation}
		\begin{cases}
			\dot{x_i} = \gamma \bigg( -\xi_i \sigma (t,x_i) + \displaystyle\sum_{j\neq i} \xi_i \xi_j \displaystyle \frac{x_i-x_j} {2s |x_i-x_j|^{2s+1}}\bigg)  \quad \text{ in } (0,+\infty) ,\\
			x_i(0)=x_i^0.
		\end{cases}
	\label{solx1}
	\end{equation} 
\end{thm}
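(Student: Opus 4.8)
The plan is to follow the same two-step strategy used for Theorem~\ref{VISCOSITY}: first invoke the general viscosity-solution machinery to produce $v_\epsilon$ and pass to the limit, and then give the formal derivation of the ODE system~\eqref{solx1}, now keeping track of the orientations $\xi_i$. The rigorous construction of the viscosity solution $v_\epsilon$, together with the proof that $v_\epsilon(t,x)\to\sum_i H\big(\xi_i(x-x_i(t))\big)$ in the appropriate sense (and up to the first collision time of the $x_i$), rests on comparison principles and on the construction of suitable sub- and supersolutions modeled on translated copies of the basic layer $u$ and of its reflection $u(-\,\cdot)$; this is carried out in~\cite{PV15}, and I would cite it rather than reproduce it. The only genuinely new feature with respect to Theorem~\ref{VISCOSITY} is that transitions of opposite sign may attract one another (the coefficient $\xi_i\xi_j$ is then negative), so the limiting dynamical system~\eqref{solx1} must be read up to collisions; I would state this caveat explicitly.

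For the formal justification of~\eqref{solx1}, I would set $u_i(t,x):=u\big(\xi_i(x-x_i(t))/\epsilon\big)$ and use the ansatz $v_\epsilon\simeq\sum_{i=1}^N u_i$. Since $\frlap$ commutes with reflections and rescales with weight $\epsilon^{-2s}$, and since $u$ solves~\eqref{allencahn}, one still has $-\frlap u_i\simeq \epsilon^{-2s}W'(u_i)$, so plugging the ansatz into~\eqref{pareq} yields, exactly as in the monotone case,
\[ -\frac{1}{\epsilon}\sum_{i=1}^N \xi_i\,\dot{x_i}(t)\,u_i'(t,x)
= \frac{1}{\epsilon^{2s+1}}\left(\sum_{i=1}^N W'(u_i) - W'\Big(\sum_{i=1}^N u_i\Big)\right), \]
where $u_i'(t,x):=u'\big(\xi_i(x-x_i(t))/\epsilon\big)$; the extra factor $\xi_i$ on the left comes from differentiating $u_i$ in $t$. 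I would then multiply by $u_k'(t,x)$ and integrate over $\R$, using the change of variable $y=\xi_k(x-x_k(t))/\epsilon$ (so that $dx=\epsilon\,dy$ once the orientation of the integration limits is accounted for).

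On the left the diagonal term $i=k$ gives $-\xi_k\dot{x_k}/\gamma$ with $\gamma$ as in~\eqref{cdg}, while the off-diagonal terms are negligible because two distinct transitions are separated by a distance of order one. On the right, the diagonal term vanishes by periodicity of $W$ (it equals $\epsilon^{-2s}[W(1)-W(0)]=0$), and the second, global term is likewise negligible, since near the $k$-th transition the sum $\sum_{i\neq k}u_i$ is asymptotically an integer and periodicity of $W'$ reduces the integrand to $W'(u_k)u_k'$, which integrates to zero. The remaining off-diagonal terms are handled through the polynomial tail~\eqref{layersol}: there $u_i\simeq H(\xi_i(x-x_i))-\tfrac{\epsilon^{2s}\xi_i(x-x_i)}{2sW''(0)|x-x_i|^{1+2s}}$, so a Taylor expansion of $W'$ about its nondegenerate minimum gives $W'(u_i)\simeq -\tfrac{\epsilon^{2s}\xi_i(x-x_i)}{2s|x-x_i|^{1+2s}}$, and after the substitution the integral concentrates at the $k$-th transition, producing $-\xi_i\,\tfrac{x_k-x_i}{2s|x_k-x_i|^{1+2s}}\int_\R u'=-\xi_i\,\tfrac{x_k-x_i}{2s|x_k-x_i|^{1+2s}}$. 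Collecting terms yields $-\xi_k\dot{x_k}/\gamma=-\sum_{i\neq k}\xi_i\,\tfrac{x_k-x_i}{2s|x_k-x_i|^{1+2s}}$; multiplying by $-\xi_k$ and using $\xi_k^2=1$ gives~\eqref{solx1} with $\sigma\equiv0$, and the term $-\xi_i\sigma(t,x_i)$ is recovered by the same bookkeeping once the external stress is reinstated. The main obstacle, as in Theorem~\ref{VISCOSITY}, is not this formal computation but the rigorous justification of the ansatz and of the convergence in~\eqref{SENSE}, i.e. constructing barriers that bound the error between $v_\epsilon$ and $\sum_i u_i$ uniformly in $\epsilon$; that is precisely the content of~\cite{PV15}.
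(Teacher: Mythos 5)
Your proposal is correct and essentially follows the paper's strategy: for the rigorous construction of $v_\epsilon$ and the convergence statement you defer to~\cite{PV15} (exactly as the paper does), and you then give a formal derivation of the ODE system~\eqref{solx1} modeled on the paper's \emph{Justification of ODE system~\eqref{solx}}. In fact the paper stops short of this second step for Theorem~\ref{VISCOSITY2}: it states the theorem with the citation to~\cite{PV15} and merely remarks that it reduces to Theorem~\ref{VISCOSITY} when all $\xi_i=1$, without rerunning the formal computation. Your adaptation carries that computation out carefully and correctly, tracking the factor $\xi_i$ through the chain rule in $\partial_t u_i$, the change of variable $y=\xi_k(x-x_k)/\epsilon$ (where the sign of $\xi_k$ flips both $dx$ and the orientation of the limits, so the net Jacobian is still $\epsilon$), the scaling of $\frlap$ under reflection, and the Taylor expansion of $W'$ against the polynomial tail~\eqref{layersol}; multiplying by $-\xi_k$ at the end and using $\xi_k^2=1$ indeed recovers~\eqref{solx1} with $\sigma\equiv0$. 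Your explicit caveat that the limit dynamics must be read only up to the first collision time is worth keeping, since it is the genuinely new phenomenon relative to the monotone case, and the paper only discusses it afterwards, in the remarks following the theorem and in Theorem~\ref{J78K}.
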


We observe that
Theorem~\ref{VISCOSITY2} reduces to Theorem~\ref{VISCOSITY}
when~$\xi_1=\dots=\xi_n=1$. In fact, the case discussed
in Theorem~\ref{VISCOSITY2} is richer than the one in Theorem~\ref{VISCOSITY},
since, in the case of different initial orientations,
collisions can occur, i.e. it may happen that~$x_i(T_c)=x_{i+1}(T_c)$
for some~$i\in\{1,\dots,N-1\}$ at a collision time~$T_c$.

For instance, in the case $N=2$, for $\xi_1=1$ and $\xi_2=-1$
(two initial dislocations with different orientations)
we have that
	\[ \begin{split}&\text{ if } \sigma \leq 0  \mbox{ then } T_c\leq\frac{s\theta_0^{1+2s}}{(2s+1)\gamma},\\
		&\text{ if } \theta_0 < (2s \|\sigma\|_{\infty})^{-\frac{1}{2s}} \mbox{ then } T_c \leq \frac{s\theta_0^{1+2s}}{\gamma(1-2s \theta_0\|\sigma\|_{\infty} )},
		\end{split} \]
where~$\theta_0:=x_2^0-x_1^0$ is the initial
distance between the dislocated atoms.
That is, if either the external force has the right sign,
or the initial distance is suitably small with respect to the
external force, then the dislocation time is finite,
and collisions occur in a finite time
(on the other hand, when these conditions are violated,
there are examples in which collisions do not occur).\bigskip

This and more general cases of collisions, with precise
estimates on the collision times, are discussed in detail
in~\cite{PV15}.\bigskip

An interesting feature of the system
is that the dislocation function~$v_\epsilon$
does not annihilate
at the collision time. More precisely, in the appropriate
scale, we have that~$v_\epsilon$ at the collision time
vanishes outside the collision
points, but it still preserves a non-negligible
asymptotic contribution exactly
at the collision points.
A formal statement is the following (see~\cite{PV15}):

\begin{thm}\label{J78K}
Let~$N=2$ and assume that a
collision occurs. Let~$x_c$ be the collision point, namely $x_c=x_1(T_c)=x_2(T_c)$. Then 
\begin{equation} \label{C1} \lim_{t\to T_c} \lim_{\eee\to 0} v_\eee(t,x)=0 \quad \mbox { for any } \quad x\neq x_c,\end{equation} but
\begin{equation}\label{C2}\limsup_{\substack{ {t\to T_c}\\{\eee\to 0}}} v_\eee(t,x_c) \geq 1.\end{equation} 
\end{thm}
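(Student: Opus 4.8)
The plan is to combine the convergence result of Theorem~\ref{VISCOSITY2} with a short analysis of the dynamical system \eqref{solx1} near the collision time. Up to relabelling I would assume $\xi_1=1$, $\xi_2=-1$ and $x_1^0<x_2^0$, so that $x_1(t)<x_2(t)$ as long as the two fronts have not met. Then Theorem~\ref{VISCOSITY2} gives, for every $t\in[0,T_c)$ and every $x$ different from $x_1(t)$ and $x_2(t)$,
\[
\lim_{\eee\to0}v_\eee(t,x)=H\bigl(x-x_1(t)\bigr)+H\bigl(x_2(t)-x\bigr),
\]
which, up to the additive integer constant that is immaterial for the crystal by periodicity of $W$, coincides (in the scale of the statement) with the profile equal to $1$ on the interval $[x_1(t),x_2(t)]$ and to $0$ elsewhere. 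So the whole statement reduces to tracking this collapsing ``box'' as $t\nearrow T_c$, since then $x_1(t),x_2(t)\to x_c$.

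For \eqref{C1} I would fix $x\neq x_c$ and use that the $x_i$ are continuous up to $T_c$ with $x_1(T_c)=x_2(T_c)=x_c$: for $t$ sufficiently close to $T_c$ the whole interval $[x_1(t),x_2(t)]$ is contained in a neighbourhood of $x_c$ that does not contain $x$, so the displayed formula gives $\lim_{\eee\to0}v_\eee(t,x)=0$; letting $t\to T_c$ yields \eqref{C1}.

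For \eqref{C2} the point to establish is that the collision is two-sided, i.e. $x_c$ stays strictly between the two fronts up to time $T_c$. I would read this off \eqref{solx1}: setting $d(t):=x_2(t)-x_1(t)>0$, the mutual interaction term contributes $+\gamma\,\bigl(2s\,d(t)^{2s}\bigr)^{-1}$ to $\dot x_1$ and $-\gamma\,\bigl(2s\,d(t)^{2s}\bigr)^{-1}$ to $\dot x_2$, and since this blows up as $d(t)\to0$ it dominates the bounded contribution of $\sigma$; hence there is $\eta>0$ with $\dot x_1>0>\dot x_2$ on $(T_c-\eta,T_c)$, so that
\[
x_1(t)<x_1(T_c)=x_c=x_2(T_c)<x_2(t)\qquad\text{for all }t\in(T_c-\eta,T_c).
\]
For such $t$ the point $x_c$ lies in the open interval $(x_1(t),x_2(t))$ and differs from both fronts, so the displayed formula gives $\lim_{\eee\to0}v_\eee(t,x_c)=1$ in the scale of the statement. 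I would then conclude by a diagonal argument: choose $t_n\nearrow T_c$ in $(T_c-\eta,T_c)$ and, for each $n$, choose $\eee_n\in(0,1/n)$ small enough that $v_{\eee_n}(t_n,x_c)\ge 1-\tfrac1n$; then $(t_n,\eee_n)\to(T_c,0)$ with $v_{\eee_n}(t_n,x_c)\to1$, whence $\displaystyle\limsup_{\substack{t\to T_c\\ \eee\to0}}v_\eee(t,x_c)\ge1$, which is \eqref{C2}.

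The main obstacle is precisely the coupling of the two limits in \eqref{C2}. The transition layers of $v_\eee$ have thickness of order $\eee$, so if $\eee$ is not chosen small relative to the front separation $d(t)$ the two opposite fronts annihilate before the value at $x_c$ reaches its full overlap $1$; consequently the iterated limit is sensitive to the order of the limits and to the side from which $t\to T_c$ (for $t\ge T_c$ the dislocations have already cancelled and $v_\eee\to0$), and only the $\limsup$ along $t<T_c$, realized by the diagonal choice $\eee_n\ll d(t_n)$, is robust. The remaining ingredients --- uniqueness and continuity up to $T_c$ of the solution of \eqref{solx1}, and the fact that the convergence in Theorem~\ref{VISCOSITY2} may be evaluated pointwise at $x$ and at $x_c$, which are off the moving fronts for $t<T_c$ --- are routine given what has already been established, so essentially all the analytic weight of the result is carried by Theorem~\ref{VISCOSITY2} itself.
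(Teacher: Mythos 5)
The paper does not actually supply a proof of Theorem~\ref{J78K}: it is presented as a ``formal statement'' and the reader is referred entirely to \cite{PV15}, so there is nothing in the text to compare your argument against line by line. That said, your route --- extract the limit profile from Theorem~\ref{VISCOSITY2}, track the fronts $x_1(t)$, $x_2(t)$ up to $T_c$, use the ODE \eqref{solx1} to show the collision is approached from both sides, and diagonalize for the $\limsup$ --- is the intended one, and the quantitative pieces you carry out are correct: the mutual-interaction term in \eqref{solx1} does contribute $+\gamma/(2sd^{2s})$ to $\dot x_1$ and $-\gamma/(2sd^{2s})$ to $\dot x_2$, which blows up as $d=x_2-x_1\to0$ and dominates the bounded $\sigma$, so indeed $x_1(t)<x_c<x_2(t)$ for $t$ close to $T_c^-$.

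The one concrete gap is the normalization. With $\xi_1=1$, $\xi_2=-1$ and $x_1(t)<x_2(t)$, the formula from Theorem~\ref{VISCOSITY2} is $H(x-x_1(t))+H(x_2(t)-x)$, which equals $1$ (not $0$) outside $[x_1(t),x_2(t))$ and $2$ (not $1$) inside. So your derivation literally proves $\lim_{t\to T_c}\lim_{\eee\to0}v_\eee(t,x)=1$ for $x\neq x_c$ and $\limsup v_\eee(t,x_c)\ge2$: everything is shifted by $+1$ relative to \eqref{C1}--\eqref{C2}. Your aside about ``the additive integer constant that is immaterial for the crystal by periodicity of $W$'' identifies why the shift is physically harmless, but it is not a proof of the stated equalities, which are about $v_\eee$ itself, not $v_\eee$ modulo integers. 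The likely source is a mismatch between the paper's paraphrase of Theorem~\ref{VISCOSITY2} and the statement of Theorem~\ref{J78K}: in \cite{PV15} the orientation $\xi_i=-1$ enters as a multiplicative sign (so the negatively-oriented layer \emph{subtracts} a transition and the limit profile sits at $0$ at infinity), whereas here it enters as an argument rescaling (which \emph{adds} a reversed transition and raises the baseline to $1$). You should either prove the shifted version consistent with the paper's Theorem~\ref{VISCOSITY2}, or flag the normalization inconsistency explicitly rather than sweeping it into a parenthetical; as written, the argument does not establish the theorem with the numerical values $0$ and $1$.
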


Formulas~\eqref{C1} and~\eqref{C2} describe what happens in the crystal at the collision time. On the one hand, formula~\eqref{C1} states that at any point that is not the collision point and at a large scale, the system relaxes at the collision time. On the other hand, formula~\eqref{C2} states that the behavior at the collision points at the collision time is quite ``singular''. Namely, the system does not relax immediately
(in the appropriate scale). As a matter of fact, in related numerical simulations
(see e.g. \cite{RAABE}) one may notice that the dislocation
function may persists after collision and, in higher dimensions,
further collisions may change the topology of the dislocation curves.

What happens is that a slightly larger time is needed before the system relaxes exponentially fast: a detailed description of this relaxation phenomenon
is presented in \cite{RELAX}. For instance,
in the case~$N=2$,
the dislocation function decays to zero
exponentially fast, slightly after collision, as given by the following
result:

\begin{thm}\label{thmexponentialdecay}
Let~$N=2$, $\xi_1=1$, $\xi_2=-1$, and let
$v_\epsilon$ be the solution given by Theorem~\ref{VISCOSITY2},
with~$\sigma\equiv0$.
Then there exist
$\epsilon_0>0$, $c>0$, $T_\epsilon>T_c$ and~$\rho_\epsilon>0$
satisfying
\begin{eqnarray*}
&& \lim_{\epsilon\to0} T_\epsilon=T_c\\
{\mbox{and }}&& \lim_{\epsilon\to0}
\varrho_\epsilon=0\end{eqnarray*}
such that for any $\epsilon<\epsilon_0$ we have
\begin{equation}\label{4.23bis}
|v_\epsilon (t,x)|\leq \varrho_\epsilon
e^{c\frac{T_\epsilon-t}{\epsilon^{2s+1}}},\quad\text{for any }x\in\R\text{ and
}t\geq T_\epsilon.\end{equation} \end{thm}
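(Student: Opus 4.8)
The plan is to run a comparison argument: for times~$t\ge T_\epsilon$, with~$T_\epsilon$ slightly larger than the collision time~$T_c$, I would trap~$v_\epsilon$ between two explicit barriers that decay exponentially fast on the scale~$\epsilon^{2s+1}$. The nonlocal parabolic equation in Theorem~\ref{VISCOSITY2} enjoys a comparison principle for viscosity solutions, so it is enough to produce a supersolution~$\overline w$ and a subsolution~$-\overline w$ which, at the initial time~$T_\epsilon$, respectively dominate and are dominated by~$v_\epsilon$.

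The mechanism that drives the exponential decay is that, near the stable equilibrium, the potential~$W$ acts as a linear damping of strength proportional to~$W''(0)>0$. Indeed, since~$0$ is a nondegenerate minimum of~$W$, one has~$W'(r)=W''(0)r+O(r^2)$, hence~$W'(r)\,r\ge c\,r^2$ for~$|r|$ small and any fixed~$c\in(0,W''(0))$. I would then take
	\[ \overline w(t):=\varrho_\epsilon\,e^{c\frac{T_\epsilon-t}{\epsilon^{2s+1}}},\]
which is independent of~$x$, so that~$\frlap\overline w=0$; hence, for~$t\ge T_\epsilon$ and as long as~$\overline w\le\varrho_\epsilon$ stays in the range where the damping estimate applies,
	\[ \partial_t\overline w=-\frac{c}{\epsilon^{2s+1}}\,\overline w\ge-\frac{1}{\epsilon^{2s+1}}\,W'(\overline w)=\frac1\epsilon\Big(-\frlap\overline w-\frac1{\epsilon^{2s}}W'(\overline w)\Big),\]
so~$\overline w$ is a supersolution, and symmetrically~$-\overline w$ is a subsolution. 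The comparison principle would then give~$|v_\epsilon(t,x)|\le\varrho_\epsilon\,e^{c(T_\epsilon-t)/\epsilon^{2s+1}}$ for all~$t\ge T_\epsilon$, which is~\eqref{4.23bis}, provided one knows that
\begin{equation}\label{PLAN:init}
\sup_{x\in\R}|v_\epsilon(T_\epsilon,x)|\le\varrho_\epsilon,\qquad\text{with }T_\epsilon\to T_c\ \text{ and }\ \varrho_\epsilon\to0.
\end{equation}

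The main obstacle is precisely~\eqref{PLAN:init}: I would need to show that, a short time after the collision, $v_\epsilon$ is already \emph{uniformly} small. Away from the collision point~$x_c$ this is immediate from Theorem~\ref{VISCOSITY2} with~$\sigma\equiv0$, since once the two fronts merge the limit profile~$\sum_iH\big(\xi_i(x-x_i(t))\big)$ becomes locally constant (equal, after the normalization of~$W$, to the value we call~$0$). Near~$x_c$, however, Theorem~\ref{J78K} only says that a residual bump of height up to~$1$ may survive exactly at~$t=T_c$, so a finer analysis is required. Here I would exploit that, past the collision, the two surviving transitions sit at the same point with opposite orientations, so the merged profile is~$u\big((x-x_c)/\epsilon\big)+u\big(-(x-x_c)/\epsilon\big)$, and by the tail estimate~\eqref{layersol} the two polynomial tails cancel to leading order, making this profile~$C^0$-close to a constant with an error that is quantitatively small in~$\epsilon$. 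Comparing~$v_\epsilon$ slightly after~$T_c$ with barriers modeled on such merged profiles (slightly tilted downwards) should force the residual bump to dissipate, yielding~\eqref{PLAN:init} for a suitable~$T_\epsilon=T_c+o(1)$ and~$\varrho_\epsilon\to0$; feeding this into the comparison above completes the proof. The detailed construction of the intermediate barriers and the quantitative control of~$T_\epsilon$ and~$\varrho_\epsilon$ is the technical heart of~\cite{RELAX}.
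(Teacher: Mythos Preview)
The paper does not prove this theorem; it only states the result and refers the reader to~\cite{RELAX} for the full argument, so there is no in-text proof to compare against.

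Your outline is sound and almost certainly aligned with the method in~\cite{RELAX}: once~$\|v_\epsilon(T_\epsilon,\cdot)\|_{L^\infty}\le\varrho_\epsilon$ with~$\varrho_\epsilon$ small, the spatially constant functions~$\pm\overline w(t)=\pm\varrho_\epsilon e^{c(T_\epsilon-t)/\epsilon^{2s+1}}$ are super/subsolutions because~$W''(0)>0$ gives~$W'(r)\,{\rm sign}(r)\ge c|r|$ near the equilibrium, and the comparison principle yields~\eqref{4.23bis}. You also correctly isolate the real content of the theorem, namely~\eqref{PLAN:init}. One point to sharpen in your heuristic: the merged profile~$u(y)+u(-y)$ differs from the equilibrium constant by an~$O(1)$ quantity on the region~$|y|=O(1)$ (the tail cancellation only controls the far field), so the residual bump near~$x_c$ is \emph{not} small in~$\epsilon$ at time~$T_c$; a genuinely dynamical barrier argument on a short time window past~$T_c$ is needed to dissipate it before the constant barriers take over. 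You defer this to~\cite{RELAX}, which is appropriate for a sketch, but you should not suggest that tail cancellation alone makes the merged profile uniformly close to a constant.

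A normalization remark: with the conventions of the chapter ($u(-\infty)=0$, $u(+\infty)=1$), the two opposite transitions relax after collision to the constant~$1$, not~$0$; the statement of~\eqref{4.23bis} is written in the convention of~\cite{RELAX}, where the relevant stable equilibrium is labeled~$0$. You flagged this with ``the value we call~$0$'', but your barriers~$\pm\overline w$ must be centered at the correct equilibrium for the supersolution computation (which uses~$W'(0)=0$, $W''(0)>0$) to go through.
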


The estimate in~\eqref{4.23bis} states, roughly speaking, that
at a suitable time~$T_\epsilon$ (only slightly bigger than the collision
time~$T_c$) the dislocation function gets below a small threshold~$\rho_\epsilon$,
and later it decays exponentially fast (the constant of this
exponential becomes large when $\epsilon$ is small).

The reader may compare Theorem~\ref{J78K}
and~\ref{thmexponentialdecay} and notice that different asymptotics
are considered by the two results.
A result
similar to Theorem~\ref{thmexponentialdecay}
holds for a larger number of dislocated atoms.
For instance,
in the case of three atoms with alternate
dislocations, one has that, slightly after collision,
the dislocation function decays
exponentially fast to
the basic layer solution. More precisely (see again~\cite{RELAX}),
we have that:

\begin{thm}\label{mainthmbeforecoll3}
Let~$N=3$, $\xi_1=\xi_3=1$, $\xi_2=-1$, and let
$v_\epsilon$ be the solution given by Theorem~\ref{VISCOSITY2},
with~$\sigma\equiv0$.
Then there exist
$\epsilon_0>0$, $c>0$, 
$T_\epsilon^1,T_\epsilon^2>T_c$ and~$\rho_\epsilon>0$
satisfying
\begin{eqnarray*}
&& \lim_{\epsilon\to0} T_\epsilon^1=
\lim_{\epsilon\to0} T_\epsilon^2
=T_c,\\
{\mbox{and }}&& \lim_{\epsilon\to0}
\varrho_\epsilon=0\end{eqnarray*}
and points~$\bar y_\epsilon$ and~$\bar z_\epsilon$ satisfying
$$ \lim_{\epsilon\to0}
|\bar z_\epsilon-\bar y_\epsilon|=0$$
such that for any $\epsilon<\epsilon_0$ we have
\begin{equation}
\label{SDF-1}
v_\epsilon(t,x)\leq
   u\left(\frac{x-\bar y_\epsilon}{\epsilon}\right)+
\varrho_\epsilon e^{-\frac{c
(t-T^1_\epsilon)}{\epsilon^{2s+1}}},\qquad
   \text{for any }x\in\R \text{ and }t\geq T^1_\epsilon,
\end{equation}
and
\begin{equation}
\label{SDF-2}
v_\epsilon(t,x)\geq
u\left(\frac{x-\bar z_\epsilon}{\epsilon}\right)
-\varrho_\epsilon e^{-\frac{c
(t-T^2_\epsilon)}{\epsilon^{2s+1}}},\qquad
\text{for any }x\in\R
\text{ and }t\geq
T^2_\epsilon,\end{equation}
where $u$ is the basic layer solution introduced in~\eqref{allencahn}.
\end{thm}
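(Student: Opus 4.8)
The plan is to reduce the three-atom alternate configuration to the two-atom collision analysis already available in Theorem~\ref{thmexponentialdecay}, and then to trap the surviving transition between two barriers built around a single translate of the basic layer solution~$u$ of~\eqref{allencahn}. The heuristic is dictated by the ODE system~\eqref{solx1} with $\sigma\equiv0$: since $\xi_1=\xi_3=1$ and $\xi_2=-1$, the middle atom $x_2$ is attracted towards (and collides with) one of the outer atoms, say $x_1$, with opposite orientation, so that the transitions $H(x-x_1)$ and $H(x_2-x)$ annihilate, while the outer atom $x_3$ survives; after collision the macroscopic dislocation is a single step, which in the $\epsilon$-scale is a translate of~$u$. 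This is why the conclusion~\eqref{SDF-1}--\eqref{SDF-2} involves a single layer with centers $\bar y_\epsilon,\bar z_\epsilon$ that collapse onto each other as $\epsilon\to0$.

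First I would use Theorem~\ref{VISCOSITY2} together with the convergence to the dynamical system~\eqref{solx1} to fix a time $T_\epsilon^0$ slightly before $T_c$ at which $v_\epsilon(T_\epsilon^0,\cdot)$ is, up to errors vanishing with $\epsilon$, squeezed between two ordered superpositions consisting of a nearly-collided pair localized around the collision point $x_c$ plus a well-separated transition $u\big((x-x_3^\epsilon)/\epsilon\big)$ near the surviving atom. Here the polynomial decay~\eqref{layersol} of the layer solution is essential: it lets me bound, by a power of~$\epsilon$, the long-range interaction between the colliding pair and the far transition, so that near $x_c$ the picture is exactly that of the $N=2$ collision analyzed in Theorem~\ref{J78K} and Theorem~\ref{thmexponentialdecay}. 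Applying that two-atom description locally, I obtain times $T_\epsilon^1,T_\epsilon^2>T_c$, centers $\bar y_\epsilon,\bar z_\epsilon$ and a threshold $\rho_\epsilon$ such that at $t=T_\epsilon^1$ (resp.\ $t=T_\epsilon^2$) the function $v_\epsilon$ lies below $u\big((x-\bar y_\epsilon)/\epsilon\big)+\rho_\epsilon$ (resp.\ above $u\big((x-\bar z_\epsilon)/\epsilon\big)-\rho_\epsilon$), with $T_\epsilon^1,T_\epsilon^2\to T_c$ and $\rho_\epsilon\to0$.

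The core step is then to propagate these two-sided estimates to all later times by a comparison argument. I would construct a supersolution of $\partial_t v=\frac1\epsilon\big(-\frlap v-\frac1{\epsilon^{2s}}W'(v)\big)$ of the form $\overline v_\epsilon(t,x):=u\big((x-\bar y_\epsilon)/\epsilon\big)+\rho_\epsilon\,e^{-c(t-T_\epsilon^1)/\epsilon^{2s+1}}\psi(x)$ for a suitable positive bounded $\psi$, and symmetrically a subsolution $\underline v_\epsilon$ for~\eqref{SDF-2}. Inserting $\overline v_\epsilon$ into the equation and Taylor-expanding $W'$ around $u$ (using $W'(0)=0$ and the periodicity of $W$), the leading correction is governed, in the $\epsilon$-rescaled variables, by the linearized fractional Allen--Cahn operator $-\frlap-W''(u)$; since $W''(0)>0$, this operator has a one-dimensional kernel spanned by $u'$ (coming from translation invariance) and the rest of its spectrum bounded below by a positive constant, which produces exactly the exponential rate $c/\epsilon^{2s+1}$ after undoing the scaling. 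The fractional parabolic comparison principle then gives $\underline v_\epsilon\le v_\epsilon\le\overline v_\epsilon$ for all $t\ge\max\{T_\epsilon^1,T_\epsilon^2\}$, i.e.~\eqref{SDF-1}--\eqref{SDF-2}, and the fact that both $\bar y_\epsilon$ and $\bar z_\epsilon$ sit near $x_3(T_c)$ forces $|\bar z_\epsilon-\bar y_\epsilon|\to0$.

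The main obstacle is precisely this barrier construction with uniform-in-$\epsilon$ constants. One must (i) choose $T_\epsilon^1,T_\epsilon^2$ only slightly past $T_c$ yet late enough that the annihilating pair has genuinely relaxed to within $\rho_\epsilon$ of a single layer — this simultaneous control of $T_\epsilon^j\to T_c$ and $\rho_\epsilon\to0$ is the delicate timing, handled as in the $N=2$ proof of~\cite{RELAX}; (ii) handle the long-range tails in~\eqref{layersol}, since $\frlap$ of an isolated layer is not zero but equals $W'(u)$ plus a polynomially decaying tail, and the mismatch between the centers $\bar y_\epsilon$ and $\bar z_\epsilon$ must be reconciled through these tails; and (iii) show that the residual influence of the vanished pair on the surviving transition is $o(\rho_\epsilon)$ uniformly in $\epsilon$, again via~\eqref{layersol}. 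Once these estimates are in place, the conclusion follows from the maximum principle, essentially repeating the argument used for Theorem~\ref{thmexponentialdecay}; the details, as indicated in the statement, are in~\cite{RELAX}.
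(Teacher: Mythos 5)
The paper itself does not prove Theorem~\ref{mainthmbeforecoll3}: it states the result and refers immediately to \cite{RELAX} for the argument, so there is no in-paper proof for your sketch to be measured against, only the structure suggested by the surrounding discussion and by Theorem~\ref{thmexponentialdecay}. That said, your overall strategy --- localize the annihilation, trap $v_\epsilon$ between barriers built around a translate of the layer~$u$, and propagate by the fractional parabolic comparison principle --- is the right shape of argument, and your appeal to the tail estimate~\eqref{layersol} and to the nondegeneracy $W''(0)>0$ is also appropriate.

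Two concrete points need attention. First, your heuristic dynamical picture covers only one of the two scenarios. With $\xi_1=\xi_3=1$, $\xi_2=-1$ and $\sigma\equiv0$, the ODE system~\eqref{solx1} sends $x_1$ to the right, $x_3$ to the left, and $x_2$ towards its nearer neighbor; in the symmetric case $x_2^0-x_1^0=x_3^0-x_2^0$ the middle particle does not move and all three particles collide simultaneously at the midpoint. Your description (``the middle atom colliding with one outer atom, while $x_3$ survives'') treats only the asymmetric double collision, and the localization of the $N=2$ argument around $x_c$ that you invoke does not apply directly to a triple collision, where the surviving layer sits at the common collision point rather than at $x_3(T_c)$. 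Second, a supersolution of the form $u\bigl((x-\bar y_\epsilon)/\epsilon\bigr)+\rho_\epsilon e^{-c(t-T^1_\epsilon)/\epsilon^{2s+1}}\psi(x)$ with a \emph{fixed} center and a static spatial profile $\psi$ will in general not be a supersolution: the linearization $-\frlap-W''(u)$ annihilates $u'$, and $W''(u)$ is negative near the layer center, so the decaying prefactor alone cannot enforce the differential inequality there. The usual remedy is a Fife--McLeod barrier $u\bigl((x-\bar y_\epsilon-q(t))/\epsilon\bigr)+\delta(t)$ with a \emph{drifting} center, whose $-q'(t)\,u'/\epsilon$ term absorbs the indefinite part of $W''(u)$; this drift is precisely the mechanism that makes the two centers $\bar y_\epsilon$ and $\bar z_\epsilon$ distinct and pushes $T^1_\epsilon,T^2_\epsilon$ strictly past $T_c$. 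Your sketch omits this shifting mechanism, which is the substantive part of the barrier construction and cannot simply be deferred.
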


Roughly speaking, formulas~\eqref{SDF-1}
and~\eqref{SDF-2}
say that for times~$T^1_\epsilon$, $T^2_\epsilon$
just slightly bigger than the collision time~$T_c$,
the dislocation function~$v_\epsilon$ gets trapped
between two basic layer solutions (centered at points~$\bar y_\epsilon$
and~$\bar z_\epsilon$), up to a small error.
The error gets eventually to zero, exponentially fast in time,
and the two basic layer solutions which trap~$v_\epsilon$
get closer and closer to each other as~$\epsilon$
goes to zero (that is, the distance between~$\bar y_\epsilon$
and~$\bar z_\epsilon$ goes to zero with~$\epsilon$).

We refer once more
to~\cite{RELAX} for a series of figures
describing in details
the results of Theorems~\ref{thmexponentialdecay}
and~\ref{mainthmbeforecoll3}. 
We observe that
the results presented in Theorems \ref{VISCOSITY}, \ref{VISCOSITY2}, \ref{J78K}, \ref{thmexponentialdecay} and \ref{mainthmbeforecoll3} describe the crystal at different space and time scale.
As a matter of fact, the mathematical study of a crystal typically goes from an atomic description (such as a classical discrete model presented by Frenkel-Kontorova
and Prandtl-Tomlinson) to a macroscopic scale in which a plastic deformation occurs.

In the theory discussed here, we join this atomic and macroscopic scales by a series
of intermediate scales, such as a microscopic scale, in which the Peierls-Nabarro model is introduced,
a mesoscopic scale, in which we studied the dynamics of the dislocations (in particular,
Theorems \ref{VISCOSITY} and \ref{VISCOSITY2}),
in order to obtain at the end a macroscopic theory
leading to the relaxation of the model to a permanent
deformation (as given in Theorems \ref{thmexponentialdecay} and \ref{mainthmbeforecoll3} ,
while Theorem\ref{J78K} somehow describes the further intermediate
features between these schematic scalings).

\section{An approach to the extension problem via the Fourier transform}

We will discuss here the extension operator of the fractional Laplacian
via the Fourier transform approach (see~\cite{CS07} and~\cite{STINGA-TORREA} 
for other approaches and further results and also~\cite{FGMT},
in which a different extension formula is obtained
in the framework of the Heisenberg groups). 

Some readers may find the details of this part rather technical:
if so, she or he can jump directly to Section~\ref{S:NP} on page~\pageref{S:NP:P},
without affecting the subsequent reading.

We fix at first a few pieces of
notation. We denote points in~$\R^{n+1}_+:=\R^n\times(0,+\infty)$ as~$X=(x,y)$, with~$x\in\R^n$ and~$y>0$. When taking gradients in~$\R^{n+1}_+$, we write~$\nabla_X=(\nabla_x,\partial_y)$, where~$\nabla_x$ is the gradient in~$\R^n$. Also, in~$\R^{n+1}_+$, we will often take the
Fourier transform in the variable~$x$ only, for fixed~$y>0$. We also set \[ a:=1-2s\in(-1,1).\]

We will consider the fractional Sobolev space $\widehat H^s(\Rn)$
defined as the 
set of functions $u$ that satisfy
\[\|u\|_{L^2(\Rn)}+[\widehat u]_G <+\infty,\]
where
	\[ [v]_G:=\sqrt{ \int_{\Rn} |\xi|^{2s}\,|v(\xi)|^2\,d\xi}. \]
For any~$u\in W^{1,1}_{\rm loc}((0,+\infty))$, we consider the functional 	
	\begin{equation} \label{Gu}
		 G(u):=\int_0^{+\infty}t^a \Big(\big|u(t)\big|^2+\big|u'(t)\big|^2\Big) \,dt.
	\end{equation}
By Theorem~4 of~\cite{SeV14}, we know that the functional~$G$ attains its minimum among all the functions~$u\in W^{1,1}_{\rm loc}((0,+\infty))\cap C^0([0,+\infty))$ with~$u(0)=1$. We call~$g$ such minimizer and
	\begin{equation}\label{F-0}
		C_\sharp:=G(g)=\min_{{u\in W^{1,1}_{\rm loc}((0,+\infty)) 
		\cap C^0([0,+\infty))}\atop{u(0)=1}} G(u).
	\end{equation}

The main result of this section is the following.

\begin{thm} \label{thmext}
Let~$u\in \mathcal{S}(\R^n)$ and let
	\begin{equation}\label{F-7-bis--}
		U(x,y):={\mathcal{F}}^{-1} \Big(\widehat u(\xi)\,g(|\xi|y)\Big).
	\end{equation}
Then
	\begin{equation}\label{78}
		{\rm div}\, (y^a \nabla U)=0
	\end{equation}
for any~$X=(x,y)\in\R^{n+1}_+$. 
In addition,
	\begin{equation}\label{0S}
		-y^a\partial_y U \Big|_{\{y=0\}} = C_\sharp (-\Delta)^s u
	\end{equation}
in~$\Rn$, both in the sense of distributions and as a pointwise limit.
\end{thm}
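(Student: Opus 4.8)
The plan is to pass to the Fourier transform in the $x$--variable, where the extension decouples into a one--parameter family (indexed by the frequency $\xi$) of one--dimensional problems all governed by the minimizer $g$ of \eqref{Gu}, and then to read off \eqref{78} and \eqref{0S} from the variational properties of $g$.

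\emph{Step 1 (properties of $g$).} Since $g$ minimizes the strictly convex functional $G$ among the competitors with $g(0)=1$ and $G(g)<+\infty$, it is the unique critical point; hence $g\in C^\infty((0,+\infty))$ and it solves the Euler--Lagrange equation
\[ \bigl(t^a g'(t)\bigr)'=t^a g(t),\qquad t>0, \]
while $G(g)<+\infty$ forces $g$ together with its derivatives to decay as $t\to+\infty$, in fact exponentially by a comparison argument. The origin is a regular singular point of this equation with indicial exponents $0$ and $1-a=2s$, so a Frobenius analysis gives, near $t=0$,
\[ g(t)=1+\frac{t^{2}}{2(1+a)}+\cdots+\kappa\, t^{2s}\bigl(1+\cdots\bigr) \]
for a constant $\kappa\in\R$ which is determined by the decay at $+\infty$ (in the borderline case $s=1/2$ one checks directly that no logarithmic term occurs). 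In particular $g$ is bounded on $[0,+\infty)$, and, since $a+2s-1=0$,
\[ \lim_{t\to0^+}t^a g'(t)=2s\,\kappa. \]

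\emph{Step 2 (the equation \eqref{78}).} With $U$ given by \eqref{F-7-bis--}, the rapid decay of $\widehat u\in\mathcal{S}(\R^n)$ together with the bounds on $g,g',g''$ from Step 1 justify differentiating under the integral; taking the Fourier transform in $x$ one finds that $\widehat{\,\mathrm{div}\,(y^a\nabla U)\,}(\xi,y)$ equals $|\xi|^{2-a}\,\widehat u(\xi)\bigl[(t^ag'(t))'-t^ag(t)\bigr]$ evaluated at $t=|\xi|y$, which vanishes by the Euler--Lagrange equation of Step 1. Hence $\mathrm{div}\,(y^a\nabla U)=0$ in $\R^{n+1}_+$.

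\emph{Step 3 (the constant $C_\sharp$ and the Neumann condition \eqref{0S}).} Multiplying the Euler--Lagrange equation by $g$ and integrating over $(0,+\infty)$, the integration by parts produces the boundary term $[g\,t^ag']_0^{+\infty}$, which by the exponential decay at $+\infty$ and $g(0)=1$ reduces to $-\lim_{t\to0^+}t^ag'(t)$, while the two resulting integrals of $t^a(g')^2$ cancel; thus
\[ C_\sharp=G(g)=-\lim_{t\to0^+}t^a g'(t)=-2s\,\kappa, \]
and in particular $\kappa<0$. On the Fourier side, from $\widehat{\partial_yU}(\xi,y)=|\xi|\,\widehat u(\xi)\,g'(|\xi|y)$ and $1-a=2s$ we get
\[ \widehat{-y^a\partial_yU}(\xi,y)=-\bigl(|\xi|y\bigr)^a g'(|\xi|y)\,|\xi|^{2s}\,\widehat u(\xi), \]
and by Step 1 the factor $(|\xi|y)^ag'(|\xi|y)$ tends to $2s\kappa$ as $y\to0^+$, so the right--hand side tends pointwise in $\xi$ to $-2s\kappa\,|\xi|^{2s}\widehat u(\xi)=C_\sharp\,|\xi|^{2s}\widehat u(\xi)$, which by Lemma~\ref{frlaphdeflem} is the Fourier transform of $C_\sharp(-\Delta)^s u$. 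To upgrade this to the pointwise statement in \eqref{0S}, note that $|t^ag'(t)|$ is bounded near $t=0$ and decays rapidly as $t\to+\infty$, so that $\widehat{-y^a\partial_yU}(\xi,y)$ is dominated, uniformly for $y\in(0,1)$, by an $L^1$ function of $\xi$ (again because $\widehat u$ is Schwartz); dominated convergence after applying $\mathcal{F}^{-1}$ gives the pointwise limit, while pairing with a test function and using Fubini gives the convergence in the sense of distributions.

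\emph{Main obstacle.} The delicate part is Step 1, namely pinning down the local behaviour of the minimizer $g$ at the regular singular point $t=0$ --- in particular the existence and exact value of $\lim_{t\to0^+}t^ag'(t)$ --- and then carefully carrying the boundary terms at $0$ and $+\infty$ through the integration by parts that identifies this limit with $-G(g)=-C_\sharp$. The remaining steps are routine Fourier--side bookkeeping and an application of dominated convergence.
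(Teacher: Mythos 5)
Your proof is correct, and it takes a genuinely different route from the paper. The paper never differentiates the extension $U$ directly; instead it proves (Lemma~\ref{lem1ext} and Lemma~\ref{L7s}) that $U$ minimizes the weighted Dirichlet energy $[\cdot]_a$ over the extension class $X_u$, with $[U]_a^2=C_\sharp\,[\widehat u]_G^2$, using Plancherel and the minimality of $g$ frequency by frequency. Equation~\eqref{78} then follows from stationarity against test functions supported inside $\R^{n+1}_+$, and the distributional form of~\eqref{0S} from the first variation against test functions touching $\{y=0\}$, matched with the expansion of $C_\sharp[\widehat u_\epsilon]_G^2$. The pointwise limit is obtained exactly as you do, but the key fact $\lim_{t\to0^+}t^ag'(t)=-C_\sharp$ is simply quoted from~\cite{SeV14} (formula~(4.4), i.e.~\eqref{89-bis2}). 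Your argument instead verifies~\eqref{78} directly on the Fourier side by reducing it to the Euler--Lagrange ODE $(t^ag')'=t^ag$ for $g$, and it re-derives the boundary constant $\lim_{t\to0^+}t^ag'(t)=-C_\sharp$ from scratch via the multiply-by-$g$-and-integrate trick. What this buys: the PDE step becomes a one-line Fourier-side verification rather than a two-lemma variational argument, and the proof is more self-contained since the ODE and the limit $\lim_{t\to0^+}t^ag'(t)=-C_\sharp$ are derived rather than cited. What it costs: you now need the Frobenius analysis at the regular singular point $t=0$ (including ruling out a logarithm when $s=1/2$) and the decay estimates at $+\infty$ to justify both the integration by parts and the dominated convergence, whereas the paper pushes most of this regularity burden onto the cited properties of $g$. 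Both routes rely on the same two structural facts about $g$ (the ODE and the boundary flux), and both end with the same Plancherel/dominated-convergence bookkeeping for~\eqref{0S}, so the overlap is substantial even though the organizing idea is different.

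One small presentational remark: your sentence ``while the two resulting integrals of $t^a(g')^2$ cancel'' only parses if the reader reconstructs the variant where you start from $G(g)=\int t^a(g^2+(g')^2)$, replace $t^ag^2$ by $g\,(t^ag')'$ using the ODE, and then integrate by parts — in that version the two copies of $\int t^a(g')^2$ indeed cancel and leave the boundary term. Stating this explicitly would avoid the impression that the integrals cancel in the more standard computation (where they actually add up to $G(g)$). The conclusion $C_\sharp=-\lim_{t\to0^+}t^ag'(t)$ is correct either way.
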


In order to prove Theorem \ref{thmext}, we need to make some preliminary computations. At first, let us recall a few useful properties of the minimizer function $g$ of the operator $G$ introduced in \eqref{Gu}.

We know from formula~(4.5) in~\cite{SeV14} that 
	\begin{equation}\label{gg}
			0\le g\le 1,
	\end{equation}
and from formula~(2.6) in~\cite{SeV14} that
	\begin{equation}\label{89}
		g'\le0.
	\end{equation}
We also cite formula~(4.3) in~\cite{SeV14}, according to which~$g$ is a solution of
	\begin{equation}\label{89-bis}
		g''(t)+a t^{-1} g'(t)=g(t)
	\end{equation}
for any~$t>0$, and formula~(4.4) in~\cite{SeV14}, according to which
	\begin{equation}\label{89-bis2}
		\lim_{t\rightarrow0^+} t^a g'(t)\,=\,-C_\sharp.
	\end{equation}

Now, for any~$V\in W^{1,1}_{\rm loc}(\R^{n+1}_+)$ we set
	\[ [V]_a := \sqrt{\int_{\R^{n+1}_+} y^a |\nabla_X V(X)|^2\,dX}.\]
Notice that~$[V]_a$ is well defined (possibly infinite) on such space. Also, one can compute~$[V]_a$ explicitly in the following interesting case:

\begin{lemma} \label{lem1ext}
Let~$\psi\in \mathcal{S}(\Rn)$
and
	\begin{equation}\label{F-7}
		U(x,y):={\mathcal{F}}^{-1} \Big(\psi(\xi)\,g(|\xi|y)\Big).
	\end{equation}
Then
	\begin{equation}\label{F-8}
		[U]_a^2 = C_\sharp\,[\psi]_G^2.
	\end{equation}
\end{lemma}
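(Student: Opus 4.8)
The plan is to pass to the partial Fourier transform in the~$x$ variable, so that~$[U]_a$ decouples, frequency by frequency, onto the one-dimensional weighted integral defining the functional~$G$; the key structural fact is that~$g$ enters~\eqref{F-7} only through the dilation~$g(|\xi|\,\cdot)$.

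First I would write, for each fixed~$y>0$, $\widehat U(\xi,y)=\psi(\xi)\,g(|\xi|y)$. Since~$\psi\in\mathcal{S}(\Rn)$ and~$g\in C^\infty\big((0,+\infty)\big)$ is bounded by~\eqref{gg}, the function~$U$ is smooth on compact subsets of~$\R^{n+1}_+$, so differentiating under the Fourier transform is legitimate and yields~$\nabla_x U=\mathcal{F}^{-1}\big(2\pi i\,\xi\,\psi(\xi)\,g(|\xi|y)\big)$ and~$\partial_y U=\mathcal{F}^{-1}\big(|\xi|\,\psi(\xi)\,g'(|\xi|y)\big)$. Plancherel's identity in~$x$, applied at each fixed~$y>0$, then gives
\begin{equation*}
\int_{\Rn}\big|\nabla_X U(x,y)\big|^2\,dx
=\int_{\Rn}|\psi(\xi)|^2\,|\xi|^2\Big(c_0\,g(|\xi|y)^2+g'(|\xi|y)^2\Big)\,d\xi,
\end{equation*}
where~$c_0$ is the dimensional constant produced by the normalization~\eqref{transF} of the Fourier transform.

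Next I would multiply by~$y^a$, integrate over~$y\in(0,+\infty)$, and exchange the two integrations by Tonelli's theorem, the integrand being nonnegative. For fixed~$\xi\neq0$ the substitution~$t=|\xi|y$ turns the inner integral into
\begin{equation*}
\int_0^{+\infty} y^a\Big(c_0\,g(|\xi|y)^2+g'(|\xi|y)^2\Big)\,dy
=|\xi|^{-1-a}\int_0^{+\infty} t^a\Big(c_0\,g(t)^2+g'(t)^2\Big)\,dt ,
\end{equation*}
which is finite because~$G(g)=C_\sharp<+\infty$ (recall~\eqref{Gu}--\eqref{F-0}); the smoothness of~$g$ and the integrability of~$t^a g'(t)^2$ near~$t=0$ are in turn guaranteed by~\eqref{89-bis}--\eqref{89-bis2}. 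Since~$a=1-2s$ one has~$|\xi|^2\,|\xi|^{-1-a}=|\xi|^{2s}$, hence
\begin{equation*}
[U]_a^2=\Big(\int_0^{+\infty} t^a\big(c_0\,g(t)^2+g'(t)^2\big)\,dt\Big)\int_{\Rn}|\xi|^{2s}|\psi(\xi)|^2\,d\xi
=\Big(\int_0^{+\infty} t^a\big(c_0\,g(t)^2+g'(t)^2\big)\,dt\Big)\,[\psi]_G^2 ,
\end{equation*}
and identifying the remaining~$t$-integral with~$G(g)=C_\sharp$ gives~\eqref{F-8}.

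The one genuinely delicate point is this last identification, i.e. the tracking of the dimensional constant~$c_0$: the proof closes only if the normalizations of the Fourier transform, of the functional~$G$ in~\eqref{Gu}, and of the rescaling~$t=|\xi|y$ are chosen mutually consistently (possibly after the harmless replacement of~$g(|\xi|y)$ by~$g(2\pi|\xi|y)$ in the definition of~$U$, which is the profile actually satisfying~\eqref{78} in view of the ODE~\eqref{89-bis}), so that~$c_0=1$ and the surviving~$t$-integral is exactly~$G(g)$. Everything else is a routine application of Plancherel and Tonelli together with the finiteness of~$G(g)$; an equivalent, coordinate-free formulation is to view~$[U]_a^2$ as the quadratic form~$V\mapsto\int_{\R^{n+1}_+}y^a|\nabla_X V|^2$ evaluated at~$U$, to slice it in the frequency variable~$\xi$, and to observe that~$y\mapsto g(|\xi|y)$ is, after rescaling by~$|\xi|$, the minimizing profile of~$G$ with unit boundary value, which directly produces the factor~$|\xi|^{2s}C_\sharp$ per frequency.
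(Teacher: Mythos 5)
Your proof is correct and follows the same route as the paper's: Plancherel in the $x$-variable at each fixed $y>0$, then integration in $y$ via the rescaling $t=|\xi|y$, which produces the factor $|\xi|^{2s}=|\xi|^{1-a}$ per frequency and reduces the remaining one-dimensional integral to $G(g)=C_\sharp$ (the paper merely treats the $\nabla_x$ and $\partial_y$ contributions in two separate passes rather than together). Your caveat about the constant $c_0$ correctly flags a normalization slip that is present in the paper itself: its proof writes $e^{ix\cdot\xi}$ rather than the $e^{2\pi i x\cdot\xi}$ of its own convention \eqref{transF}, and under the latter convention the ODE \eqref{89-bis} forces the profile $g(2\pi|\xi|y)$ in \eqref{F-7}, so the substitution you indicate is exactly the consistent fix and makes $c_0=1$.
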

 
\begin{proof} By~\eqref{gg}, for any fixed~$y>0$, the function~$\xi\mapsto \psi(\xi)\,g(|\xi|y)$ belongs to~$L^2(\Rn)$, and so we may consider its (inverse) Fourier transform. This says that the definition of~$U$ is well posed. 

By the inverse Fourier transform definition \eqref{invF}, we have that 
	\[ \begin{split}
		 \nabla_x U(x,y)= &\; \nabla_x
			\int_{\Rn} \psi(\xi)\,g(|\xi|y) \,e^{ix\cdot\xi}\,d\xi \\
			=&\;
			\int_{\Rn} i\xi\psi(\xi)\,g(|\xi|y) \,e^{ix\cdot\xi}\,d\xi\\
			=&\;{\mathcal{F}}^{-1} \Big( i\xi\psi(\xi) g(|\xi|y)\Big) (x).
	\end{split} \]
Thus, by Plancherel Theorem,
	\[ \int_{\Rn} |\nabla_x U(x,y)|^2\,dx =\int_{\Rn} \big|\xi\psi(\xi) g(|\xi|y)\big|^2\,d\xi.\]
Integrating over~$y>0$, we obtain that
	\begin{equation}\label{F-2}
		\begin{split}
			\int_{\R^{n+1}_+} y^a |\nabla_x U(X)|^2\,dX\,&
			=\int_{\Rn} |\xi|^2 \big|\psi(\xi)\big|^2 \left[\int_0^{+\infty} 
			y^a \big|g(|\xi|y)\big|^2 \,dy\right]\,d\xi\\
			&=\int_{\Rn} |\xi|^{1-a} \big|\psi(\xi)\big|^2 \left[\int_0^{+\infty}
			t^a \big|g(t)\big|^2 \,dt\right]\,d\xi
			\\ &=\int_0^{+\infty}
			t^a \big|g(t)\big|^2 \,dt\cdot
			\int_{\Rn} |\xi|^{2s} \big|\psi(\xi)\big|^2 \, d\xi
		\\ &= [\psi]_{G}^2 \;\int_0^{+\infty}
			t^a \big|g(t)\big|^2 \,dt.
		\end{split}
	\end{equation}
Let us now prove that the  following identity is well posed
	\begin{equation}\label{4.3bis}
\partial_y U(x,y)= {\mathcal{F}}^{-1} \Big(|\xi|\,\psi(\xi)\,g'(|\xi|y)\Big) .\end{equation}
For this, we observe that
	\begin{equation}\label{98}
		|g'(t)|\le C_\sharp t^{-a}.
	\end{equation}
To check this, we define~$\gamma(t):= t^a \,|g'(t)|$. {F}rom~\eqref{89} and~\eqref{89-bis}, we obtain that
	\[ \gamma'(t) =-\frac{d}{dt}\big(t^a g'(t)\big)=-t^a \,\big(g''(t)+a t^{-1} g'(t)\big)=-t^a g(t)\le0.\]
Hence
$$ \gamma(t)\le \lim_{\tau\rightarrow 0^+} \gamma(\tau)=
\lim_{\tau\rightarrow 0^+} \tau^a |g'(\tau)|= C_\sharp,$$
where formula~\eqref{89-bis2} was used in the last identity,
and this establishes~\eqref{98}.

{F}rom~\eqref{98} we have that~$|\xi|\,|\psi(\xi)|\,|g'(|\xi|y)|\le
 C_\sharp y^{-a}\,|\xi|^{1-a}\,|\psi(\xi)|\in L^2(\Rn)$, and so~\eqref{4.3bis}
follows.

Therefore, by~\eqref{4.3bis} and the Plancherel Theorem,
$$ \int_{\Rn} |\partial_y U(x,y)|^2\,dx= \int_{\Rn} |\xi|^2\,\big|\psi(\xi)\big|^2\,\big|g'(|\xi|y)\big|^2\,d\xi.$$
Integrating over~$y>0$ we obtain
\[ \begin{split}
	\int_{\R^{n+1}_+} y^a |\partial_y U(x,y)|^2\,dx =&\;
	\int_{\Rn} |\xi|^2\,\big|\psi(\xi)\big|^2\,\left[\int_0^{+\infty}
	y^a \big|g'(|\xi|y)\big|^2\,dy \right]\,d\xi
	\\ =&\;
	\int_{\Rn} |\xi|^{1-a} \,\big|\psi(\xi)\big|^2\,\left[\int_0^{+\infty}
	t^a \big|g'(t)\big|^2 dt \right]\,d\xi
	\\ =&\;
	\int_0^{+\infty}
	t^a \big|g'(t)\big|^2 dt\cdot
	\int_{\Rn} |\xi|^{2s} \,\big|\psi(\xi)\big|^2\,d\xi
	\\ =&\;
	 [\psi]_{G}^2 \; \int_0^{+\infty}
	t^a \big|g'(t)\big|^2 dt.
\end{split} \]
By summing this with~\eqref{F-2}, and recalling~\eqref{F-0}, we obtain the desired result $[U]_a^2 = C_\sharp\,[\psi]_G^2$. This concludes the proof of the Lemma.
\end{proof}

Now, given~$u \in L^1_{\rm loc}(\Rn)$, we consider the space~$X_u$ of all the functions~$V\in W^{1,1}_{\rm loc}(\R^{n+1}_+)$ such that, for any~$x\in\Rn$, the map~$y\mapsto V(x,y)$ is in~$C^0\big([0,+\infty)\big)$, with~$V(x,0)=u(x)$ for any~$x\in\Rn$.
Then the problem of minimizing~$[\,\cdot\,]_a$ over~$X_u$ has a somehow explicit solution.

\begin{lemma}\label{L7s}
Assume that~$u\in \mathcal{S}(\Rn)$. Then
\begin{equation}\label{A6}
\min_{V\in X_u} [V]_a^2 = [U]_a^2
=C_\sharp\,[\widehat u]_G^2,\end{equation}

\begin{equation}\label{F-7-bis}
U(x,y):={\mathcal{F}}^{-1} \Big(\widehat u(\xi)\,g(|\xi|y)\Big).\end{equation}
\end{lemma}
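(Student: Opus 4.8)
The plan is to show that the function $U$ defined in~\eqref{F-7-bis} is the unique minimizer of $[\,\cdot\,]_a$ over $X_u$, and that its energy is $C_\sharp\,[\widehat u]_G^2$. The value of the energy is already essentially in hand: applying Lemma~\ref{lem1ext} with $\psi:=\widehat u\in\mathcal{S}(\Rn)$ gives immediately that $[U]_a^2 = C_\sharp\,[\widehat u]_G^2$, and one checks that $U\in X_u$ since, for fixed $x$, the map $y\mapsto U(x,y)$ is continuous on $[0,+\infty)$ with $U(x,0)={\mathcal{F}}^{-1}(\widehat u(\xi)\,g(0))(x)=u(x)$, using $g(0)=1$. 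So the whole content of the lemma is the inequality $[V]_a^2\ge [U]_a^2$ for every competitor $V\in X_u$.

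First I would reduce everything to the one-dimensional minimization problem fiberwise in Fourier space. Fix a competitor $V\in X_u$. For each fixed $y>0$, take the Fourier transform of $V(\cdot,y)$ in the $x$-variable, writing $\widehat V(\xi,y)$; by Plancherel,
\begin{equation*}
[V]_a^2 = \int_0^{+\infty}\!\!\int_{\Rn} y^a\Big(|\xi|^2\,|\widehat V(\xi,y)|^2 + |\partial_y\widehat V(\xi,y)|^2\Big)\,d\xi\,dy
= \int_{\Rn}\left(\int_0^{+\infty} y^a\big(|\xi|^2|\widehat V(\xi,y)|^2+|\partial_y\widehat V(\xi,y)|^2\big)\,dy\right)d\xi,
\end{equation*}
and the boundary condition becomes $\widehat V(\xi,0)=\widehat u(\xi)$. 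For fixed $\xi\ne0$, substituting $t=|\xi|y$ in the inner integral and writing $w(t):=\widehat V(\xi,t/|\xi|)/\widehat u(\xi)$ (assuming $\widehat u(\xi)\ne0$; the set where it vanishes contributes nothing) turns the inner integral into $|\widehat u(\xi)|^2\,|\xi|^{1-a}\,G(w)$ with $w(0)=1$, exactly the functional $G$ from~\eqref{Gu}. By the definition of $C_\sharp$ in~\eqref{F-0} as the minimum of $G$ over $W^{1,1}_{\rm loc}\cap C^0$ with $w(0)=1$, we get $G(w)\ge C_\sharp$, hence the inner integral is $\ge C_\sharp\,|\widehat u(\xi)|^2\,|\xi|^{1-a}=C_\sharp\,|\widehat u(\xi)|^2\,|\xi|^{2s}$. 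Integrating in $\xi$ gives $[V]_a^2\ge C_\sharp\,[\widehat u]_G^2=[U]_a^2$, which is the claim.

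The main obstacles are the technical measure-theoretic points rather than the idea. One must justify that, for $V\in W^{1,1}_{\rm loc}(\R^{n+1}_+)$ in $X_u$, the slices $\widehat V(\cdot,y)$ are well-defined, that $y\mapsto\widehat V(\xi,y)$ is for a.e.\ $\xi$ absolutely continuous with the right boundary value (this uses Fubini together with the continuity hypothesis $y\mapsto V(x,y)\in C^0([0,+\infty))$ built into the definition of $X_u$), and that the change of variables $w(t)=\widehat V(\xi,t/|\xi|)/\widehat u(\xi)$ produces an admissible competitor for $G$ for a.e.\ $\xi$ — including the subtlety that $\widehat u$ is complex-valued, handled by noting $G$ depends only on $|w|$ and $|w'|$, so one may normalize by the complex scalar $\widehat u(\xi)$ freely. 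If $[V]_a=+\infty$ there is nothing to prove, so one works on the set where the inner integral is finite for a.e.\ $\xi$. Finally, uniqueness of the minimizer (if desired) follows from strict convexity of $G$, which forces $w=g$ a.e.\ $\xi$, i.e.\ $V=U$; but the statement~\eqref{A6} only asserts the value of the minimum, so this last point is optional.
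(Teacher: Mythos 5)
Your proof follows essentially the same route as the paper: apply Plancherel fiberwise in~$y$, change variables $t=|\xi|y$ to recognize the one-dimensional functional $G$, and invoke the minimality defining $C_\sharp$, then integrate in~$\xi$. The only cosmetic difference is that you normalize by dividing by $\widehat u(\xi)$ to reduce to boundary value~$1$, whereas the paper proves the equivalent scaling statement $\min_{w(0)=\lambda}G(w)=|\lambda|^2 C_\sharp$ directly (their formula~\eqref{F-15}); these are the same observation.
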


\begin{proof} We remark that~\eqref{F-7-bis} is simply~\eqref{F-7}
with~$\psi:=\widehat u$, and by Lemma \ref{lem1ext} we have that \[ [U]_a^2= C_{\sharp}[\widehat u]^2_G.\]
Furthermore, we claim that
\begin{equation}\label{98-bis}
U\in X_u. \end{equation}
In order to prove this, we first observe that
\begin{equation}\label{98-tris}
|g(T)-g(t)|\le \frac{C_\sharp\,|T^{2s}-t^{2s}|}{2s} .
\end{equation}
To check this, without loss of generality, we may suppose that~$T\ge t\ge0$.
Hence, by~\eqref{89} and~\eqref{98},
	\[ \begin{split}
		|g(T)-g(t)|\le &\; \int_t^T |g'(r)|\,dr \\
		\le &\; C_\sharp\int_t^T r^{-a}\,dr\\
		=&\; \frac{C_\sharp\,(T^{1-a}-t^{1-a})}{1-a},
	\end{split}\]
that is~\eqref{98-tris}.

Then, by~\eqref{98-tris}, for any~$y$, $\tilde y\in(0,+\infty)$,
we see that
\[ \Big| g(|\xi|\,y)-g(|\xi|\,\tilde y)\Big|\le
\frac{C_\sharp\,|\xi|^{2s} |y^{2s}-{\tilde y}^{2s}|}{2s} .\]
Accordingly,
	\[ \begin{split}
		\big|U(x,y)-U(x,\tilde y)\big|
		=& \; \bigg|{\mathcal{F}}^{-1} \bigg(\widehat u(\xi)\,\Big( g(|\xi|\,y)-g(|\xi|\,\tilde y)\Big)\bigg)\bigg|\\
		 \le & \; 	\int_{\Rn}  	\Big|\widehat u(\xi)\,\Big( g(|\xi|\,y)-g(|\xi|\,\tilde y)\Big)\Big|\,d\xi \\ 
		\le & \; 	\frac{C_\sharp\,|y^{2s}-{\tilde y}^{2s}|}{2s} \int_{\Rn} |\xi|^{2s} |\widehat u(\xi)|\,d\xi,
	\end{split} \]
and this implies~\eqref{98-bis}.

Thanks to~\eqref{98-bis} and~\eqref{F-8}, in order to complete the proof
of~\eqref{A6},
it suffices to show that, for any~$V\in X_u$, we have that
\begin{equation}\label{A6-c}
[V]_a^2 \ge [U]_a^2.\end{equation}
To prove this,
let us take~$V\in X_u$. Without loss of generality,
since~$[U]_a<+\infty$ thanks to~\eqref{F-8},
we may suppose that~$[V]_a<+\infty$. Hence, 
fixed a.e.~$y>0$, we have that
$$ y^a \int_{\Rn}|\nabla_x V(x,y)|^2\,dx
\,\le\,
y^a \int_{\Rn}|\nabla_X V(x,y)|^2\,dx\,<\,+\infty,$$
hence the map~$x\in |\nabla_x V(x,y)|$ belongs to~$L^2(\Rn)$.
Therefore, by Plancherel Theorem,
\begin{equation}\label{F-9}
\int_{\Rn}|\nabla_x V(x,y)|^2\,dx =
\int_{\Rn}\Big|{\mathcal{F}}\big(\nabla_x V(x,y)\big) (\xi)\Big|^2\,d\xi
.\end{equation}
Now by the Fourier transform definition \eqref{transF}
\begin{eqnarray*}  {\mathcal{F}}\big(\nabla_x V(x,y)\big) (\xi)&=&
\int_{\Rn} \nabla_x V(x,y)\,e^{-ix\cdot\xi}\,dx\\
&=&
\int_{\Rn} i\xi\,V(x,y)\,e^{-ix\cdot\xi}\,dx \\ &=&
i\xi \,{\mathcal{F}}\big(V(x,y)\big) (\xi),\end{eqnarray*}
hence~\eqref{F-9} becomes
\begin{equation}\label{F-11}
\int_{\Rn}|\nabla_x V(x,y)|^2\,dx =
\int_{\Rn} |\xi|^2 \,|{\mathcal{F}}\big(V(x,y)\big) (\xi)|^2\,d\xi.\end{equation}
On the other hand
$$ {\mathcal{F}}\big(\partial_y V(x,y)\big) (\xi)=
\partial_y {\mathcal{F}}\big(V(x,y)\big) (\xi)$$
and thus, by Plancherel Theorem,
$$ \int_{\Rn} |\partial_y V(x,y)|^2 \,dx=
\int_{\Rn}\big| {\mathcal{F}}\big(\partial_y V(x,y)\big) (\xi)\big|^2\,d\xi
=\int_{\Rn} |\partial_y {\mathcal{F}}\big(V(x,y)\big) (\xi)|^2\,d\xi.$$
We sum up this latter result with identity \eqref{F-11} and we use the notation~$\phi(\xi,y):=
{\mathcal{F}}\big(V(x,y)\big) (\xi)$
to conclude that
\begin{equation}\label{A5} 
\int_{\Rn}|\nabla_X V(x,y)|^2\,dx =
\int_{\Rn} |\xi|^2 \,|\phi(\xi,y)|^2 + |\partial_y \phi(\xi,y)|^2\,d\xi.\end{equation}
Accordingly, integrating over~$y>0$, we deduce that
\begin{equation}\label{F-13}
[V]_a^2 =\int_{\R^{n+1}_+} y^a\Big(
 |\xi|^2 \,|\phi(\xi,y)|^2 + |\partial_y \phi(\xi,y)|^2 \Big)\,d\xi\,dy.
\end{equation}
Let us first consider the integration over~$y$, for any fixed $\xi\in
\Rn\setminus\{0\}$, that we now omit from the notation when
this does not generate any confusion. We set~$h(y):= 
\phi(\xi, |\xi|^{-1} y)$.
We have that~$h'(y)=|\xi|^{-1} \partial_y\phi(\xi, |\xi|^{-1} y)$ and
therefore, using the substitution~$t=|\xi|\,y$, we obtain
	\begin{equation}\label{F-17}
		\begin{split}
		&\int_0^{+\infty} y^a \Big(|\xi|^2\,|\phi(\xi, y)|^2 +  \big|\partial_y\phi(\xi, y)\big|^2 \Big)\,dy
		\\ = &\; |\xi|^{1-a} 
		\int_0^{+\infty} t^a \Big( 
		|\phi(\xi, |\xi|^{-1} t)|^2 +
		|\xi|^{-2} \big|\partial_y\phi(\xi, |\xi|^{-1} t)\big|^2
		\Big)\,dt\\
		= &\; |\xi|^{1-a}
		\int_0^{+\infty} t^a \Big( 
		|h(t)|^2 +|h'(t)|^2
		\Big)\,dt
		\\ = &\; |\xi|^{2s} \,G(h).
\end{split}\end{equation}
Now, for any~$\lambda\in\R$, we show that
\begin{equation}\label{F-15}
\min_{\substack{{w\in W^{1,1}_{\rm loc}((0,+\infty))
\cap C^0([0,+\infty))}\\ {w(0)=\lambda}}}  G(w) = \lambda^2 \,C_\sharp.
\end{equation}
Indeed, when~$\lambda=0$, the trivial function is an allowed
competitor and~$G(0)=0$, which gives~\eqref{F-15}
in this case. If, on the other hand,~$\lambda\ne 0$, given~$w$
as above with~$w(0)=\lambda$ we set~$w_\lambda(x):=\lambda^{-1} w(x)$.
Hence we see that~$w_\lambda(0)=1$ and thus~$G(w)=
\lambda^2 \, G(w_\lambda)\le \lambda^2\,G(g)=\lambda^2\,C_\sharp$, due to the minimality
of~$g$. This proves~\eqref{F-15}.
{F}rom~\eqref{F-15} and the fact that
$$h(0)=
\phi(\xi, 0)={\mathcal{F}}\big(V(x,0)\big) (\xi) = \widehat u(\xi),$$
we obtain that
$$ G(h)\ge C_\sharp\, \big|\widehat u(\xi)\big|^2.$$
As a consequence, we get from~\eqref{F-17} that
$$ \int_0^{+\infty} y^a \Big(
|\xi|^2\,|\phi(\xi, y)|^2 +
\big|\partial_y\phi(\xi, y)\big|^2
\Big)\,dy \ge C_\sharp\,|\xi|^{2s} \,\big|\widehat u(\xi)\big|^2.$$
Integrating over~$\xi\in\Rn\setminus\{0\}$ we obtain that
\[ \int_{\R^{n+1}_+} y^a \Big(
|\xi|^2\,|\phi(\xi, y)|^2 +
\big|\partial_y\phi(\xi, y)\big|^2
\Big)\,d\xi\,dy \ge  C_\sharp\, [\widehat u]_G^2.\]
Hence, by~\eqref{F-13},
\[ [V]_a^2 \ge C_\sharp\, [\widehat u]_G^2,\]
which proves~\eqref{A6-c},
and so~\eqref{A6}.
\end{proof}

We can now prove the main result of this section.

\begin{proof}[Proof of Theorem \ref{thmext}] Formula~\eqref{78} follows from the minimality
property in~\eqref{A6}, by writing that
$[U]_a^2 \le [U+\epsilon \varphi]_a^2$
for any~$\varphi$ smooth and compactly supported inside~$\R^{n+1}_+$
and any~$\epsilon\in\R$.

Now we take~$\varphi\in C^\infty_0(\Rn)$ (notice that its support
may now hit~$\{y=0\}$). We define $u_\epsilon:= u+\epsilon\varphi$,
and~$U_\epsilon$
as in~\eqref{F-7-bis--}, with~$\widehat u$ replaced by~$\widehat u_\epsilon$
(notice that~\eqref{F-7-bis--}
is nothing but~\eqref{F-7-bis}), hence we will be able to exploit
Lemma~\ref{L7s}.

We also set
$$\varphi_* (x,y):= {\mathcal{F}}^{-1} \Big(\widehat\varphi(\xi)\,g(|\xi|y)\Big).$$
We observe that
\begin{equation}\label{9956}
\varphi_* (x,0)=
{\mathcal{F}}^{-1} \Big(\widehat\varphi(\xi)\,g(0)\Big)=
{\mathcal{F}}^{-1} \Big(\widehat\varphi(\xi)\Big)=\varphi(x)\end{equation}
and that
$$ U_\epsilon= U+\epsilon
{\mathcal{F}}^{-1} \Big(\widehat\varphi(\xi)\,g(|\xi|y)\Big)=
U+\epsilon\varphi_*.$$
As a consequence
$$ [U_\epsilon]_a^2 \, =[U_\epsilon]_a^2 +2\epsilon
\int_{\R^{n+1}_+} y^a \nabla_X U\cdot \nabla_X \varphi_*\,dX+o(\epsilon).$$
Hence, using~\eqref{78}, \eqref{9956} and the Divergence Theorem,
	\begin{equation}\label{67}
		\begin{split}
			[U_\epsilon]_a^2= &\; [U]_a^2 +2\epsilon 	\int_{\R^{n+1}_+} {\rm div}\,
			\Big( \varphi_* \; y^a\nabla_X U \Big) \,dX+o(\epsilon)\\
			 =&\;   [U]_a^2 -2\epsilon \int_{\R^{n}\times\{0\}}
			\varphi \; y^a\partial_y U \,dx+o(\epsilon).
		\end{split}
	\end{equation}
Moreover, from Plancherel Theorem, and the fact that the image of~$\varphi$ is in the reals,
\[ \begin{split}
[\widehat u_\epsilon]_G^2 =&\; [\widehat u]_G +2\epsilon \int_{\Rn}
|\xi|^{2s}\widehat u(\xi)\,\overline{\widehat \varphi(\xi)}\,d\xi+o(\epsilon)\\
=&\;[\widehat u]_G +2\epsilon \int_{\Rn}
{\mathcal{F}}^{-1}\Big( |\xi|^{2s}\widehat u(\xi)\Big)(x)
\,\overline{\varphi(x)}\,dx+o(\epsilon)\\
=&\;[\widehat u]_G +2\epsilon \int_{\Rn} (-\Delta)^s u(x)\,
{\varphi(x)}\,dx+o(\epsilon).
\end{split} \] 
By comparing this with~\eqref{67} and recalling \eqref{A6}
we obtain that
\[ \begin{split}
& [U]_a^2 -2\epsilon
\int_{\R^{n}\times\{0\}}
\varphi \; y^a\partial_y U \,dx+o(\epsilon)\\=&\; [U_\epsilon]_a^2
\\ =&\; C_\sharp [u_\epsilon]_G^2
\\ =&\;
C_\sharp [\widehat u]_G +2C_\sharp
\epsilon \int_{\Rn} (-\Delta)^s u(x)\,
{\varphi(x)}\,dx+o(\epsilon)
\\=&\;[U]_a^2+2C_\sharp
\epsilon \int_{\Rn} (-\Delta)^s u\,
{\varphi}\,dx+o(\epsilon) 
\end{split}\]
and so
$$ -\int_{\R^{n}\times\{0\}}
\varphi \; y^a\partial_y U \,dx = C_\sharp
\int_{\Rn} (-\Delta)^s u\,
{\varphi}\,dx,$$
for any~$\varphi\in C^\infty_0(\Rn)$, that is the
distributional formulation of~\eqref{0S}.

Furthermore, by \eqref{F-7-bis--}, we have that
$$ y^a \partial_y
U(x,y)={\mathcal{F}}^{-1} \Big(|\xi|\,\widehat u(\xi)\,y^a\,g(|\xi|y)\Big)
= {\mathcal{F}}^{-1} \Big(|\xi|^{1-a}\,\widehat u(\xi)\,(|\xi|y)^a\,g(|\xi|y)\Big)
.$$
Hence, by~\eqref{89-bis2}, we obtain
\[ \begin{split}
 	\lim_{y\rightarrow0^+} y^a \partial_y U(x,y) =&\; -C_\sharp {\mathcal{F}}^{-1} \Big(|\xi|^{1-a}\,\widehat u(\xi) \Big)\\
 	=&\;-C_\sharp {\mathcal{F}}^{-1} \Big(|\xi|^{2s}\,\widehat u(\xi)  \Big)\\
	=&\;-(-\Delta)^s u(x),
	\end{split} \]
that is the pointwise limit formulation of~\eqref{0S}. This concludes the proof of Theorem \ref{thmext}.
\end{proof}

\chapter{Nonlocal phase transitions }\label{S:NP}

Now, we consider
a nonlocal phase transition model, \label{S:NP:P} in particular described by the Allen-Cahn equation. A fractional analogue of
a conjecture of De Giorgi, that deals with possible one-dimensional symmetry of entire solutions, naturally arises from treating this model, and will be consequently presented. There is a very interesting connection with nonlocal minimal surfaces, that will be studied in Chapter \ref{nlms}.

  We introduce briefly the classical case\footnote{We would like to thank Alberto Farina who, during a summer-school in Cortona (2014), gave a beautiful introduction on phase transitions in the classical case.}. The Allen-Cahn equation has various applications, for instance, in the study of interfaces (both in gases and solids), in the theory of superconductors and superfluids or in cosmology.  We deal here with a two-phase transition model, in which a fluid can reach two pure phases (say $1$ and $-1$) forming an interface of separation. The aim is to describe the pattern and the separation of the two phases. 
  
The formation of the interface is driven by a variational principle. Let $u(x)$ be the function describing the state of the fluid at position $x$ in a bounded region $\Omega$. As a first guess, the phase separation
can be modeled
via the minimization of the energy 
  \[ \E_0(u)= \int_{\Omega} W \big(u(x)\big) \, dx,\]
where $W$ is a double-well potential. More precisely, $W\colon [-1,1]\to [0,\infty)$ such that 
	\eqlab{ \label{dwp} W\in C^2\left([-1,1]\right), W(\pm 1)=0, W>0 \mbox{ in } (-1,1),\\
	 W'(\pm 1)= 0 \mbox{ and } W''(\pm 1) >0. }
The classical example is 
\begin{equation}\label{DF-WELL}
W(u):=\displaystyle \frac{(u^2-1)^2}{4}.\end{equation} 
On the other hand, the functional in~$\E_0$ 
produces an ambiguous
outcome, since any function $u$ that attains
only the values $\pm 1$ is a minimizer for the energy. That is,
the energy functional in~$\E_0$ alone cannot detect any
geometric feature of the interface.

To avoid this, one is led to consider an additional energy term that penalizes 
the formation of unnecessary interfaces. The typical energy
functional provided by this procedure has the form
\begin{equation}\label{5.2PRE}
\mathcal{E} (u):= \int_{\Omega} W\big(u(x)\big)\, dx + \frac{\eee^2}{2} \int_{\Omega} |\nabla u(x)|^2 \, dx.\end{equation}
In this way, the potential energy that forces the pure phases is compensated by a small term, that is due to the elastic effect of the reaction of the particles. 
As a curiosity, we point out that
in the classical mechanics framework, the analogue
of~\eqref{5.2PRE} is a Lagrangian action of a particle,
with~$n=1$, $x$ representing a time coordinate and $u(x)$
the position of the particle at time~$x$. In this framework
the term involving the square of the derivative of~$u$
has the physical meaning of a
kinetic energy. With a slight abuse of notation, we will keep
referring to the gradient term in~\eqref{5.2PRE}
as a kinetic energy. Perhaps a more appropriate term would be elastic
energy, but in concrete applications also 
the potential may arise from elastic reactions,
therefore the only purpose of these names in our framework
is to underline the fact that~\eqref{5.2PRE}
occurs as a superposition of two terms,
a potential one, which only depends on $u$,
and one, which will be called kinetic, which only depends
on the variation of $u$ (and which, in principle, possesses no
real ``kinetic'' feature). 
	
	The energy minimizers will be smooth functions, taking values between $-1$ and $1$, forming layers of interfaces of $\eee$-width. If we send $\eee \to 0$, the transition layer will tend to a minimal surface. To better explain this, consider the energy
	\eqlab{\label{gle1}  J(u)= \int \frac{1}2 |\nabla u|^2 +W(u) \,dx,} whose minimizers solve the Allen-Cahn equation
	\begin{equation}\label{ALC} -\Delta u+ W'(u)=0. \end{equation}  
In particular, for the explicit potential in~\eqref{DF-WELL},
equation~\eqref{ALC} reduces (up to normalizations constants) to
\begin{equation}\label{ALC-DG} -\Delta u= u-u^3.\end{equation}
In this setting,
the behavior of $u$ in large domains reflects into the behavior of the rescaled function $u_{\eee} (x)=u\big( \frac{x}{\eee}\big)$ in $B_1$. Namely, the minimizers of $J$ in $B_{1/ \eee}$ are the minimizers of $J_\eee$ in $B_1$, where $J_\eee$ is the rescaled energy functional
	\eqlab{ J_{\eee} (u)= \int_{B_1} \frac{\eee}2 |\nabla u|^2 + \frac{1}\eee W(u) \, dx.    \label{rescef1} }
We notice then that
	\[ J_\eee(u) \geq \int_{B_1} \sqrt{ 2W(u)}\, |\nabla u| \, dx\] 
which, using the Co-area Formula, gives 	\[  J_\eee(u) \geq
\int_{-1}^1 \sqrt{2W(t)}\,\mathcal {H}^{n-1} \left(\{u=t\}\right) \, dt.\] 
The above formula may suggest that the minimizers of~$J_\eee$ 
have the tendency to minimize the $(n-1)$-dimensional measure of
their level sets. It turns out that indeed the level sets of the
minimizers of $J_\eee$ converge 
to a minimal surface as $\eee \to 0$: for more details see, for instance,
\cite{savinminimal} and the references therein.
															
In this setting, a famous De Giorgi conjecture comes into place. In the
late 70's, De Giorgi conjectured that entire, smooth, monotone (in one direction), bounded solutions of~\eqref{ALC-DG}
in the whole of~$\R^n$ 
are necessarily one-dimensional, i.e., there exist $\omega \in S^{n-1}$ and $u_0: \R\to \R$ such that 	
	\[ u(x)=u_0(\omega \cdot x) \quad \mbox{for any} \quad x\in \Rn.\] In other words, the conjecture above asks
if the level sets of the entire, smooth, monotone (in one direction),
bounded solutions are necessarily hyperplanes, at least in dimension $n\leq 8$. 

One may wonder why the number eight has a relevance in the problem above.
A possible explanation for this is given by the Bernstein Theorem,
as we now try to describe.
 
The Bernstein problem asks on whether or not all 
minimal graphs (i.e.
surfaces that locally minimize the perimeter and that
are graphs in a given direction)
in $\R^n$ must be necessarily affine.
This is indeed true in dimensions $n$ at most eight.
On the other hand, in dimension $n\geq 9$ there are global minimal graphs that are not hyperplanes (see e.g.~\cite{GIUSTI}).

The link between the problem of Bernstein and the conjecture of De Giorgi could be suggested
by the fact that
minimizers approach minimal surfaces in the limit.
In a sense, if one is able to prove that the limit interface is a hyperplane
and that this rigidity property gets inherited by the level sets of
the minimizers~$u_\eee$ (which lie nearby such limit hyperplane),
then, by scaling back, one obtains that the level sets
of~$u$ are
also hyperplanes. Of course, this link between the two
problems, as stated here, is only heuristic,
and much work is needed to deeply understand the connections
between the problem of Bernstein and the conjecture of De Giorgi.
We refer to~\cite{FARINA-VALDINOCI-STATE} for a more detailed introduction
to this topic.
\bigskip
	
We recall that this conjecture by De Giorgi
was proved for $n\leq 3$, see~\cite{GGUI, BCN97, AC00}.
Also,  the case $4\leq n \leq 8$ with the additional assumption that 	
\begin{equation} \label{limdgs}
 \lim_{x_n\to \pm \infty} u(x',x_n)=\pm 1, \quad \mbox{for any} \quad x'\in \R^{n-1}
\end{equation}
was proved in \cite{S09}. 

For $n\geq 9$ a counterexample can be found in \cite{PKW08}.  Notice 
that, if the above limit is uniform (and the De Giorgi conjecture with 
this additional assumption is known as the Gibbons conjecture), the 
result extends to all possible $n$ (see for 
instance~\cite{Farina-Gibbons, FARINA-VALDINOCI-STATE}
for further details).

The goal of the next part of this book
is then to discuss an analogue of these questions
for the nonlocal case and present related results.

\section{The fractional Allen-Cahn equation} \label{sbsac} 

The extension of the Allen-Cahn equation in~\eqref{ALC}
from a local to a nonlocal 
setting has theoretical interest and concrete applications. 
Indeed, the study of
long range interactions naturally leads to the analysis of
phase transitions and interfaces of nonlocal type.

Given an open domain 
$\Omega\subset \Rn$ and the double well potential $W$
(as in~\eqref{DF-WELL}), our goal here is 
to study the fractional Allen-Cahn equation	
\[ \frlap u +W'(u)=0 \quad \mbox{in} \quad \Omega, \]
for~$s\in(0,1)$ (when~$s=1$, this equation reduces to~\eqref{ALC}).
The solutions are the critical points of the nonlocal energy 	
	\begin{equation} \label{enfac}
		 \mathcal{E}(u,\Omega) := \int_{\Omega} W\big(u(x)\big) \, dx + \frac12
		 \iint_{\R^{2n}\setminus (\Omega^{\C})^2} \frac{|u(x)-u(y)|^2}{|x-y|^{n+2s} }\, dx\, dy,	
	\end{equation}
up to normalization constants that we omitted for simplicity.
The reader can compare~\eqref{enfac} with~\eqref{5.2PRE}. Namely, in~\eqref{enfac} the kinetic energy
is modified, in order to take into account long range interactions.
That is, the new kinetic energy still depends on the variation of the
phase parameter.
But, in this case, far away changes in phase may influence each other
(though the influence is weaker and weaker towards infinity).
	
Notice that in the nonlocal framework, we prescribe the function on 
$\Omega^{\C} \times \Omega^{\C}$ and consider the kinetic energy on the remaining regions (see Figure \ref{fign:eac}). 
The prescription of values in~$\Omega^{\C} \times \Omega^{\C}$
reflects into the fact that the domain of integration of the kinetic
integral in~\eqref{enfac} is~$\R^{2n}\setminus (\Omega^{\C})^2$.
Indeed, this is perfectly compatible with
the local case in~\eqref{5.2PRE}, where the domain of
integration of the kinetic term was simply~$\Omega$.
To see this compatibility, one may think that 
the domain of integration of the kinetic energy
is simply the complement of
the set in which the values of the functions are prescribed.
In the local case of~\eqref{5.2PRE}, the values are prescribed on~$\partial\Omega$, or, one may say, in~$\Omega^{\C}$: then the domain of integration
of the kinetic energy is the complement of~$\Omega^{\C}$,
which is simply~$\Omega$. 
In analogy with that, in the nonlocal case of~\eqref{enfac},
the values are prescribed on~$\Omega^{\C}\times\Omega^{\C}=(\Omega^{\C})^2$,
i.e. outside~$\Omega$ for both the variables $x$ and~$y$.
Then, the kinetic integral is set on the complement of~$(\Omega^{\C})^2$,
which is indeed~$\R^{2n}\setminus(\Omega^{\C})^2$.

Of course, the potential energy has local features, both
in the local and in the nonlocal case,
since in our model the nonlocality only occurs in the kinetic interaction,
therefore the potential integrals are set over~$\Omega$
both in~\eqref{5.2PRE} and in~\eqref{enfac}.

For the sake of shortness, given disjoint sets~$A$, $B\subseteq\R^n$
we introduce the notation
\begin{equation*} u(A,B):=\int_{A}\int_{B}  \frac{|u(x)-u(y)|^2}{|x-y|^{n+2s} }\, dx\, dy,\end{equation*}
and we write the new kinetic energy in~\eqref{enfac} as
\begin{equation}\label{kenac}
 \mathcal {K} (u,\Omega) = \frac{1}{2} u(\Omega, \Omega) + u(\Omega, \Omega^{\C}).
\end{equation} 
\begin{center}
	\begin{figure}[htb]
	\hspace{0.8cm}
	\begin{minipage}[b]{0.75\linewidth}
	\centering
	\includegraphics[width=0.75\textwidth]{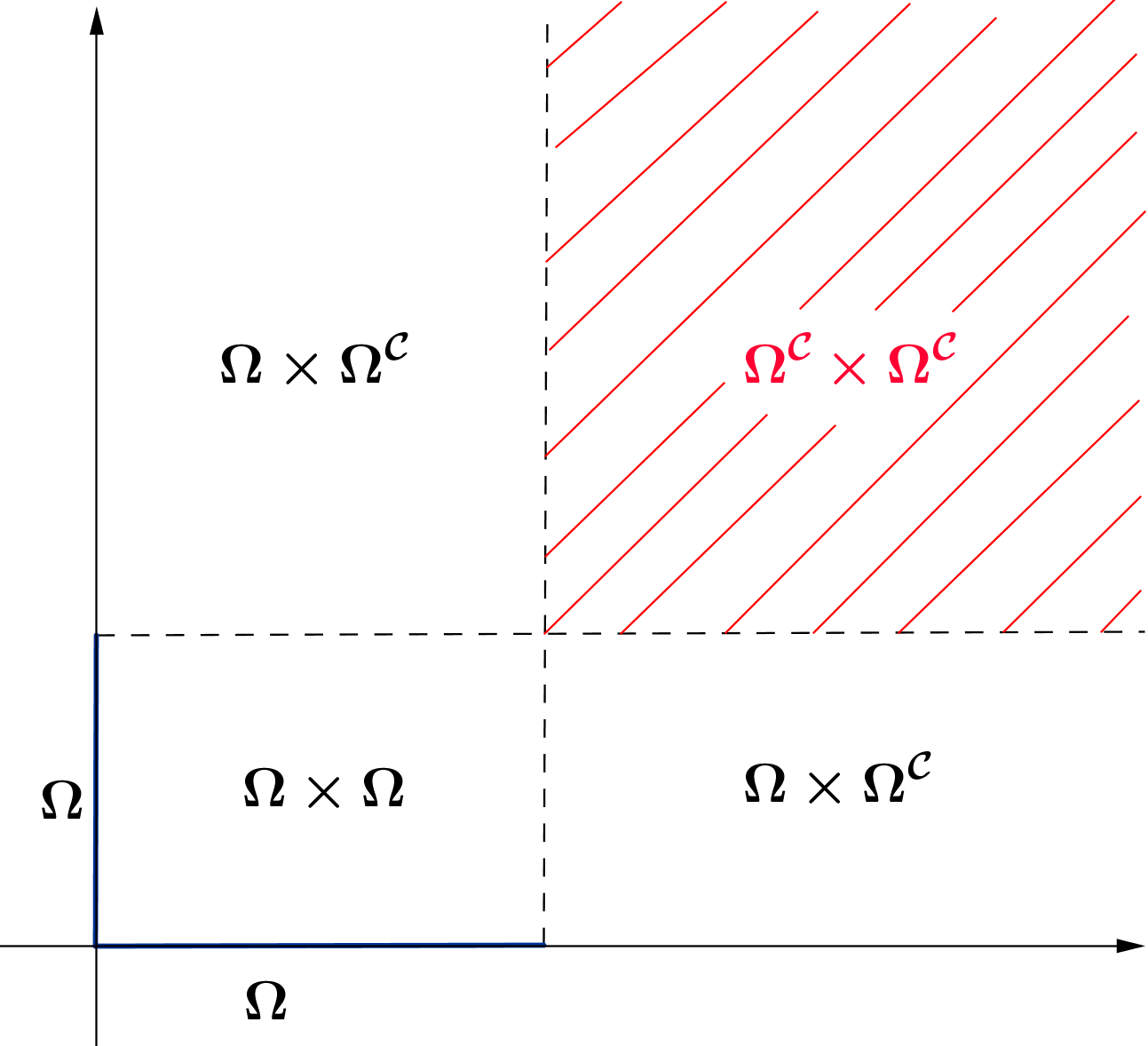}
	\caption{The kinetic energy}   
	\label{fign:eac}
	\end{minipage}
\end{figure}
\end{center}
Let us define the energy minimizers and provide a density estimate for the minimizers.
\begin{defn}
The function $u$ is a minimizer for the energy $\E$ in $B_R$ if 
$ \E (u, B_R)\leq \E (v,B_R) $ for any $v$ such that $u=v $ outside $B_R$. 
\end{defn}

The energy of the minimizers satisfy the following
uniform bound property on large balls.
\begin{thm} \label{acenest1}
Let $u$ be a minimizer in $B_{R+2}$ for a large $R$, say $R\geq1$. Then 
	\begin{equation}\label{THANKS}
\lim_{R \to +\infty} \frac{1}{R^n} \E (u,B_R) =0.\end{equation} More precisely,
	\[ \E(u,B_R) \leq 
					\begin{cases}
						CR^{n-1} \quad &\mbox{if} \quad s\in \Big(\frac{1}{2},1\Big),\\
						CR^{n-1}\log R \quad &\mbox{if} \quad s=\frac{1}{2},\\
						CR^{n-2s} \quad &\mbox{if} \quad s\in \Big(0,\frac{1}{2}\Big).
					\end{cases}
	\]
	Here, $C$ is a positive constant depending only on $n, s$ and $W$.
\end{thm}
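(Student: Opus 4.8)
The plan is to prove the energy estimate by constructing an explicit competitor $v$ and using the minimality of $u$. The competitor will agree with $u$ outside $B_R$ and will be a ``cut-off'' interpolation between $u$ and one of the pure phases (say $+1$) inside a thin annulus near $\partial B_R$, while being identically $+1$ deep inside $B_{R-1}$. More precisely, fix a smooth radial function $\phi$ with $\phi=1$ in $B_{R-1}$, $\phi=0$ outside $B_R$, and $|\nabla\phi|\le C$, and set
\[ v(x):=(1-\phi(x))\,u(x)+\phi(x). \]
Then $v=u$ outside $B_R$, so $v$ is an admissible competitor on $B_{R+2}$ (indeed on any ball containing $B_R$), and minimality gives $\E(u,B_{R+2})\le\E(v,B_{R+2})$. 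The key point is that outside $B_R$ both functionals agree, so after subtracting the common part we are left comparing the \emph{local} contributions on $B_R$; hence it suffices to bound $\E(v,B_R)$, or rather the part of the energy that lives over $B_R$, by the claimed right-hand side.

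Next I would estimate the three pieces of $\E(v,\,\cdot\,)$ separately. For the potential term, $W(v)=0$ in $B_{R-1}$ since $v\equiv1$ there, and in the annulus $B_R\setminus B_{R-1}$ the potential is bounded (as $|v|\le1$ on the relevant range, using $|u|\le1$ which follows for minimizers), so $\int_{B_R}W(v)\le C\,|B_R\setminus B_{R-1}|\le CR^{n-1}$. For the kinetic term $\K(v,B_R)=\tfrac12 v(B_R,B_R)+v(B_R,B_R^{\C})$, split the domain of integration according to whether each variable lies in $B_{R-1}$ or in the annulus $A_R:=B_R\setminus B_{R-1}$ (or, for the second term, outside $B_R$). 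The contribution where both $x,y\in B_{R-1}$ vanishes because $v\equiv1$ there. The dangerous pieces are: (i) $x\in B_{R-1}$, $y$ far away, where $|v(x)-v(y)|=|1-v(y)|\le2$ and one integrates $|x-y|^{-n-2s}$ against the distance from $\partial B_R$ — this is precisely the integral that produces the dichotomy in $s$, since $\int_{B_{R-1}}\big(\int_{B_R^{\C}\cup A_R}\frac{dy}{|x-y|^{n+2s}}\big)dx\sim\int_{B_{R-1}}\mathrm{dist}(x,\partial B_R)^{-2s}\,dx$, which is $O(R^{n-1})$ if $s<\tfrac12$ wait — I must be careful: the estimate there gives $CR^{n-2s}$ for $s<\tfrac12$, $CR^{n-1}\log R$ for $s=\tfrac12$, and $CR^{n-1}$ for $s>\tfrac12$, matching the claim; and (ii) the near-diagonal part with $x,y\in A_R$ or one of them in $A_R$, where one uses $|v(x)-v(y)|\le |u(x)-u(y)|+|\phi(x)-\phi(y)|\big(1+\|u\|_\infty\big)\le C|x-y|+|u(x)-u(y)|$, so this contributes at most $C$ times $u(A_R',A_R')+|A_R'|$ for a slightly enlarged annulus $A_R'$, both of which are $O(R^{n-1})$ — the term $u(A_R',A_R')$ being controlled crudely by $\iint_{|x-y|<1}|u(x)-u(y)|^2|x-y|^{-n-2s}\le C|A_R'|$ after absorbing the far part into (i) and using $|u|\le1$.

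The main obstacle is bookkeeping the kinetic cross-terms so that the pieces involving $u$ itself on the annulus are genuinely of lower order: one cannot simply invoke an \emph{a priori} bound on $u$'s kinetic energy over $B_R$, since that is essentially what we are trying to prove, so the argument must be self-contained and rely only on $|u|\le1$ (true for minimizers by a truncation argument, since replacing $u$ by $\max\{-1,\min\{1,u\}\}$ does not increase $\E$) together with the local integrability $\int_{|z|<1}|z|^{2-n-2s}\,dz<\infty$. Once this is organized, collecting all the pieces gives
\[ \E(u,B_R)\le \E(v,B_R)\le
\begin{cases}
CR^{n-1}, & s\in\left(\tfrac12,1\right),\\
CR^{n-1}\log R, & s=\tfrac12,\\
CR^{n-2s}, & s\in\left(0,\tfrac12\right),
\end{cases}\]
and since $n-1<n$, $n-1+\varepsilon<n$, and $n-2s<n$ in all cases, dividing by $R^n$ and letting $R\to+\infty$ yields~\eqref{THANKS}. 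For the detailed computation of the annular integrals one may follow the estimates in the references on nonlocal phase transitions cited in the text; I would state them as a lemma and verify the borderline case $s=\tfrac12$ by hand to see where the logarithm enters.
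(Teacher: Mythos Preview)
Your overall strategy is right, and you even correctly identify the obstacle (``one cannot simply invoke an a~priori bound on $u$'s kinetic energy\dots''), but you then fall into exactly that trap. The claim
\[
\iint_{\substack{x,y\in A_R'\\|x-y|<1}}\frac{|u(x)-u(y)|^2}{|x-y|^{n+2s}}\,dx\,dy\le C|A_R'|
\]
is false under the hypothesis $|u|\le 1$ alone: it would require a Lipschitz (or at least H\"older of order~$>s$) bound on~$u$, which you do not have at this stage. With only $|u|\le1$ the near-diagonal kernel $|x-y|^{-n-2s}$ is not integrable, so this term can be infinite for a generic bounded~$u$. Your competitor $v=(1-\phi)u+\phi$ produces $|v(x)-v(y)|^2\le 2(1-\phi(x))^2|u(x)-u(y)|^2+C|\phi(x)-\phi(y)|^2$, and the factor~$2$ in front of the $u$-term means it cannot be absorbed back into $\E(u,B_R)$; the argument is genuinely circular.

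The paper sidesteps this by choosing the competitor $v:=\min\{u,\psi\}$, where $\psi$ equals $-1$ on $B_{R+1}$ and transitions to $+1$ on $B_{R+2}\setminus B_{R+1}$. The point of the minimum (as opposed to a convex combination) is the pointwise inequality: for $x\in A:=\{v=\psi\}$ and $y\in A^{\C}$ one has
\[
|v(x)-v(y)|\le \max\big\{|u(x)-u(y)|,\,|\psi(x)-\psi(y)|\big\},
\]
hence $v(A,A^{\C})\le u(A,A^{\C})+\psi(A,A^{\C})$ with coefficient exactly~$1$ on the $u$-term. On $A\times A$ one simply has $v\equiv\psi$, so $v(A,A)=\psi(A,A)$ with no $u$-contribution at all. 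Plugging into $\E(v,B_{R+2})\ge\E(u,B_{R+2})$, every uncontrolled $u$-interaction cancels, and one is left with $\tfrac12 u(B_{R+1},B_{R+1})+\int_{B_{R+1}}W(u)\le\E(\psi,B_{R+2})$, which is then bounded by explicit computation via $\int_{B_{R+2}}d(x)^{-2s}\,dx$ with $d(x)=\max\{R+1-|x|,1\}$ (this integral is what produces the three-case dichotomy). The remaining cross term $u(B_R,B_{R+1}^{\C})$ is handled separately by the same $d(x)^{-2s}$ estimate, using only $|u|\le1$ and the fact that $|x-y|\ge d(x)$ there. If you want to keep your cutoff-style competitor, you would need to replace the convex combination by a truncation (min or max against a barrier) to get the coefficient~$1$ that makes the cancellation work.
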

Notice that for $\displaystyle s\in \Big(0,\frac{1}{2}\Big)$, $R^{n-2s}>R^{n-1}$. These estimates are optimal (we
refer to \cite{SV14} for further details).
\begin{proof}
We introduce at first some auxiliary functions. Let 
	\[\psi(x):=-1+2 \min\Big\{ (|x|-R-1)_+,1\Big\}, \quad v(x):=\min \Big\{u(x), \psi(x)\Big\},\]\[ d(x):= \max\Big\{ (R+1-|x|),1 \Big\}.\]
	\begin{center}
\begin{figure}[htb]
	\hspace{1.15cm}
	\begin{minipage}[b]{1.05\linewidth}
	\centering
	\includegraphics[width=0.95\textwidth]{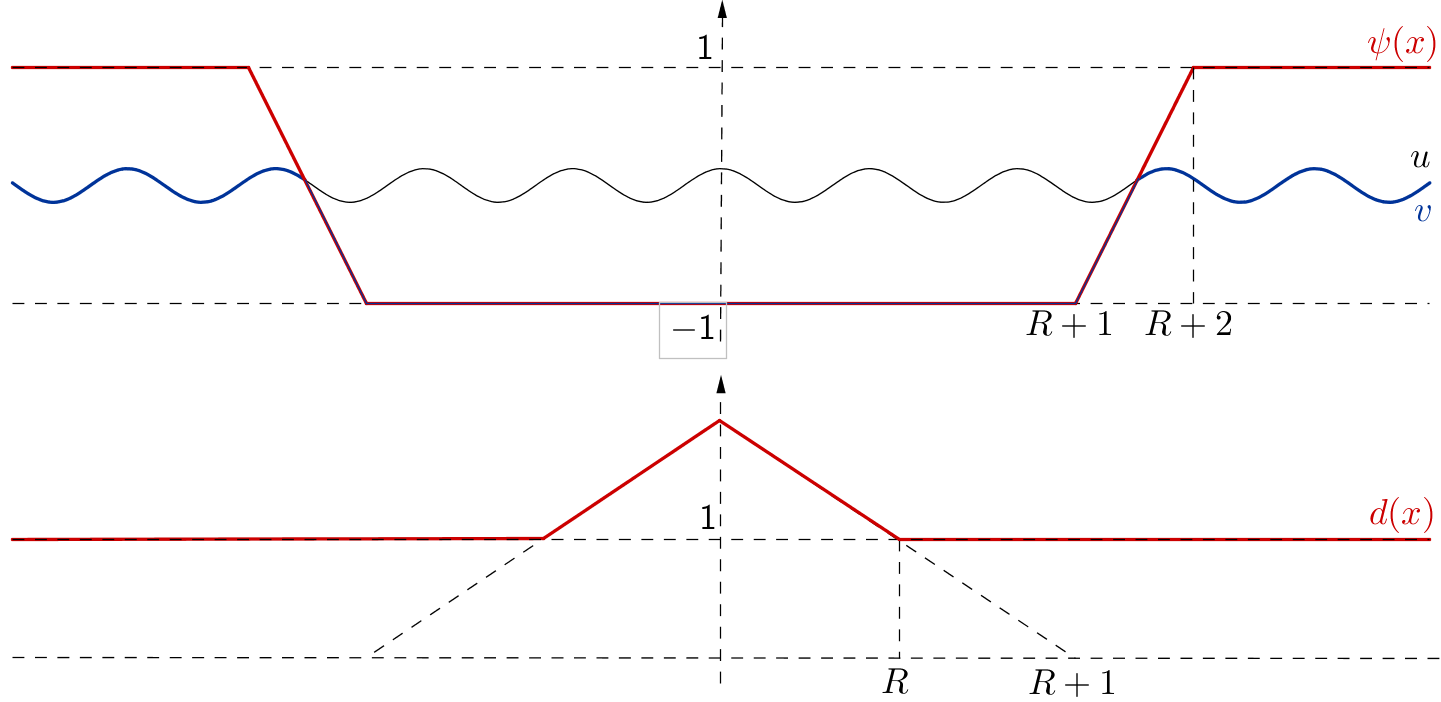}
	\caption{The functions $\psi$, $v$ and $d$}   
	\label{fign:psid}
	\end{minipage}
\end{figure}
\end{center}
Then, for $|x-y|\leq d(x)$ we have that 
	\begin{equation}\label{acpsi1} |\psi(x)-\psi(y)|\leq \frac{2 |x-y|}{d(x)}.\end{equation}
	Indeed, if $|x|\leq R$, then $d(x)=R+1-|x|$ and \[ |y|\leq |x-y|+|x|\leq d(x)+|x|\leq R+1,\] thus $\psi(x)=\psi(y)=0$ and the inequality is trivial. Else, if $|x|\geq R$, then $d(x)=1$,
and so the inequality is assured by the Lipschitz continuity of $\psi$ (with $2$ as the Lipschitz constant).
	
Also, we prove that we have the following estimates for the function $d$:
	\begin{equation} \label{acdx1} \int_{B_{R+2}} d(x)^{-2s}\, dx \leq 
		\begin{cases} 
			 CR^{n-1}  \quad & \mbox{if}  \quad s\in \Big(\frac{1}{2},1\Big),\\
			 CR^{n-1}\log R   \quad &\mbox{if} \quad s=\frac{1}{2},\\
			 CR^{n-2s}  \quad & \mbox{if}  \quad s\in \Big(0,\frac{1}{2}\Big) .
		\end{cases}
	\end{equation}
	To prove this, we observe that
in the ring $B_{R+2}\setminus B_R$, we have~$d(x)=1$. Therefore,
the contribution to the integral in~\eqref{acdx1}
that comes from the ring~$B_{R+2}\setminus B_R$
is bounded by the measure of the ring, and so it
is of order $R^{n-1}$, namely
\begin{equation}\label{5.10bis}
\int_{B_{R+2}\setminus B_R} d(x)^{-2s}\,dx=|B_{R+2}\setminus B_R|\le CR^{n-1},
\end{equation}
for some~$C>0$.
We point out that this order is always negligible with respect to
the right hand side of~\eqref{acdx1}.

Therefore, to complete the proof of~\eqref{acdx1}, it only remains to estimate
the contribution to the integral coming from~$B_R$.

For this, we use
polar coordinates and perform
the change of variables $t=\rho/(R+1)$. In this way, we obtain that
	\[ \begin{split}
		\int_{B_R} d(x)^{-2s} \, dx =\;& C\,\int_0^R \frac{\rho^{n-1}}	{(R+1-\rho)^{2s}} \, d\rho  \\
			=\;&  C\,(R+1)^{n-2s} \int_0^{1-\frac{1}{R+1}} t^{n-1} (1-t)^{-2s} \, dt\\
			\leq \;& C\,(R+1)^{n-2s} \int_0^{1-\frac{1}{R+1}}  (1-t)^{-2s} \, dt,
	 \end{split}\]
for some~$C>0$. 
Now we observe that
	\[  \int_0^{1-\frac{1}{R+1}}  (1-t)^{-2s} \, dt \leq
	 	\begin{cases} \displaystyle \int_0^1 (1-t)^{-2s} \, dt =C \quad &\mbox{if} \quad s\in \Big(0, \frac{1}{2}\Big),\\
	 					-\log (1-t) \Big|_0^{1-\frac{1}{R+1}} \leq \log R \quad &\mbox{if} \quad s= \frac{1}{2},\\
	 					-\frac{(1-t)^{1-2s}}{1-2s} \Big|_0^{1-\frac{1}{R+1}} \leq C R^{2s-1} \quad &\mbox{if} \quad s\in \Big( \frac{1}{2},1 \Big) .
		\end{cases}
	\]
The latter two formulas and~\eqref{5.10bis}
imply~\eqref{acdx1}.

Now, we
define the set \[ A:= \{ v=\psi\} \] and notice that $B_{R+1}\subseteq A \subseteq B_{R+2}$. We prove
that for any $x\in A$ and any $y\in A^{\C}$ 
	\begin{equation}\label{8uUtY}
		|v(x)-v(y)| \leq \max\Big\{ |u(x)-u(y)|,|\psi(x)-\psi(y)|\Big\}.	
	\end{equation}
Indeed, for $x\in A$ and $y\in A^{\C}$ we have that
\[ 	v(x) =\psi(x)\leq u(x) \quad \mbox{and} \quad  v(y)=u(y)\leq \psi(y),\]
therefore
	\[ v(x)-v(y)\leq u(x)-u(y) \quad \mbox{and} \quad v(y)-v(x)\leq \psi(y)-\psi(x),\]
	which establishes~\eqref{8uUtY}. This leads to 
	\begin{equation} \label{acv1}
		v(A,A^{\C}) \leq u(A,A^{\C})+\psi(A,A^{\C}).
	\end{equation}
Notice now that \[\E(u,B_{R+2})\leq \E(v,B_{R+2})\] since $u$ is a minimizer in $B_{R+2}$ and $v=u$ outside $B_{R+2}$. We have that
	\[ \begin{split}
		\E (u,B_{R+2}) =\;& \frac{1}{2} \;u(B_{R+2},B_{R+2}) + u(B_{R+2}, B_{R+2}^{\C}) +\int_{B_{R+2}} W(u) \, dx\\
						=\;& \frac{1}{2}\; u(A,A) + u(A,A^{\C}) \\
						\;& + \frac{1}{2} \;u(B_{R+2}\setminus A,B_{R+2}\setminus A) + u(B_{R+2}\setminus A, B_{R+2}^{\C}) \\
						\;& + \int_{A} W(u) \, dx+ \int_{B_{R+2}\setminus A} W(u) \, dx.
	\end{split}\]
	Since $u$ and $v$ coincide on $A^{\C}$, by using the inequality \eqref{acv1} we obtain that
	\[ \begin{split}
		0\;\leq&\;  \E(v,B_{R+2})- \E(u,B_{R+2}) \\\;= &\; \frac{1}{2}\; v(A,A)-\frac{1}{2} u(A,A) + v(A,A^{\C}) -u (A,A^{\C}) + \int_{A} \Big(W(v)-W(u)\Big) \, dx \\
		\; \leq & \;  \frac{1}{2} \;v(A,A)-\frac{1}{2} u(A,A)  + \psi (A,A^{\C}) +  \int_{A}\Big( W(v)-W(u) \Big)\, dx.
	\end{split}\]
Moreover, $v=\psi$ on $A$ and we have that
		\[ \frac{1}{2}\; u(A,A) + \int_{A} W(u) \, dx \leq  \frac{1}{2} \; \psi (A, A) + \psi (A, A^{\C})  +\int_{A} W(\psi) \, dx=\E(\psi,A),\]
		and therefore, since $B_{R+1}\subseteq A \subseteq B_{R+2}$,
		\begin{equation}\label{acbr1} \frac{1}{2} \;u(B_{R+1},B_{R+1}) + \int_{B_{R+1}} W(u)\, dx \leq \E(\psi,B_{R+2}).\end{equation}
We estimate now $\E(\psi, B_{R+2})$. For a fixed $x \in B_{R+2}$ we observe that
		\[ \begin{split} \int_{\Rn}\al \frac{|\psi(x)-\psi(y)|^2}{|x-y|^{n+2s}} \, dy \\
		=\al  \int_{|x-y|\leq d(x)} \frac{|\psi(x)-\psi(y)|^2}{|x-y|^{n+2s}} \, dy  + \int_{|x-y|\geq d(x)} \frac{|\psi(x)-\psi(y)|^2}{|x-y|^{n+2s}} \, dy \\
				\leq \al  C \bigg( \frac{1}{d(x)^{2}} \int_{|x-y|\leq d(x)} |x-y|^{-n-2s+2} \, dy + \int_{|x-y|\geq d(x)} |x-y|^{-n-2s} \, dy \bigg),\end{split}\]
		where we have used \eqref{acpsi1} and the boundedness of $\psi$. Passing to polar coordinates, we have that
		\[ \begin{split}
			\int_{\Rn} \frac{|\psi(x)-\psi(y)|^2}{|x-y|^{n+2s}} \, dy \leq&\; C\bigg( \frac{1}{d(x)^{2}} \int_0^{d(x) } \rho^{-2s+1} \, d\rho + \int_{d(x)}^{\infty} \rho^{-2s-1}\, d\rho \bigg) \\ =&\;C d(x)^{-2s}. 
	\end{split}\]
Recalling
that~$\psi(x)=-1$ on $B_{R+1}$ and $W(-1)=0$, we obtain that
 \[ \begin{split}
 	\E(\psi, B_{R+2}) =&\;\int_{B_{R+2}}\int_{\Rn} \frac{|\psi(x)-\psi(y)|^2}{|x-y|^{n+2s}} \, dy \, dx + \int_{B_{R+2}} W(\psi) \, dx \\
 	\leq&\; \int_{B_{R+2}} d(x)^{-2s} dx + \int_{B_{R+2}\setminus B_{R+1}} W(\psi)\,dx .
 \end{split}\]
Therefore, making use of~\eqref{acdx1},
\begin{equation}\label{acpsi2} \E(\psi, B_{R+2}) \leq \begin{cases} 
			CR^{n-1} \quad &\mbox{if} \quad s\in \Big(\frac{1}{2},1\Big),\\
			CR^{n-1}\log R \quad &\mbox{if} \quad s= \frac{1}{2},\\
			CR^{n-2s} \quad &\mbox{if} \quad s\in \Big(0,\frac{1}{2}\Big).
		\end{cases}\end{equation} 
For what regards the right hand-side of inequality \eqref{acbr1}, we have that
		\eqlab{\label{bla111}
			\frac{1}{2} \;u(B_{R+1},B_{R+1}) + \int_{B_{R+1}} W(u)\, dx  \geq&\;   \frac{1}{2} \; u(B_{R},B_{R}) +u(B_R, B_{R+1}\setminus B_R) \\
			&\; +\int_{B_{R}} W(u)\, dx . }
We prove now that
\begin{equation}\label{5.16bis}
u(B_R,B_{R+1}^{\C})  \leq  \int_{B_{R+2}} d(x)^{-2s}\, dx .\end{equation}
For this, we observe that if $x\in B_R$, then~$d(x)=R+1-|x|$.
So, if~$x\in B_R$ and~$y \in B_{R+1}^{\C}$, then
\[|x-y|\geq |y|-|x| \geq R+1-|x| =d(x). \] Therefore, by changing variables $z=x-y$ and then passing to polar coordinates, we have that
	\bgs{ u(B_R,B_{R+1}^{\C}) \leq&\; 4\,\int_{B_R} dx \int_{B_{d(x)}^{\C}} |z|^{-n-2s} \, dz\\
				\leq&\; C\,\int_{B_R} dx \int_{d(x) }^{\infty} \rho^{-2s-1} \, d\rho \\
				=&\;C\, \int_{B_R} d(x)^{-2s} \, dx .}
This establishes~\eqref{5.16bis}.

	 Hence, by \eqref{acdx1} and~\eqref{5.16bis}, we have that
	 	\begin{equation} \label{acpsi3} 
u(B_R,B_{R+1}^{\C})  \leq  \int_{B_{R+2}} d(x)^{-2s}\, dx \leq 
		\begin{cases} 
			 CR^{n-1}  \quad & \mbox{if}  \quad s\in \Big(\frac{1}{2},1\Big),\\
			 CR^{n-1}\log R   \quad &\mbox{if} \quad s=\frac{1}{2},\\
			 CR^{n-2s}  \quad & \mbox{if}  \quad s\in \Big(0,\frac{1}{2}\Big) .
		\end{cases}\end{equation}
We also observe that, by adding $u(B_R,B_{R+1}^{\C}) $ to inequality \eqref{bla111}, we obtain that
	\[ \begin{aligned} 	\frac{1}{2} \al  u(B_{R+1},B_{R+1}) + \int_{B_{R+1}} W(u)\, dx  + u(B_R,B_{R+1}^{\C}) 
	\\
		&\qquad\ge   \frac{1}{2} \; u(B_{R},B_{R}) +u(B_R, B_{R+1}\setminus B_R) +\int_{B_{R}} W(u) \, dx +u(B_R,B_{R+1}^{\C})
\\ &\qquad=\E(u,B_R)
.\end{aligned}\] 
This and \eqref{acbr1} give that 
		\[ \E(u,B_R)\leq \E(\psi,B_{R+2} )+  u(B_R,B_{R+1}^{\C}).\] 
Combining this with the estimates in \eqref{acpsi2} and \eqref{acpsi3},
we obtain the desired result.
\end{proof}

Another type of estimate can be given in terms of the level sets of the minimizers (see Theorem 1.4 in \cite{SV14}).

\begin{thm}\label{TY78UU}
Let u be a minimizer of $\E$ in $B_R$. Then for any $\theta_1, \theta_2 \in (-1,1)$ such that \[ u(0)>\theta_1\] we have that
there exist~$\overline R$ and~$C>0$ such that
	\[ \Big| \{ u>\theta_2\} \cap B_R\Big| \geq C R^n\]
if $R \geq \overline R(\theta_1, \theta_2)$. The constant $C$ > 0 depends only on $n$, $s$ and $W$ and $\overline R(\theta_1, \theta_2)$ is
a large constant that depends also on $\theta_1$ and $\theta_2$.
\end{thm}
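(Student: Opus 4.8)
The plan is to run the density-estimate scheme for minimizers of singularly perturbed functionals (in the spirit of Caffarelli and C\'ordoba in the local case), in the nonlocal form worked out in~\cite{SV14}. I will use freely that minimizers satisfy~$|u|\le1$ (truncation does not increase~$\E$) and possess a modulus of continuity depending only on~$n$, $s$ and~$W$ (by interior regularity for~$\frlap u=-W'(u)$ with bounded right-hand side), and I will invoke the energy bound of Theorem~\ref{acenest1}.

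First I would establish a \emph{vanishing lemma}: there exist~$\delta_0,\lambda_0\in(0,1)$ such that if~$u$ is a minimizer in~$B_\rho$ and~$|\{u>\theta\}\cap B_\rho|\le\delta_0\rho^n$, then~$u\le\theta$ in~$B_{\lambda_0\rho}$. To prove this I would compare~$u$ with the competitor~$v:=\min\{u,\psi\}$, where~$\psi$ is lowered to the level~$\theta$ inside~$B_{\rho-1}$, equals~$u$ outside~$B_\rho$, and interpolates across the unit-width annulus in between (the same kind of auxiliary function used in the proof of Theorem~\ref{acenest1}). Minimality, $\E(u,B_\rho)\le\E(v,B_\rho)$, after estimating the kinetic interactions across the annulus as in~\eqref{acpsi1}--\eqref{acpsi3} and bounding the potential term by the (small) measure of~$\{u>\theta\}\cap B_\rho$, yields a decay of this ``excess measure'' under a fixed dyadic rescaling; feeding this into the fractional Sobolev inequality of Section~\ref{sobineq} (used with~$p=2$, whose Gagliardo kernel is precisely~$|x-y|^{-n-2s}$) and iterating in the manner of De Giorgi forces~$\{u>\theta\}$ to be empty in~$B_{\lambda_0\rho}$. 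The bookkeeping splits according to whether~$s<1/2$, $s=1/2$ or~$s>1/2$, with a logarithmic correction at~$s=1/2$, exactly as in Theorem~\ref{acenest1}.

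Next I would deduce the density bound at the threshold~$\theta_1$. Since~$u(0)>\theta_1$, the origin lies in~$\{u>\theta_1\}\cap B_r$ for every~$r>0$, so the contrapositive of the vanishing lemma (rescaled to~$B_r$) gives, with no circularity, $|\{u>\theta_1\}\cap B_r|\ge c_1 r^n$ for all~$r<R/2$. If~$\theta_2\le\theta_1$ this already closes the argument, because~$\{u>\theta_1\}\subseteq\{u>\theta_2\}$. If instead~$\theta_2>\theta_1$, I would produce a \emph{clean ball}: set~$\theta_\star:=\frac{\theta_2+1}{2}$ and~$w_0:=\min_{[\theta_1,\theta_\star]}W>0$; if~$u\le\theta_\star$ on all of~$B_{R/2}$, then~$W(u)\ge w_0$ on the set~$\{u>\theta_1\}\cap B_{R/2}$, of measure~$\ge c_1(R/2)^n$, whence~$\E(u,B_{R/2})\ge w_0\,c_1\,(R/2)^n$, contradicting~$\E(u,B_{R/2})=o(R^n)$ from Theorem~\ref{acenest1} once~$R\ge\overline R(\theta_1,\theta_2)$. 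Hence some~$\bar x\in B_{R/2}$ satisfies~$u(\bar x)>\theta_\star$, so~$u(\bar x)-\theta_2\ge\frac{1-\theta_2}{2}$, and by the universal continuity modulus~$u>\theta_2$ on a ball~$B_{\rho_0}(\bar x)$ with~$\rho_0>0$ depending only on~$n$, $s$, $W$ and~$\theta_2$. Applying the contrapositive of the vanishing lemma centered at~$\bar x$ for the threshold~$\theta_2$, as above, gives~$|\{u>\theta_2\}\cap B_r(\bar x)|\ge c_2 r^n$ for every~$r\le R/2$; taking~$r=R/2$ and using~$B_{R/2}(\bar x)\subseteq B_R$ yields the asserted bound.

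The hard part will be the vanishing lemma: in contrast with the local case, the kinetic energy is nonlocal, so the competitor~$v$ keeps interacting with~$u$ far from the annulus, and these long-range contributions must be absorbed into the estimate --- this is exactly where the threshold~$s=1/2$ enters and where the delicate computations of~\cite{SV14} are required (it is also the step to be carried out carefully enough that the final constant~$C$ can be taken to depend only on~$n$, $s$ and~$W$).
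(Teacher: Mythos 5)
Your plan is recognizably in the Caffarelli--C\'ordoba density-estimate family, which is also the skeleton of the paper's proof (itself a streamlined version of~\cite{SV14}, restricted in the text to $s\in(0,1/2)$). But the organization diverges, and the place where it diverges is exactly the place where a crucial ingredient is missing.

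The paper does not prove a vanishing lemma; it proves a nonlinear growth inequality directly. Writing $V(\rho):=|\{u>\theta_\star\}\cap B_\rho|$, the paper compares $u$ with $v=\min\{u,w\}$ and, after absorbing the reaction and nonlocal terms, arrives at
$\rho^{2s}\,V(\rho-K)^{(n-2s)/n}\le 2C_5\,V(2\rho)$,
and then invokes the iteration Lemma~\ref{indargl} to convert this recursion into $V(R)\ge c_0 R^n$. The switch from the low threshold $\theta_\star$ to the high one $\theta^\star$ is then done purely energetically, via $\E(u,B_R)\ge\inf_{(\theta_\star,\theta^\star)}W\cdot|\{\theta_\star<u<\theta^\star\}\cap B_R|$ together with the $R^{n-2s}$ energy bound, \emph{not} via a clean ball and a modulus of continuity. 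Your threshold-switching step is a legitimate alternative, and the modulus-of-continuity input you invoke is available; this part is fine and is a genuinely different (arguably more geometric) route to the same conclusion.

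The real gap is in the \emph{competitor}. You propose to use ``the same kind of auxiliary function used in the proof of Theorem~\ref{acenest1}'', i.e.\ a piecewise-linear radial cutoff $\psi$. That function is only used in~Theorem~\ref{acenest1} to bound the \emph{total} energy from above; it has no useful sign or size control on its fractional Laplacian. The paper's proof hinges on the barrier $w$ of Lemma~\ref{bW1}, which is constructed precisely so that $-\frlap w(x)\le\tau\big(1+w(x)\big)$ for an arbitrarily small $\tau>0$, together with $1+w(x)\asymp(R+1-|x|)^{-2s}$. Inequality~\eqref{w2} is what lets the nonlocal kinetic remainder $2\int_{B_R\cap\{u>w\}}(u-w)(-\frlap w)$ be dominated by the (negative) contribution coming from the coercivity of $W$ near $-1$ (the term $-c\int(1+w)(u-w)$ in~\eqref{wwu}), which is the whole balance that makes~\eqref{kuvr2}--\eqref{strange1} close. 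With a generic cutoff, $-\frlap\psi$ is of order $d(x)^{-2s}$ with no favorable sign, and this term cannot be absorbed into the potential-energy gain; the comparison then only reproduces the coarse upper bound of Theorem~\ref{acenest1}, which is of the wrong sign for a vanishing or growth argument. So your ``vanishing lemma'' step, as written, does not close: you would need to replace the acenest1-type cutoff by a genuine fractional supersolution barrier with the property~\eqref{w2} (or an equivalent quantitative construction), and at that point you have essentially reconstructed the paper's proof. In addition, note that the vanishing lemma you propose is a pointwise statement ($u\le\theta$ on $B_{\lambda_0\rho}$), which is strictly stronger than what the paper establishes or needs; the paper's argument never shows $u$ dips below a level on a ball, only that measures of superlevel sets grow.
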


The statement of Theorem \ref{TY78UU}
says that the level sets of minimizers always occupy
a portion of a large ball comparable to the ball itself.
In particular, both phases occur in a large ball,
and the portion of the ball occupied by each phase is comparable
to the one occupied by the other.

Of course, the simplest situation in which two phases split
a ball in domains with comparable, and in fact equal, size
is when all the level sets are hyperplanes.
This question is related to a fractional version
of a classical conjecture of
De Giorgi and
to nonlocal minimal surfaces, that we discuss in the following Section~\ref{sbsdg} and Chapter~\ref{nlms}.

Let us try now to give some details on the proof of the Theorem \ref{TY78UU} in the particular case in which $s$ is in the range $(0,1/2)$. The more general proof for all $s \in (0,1)$ can be found in \cite{SV14}, where one uses some estimates on the Gagliardo norm. In our particular case we will make use of the Sobolev inequality that we introduced in \eqref{OK:sob:p}. The interested reader can see \cite{SavVal11} for a more exhaustive explanation of the upcoming proof. 

\begin{proof}[Proof of Theorem \ref{TY78UU}]
Let us consider a smooth function $w$ such that $w=1$ on $B_R^{\C}$ (we will take in sequel $w$ to be a particular barrier for $u$), and define 
	\[ v(x):= \min \{ u(x),w(x)\}.\] 
	Since $|u|\leq 1$, we have that $v=u$ in $B_R^{\C}$. Calling $D=\left(\Rn\times \Rn\right)\setminus \left(B_R^{\C}\times B_R^{\C} \right)$ we have from definition \eqref{kenac} that
		\bgs{ \mathcal {K} (u-v,\al  B_R)  + \mathcal {K} (v,B_R) -\mathcal {K}(u,B_R)  \\ =\al
		\frac{1}{2} \iint_D \frac{ |(u-v)(x) -(u-v)(y)|^2 +|v(x)-v(y)|^2 -|u(x)-u(y)|^2}{|x-y|^{n+2s}} \, dx\, dy.}
		We use the algebraic identity $|a-b|^2 +b^2-a^2 = 2b(b-a)$ with $a=u(x)-u(y)$ and $b=v(x)-v(y)$ to obtain that
		\bgs{ \mathcal {K} (u-v,B_R) \al  + \mathcal {K} (v,B_R) -\mathcal {K}(u,B_R) \\
		= \al  \iint_D \frac{ \left( (u-v)(x) -(u-v)(y)\right)\left(v(y)-v(x)\right)} {|x-y|^{n+2s}} \, dx\, dy.} 
	Since $u-v=0$ on $B_R^{\C}$ we can extend the integral to the whole space $\Rn \times \Rn$, hence 
		\bgs{ \mathcal {K}   (u-v,B_R) \al + \mathcal {K} (v,B_R) -\mathcal {K}(u,B_R) \\
		= \al  \iint_{\Rn\times\Rn} \frac{ \left( (u-v)(x) -(u-v)(y)\right)\left(v(y)-v(x)\right)} {|x-y|^{n+2s}} \, dx\, dy.}
	Then  by changing variables and using the anti-symmetry of the integrals, we notice that
		\bgs{ \iint_{B_R\times B_R} \al   \frac{ \left( (u-v)(x) -(u-v)(y)\right)\left(v(y)-v(x)\right)} {|x-y|^{n+2s}} \, dx\, dy \\
		=\al   \iint_{B_R\times B_R}  \frac{ (u-v)(x) \left(v(y)-v(x)\right)} {|x-y|^{n+2s}} \, dx\, dy\\
		\al  - \iint_{B_R\times B_R} \frac{(u-v)(y)\left(v(y)-v(x)\right)} {|x-y|^{n+2s}} \, dx\, dy \\
		=\al 2  \iint_{B_R\times B_R}   \frac{ (u-v)(x) \left(v(y)-v(x)\right)} {|x-y|^{n+2s}} \, dx\, dy}
		and
		\bgs{ \iint_{B_R\times B_R^{\C}}\al   \frac{ \left( (u-v)(x) -(u-v)(y)\right)\left(v(y)-v(x)\right)} {|x-y|^{n+2s}} \, dx\, dy \\
		 \al +   \iint_{B_R^{\C} \times B_R}  \frac{ \left( (u-v)(x) -(u-v)(y)\right)\left(v(y)-v(x)\right)} {|x-y|^{n+2s}} \, dx\, dy  \\
			=\al \iint_{B_R\times B_R^{\C}}   \frac{ (u-v)(x) \left(v(y)-v(x)\right)} {|x-y|^{n+2s}} \, dx\, dy\\
			\al  -\iint_{B_R^{\C} \times B_R} \frac{ (u-v)(y) \left(v(y)-v(x)\right)} {|x-y|^{n+2s}} \, dx\, dy  \\
			= \al 2 \iint_{B_R\times B_R^{\C}}  \frac{  (u-v)(x) \left(v(y)-v(x)\right)} {|x-y|^{n+2s}} \, dx\, dy.}
	Therefore
		\bgs{ \mathcal {K}   (u-v,B_R) \al + \mathcal {K} (v,B_R) -\mathcal {K}(u,B_R) \\
		= \al 2 \iint_{\Rn\times\Rn}   \frac{\left( u(x)-v(x)\right)\left(v(y)-v(x)\right) }{|x-y|^{n+2s}}\,dx \, dy\\
		=\al 2 \int_{\Rn} (u(x)-v(x))\left(\int_{\Rn} \frac{ v(y)-v(x)}{|x-y|^{n+2s}}\, dy\right)\, dx\\
		=\al 2 \int_{B_R\cap\{u>v=w\}} (u(x)-w(x))\left(\int_{\Rn} \frac{ v(y)-w(x)}{|x-y|^{n+2s}}\, dy\right)\, dx\\
		\leq \al  2 \int_{B_R\cap\{u>v=w\}} (u-w)(x)\left(\int_{\Rn} \frac{ w(y)-w(x)}{|x-y|^{n+2s}}\, dy\right)\, dx\\
		=\al  2 \int_{B_R\cap\{u>w\}} (u-w)(x)\left(- \frlap w \right)(x)\, dx.}
Hence
	\bgs{  \mathcal {K}  (u\al-v,B_R)  \\
	 \leq \al \mathcal {K} (u,B_R) -\mathcal {K}(v,B_R) + 2 \int_{B_R\cap\{u>w\}} (u-w)\left(- \frlap w \right)\, dx.} By adding and subtracting the potential energy, we have that 
	\bgs{  \mathcal {K}  (u\al-v,B_R) \\ \leq \al \E(u,B_R)-\E(v,B_R) +\int_{B_R} W(v)-W(u) \, dx \\ \al + 2 \int_{B_R\cap\{u>w\}} (u-w)\left(- \frlap w \right)\, dx} and since $u$ is minimal in $B_R$,
	\eqlab{\label{kuvr1} \mathcal {K}  (u-v,B_R) \leq \al   \int_{B_R\cap \{ u>w=v\}} W(w)-W(u) \, dx \\ \al+ 2 \int_{B_R\cap\{u>w\}} (u-w)\left(- \frlap w \right)\, dx.}
We deduce from the properties in \eqref{dwp} of the double-well potential $W$ that there exists a small constant $c>0$ such that
	\bgs{ &W(t)-W(r) \geq c(1+r)(t-r) +c(t-r)^2 & \mbox{ when } & -1\leq r\leq t\leq -1+c \\
		&W(r)-W(t) \leq \frac{1+r}{c} &\mbox{ when } &-1\leq r\leq t\leq 1.}
	We fix the arbitrary constants $\theta_1$ and $\theta_2$, take $c$ small as here above. Let then \[ \theta_{\star}:=\min\{\theta_1,\theta_2,-1+c\}.\] It follows that
	\eqlab{ \label{wwu} \int_{B_R\cap \{ u>w\} }\al W(w)-W(u) dx \\
	=\al  \int_{B_R\cap \{ \theta_{\star}>u>w\} }W(w)-W(u) dx 	+ \int_{B_R\cap \{ u>\max\{\theta_{\star},w\} \} }W(w)-W(u) dx \\
	\leq \al -c  \int_{B_R\cap \{ \theta_{\star}>u>w\} } (1-w)(u-w) \,dx - c\int_{B_R\cap \{ \theta_{\star}>u>w\} }(u-w)^2\, dx \\ 
	\al+ \frac{1}{c} \int_{B_R\cap \{ u>\max\{\theta_{\star},w\} \} } (1+w)\, dx\\
	\leq  \al -c  \int_{B_R\cap \{ \theta_{\star}>u>w\} } (1-w)(u-w) \,dx + \frac{1}{c} \int_{B_R\cap \{ u>\max\{\theta_{\star},w\} \} } (1+w)\, dx.}
Therefore, in \eqref{kuvr1} we obtain that 
\eqlab{\label{kuvr2} \mathcal {K}  (u-v,B_R) \leq \al   -c  \int_{B_R\cap \{ \theta_{\star}>u>w\} } (1-w)(u-w) \,dx \\ \al + \frac{1}{c} \int_{B_R\cap \{ u>\max\{\theta_{\star},w\} \} } (1+w)\, dx  \\ \al+ 2 \int_{B_R\cap\{u>w\}} (u-w)\left(- \frlap w \right)\, dx.} 
We introduce now a useful barrier in the next Lemma (we just recall here Lemma 3.1 in \cite{SV14} - there the reader can find how this barrier is build):
\begin{lemma}\label{bW1} Given any $\tau\geq 0$ there exists a constant $C> 1$ (possibly depending on $n, s$ and $\tau$) such that: for any $R\geq C$ there exists a rotationally symmetric function $w \in C\left(\Rn, [-1+CR^{-2s},1]\right)$ with $w=1$ in $B_R^{\C}$ and such that for any $x\in B_R$ one has that
	\eqlab{ \label{w1} \frac{1}{C} (R+1-|x|)^{-2s} \leq 1+w(x)\leq C (R+1-|x|)^{-2s} \quad \mbox{and} }
	\eqlab{\label{w2} -\frlap w(x)\leq \tau (1+w(x)).}
\end{lemma}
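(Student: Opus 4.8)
The plan is to produce $w$ by hand, as an explicit rotationally symmetric profile whose shape is essentially forced by~\eqref{w1}, and then to verify~\eqref{w1}--\eqref{w2} by a careful splitting of the nonlocal integral. The key point that makes the dependence of $C$ on $\tau$ natural is that the leading part of $\frlap w(x)$ will come from the region where $w\equiv 1$ and will be of size $\operatorname{dist}(x,\{w=1\})^{-2s}$ with a purely dimensional constant, while $1+w(x)$ on the bulk is \emph{that same quantity multiplied by the adjustable factor $C$}.

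First I would build the barrier. Set $\delta(x):=R+1-|x|$ and look for $w(x)=\eta(|x|)$, where $\eta\colon[0,+\infty)\to[-1+CR^{-2s},1]$ is nondecreasing and of class $C^2$, with $\eta'(0)=0$ (so that $w$ is $C^2$ across the origin and $\frlap w$ is classically defined there), with $\eta\equiv 1$ on $[R_0,+\infty)$, where $R_0:=R+1-(C/2)^{1/(2s)}$ is the radius at which $C(R+1-r)^{-2s}=2$, and with $1+\eta(r)=C\,(R+1-r)^{-2s}$ for $r\in[1,R_0-1]$, smoothly and monotonically interpolated on $[0,1]$ and on $[R_0-1,R_0]$. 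Near $R_0$ the quantity $1+\eta$ only has to move from $\approx 2$ to exactly $2$, so the interpolation there can be taken with $1+w\asymp 1$ on $[R_0-1,R_0]$ and with $\|D^2w\|_{L^\infty}$ over the whole transition region bounded by a constant depending only on $n,s$ that tends to $0$ as $C\to+\infty$ (in fact of order $C^{-1/(2s)}$); the interpolation near $0$ changes $\eta$ only by $O(CR^{-1-2s})$, with Hessian $O(CR^{-1-2s})$. By inspection this $w$ is rotationally symmetric, lies in $C(\Rn,[-1+CR^{-2s},1])$, equals $1$ in $B_R^{\C}$, and satisfies~\eqref{w1} (reading the occurrences of $C$ there as comparable constants and enlarging $C$ if necessary), provided $R$ is large with respect to $(C/2)^{1/(2s)}$.

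The core is~\eqref{w2}. Fix $x\in B_R$. If $|x|\ge R_0$ then $w(x)=1=\max w$, so $\frlap w(x)=C(n,s)\,\mathrm{P.V.}\!\int\frac{1-w(y)}{|x-y|^{n+2s}}\,dy\ge 0$ and~\eqref{w2} holds trivially. Assume $|x|<R_0$ and write $d:=\delta(x)\in\big((C/2)^{1/(2s)},R+1\big]$, so that $1+w(x)=C\,d^{-2s}$. I would split $\frlap w(x)$ into a near contribution on $\{|y-x|<d/4\}$ and a far one on $\{|y-x|\ge d/4\}$. On the near region $\eta$ is smooth with $R+1-|y|\ge\tfrac34 d$, hence $|D^2w|\lesssim C\,d^{-2-2s}$ there, and this contribution is $\lesssim C\,d^{-2-2s}\!\int_{B_{d/4}}|z|^{2-n-2s}\,dz\lesssim C\,d^{-4s}$, which is $\le\tfrac{\tau}{2}(1+w(x))=\tfrac{\tau}{2}C\,d^{-2s}$ once $d^{-2s}\le\tau$, and this is ensured by $d>(C/2)^{1/(2s)}$ as soon as $C\ge 2/\tau$. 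For the far part I would bound $-\frlap w(x)$ from above by $C(n,s)\int_{|y-x|\ge d/4}\frac{|w(x)-w(y)|}{|x-y|^{n+2s}}\,dy$ and estimate it on three pieces: the set $\{w=1\}$ (exterior of $B_{R_0}$ plus the outer part of the transition layer), the region where $|y-x|\ge d/4$ but $w(y)<1$, and the inner transition together with the smoothing near $0$. The first is dominant: since $|w(x)-w(y)|\le 2$ there, it is $\le 2C(n,s)\int_{\{w=1\}}|x-y|^{-n-2s}\,dy\lesssim\operatorname{dist}(x,\{w=1\})^{-2s}\lesssim d^{-2s}$, with a dimensional constant carrying \emph{no} factor $C$; using $\|D^2w\|\lesssim CR^{-1-2s}$ near $0$ and that $(R+1-|y|)^{-2s}-(R+1)^{-2s}$ decays like $|y|\,R^{-1-2s}$ away from $\partial B_{R+1}$, the remaining two pieces are of lower order, $\lesssim C\,d^{-1-2s}$. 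Collecting terms, $-\frlap w(x)\le\frac{c_\star(n,s)}{C}\,(1+w(x))$ for every $x\in B_R$, once $R\ge R_0(n,s,C)$; choosing $C$ larger than $c_\star(n,s)/\tau$ and than $2/\tau$, and then $R\ge C$ (enlarging $C$ once more if needed), yields~\eqref{w2}.

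The main obstacle is the construction of the profile $\eta$: the naive choice $\eta(r)=-1+C(R+1-r)^{-2s}$ has $\eta'(0)\neq0$, so $w(x)=\eta(|x|)$ carries a spurious conical singularity at the origin, and it has a corner at $r=R_0$ where $\frlap w$ fails to be locally bounded (for $s\ge1/2$); both must be smoothed while preserving $1+w\asymp(R+1-|x|)^{-2s}$ and keeping $\|D^2w\|\sim C^{-1/(2s)}$ in the transition layer. Once such a profile is pinned down, everything else is routine bookkeeping of the nonlocal integral along the lines above.
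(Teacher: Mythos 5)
The paper does not prove this lemma: it states it and defers to Lemma~3.1 of~\cite{SV14} for the construction, so there is no internal proof to compare against; the review is only of your sketch.

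Your outline --- a radial profile with $1+w\asymp C\,(R+1-|x|)^{-2s}$ capped at $2$, the near/far split of the fractional Laplacian, and the key observation that the far contribution from $\{w=1\}$ carries no factor $C$ while $1+w$ does --- is the right one and matches the spirit of~\cite{SV14}. The gap is in the near estimate for $x$ close to the matching radius $R_0$. You assert $|D^2w|\lesssim C\,d^{-2-2s}$ on $B_{d/4}(x)$; but when $d$ is comparable to $a:=(C/2)^{1/(2s)}$, the ball $B_{d/4}(x)$ has radius $\approx a/4\gg1$ and contains the smoothing layer $[R_0-1,R_0]$, where you yourself observe that $\|D^2w\|_{L^\infty}\sim C^{-1/(2s)}$. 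This is \emph{larger} (by a factor $C^{1/(2s)}$) than the bound $C\,a^{-2-2s}\sim C^{-1/s}$ you invoke, so that assertion is false there. Plugging the correct $L^\infty$ value into your Taylor estimate gives a near contribution $\sim C^{-1/(2s)}a^{2-2s}\sim C^{(1-2s)/(2s)}$ against $1+w(x)\approx 2$: for $s\le\tfrac12$ --- which covers the whole range $s\in(0,\tfrac12)$ in which the paper applies the lemma --- this does not vanish as $C\to+\infty$, and for $s<\tfrac12$ it diverges, so~\eqref{w2} cannot be concluded this way. (The smoothing near the origin gives a parallel problem.) The $L^\infty$-Taylor bound is simply too coarse across a thin smoothing layer. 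A correct treatment near $R_0$ either observes that the second differences of $w$ are nonpositive across the concave matching corner, so that $-\frlap w\le 0$ there and~\eqref{w2} is trivial, or, for $s<\tfrac12$, replaces the second-order bound by the first-order bound $|w(x+z)+w(x-z)-2w(x)|\lesssim C^{-1/(2s)}|z|$, whose near contribution is $\sim C^{-1/(2s)}d^{1-2s}\sim C^{-1}$, small enough. As written, the ``routine bookkeeping'' claimed at the end does not cover this range of base points, and that is in fact the delicate part of verifying~\eqref{w2}.
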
 
Taking $w$ as the barrier introduced in the above Lemma, thanks to \eqref{kuvr2} and to the estimate in \eqref{w2}, we have that
	\bgs{ \mathcal {K}  (u-v,B_R) \leq \al  -c  \int_{B_R\cap \{ \theta_{\star}>u>w\} } (1+w)(u-w) \,dx \\ \al + \frac{1}{c} \int_{B_R\cap \{ u>\max\{\theta_{\star},w\} \} } (1+w)\, dx  \\ \al + 		2\tau \int_{B_R\cap\{u>w\}} (u-w)(1+w) \, dx .}
Let then $\tau=\frac{c}2$, and we are left with
	\bgs{ \mathcal {K}  (u-v,B_R) \leq  \al c \int_{B_R\cap \{ u>\max\{\theta_{\star},w\} \}} (u-w)(1+w)\, dx \\ \al + \frac{1}{c} \int_{B_R\cap \{ u>\max\{\theta_{\star},w\} \} } (1+w)\, dx\\
	\leq \al C_1 \int_{B_R\cap \{ u>\max\{\theta_{\star},w\} \} } (1+w)\, dx ,}
	with $C_1$ depending on $c$ (hence on $W$).
Using again Lemma \ref{bW1}, in particular the right hand side inequality in \eqref{w1}, we have that
	\[ 	  \mathcal {K}  (u-v,B_R) \leq C_1 \cdot C \int_{B_R\cap \{ u>\max\{\theta_{\star},w\} \} } (R+1-|x|)^{-2s}.\]
We set
	\eqlab{ \label {vrrr} V(R):= |B_R\cap \{ u>\theta_{\star} \} | } and the Co-Area formula then gives
	\eqlab{ \label{strange1}  \mathcal {K}  (u-v,B_R) \leq C_2 \int_0^R (R+1-t)^{-2s} V'(t)\, dt,}
	where $C_2$ possibly depends on $n,s,W$.
	
	 We use now the Sobolev inequality \eqref{OK:sob:p} for $p=2$, applied to $u-v$ (recalling that the support of $u-v$ is a subset of $B_R$) to obtain that
	 	\eqlab{\label{sobk1} \mathcal K(u-v,B_R) \al =  \mathcal K(u-v,\Rn) = \iint_{\Rn \times \Rn} \frac{ |(u-v)(x)-(u-v)(y)|^2}{|x-y|^{n+2s}}\, dx\, dy \\
	 	\geq \al  \tilde C \|u-v\|^2_{L^{\frac{2n}{n-2s}}(\Rn)} = \tilde C \|u-v\|^2_{L^{\frac{2n}{n-2s}}(B_R)}.}
From \eqref{w1} one has that
	\[ w(x)\leq C(R+1-|x|)^{-2s} -1.\] We fix $K$ large enough so as to have $R\geq 2K$ and in $B_{R-K}$ 
		\[ w(x) \leq C(1+K)^{-2s} -1 \leq -1 + \frac{1+\theta_{\star}}2.\]
		Therefore in $B_{R-K}\cap \{u>\theta_{\star} \}$  we have that
		\[ |u-v|\geq u-w \geq u+1- \frac{1+\theta_{\star}}2\geq \frac{1+\theta_{\star}}2 .\] Using definition \eqref{vrrr}, this leads to
			\bgs{ \|u-v\|^2_{L^{\frac{2n}{n-2s}}(B_R)} =\al  \left( \int_{B_R} |u-v|^{\frac{2n}{n-2s}} dx\right) ^{\frac{n-2s}{n}}\\
			\geq \al  \left( \frac{1+\theta_{\star}}2\right)^{\frac{2n}{n-2s}} \left( \int_{ B_{R-K}\cap \{u>\theta_{\star} \} } dx \right)^{\frac{n-2s}{n}} \\
			\geq\al C_3 V(R-K)^{\frac{n-2s}{n}}.}
In \eqref{sobk1}  we thus have  	\[ \mathcal K (u-v,B_R) \geq \tilde C_3 V(R-K)^{\frac{n-2s}{n}}\] and from \eqref{strange1} it follows that
	\bgs{ C_4 V(R-K)^{\frac{n-2s}{n}} \leq \int_0^R (R+1-t)^{-2s} V'(t) \, dt.}
Let $R\geq \rho \geq 2K$. Integrating the latter integral from $\rho$ to $\displaystyle \frac{3\rho}2$ we have that
	\bgs{ C_4 \frac{\rho}2 V(\rho-K)^{\frac{n-2s}{n}} \leq \al C_4   \int_\rho^{\frac{3\rho}2} V(R-K)^{\frac{n-2s}{n}} \, dR \\
		\leq \al \int_0^{\frac{3\rho}2}\left(\int_0^R (R+1-t)^{-2s} V'(t) \, dt\right) \, dR \\	
		=\al \int_0^{\frac{3\rho}2} V'(t)  \left(\int_0^{\frac{3\rho}2} (R+1-t)^{-2s} \, dR\right)\, dt \\
		=\al\int_0^{\frac{3\rho}2} V'(t)  \frac{ \left(\frac{3\rho}2+1-t\right)^{1-2s}-1}{1-2s} \, dt.}
Since $1-2s>0$, one has for large $\rho$ that $\displaystyle \left(\frac{3\rho}2+1-t\right)^{1-2s}-1 \leq (2\rho)^{1-2s}$, hence, noticing that the function $V$ is nondecreasing,
	\bgs{  \frac{\rho}2 V(\rho-K)^{\frac{n-2s}{n}} \leq \al C_5 \rho^{1-2s} \int_0^{2\rho} V'(t)\, dt\\
	\leq \al C_5 \rho^{1-2s} V(2\rho).}
	Therefore 	\eqlab{ \label{rhovk1} \rho^{2s} V(\rho-K)^{\frac{n-2s}{n}} \leq2 C_5 V(2\rho).}
Now we use an inductive argument as in Lemma 3.2 in \cite{SV14}, that we recall here:
\begin{lemma} \label{indargl} Let $\sigma, \mu\in (0,\infty), \nu \in (\sigma,\infty) $ and $\gamma, R_0,C\in (1,\infty)$. \\ Let $V \colon (0,\infty)\to (0,\infty)$ be a nondecreasing function. For any $r\in [R_0,\infty)$, let $\alpha(r) := \min\left\{ 1, \displaystyle \frac{ \log V(r)} {\log r} \right\} $. Suppose that $V(R_0)>\mu$  and 	\[ r^\sigma \alpha(r) V(r)^{\frac{\nu-\sigma}{\nu}} \leq C V(\gamma r),\]
for any $r\in [R_0,\infty)$. Then there exist $c\in (0,1)$ and $R_{\star} \in [R_0,\infty)$, possibly depending on $\mu,\nu, \gamma, R_0, C$ such that 
\[ V(r) >c r^{\nu} ,\]
for any $r\in [R_{\star},\infty)$. 
\end{lemma}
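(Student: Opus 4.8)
The plan is to run a bootstrap along the geometric sequence $R_0,\gamma R_0,\gamma^2R_0,\dots$, using the assumed inequality $r^{\sigma\alpha(r)}V(r)^{(\nu-\sigma)/\nu}\le C\,V(\gamma r)$ as a mechanism that, at each step, improves the exponent of a polynomial lower bound for~$V$. Heuristically, if $V(r)\ge c\,r^{\beta}$ on a tail with $\beta<\nu$, then inserting this bound into the inequality (once we are in the regime $\alpha\equiv1$, discussed below) forces $V(\gamma r)\gtrsim r^{\,T(\beta)}$ with the new exponent $T(\beta):=\sigma+q\,\beta$, where $q:=\tfrac{\nu-\sigma}{\nu}\in(0,1)$. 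The affine map $T$ has slope $q<1$ and unique fixed point $T(\nu)=\nu$, so iterating it pushes the exponent monotonically up toward~$\nu$. Two things must be done carefully: keeping the multiplicative constants from degenerating along the iteration, and upgrading the statement ``$\beta_k\uparrow\nu$'' to the genuine bound $V(r)\ge c\,r^{\nu}$.

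First I would dispose of the low regime. Since $V$ is nondecreasing and $V(R_0)>\mu$, we have $V\ge\mu$ on $[R_0,\infty)$; in the situation of interest here ($\mu$ and $R_0$ large, compare the way~\eqref{rhovk1} feeds this Lemma), as long as $1\le V(r)<r$ one has $\alpha(r)=\tfrac{\log V(r)}{\log r}$ and the hypothesis gives $V(\gamma r)\ge C^{-1}r^{\sigma\alpha(r)}V(r)^{q}$, whose right-hand side tends to $+\infty$ with~$r$; hence after finitely many steps one reaches $R_1\ge R_0$ and $c_1>0$ with $V(r)\ge c_1\,r$ for all $r\ge R_1$. On $[R_1,\infty)$ we then have $\alpha\equiv1$ and the inequality becomes simply $r^{\sigma}V(r)^{q}\le C\,V(\gamma r)$.

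Next comes the main induction. Put $\beta_0:=1$, $R_k:=\gamma^{k}R_1$, $\beta_{k+1}:=\sigma+q\,\beta_k$ and $c_{k+1}:=C^{-1}\gamma^{-\beta_{k+1}}c_k^{\,q}$, and show by induction that $V(r)\ge c_k\,r^{\beta_k}$ for $r\ge R_k$: for $r\ge R_k$ one has $r^{\sigma}V(r)^{q}\ge c_k^{\,q}\,r^{\beta_{k+1}}$, so writing $\rho=\gamma r$ yields $V(\rho)\ge C^{-1}c_k^{\,q}(\rho/\gamma)^{\beta_{k+1}}=c_{k+1}\rho^{\beta_{k+1}}$ for $\rho\ge R_{k+1}$. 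From $\beta_{k+1}-\nu=q(\beta_k-\nu)$ we get $\nu-\beta_k=q^{k}(\nu-1)\downarrow0$; and since $\beta_{k+1}\le\nu$ we get $c_{k+1}\ge D\,c_k^{\,q}$ with $D:=C^{-1}\gamma^{-\nu}\in(0,1)$, i.e. $\log c_{k+1}\ge\log D+q\log c_k$. The map $x\mapsto\log D+q\,x$ is a contraction with fixed point $\tfrac{\log D}{1-q}$, so $\log c_k\ge\min\{\log c_1,\tfrac{\log D}{1-q}\}$ for all~$k$, that is $c_k\ge c_\infty:=\min\{c_1,\,D^{1/(1-q)}\}>0$ uniformly.

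Finally, given $r\ge R_1$ I would choose $k=k(r)$ to be the largest integer with $R_k\le r$, so that $k(r)=\lfloor\log(r/R_1)/\log\gamma\rfloor$ and $V(r)\ge c_\infty\,r^{\beta_{k(r)}}=c_\infty\,r^{\nu}\,r^{-(\nu-\beta_{k(r)})}$. Since $\nu-\beta_{k(r)}=q^{\,k(r)}(\nu-1)$ decays geometrically in~$k(r)$ whereas $\log r$ is only of linear size $k(r)\log\gamma+\log R_1$, the product $(\nu-\beta_{k(r)})\log r$ tends to~$0$ as $r\to\infty$, hence $r^{-(\nu-\beta_{k(r)})}\to1$; therefore $V(r)\ge\tfrac{c_\infty}{2}\,r^{\nu}$ for all $r\ge R_\star$ with $R_\star$ sufficiently large, which is the claim with $c=c_\infty/2$. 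I expect the genuinely delicate point to be exactly this last balance: the naive iteration only gives $V(r)\ge c\,r^{\nu-\varepsilon}$ for each fixed $\varepsilon$ on an $\varepsilon$-dependent tail, and recovering the sharp power $\nu$ requires both the uniform lower bound $c_\infty$ on the constants and the geometric-versus-logarithmic comparison above; the bookkeeping in the regime $\alpha(r)<1$ (and the precise role played by the size of~$\mu$) is the other place where care is needed.
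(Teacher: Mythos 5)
Your Steps 2 and 3 — the geometric iteration that upgrades the exponent via $\beta_{k+1}=\sigma+q\beta_k$, the uniform control of the constants $c_k\ge c_\infty=\min\{c_1,D^{\nu/\sigma}\}$ through the recursion $\log c_{k+1}\ge\log D+q\log c_k$, and the closing observation that $q^{k(r)}\log r\to0$ makes the deficit $r^{-(\nu-\beta_{k(r)})}$ tend to $1$ — are correct, and this is indeed the bootstrap scheme behind Lemma~3.2 of \cite{SV14}.

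The gap is in Step 1, the passage to the $\alpha\equiv1$ regime, and it is a real gap rather than a mere omission of routine detail. First, for $1\le V(r)<r$ one has $r^{\sigma\alpha(r)}=V(r)^{\sigma}$, so the hypothesis becomes $V(\gamma r)\ge C^{-1}V(r)^{\sigma+q}$; your assertion that this right-hand side ``tends to $+\infty$ with $r$'' is circular, because it already presupposes $V(r)\to\infty$. What one actually has, along $r_k:=\gamma^k R_0$, is the recurrence $v_{k+1}\ge C^{-1}v_k^{p}$ with $p:=\sigma+q=1+\sigma(\nu-1)/\nu$, and this has an \emph{unstable} fixed point at $C^{1/(p-1)}$: it yields (doubly exponential) blow-up only when $v_0=V(R_0)>\mu$ starts above that threshold. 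This is exactly where ``$\mu$ large'' enters as a quantitative ingredient, not merely as context — indeed $V\equiv1$ with any $\mu<1$ satisfies all the stated hypotheses ($\alpha\equiv0$, and the inequality degenerates to $1\le C$) yet grossly violates the conclusion, so Step~1 cannot be dispatched without explicitly exploiting the size of $\mu$. Second, even granting a tail bound $V(r)\ge c_1 r$, this does not give $\alpha\equiv1$ unless $c_1\ge1$, since $\alpha(r)=1$ means $V(r)\ge r$, not $V(r)\ge c_1 r$. The repair is to push the doubly exponential growth of $v_k$ against the merely exponential growth of $r_k$ until $v_k\ge\gamma r_k$ for all $k\ge k_0$; then for $r\in[r_k,r_{k+1}]$ monotonicity gives $V(r)\ge v_k\ge\gamma r_k\ge r$, hence $\alpha\equiv1$ past $R_1:=r_{k_0}$ and the induction of Step~2 can launch with $\beta_0=1$, $c_0=1$.
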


For $R$ large, one obtains from \eqref{rhovk1} and Lemma \ref{indargl} that
	\[ V(R) \geq c_0 R^n,\] for a suitable $c_0\in(0,1)$.
Let now \[ \theta^{\star} :=\max \{ \theta_1, \theta_2, -1+c\} .\] We have that
	\eqlab { \label{uteta}|\{ u>\al  \theta^{\star}\} \cap B_R | + |\{ \theta_{\star}<u<\theta^{\star}\}\cap B_R | \\ 
	=\al  |\{ u>\theta_{\star}\} \cap B_R |\\
	 =\al  V(R)\geq c_0 R^n.}
Moreover, from \eqref{acenest1} we  have that for some $\overline c>0$
		\[ \E(u,B_R) \leq \overline c R^{n-2s} ,\] therefore
			\bgs{ \overline c R^{n-2s} \geq \al \E(u,B_R) \geq  \int_{ \{ \theta_{\star}<u<\theta^{\star}\}\cap B_R} W(u)\, dx \\
			\geq \al  \inf_{t\in (\theta_{\star},\theta^{\star})}W(t)\, |\{ \theta_{\star}<u<\theta^{\star}\}\cap B_R |  .}
			From this and \eqref{uteta} we have that
				\bgs{ c_0 R^n \leq \overline C R^{n-2s} + |\{ u>\theta^{\star}\} \cap B_R |  ,}
				and finally \bgs{ |\{ u>\theta^{\star}\} \cap B_R |  \geq C R^n,}
				with $C$ possibly depending on $n,s,W$. This concludes the proof of Theorem \ref{TY78UU} in the case $s\in (0,1/2)$.
\end{proof}

\section{A nonlocal version of a conjecture by De Giorgi} \label{sbsdg}

In this section we consider the fractional 
counterpart of
the conjecture by De Giorgi that was discussed before in the classical case.
Namely,
we consider the nonlocal Allen-Cahn equation 
	\[ -\frlap u + W(u)=0 \quad \mbox{in} \quad \Rn, \]
where $W$ is a double-well potential, and $u$ is smooth, bounded and monotone in one direction, namely $|u|\leq 1$ and $\partial_{x_n} u >0$. We wonder if it is also true, at least in low
dimension, that $u$ is one-dimensional. In this case, the conjecture was initially proved for $n=2$ and $s=\frac{1}{2}$ in \cite{CM05}. In the case $n=2$, for any $s \in (0,1)$, the result is proved using the harmonic extension of the fractional Laplacian in \cite{CS15} and \cite{SV09}. For $n=3$, the proof can be found in \cite{CC10} for $s\in \Big[\frac{1}{2},1\Big]$. The conjecture is still open for $n=3$ and  $s\in \Big[0,\frac{1}{2}\Big]$ and for $n\geq 4$. Also, the Gibbons conjecture (that is the 
De Giorgi conjecture with the additional condition that limit in \eqref{limdgs} is uniform) is also true for any $s \in (0,1)$
and in any dimension~$n$, see~\cite{INDIANA}.
 
To keep the discussion as simple as possible,
we focus here on the case $n=2$ and any $s\in (0,1)$, providing an alternative proof that does not make use of the harmonic extension. This part is completely new and not available in the literature. The proof is indeed quite
general and it will be further exploited in \cite{CV15}.
 
 We define (as in \eqref{kenac}) the total energy of the system to be 
 \begin{equation}\label{5.17bis}
\E(u, B_R) =  \mathcal{K}_R(u) + \int_{B_R} W(u) dx,\end{equation}
 where the kinetic energy is 
\begin{equation}  \label{dgkenac} {\mathcal{K}}_R(u):=\frac{1}{2}
\iint_{Q_R} \frac{|u(x)-u(\bar x)|^2}{|x-\bar x|^{n+2s}}\,dx\,d\bar x,\end{equation}
and~$Q_R:= \R^{2n}\setminus (B_R^{\C})^2= (B_R\times B_R)\cup (B_R\times(\R^n\setminus B_R))\cup
((\R^n\setminus B_R)\times B_R)$. We recall that the kinetic energy can also be written as
	\begin{equation} \label{dgkr}  {\mathcal{K}}_R(u) = \frac{1}{2} u(B_R,B_R) + u(B_R,B_R^{\C}),\end{equation}
	where for two sets $A,B$ \begin{equation} \label{dguab} u(A,B)= \int_A\int_B \frac{|u(x)-u(\bar x)|^2}{|x-\bar x|^{n+2s}}\,dx\,d\bar x.\end{equation}
	
The main result of this section is the following. 

\begin{thm}\label{dgdim2}
Let $u$ be a minimizer of the energy defined in \eqref{5.17bis}
in any ball of $\R^2$. Then $u$ is $1$-D, i.e.  there exist~$\omega \in S^{1}$ and $u_0: \R\to \R$ such that 	
	\[ u(x)=u_0(\omega \cdot x) \quad \mbox{for any} \quad x\in \R^2.\] 	
\end{thm}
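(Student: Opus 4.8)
The plan is to prove the one-dimensional symmetry of minimizers in $\R^2$ via the classical \emph{stability + energy estimate} mechanism adapted to the nonlocal setting, following the scheme that goes back to the local case of Berestycki--Caffarelli--Nirenberg and its fractional incarnations. First I would record that a minimizer is in particular a stable solution, so that the second variation of $\E$ is nonnegative: for every Lipschitz test function $\xi$ with compact support,
\[
\frac{1}{2}\iint_{\R^{2n}} \frac{|\xi(x)-\xi(\bar x)|^2}{|x-\bar x|^{n+2s}}\,dx\,d\bar x
+\int W''(u)\,\xi^2\,dx \;\ge\; 0 .
\]
Next I would establish the \emph{linearized equation} satisfied by the partial derivatives of $u$: differentiating $-\frlap u = W'(u)$ (which holds since minimizers are solutions) in the $x_j$ directions gives $-\frlap u_{x_j} = W''(u)\,u_{x_j}$ for $j=1,2$. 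Since $u$ is monotone, $u_{x_2}>0$, so $\sigma:=u_{x_2}$ is a positive solution of the linearized equation; this is the nonlocal analogue of having a positive solution of $-\frlap \sigma = W''(u)\sigma$, which in the local theory is exactly the ``ground state alternative'' that forces the linearized operator to be nonnegative and rigid.

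The core of the argument is a \emph{Liouville-type / geometric Poincar\'e inequality} argument. Consider the ratio $\psi_j := u_{x_j}/\sigma$ for $j=1,2$ (so $\psi_2\equiv 1$). Plug $\xi = \sigma\,\varphi_R$ into the stability inequality, where $\varphi_R$ is a logarithmic cutoff supported in $B_{2R}$, equal to $1$ on $B_R$, with the standard $|\nabla\varphi_R|\le C/(R\log R)$ type control in dimension $2$; using the equation for $\sigma$ to cancel the potential term, one obtains an inequality of the form
\[
\iint \sigma(x)\sigma(\bar x)\,\frac{\bigl(\psi_j(x)-\psi_j(\bar x)\bigr)^2}{|x-\bar x|^{n+2s}}\,\varphi_R(x)^2\,dx\,d\bar x
\;\le\; C\iint \frac{\sigma(x)\sigma(\bar x)\,\psi_j(x)^2\,(\varphi_R(x)-\varphi_R(\bar x))^2}{|x-\bar x|^{n+2s}}\,dx\,d\bar x .
\]
Now the essential input is the energy estimate of Theorem~\ref{acenest1}, which in $\R^2$ (here $n=2$, and $s\in(0,1)$) gives $\E(u,B_R)\le C R\log R$ when $s=1/2$, $CR$ when $s>1/2$, and $CR^{2-2s}$ when $s<1/2$ — in all cases $o(R^2)$, and in fact compatible with the bound $\mathcal K_R(u)\le CR^{2-2s}$ (taking the worst case). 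Combined with the boundedness of $u$, $\sigma$, and the derivatives (interior estimates give $|\psi_j|$ controlled in terms of the energy), the right-hand side is estimated by the kinetic energy of $u$ against $|\nabla\varphi_R|$, and the logarithmic cutoff in two dimensions forces it to $0$ as $R\to\infty$. We conclude that $\psi_j(x)-\psi_j(\bar x)=0$ for all $x,\bar x$ (on the support of $\sigma\sigma$, which is everywhere since $\sigma>0$), i.e.\ $\psi_j$ is constant. Hence $u_{x_1}=c\,u_{x_2}$ for a constant $c$, which means $\nabla u$ is everywhere parallel to the fixed vector $\omega=(c,1)/\sqrt{1+c^2}\in S^1$, and therefore $u(x)=u_0(\omega\cdot x)$ for some $u_0:\R\to\R$.

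The main obstacle I expect is twofold. First, making the ``plug $\xi=\sigma\varphi_R$'' step rigorous: one must justify that $\sigma=u_{x_2}$ is an admissible weight, prove the algebraic identity that turns the quadratic form evaluated at $\sigma\varphi$ into the weighted Gagliardo seminorm of $\psi\varphi$ plus a remainder (a nonlocal integration-by-parts / ``ground state substitution'' that requires care with the principal value and with the cross terms $(\psi(x)-\psi(\bar x))(\varphi(x)^2-\varphi(\bar x)^2)$), and control all error terms uniformly. Second, closing the estimate requires that the right-hand side error genuinely be bounded by $\mathcal K_R(u)\,\|\nabla\varphi_R\|_\infty^2$-type quantities that vanish; here one uses $|\varphi_R(x)-\varphi_R(\bar x)|^2\le \min\{4,|x-\bar x|^2\|\nabla\varphi_R\|_\infty^2\}$ and splits the integral into $|x-\bar x|$ small and large, the small part being absorbed by the logarithmic decay in $R=2$ dimensions and the large part by the $L^2$-type bounds coming from Theorem~\ref{acenest1}. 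Once this analytic estimate is in place, the conclusion $\nabla u \parallel \omega$ and hence $u=u_0(\omega\cdot x)$ is immediate.
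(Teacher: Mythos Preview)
Your approach has a genuine gap at the very start: you write ``since $u$ is monotone, $u_{x_2}>0$'', but the hypothesis of Theorem~\ref{dgdim2} is only that $u$ is a minimizer of~$\E$ in every ball of~$\R^2$ --- no monotonicity is assumed. Without a sign on some~$u_{x_j}$ you have no positive solution~$\sigma$ of the linearized equation, and the ground-state substitution~$\xi=\sigma\varphi_R$ cannot get started. The stability / geometric Poincar\'e route you describe is the scheme for the fractional De Giorgi conjecture \emph{under the assumption}~$\partial_{x_n}u>0$; here that assumption is part of what must be proved, not part of the data.

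The paper's argument is structurally different and is designed to extract monotonicity directly from minimality. For each direction~$e$ it runs a sliding argument: assuming by contradiction that~$u(0)>u(e)$ and~$u(0)>u(-e)$, it builds a competitor~$u_{R,+}$ that equals~$u(\cdot-e)$ in~$B_{R/2}$ and~$u$ outside~$B_R$, and uses the domain-deformation estimate of Lemma~\ref{endg},
\[
\mathcal K_R(u_{R,+})+\mathcal K_R(u_{R,-})-2\mathcal K_R(u)\le \frac{C}{R^2}\,\mathcal K_R(u),
\]
together with the energy bound of Theorem~\ref{acenest1} (which in~$n=2$ gives~$\E(u,B_R)=o(R^2)$) to conclude~$\E(u_{R,+},B_R)-\E(u,B_R)\to 0$. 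One then compares~$u$ with~$v_R:=\min\{u,u_{R,+}\}$ and~$w_R:=\max\{u,u_{R,+}\}$; minimality and the strong maximum principle force a fixed positive energy gap~$\delta_R$ between~$v_R$ and the true minimizer in~$B_2$, which contradicts the limit above. Thus~$u$ is monotone in every direction, hence one-dimensional. No linearized equation, stability inequality, or logarithmic cutoff enters the proof.
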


The proof relies on the following estimate for the kinetic energy, that we prove
by employing a domain deformation technique.   

\begin{lemma} \label{endg}
Let~$R>1$, $\varphi\in C^\infty_0(B_1)$. Also, for any~$y\in\R^n$, let
\begin{equation}\label{5252}
\Psi_{R,+}(y):=y+\varphi \Big(\frac{ y}{R}\Big)\,e_1
\ {\mbox{ and }} \ \Psi_{R,-}(y):=y-\varphi \Big(\frac{ y}{R}\Big)\,e_1.\end{equation}
Then, for large~$R$, the maps~$\Psi_{R,+}$ and~$\Psi_{R,-}$
are diffeomorphisms on~$\R^n$.
Furthermore, if we define~$u_{R,\pm}(x):= u(\Psi_{R,\pm}^{-1}(x))$,
we have that
\begin{equation}\label{DG01}
{\mathcal{K}}_R (u_{R,+})+{\mathcal{K}}_R (u_{R,-})-2
{\mathcal{K}}_R (u)\le \frac{C}{R^2}{\mathcal{K}}_R (u),
\end{equation}
for some~$C>0$. 
\end{lemma}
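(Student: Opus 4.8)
\textbf{Proof plan for Lemma~\ref{endg}.}
The plan is to treat~$\Psi_{R,\pm}$ as small perturbations of the identity and to compute the effect of the change of variables on the Gagliardo-type double integral. First I would observe that~$\nabla\Psi_{R,\pm}(y)=\mathrm{Id}\pm R^{-1}(\nabla\varphi)(y/R)\otimes e_1$, which differs from the identity by a term of size~$O(R^{-1})$ (uniformly in~$y$, since~$\varphi\in C^\infty_0(B_1)$); hence for~$R$ large these maps are diffeomorphisms of~$\R^n$, equal to the identity outside~$B_R$. This last point is important: since~$\Psi_{R,\pm}$ fixes~$B_R^{\C}$ pointwise, the region~$Q_R$ is invariant, and performing the change of variables~$x=\Psi_{R,+}(\xi)$, $\bar x=\Psi_{R,+}(\bar\xi)$ in~${\mathcal{K}}_R(u_{R,+})$ turns it into an integral over~$Q_R$ of~$|u(\xi)-u(\bar\xi)|^2$ against the deformed kernel~$|\Psi_{R,+}(\xi)-\Psi_{R,+}(\bar\xi)|^{-(n+2s)}\,J_{R,+}(\xi)\,J_{R,+}(\bar\xi)$, where~$J_{R,\pm}$ is the Jacobian determinant. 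I would do the same for~$\Psi_{R,-}$.

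The heart of the argument is then a second-order Taylor expansion in the small parameter. Writing~$\varphi_R(y):=\varphi(y/R)$, I expand the weight
\[
W_\pm(\xi,\bar\xi):=\frac{J_{R,\pm}(\xi)\,J_{R,\pm}(\bar\xi)}{|\Psi_{R,\pm}(\xi)-\Psi_{R,\pm}(\bar\xi)|^{n+2s}}
\]
around the undeformed weight~$|\xi-\bar\xi|^{-(n+2s)}$. The key algebraic point is that the first-order terms in~$W_+$ and~$W_-$ are exactly opposite (they are odd in the sign of the displacement field), so in the sum~$W_+ + W_- - 2|\xi-\bar\xi|^{-(n+2s)}$ the linear contributions cancel and one is left with a purely quadratic remainder. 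Concretely, with~$z:=\xi-\bar\xi$ and~$\delta(\xi,\bar\xi):=\varphi_R(\xi)-\varphi_R(\bar\xi)$, one has~$|\Psi_{R,\pm}(\xi)-\Psi_{R,\pm}(\bar\xi)|^2=|z|^2\pm 2z_1\delta+\delta^2$, and the Taylor expansion of~$(\,\cdot\,)^{-(n+2s)/2}$ together with~$J_{R,\pm}=1\pm\partial_1\varphi_R+O(R^{-2})$ gives
\[
W_+ + W_- - \frac{2}{|z|^{n+2s}} = \frac{1}{|z|^{n+2s}}\Big[\,O(\delta^2/|z|^2)+O(\big(\partial\varphi_R\big)^2)+O(\delta\,\partial\varphi_R/|z|)\,\Big].
\]
Since~$\|\nabla\varphi_R\|_{L^\infty}\le C/R$ and~$\|D^2\varphi_R\|_{L^\infty}\le C/R^2$, one has~$|\partial\varphi_R|^2\le C/R^2$ and~$|\delta(\xi,\bar\xi)|\le \min\{2\|\varphi\|_\infty,\ C|z|/R\}$, so each of these error terms is bounded by~$C R^{-2}|z|^{-(n+2s)}$ on the relevant range (the cutoff~$\min\{\cdot\}$ is what lets one absorb the~$1/|z|^2$ and~$1/|z|$ factors near the diagonal without creating a non-integrable singularity). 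Multiplying by~$|u(\xi)-u(\bar\xi)|^2$ and integrating over~$Q_R$ then yields
\[
{\mathcal{K}}_R(u_{R,+})+{\mathcal{K}}_R(u_{R,-})-2{\mathcal{K}}_R(u)\le \frac{C}{R^2}\iint_{Q_R}\frac{|u(\xi)-u(\bar\xi)|^2}{|\xi-\bar\xi|^{n+2s}}\,d\xi\,d\bar\xi=\frac{C}{R^2}{\mathcal{K}}_R(u),
\]
which is~\eqref{DG01}.

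I expect the main obstacle to be the bookkeeping of the Taylor remainder near the diagonal~$\xi=\bar\xi$: one must verify that every error term genuinely carries two factors of~$R^{-1}$ \emph{and} that the residual singularity in~$z$ is no worse than~$|z|^{-(n+2s)}$, which requires using the~$O(|z|/R)$ bound on~$\delta$ (rather than the~$O(1/R)$ bound) precisely in the terms that would otherwise behave like~$|z|^{-(n+2s)-1}$ or~$|z|^{-(n+2s)-2}$. A secondary technical point is to justify differentiating under the integral / exchanging the order of integration, which is routine since~$u$ is bounded and the kernel is integrable on~$Q_R$ away from the diagonal, with the near-diagonal part controlled by the~$C^2$ regularity built into the estimate. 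The cancellation of the odd-order terms is the structural fact that makes the bound~$R^{-2}$ (rather than~$R^{-1}$) possible, and it is the reason both~$\Psi_{R,+}$ and~$\Psi_{R,-}$ appear symmetrically in the statement.
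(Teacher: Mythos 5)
Your plan is correct and follows essentially the same route as the paper's own proof: change variables via~$\Psi_{R,\pm}$ (which fix~$B_R^{\C}$, so~$Q_R$ is preserved), Taylor-expand the Jacobian factors and the deformed kernel to first order in~$1/R$, and observe that the first-order terms are odd in the sign of the displacement so they cancel in the sum~$W_++W_-$, leaving an~$O(R^{-2})$ quadratic remainder controlled by the Lipschitz bound~$|\varphi_R(\xi)-\varphi_R(\bar\xi)|\le C|z|/R$. The one minor remark is that the~$\min\{2\|\varphi\|_\infty,\,C|z|/R\}$ cutoff is not actually needed: the~$C|z|/R$ bound alone already makes~$\delta^2/|z|^2$ uniformly~$O(R^{-2})$, which is exactly what the paper uses; otherwise the argument is the same.
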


\begin{proof} First of all, we compute the Jacobian of~$\Psi_{R,\pm}$.
For this, we write~$\Psi_{R,+,i}$ to denote the~$i^{\text{th}}$
component
of the vector~$\Psi_{R,+}=(\Psi_{R,+,1},\cdots,\Psi_{R,+,n})$
and we observe that
\begin{equation}\label{JA}
\frac{\partial \Psi_{R,+,i}(y)}{\partial y_j}=
\frac{\partial }{\partial y_j} \Big(y_i\pm 
\varphi \Big(\frac{ y}{R}\Big) \delta_{i1} \Big) = \delta_{ij} \pm \frac{ 1}{R}
\partial_j\varphi \Big(\frac{  y}{R}\Big) \delta_{i1}.\end{equation}
The latter term is bounded by~${\mathcal{O}}(R^{-1})$, and
this proves that~$\Psi_{R,\pm}$ is a diffeomorphism if~$R$ is large enough.

For further reference, we point out that if~$J_{R,\pm}$
is the Jacobian determinant of~$\Psi_{R,\pm}$, then
the change of variable
\begin{equation}\label{JA0}
x:=\Psi_{R,\pm}(y),\qquad
\bar x:=\Psi_{R,\pm}(\bar y)\end{equation}
gives that
\begin{equation*}
	\begin{split}
		dx\,d\bar x \;=\;& J_{R,\pm}(y)\,J_{R,\pm}(\bar y)\,dy\,d\bar y \\
		 =\;& 		\bigg(1\pm  \Big(\frac{ 1}{R} \Big)\partial_1 \varphi \Big(\frac{ y}{R}\Big) +{\mathcal{O}}\Big(\frac{ 1}{R^2}\Big)\bigg)
		\bigg(1\pm  \frac{ 1}{R}\partial_1 \varphi \Big(\frac{ \bar y}{R} \Big)+{\mathcal{O}}\Big(\frac{ 1}{R^2}\Big) \bigg) dy d\bar y
		\\  \;=\;& 1\pm  \frac{ 1}{R} \partial_1 \varphi \Big(\frac{ y}{R}\Big) \pm   \frac{ 1}{R}
		\partial_1 \varphi \Big(\frac{ \bar y}{R} \Big) +{\mathcal{O}}\Big(\frac{ 1}{R^2}\Big)	
		\,dy\,d\bar y
		,\end{split}
\end{equation*}
thanks to~\eqref{JA}. Therefore
\begin{equation}\label{JA8}
\begin{split}
& \frac{|u_{R,\pm}(x)-u_{R,\pm}(\bar x)|^2}{|x-\bar x|^{n+2s}}\,dx\,d\bar x\\
&\quad=\frac{\big|u(\Psi_{R,\pm}^{-1}(x))-
u(\Psi_{R,\pm}^{-1}(\bar x))\big|^2}{
|\Psi_{R,\pm}^{-1}(x)-\Psi_{R,\pm}^{-1}(\bar x)|^{n+2s}}\cdot
\left(
\frac{ |x-\bar x|^2
}{|\Psi_{R,\pm}^{-1}(x)-\Psi_{R,\pm}^{-1}(\bar x)|^2}\right)^{-\frac{n+2s}{2}}\,dx\,d\bar x
\\ &\quad= \frac{|u(y)-u(\bar y)|^2}{|y-\bar y|^{n+2s}} \cdot \left( 
\frac{ \Big|\Psi_{R,\pm}(y)- \Psi_{R,\pm}(\bar y)\Big|^2 }{|y-\bar 
y|^2}\right)^{-\frac{n+2s}{2}} \\&\qquad\cdot \Bigg( 1\pm  \frac{ 1}{R} \partial_1 \varphi \Big(\frac{ y}{R}\Big) \pm   \frac{ 1}{R}
		\partial_1 \varphi \Big(\frac{ \bar y}{R} \Big) +{\mathcal{O}}\Big(\frac{ 1}{R^2}\Big)	\Bigg)
		\,dy\,d\bar y. \end{split} \end{equation} Now, for 
any~$y$, $\bar y\in\R^n$ we calculate 
	\begin{equation}
		\label{J1}
			\begin{split} 
			&\Big|\Psi_{R,\pm}(y)-\Psi_{R,\pm}(\bar y)\Big|^2\\
			&\quad= \Big| (y-\bar y)\pm \bigg(\varphi \Big(\frac{y}{R}\Big)-\varphi \Big(\frac{\bar y}{R}\Big)\bigg) \,e_1\Big|^2 \\
			 &\quad= |y-\bar y|^2 +\bigg|\varphi \Big(\frac{y}{R}\Big)-\varphi \Big(\frac{\bar y}{R}\Big)\bigg|^2 \pm 2 \bigg(\varphi \Big(\frac{y}{R}\Big) -\varphi \Big(\frac{\bar y}{R}\Big) \bigg) \,(y_1-\bar y_1). 					\end{split}
	\end{equation}
 Notice also  that 
 	\begin{equation}\label{j9} 
 		\bigg|\varphi \Big(\frac{y}{R}\Big)-\varphi \Big(\frac{\bar y}{R}\Big)\bigg| \le \frac{1}{R} \|\varphi\|_{C^1(\R^n)} |y-\bar y|,
	\end{equation} 
hence~\eqref{J1} becomes
\[ \frac{ \Big|\Psi_{R,\pm}(y)-\Psi_{R,\pm}(\bar y)\Big|^2 }{|y-\bar y|^2} 
=1+\eta_{\pm}\]
 where
  \begin{equation}\label{JA7} \eta_{\pm}:= \frac{ \bigg|\varphi \Big(\frac{y}{R}\Big)-\varphi \Big(\frac{\bar y}{R}\Big)\bigg|^2}{|y-\bar y|^2}\pm 2\frac{ \bigg(\varphi \Big(\frac{y}{R}\Big)-\varphi \Big(\frac{\bar y}{R}\Big)\bigg)  \,(y_1-\bar y_1)}{|y-\bar y|^2}={\mathcal{O}}\Big(\frac{1}{R}\Big).
\end{equation} 

As a consequence 
\[  \left( \frac{ \Big|\Psi_{R,\pm}(y)- \Psi_{R,\pm}(\bar y)\Big|^2 }{|y-\bar y|^2}\right)^{-\frac{n+2s}{2}}\\ = 
(1+\eta_\pm)^{-\frac{n+2s}{2}}= 1-\frac{n+2s}{2} \eta_\pm +{\mathcal{O}}(\eta_\pm^2). \]
 We plug this information into~\eqref{JA8} and use~\eqref{JA7} to 
obtain 
\begin{eqnarray*} && \frac{|u_{R,\pm}(x)-u_{R,\pm}(\bar x)|^2}{|x-\bar x|^{n+2s}}\,dx\,d\bar x\\ 
&=& \frac{|u(y)-u(\bar y)|^2}{|y-\bar y|^{n+2s}} \cdot \left(1-\frac{n+2s}{2} \eta_\pm +{\mathcal{O}}\Big(\frac{1}{R^2}\Big) \right) \\
&&\cdot \bigg(1\pm \frac{1}{R}\partial_1 \varphi \Big(\frac{y}{R}\Big)\pm \frac{1}{R} \partial_1 \varphi \Big(\frac{\bar y}{R} \Big)+{\mathcal{O}}\Big(\frac{1}{R^2}\Big)\bigg) 
\,dy\,d\bar y\\ 
&=& \frac{|u(y)-u(\bar y)|^2}{|y-\bar y|^{n+2s}} \cdot \Bigg[ 1-\frac{n+2s}{2} \eta_\pm + \, \bigg(  \pm \frac{1}{R} \partial_1 \varphi \Big(\frac{ y}{R} \Big)   \pm \frac{1}{R} \partial_1 \varphi \Big(\frac{\bar y}{R}\Big) \bigg) \\
	&&+{\mathcal{O}}\Big(\frac{1}{R^2}\Big)\Bigg]\,dy\,d\bar y.\end{eqnarray*}
Using this and the fact that $$ \eta_+ \,+\, \eta_-= 2\,\frac{ \bigg|\varphi \Big( \frac{y}{R}\Big)-\varphi \Big(\frac{\bar y}{R}\Big)\bigg|^2}{|y-\bar y|^2}={\mathcal{O}}\Big(\frac{1}{R^2}\Big),$$ thanks 
to~\eqref{j9},
we obtain \begin{eqnarray*} && 
\frac{|u_{R,+}(x)-u_{R,+}(\bar x)|^2}{|x-\bar x|^{n+2s}}+ 
\frac{|u_{R,-}(x)-u_{R,-}(\bar x)|^2}{|x-\bar x|^{n+2s}} \,dx\,d\bar 
x\\ &=& \frac{|u(y)-u(\bar y)|^2}{|y-\bar y|^{n+2s}} \cdot \left( 2 
+{\mathcal{O}}\Big(\frac{1}{R^2}\Big)\right)
\,dy\,d\bar y 
.\end{eqnarray*}
Thus, if we integrate over~$Q_R$
we find that
$$ {\mathcal{K}}_R(u_{R,+})+{\mathcal{K}}_R(u_{R,+})
= 2{\mathcal{K}}_R(u) +
\iint_{Q_R} {\mathcal{O}}\Big(\frac{1}{R^2}\Big)\,
\frac{|u(x)-u(\bar x)|^2}{|x-\bar x|^{n+2s}}\,dx\,d\bar x.$$
This establishes~\eqref{DG01}.
\end{proof}

 \begin{proof}[Proof of Theorem \ref{dgdim2}]
We organize this proof into four steps. 
 
\noindent \textbf{Step 1.} \textbf{A geometrical consideration}\\
In order to prove that the level sets are flat, it suffices to prove that $u$ is monotone in any direction.  Indeed, if $u$ is monotone in any direction, the level set  $\{ u=0\}$ is both convex and concave, thus it is flat. 
 \bigskip
 
\noindent  \textbf{Step 2.} \textbf{Energy estimates}\\
 	 Let $\varphi \in C_0^{\infty}(B_1)$ such that $\varphi = 1 $ in $B_{1/2}$, and let $e=(1,0)$. We define as in Lemma \ref{endg} 
		\[ \Psi_{R,+}(y):=y+\varphi \Big(\frac{ y}{R}\Big)\,e \ {\mbox{ and }} \ \Psi_{R,-}(y):=y-\varphi \Big(\frac{ y}{R}\Big)\,e,\]
which are diffeomorphisms for large $R$, and the functions ~$u_{R,\pm}(x):= u(\Psi_{R,+}^{-1}(x))$. Notice that 
	 \begin{align}
		&u_{R,+} (y)= u(y) \; &\text{for} \; &y \in B_R^{\C} \label{urpiur1}\\
		&u_{R,+} (y)= u(y-e) \; &\text{for} \; &y \in B_{R/2}\label{urpiur2} .
		\end{align} 
     By computing the potential energy, it is easy to see that
    	\[ \begin{split}
    	\int_{B_R} W(u_{R,+}(x)) \, dx &+\int_{B_R} W(u_{R,-}(x)) \, dx - 2 \int_{B_R} W(u(x)) \, dx \\
    		&\leq \frac{C}{R^2}  \int_{B_R} W(u(x)) \, dx.\end{split}\]
Using this and~\eqref{DG01}, we obtain
the following estimate for the total energy
    	\begin{equation}\label{5.29bis}
\E(u_{R,+},B_R)+\E(u_{R,-},B_R) - 2\E(u,B_R) \leq \frac{C}{R^2} \E(u,B_R).\end{equation}  
Also, since $u_{R,\pm}=u$ in $B_R^{\C}$, we have that
    	\[ \E(u,B_R) \leq \E(u_{R,-},B_R).\]
This and~\eqref{5.29bis} imply that
    		\begin{equation} \label {bla13} \E(u_{R,+},B_R) -\E(u,B_R) \leq \frac{C}{R^2} \E(u,B_R).\end{equation}
As a consequence of this estimate and~\eqref{THANKS},
it follows that
  		\begin{equation} \label{dge11} \lim_{R \to+ \infty} \Big(\E(u_{R,+},B_R) -\E(u,B_R) \Big)=0.
		\end{equation}
		
\bigskip 

  \noindent  \textbf{Step 3.} \textbf{Monotonicity}\\
     We claim that $u$ is monotone. Suppose by contradiction that $u$ is not monotone. 
     That is, up to translation and dilation, we suppose that the value of~$u$ at the 
     origin stays above the values of~$e$ and~$-e$, with $e:=(1,0)$, i.e. 
     \[ u(0) > u(e) \;{\mbox{ and }}\;u(0)>u(-e).\]
     Take $R$ to be large enough, say $R>8$. Let now 
  	\begin{equation}\label{5.31bis}
v_R(x):= \min \big\{u(x),u_{R,+}(x)\big\} \quad \mbox{and} \quad w_R(x):= \max \big\{u(x),u_{R,+}(x)\big\} .\end{equation}
  	By \eqref{urpiur1} we have that $v_R =w_R =u $ outside $B_R$. Then, since $u$ is a minimizer in $B_R$ and $w_R=u$ outside $B_R$, we have that
  		\begin{equation}\label{oPGHll} \E(w_R,B_R) \geq \E(u,B_R) .\end{equation}
Moreover, the sum of the energies of
the minimum and the maximum is less than or equal to the sum
of the original energies: this is obvious in the local case,
since equality holds, and in the nonlocal case the proof
is based on the inspection of the different integral contributions, see e.g.
formula~(38) in~\cite{PSV13}. So we have that
\[  \E(v_R,B_R) + \E(w_R,B_R) \leq\E(u,B_R) +\E(u_{R,+},B_R)\]
hence, recalling~\eqref{oPGHll},
	 \begin{equation} \label{dge12}   \E(v_R,B_R)\leq   \E(u_{R,+},B_R) .\end{equation} 
  	
We claim that~$v_R$ is not identically neither $u$, nor $u_{R,+}$. Indeed, since $u(0)= u_{R,+}(e)$ and $u(-e)= u_{R,+}(0)$ we have that
 	\[\begin{split}	 v_R(0) \;=&\;\min \big\{u(0),u_{R,+}(0)\big\}=
\min\big\{u(0),u(-e)\big\}\\ =&\;u(-e) = u_{R,+}(0) <u(0)  \quad \mbox{and} \\
 					v_R(e) \;= &\;\min \big\{u(e),u_{R,+}(e)\big\}=\min\big\{u(e),u(0)\big\}\\
=&\;u(e) <u(0) = u_{R,+}(e).  \end{split}\]
 					By continuity of $u$ and $u_{R,+}$, we have that
 					\begin{equation} \label{dgcuur1} \begin{split} v_R \;=&\;u_{R,+}<u  \mbox{ in a neighborhood of } 0 \quad \mbox{and} \\
 									  v_R \;=&\;u <u_{R,+} \mbox{ in a neighborhood of } e .\end{split}\end{equation}
 We focus our attention on the energy in the smaller ball $B_2$. We claim that $v_R$ is not minimal for $\E(\cdot, B_2)$. Indeed, if $v_R$ were minimal in $B_2$, then on $B_2$ both $v_R$ and $u$ would satisfy the same equation. However, $v_R \leq u$ in $\R^2$ by definition and $v_R=u$ in a neighborhood of $e$ by the second statement in \eqref{dgcuur1}. The Strong Maximum Principle implies that they coincide everywhere, which contradicts the first line in \eqref{dgcuur1}. 

Hence $v_R$ is not a minimizer in $B_2$. Let then $v^*_R$ be a minimizer of $\E(\cdot, B_2)$, that agrees with $v_R$ outside the ball $B_2$, and we define the positive quantity
 	\begin{equation}\delta_R: = \E(v_R,B_2) -  \E(v^*_R,B_2).  \label{dgclmc1} \end{equation} 
  
 We claim that 
 \begin{equation}\label{remains}
{\mbox{as $R$ goes to infinity, $\delta_R$ remains bounded away from
zero.}}\end{equation} 
To prove this, we assume by contradiction that \begin{equation} \label{dgclaim2} \displaystyle \lim_{R \to+ \infty} \delta_R=0.\end{equation} 
Consider $\tilde u$ to be the translation of $u$, that is~$\tilde u(x):= u(x-e)$.
Let also
\[ m (x):= \min \big\{ u(x), \tilde u(x)\big\}.\] 
We notice that in $B_{R/2}$ we have that $\tilde u (x)= u_{R,+}(x)$. 
This and~\eqref{5.31bis}
give that
\begin{equation}\label{5.31ter}
{\mbox{$m=v_R$ in $B_{R/2}$.}}\end{equation}
Also, from \eqref{dgcuur1} and~\eqref{5.31ter},
it follows that $m$ cannot be identically neither $u$ nor $\tilde u$,
and
\begin{equation} \label{dgcuur2} \begin{split} m \;<&\; u  \mbox{ in a neighborhood of } 0 \quad \mbox{and} \\
 									  m \;=&\;u  \mbox{ in a neighborhood of } e .\end{split}\end{equation}
Let $z$ be a competitor for $m$ in the ball $B_2$, that agrees with $m$ outside $B_2$. We take a
cut-off function $\psi\in C_0^{\infty}(\Rn)$ such that $\psi = 1 $ in $B_{R/4}$, $\psi =0$ in $B_{R/2}^{\C}$. Let \[z_R(x):= \psi(x) z(x) + \big(1- \psi(x)\big) v_R(x).\]  Then we have that $z_R=z $ on $B_{R/4}$ and 
\begin{equation}\label{STAR}
{\mbox{$z_R=v_R$ on $B_{R/2}^{\C}$. }}\end{equation}
In addition, by~\eqref{5.31ter}, we have that~$z=m=v_R$
in~$B_{R/2}\setminus B_2$. So, it follows that
	\[ z_R(x)=\psi(x)v_R(x)+(1-\psi(x))v_R(x)= v_R(x)  =z(x)\quad \mbox{on} \quad B_{R/2}\setminus B_2 .\]
This and~\eqref{STAR} imply that
$z_R=v_R$ on $B_2^{\C}$. 

We summarize
in the next lines these useful identities (see also Figure \ref{fign:Enest}). 
	\begin{align*}
		&\text{in }\; B_2  & & u_{R,+}= \tilde u, \quad m=v_R, \quad z=z_R\\
		&\text{in }\; B_{R/2}\setminus B_2  & & u_{R,+}= \tilde u, \quad  v^*_R=v_R=m=z=z_R \\
		&\text{in }\; B_R \setminus B_{R/2}  & & v^*_R=v_R=z_R,\quad  m=z\\	
		&\text{in }\; B_R^{\C}   & &  u_{R,+}=u=v_R=v^*_R=z_R, \quad m=z.	
	\end{align*}		
\begin{center}
  \begin{figure}[htpb]
	\hspace{0.65cm}
	\begin{minipage}[b]{1.15\linewidth}
	\centering
	\includegraphics[width=1.15\textwidth]{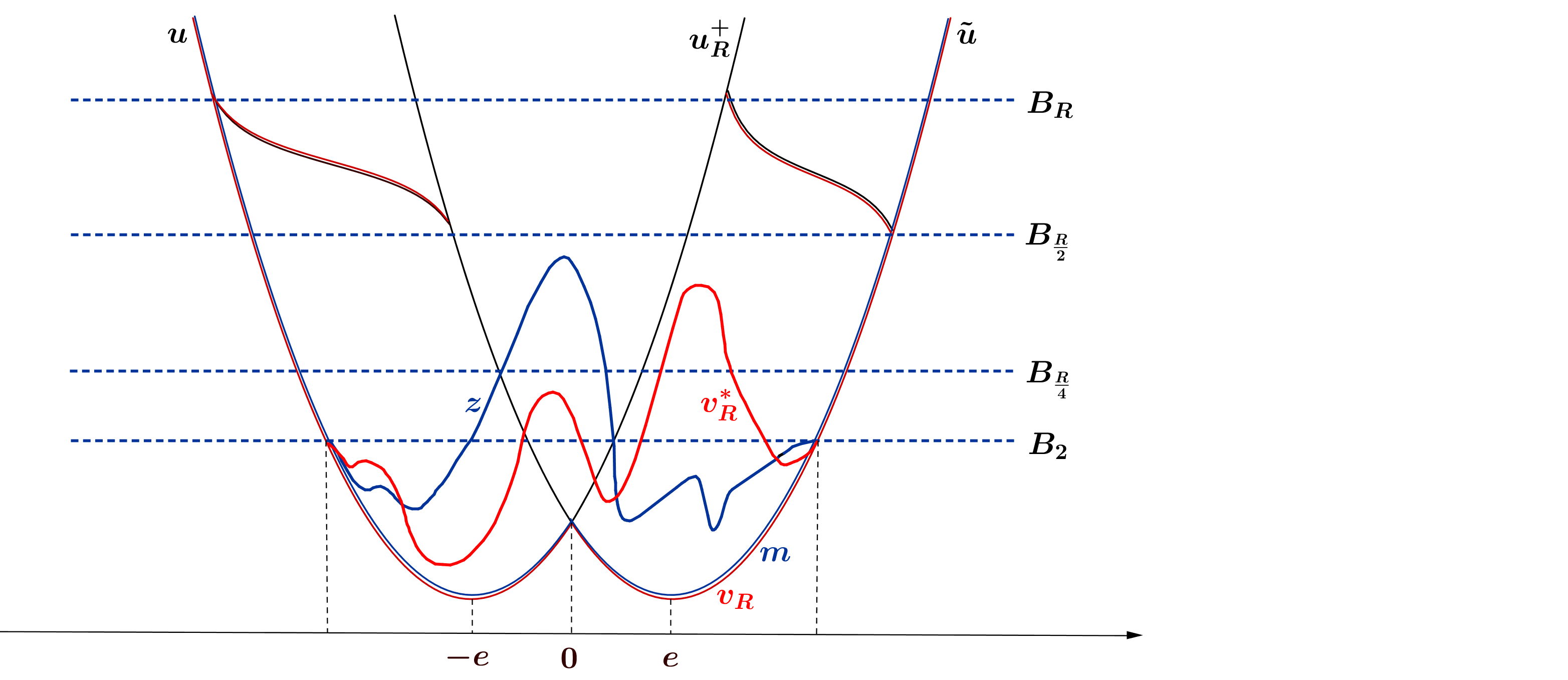}
	\caption{Energy estimates}  
	\label{fign:Enest}
	\end{minipage}
	\end{figure} 
	\end{center}										
We compute now
		\[ \begin{split} \E(m&\;,B_2) -\E(z,B_2) \\
		= &\; \E(m,B_2)-\E(v_R,B_2)+ \E(v_R,B_2)-\E(z_R,B_2) +\E(z_R,B_2)-\E(z,B_2) .\end{split}\]  \\
By the definition of $\delta_R$ in~\eqref{dgclmc1}, we have that
	\begin{equation}\label{dgeqc}\begin{split} \E(m&\;,B_2) -\E(z,B_2) \\							
				= &\;	\E(m,B_2)-\E(v_R,B_2)+\delta_R + \E(v^*_R,B_2) -\E(z_R,B_2)+\E(z_R,B_2)-\E(z,B_2).
				\end{split} \end{equation}							
Using the formula for the kinetic energy given in \eqref{dgkr} together with \eqref{dguab} we have that 
\[\begin{split} \E(m&\; ,B_2)-\E(v_R,B_2) \\
= &\; \frac{1}{2} m(B_2,B_2)+ m(B_2,B_2^{\C}) + \int_{B_2} W\big(m(x)\big)\, dx \\
&\;- \frac{1}{2} v_R(B_2,B_2) -v_R(B_2,B_2^{\C}) - \int_{B_2}W\big(v_R(x)\big) \, dx.\end{split} \]	
Since $m=v_R$ on  $B_{R/2}$ (recall~\eqref{5.31ter}), we obtain 
\[\begin{split} \E(m&\; ,B_2)-\E(v_R,B_2) \\	=&\; \int_{B_2} \, dx \int_{B_{R/2}^{\C}} \, dy \frac{ |m(x)-m(y)|^2-|m(x)-v_R(y)|^2}{|x-y|^{n+2s}} . 
				\end{split} \]	
Notice now that~$m$ and $v_R$ are bounded on $\Rn$ (since so is
$u$).
Also, if~$x\in B_2$ and~$y\in B_{R/2}^{\C}$ we have that~$|x-y|\ge|y|-|x|\ge |y|/2$ if~$R$ is large.
Accordingly,
	\begin{equation}\label{uno} \E(m ,B_2)-\E(v_R,B_2) \leq  C \int_{B_2} \, dx \int_{B_{R/2}^{\C}}\frac{1}{|y|^{n+2s}} \, dy  \le
	CR^{-2s}, 
				\end{equation}
up to renaming constants.
Similarly, 
$z_R=z$ on $B_{R/2}$ and we have the same bound  
\begin{equation}\label{due}
\E(z_R,B_2)-\E(z,B_2) \leq  C R^{-2s}.  \end{equation}
Furthermore, since $v_R^*$ is a minimizer for $\E(\cdot, B_2)$ and $v_R^*=z_R$ outside of $B_2$, we have that
 \[ \E(v^*_R,B_2) -\E(z_R,B_2)\leq 0.\]   				  			
Using this, \eqref{uno} and~\eqref{due}        
in \eqref{dgeqc}, it follows that
 	\[ \E(m,B_2)-\E(z,B_2) \leq CR^{-2s} +\delta_R.\] 
Therefore, by sending~$R\to+\infty$ and using again~\eqref{dgclaim2}, we
obtain that
 	\begin{equation}\label{MKI} \E(m,B_2) \leq \E(z,B_2) .\end{equation}
We recall that $z$ can be any competitor for $m$, that coincides with $m$ outside of $B_2$. Hence, formula~\eqref{MKI} means that $m$ is a minimizer for $\E(\cdot, B_2)$.
On the other hand,
$u$ is a minimizer of the energy in any ball.
Then, both $u$ and $m$ satisfy the same equation in $B_2$. Moreover, they coincide in a neighborhood of $e$, as stated in the second line of \eqref{dgcuur2}. By the Strong Maximum Principle, they have to coincide on $B_2$, but this contradicts the
first statement of \eqref{dgcuur2}. The proof of~\eqref{remains} is thus complete.
\bigskip

Now, since $v_R^*=v_R$ on $B_2^{\C}$, from definition \eqref{dgclmc1} we
have that 
 	\[ \delta_R  = \E(v_R,B_R)-\E(v_R^*,B_R) .\]
Also, $\E (v_R^*,B_R)\geq \E(u,B_R)$, 
thanks to the minimizing property of~$u$. 
Using these pieces of information and inequality \eqref{dge12}, it follows that 
  	\[ 		\delta_R  \leq  \E(u_{R,+},B_R) - \E(u,B_R) .\]
Now, by  sending $R \to +\infty$ and using~\eqref{remains}, we have that
	\[  \lim_{R \to +\infty} \E(u_{R,+},B_R) - \E(u,B_R) > 0 ,\] which contradicts \eqref{dge11}. This implies that indeed $u$ is monotone, and this concludes the proof of this Step.

\bigskip  					  						  						  					
 \noindent \textbf{Step 4.} \textbf{Conclusions}\\ 
         In Step 3, we have proved that $u$ is monotone, in any given direction~$e$.
Then, Step 1 gives the desired result.
This concludes the proof of Theorem \ref{dgdim2}.
 \end{proof}

We remark that the exponent two in the energy estimate \eqref{DG01} is related to the expansions of order two and not to the dimension of the space. Indeed, the energy estimates hold for any $n$. However, the two power in the estimate \eqref{DG01} allows us to prove the fractional
version of De Giorgi conjecture only in dimension two. In other words, the proof of Theorem \ref{dgdim2} is not applicable for $n> 2$. One can verify this by checking the limit in \eqref{dge11}
\bgs{ \lim_{R \to+ \infty} \Big(\E(u_{R,+},B_R) -\E(u,B_R) \Big)=0,}
which was necessary for the Proof of Theorem \ref{dgdim2} in the case $n=2$.
 We know from Theorem \ref{acenest1} that
	\[ \lim_{R \to+ \infty} \frac{C}{R^n} \E(u,B_R) =0.\]  Confronting this result with inequality \eqref{bla13}
	    		\bgs{ \E(u_{R,+},B_R) -\E(u,B_R) \leq \frac{C}{R^2} \E(u,B_R),}
 we see that we need to have $n=2$ in order for the the limit in \eqref{dge11} to be zero.  
\bigskip

Of course, the one-dimensional symmetry property in
Theorem \ref{dgdim2} is inherited by the spatial homogeneity
of the equation, which is translation and rotation invariant.
In the case, for instance, in which the potential also depends 
on the space variable, the level sets of the (minimal) solutions
may curve, in order to adapt themselves to the spatial inhomogeneity.

Nevertheless, in the case of periodic dependence, it
is possible to construct minimal solutions whose
level sets are possibly not planar, but still remain at a bounded distance
from any fixed hyperplane. As a typical result in this direction,
we recall the following one (we refer to~\cite{MATTEO-ENRICO}
for further details on the argument):
\begin{center}
  \begin{figure}[htpb]
	\hspace{0.65cm}
	\begin{minipage}[b]{0.90\linewidth}
	\centering
	\includegraphics[width=0.90\textwidth]{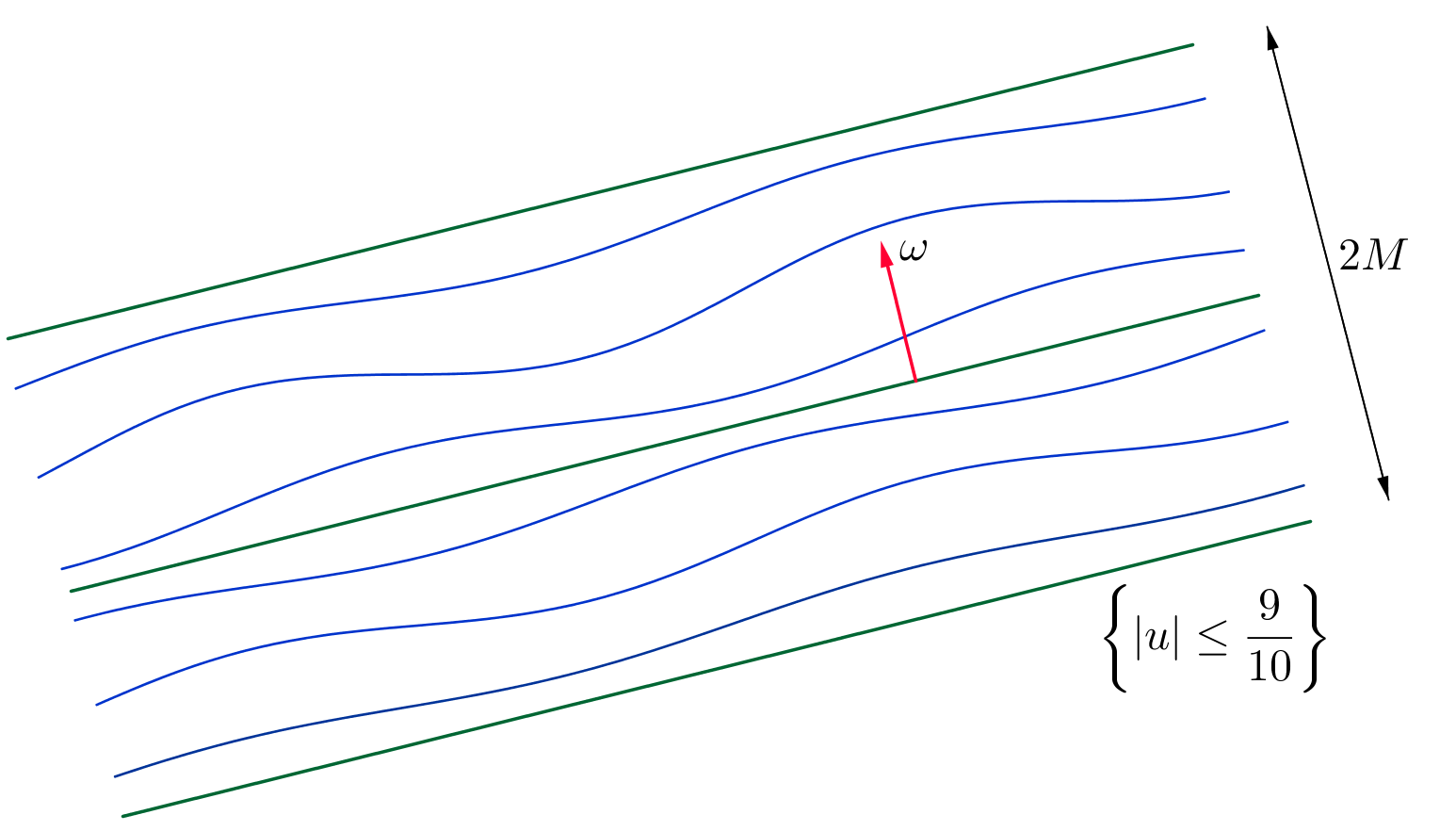}
	\caption{Minimal solutions in periodic medium}  
	\label{fign:minsperm}
	\end{minipage}
	\end{figure} 
	\end{center}
\begin{thm}
Let~$Q_+>Q_->0$ and~$Q:\R^n\to [Q_-,Q_+]$. Suppose that~$Q(x+k)=Q(x)$
for any~$k\in \Z ^n$. Let us consider, in any ball~$B_R$, the energy
defined by
$$ \E(u, B_R) =  \mathcal{K}_R(u) + \frac14 \,\int_{B_R} Q(x)\,(1-u^2)^2 dx,$$
where the kinetic energy ${\mathcal{K}}_R(u)$ is defined as in~\eqref{dgkenac}.

Then, there exists a constant $M> 0$, 
such that, given any~$\omega \in \partial B_1$,
there exists a minimal solution~$u_\omega$ of 
$$ (-\Delta)^s u_\omega (x)= Q(x)\,(u_\omega(x)-u^3_\omega(x))
\qquad{\mbox{ for any }}x\in\R^n$$
for which the level sets $\{|u_\omega | \le\frac{9}{10}\}$ are contained in the 
strip~$\{x\in\R^n {\mbox{ s.t. }} |\omega\cdot x|\le M\}$.

Moreover, if $\omega$ is rotationally dependent,
i.e. if there exists~$k_o\in\Z^n$ such that~$\omega\cdot k_o=0$,
then~$u_\omega$ is periodic with respect to~$\omega$, i.e.
$$ u_\omega(x)=u_\omega(y) {\mbox{ for any $x$, $y\in\R^n$ such that 
$x-y=k$ and $\omega\cdot k=0$.}} $$
\end{thm}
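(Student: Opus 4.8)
The plan is to adapt the classical Plateau-type / minimal-solution construction (in the spirit of Moser and of the work on minimizers with prescribed asymptotic slope) to the fractional, inhomogeneous Allen--Cahn energy. The starting point is the existence theory for minimizers of $\E(\cdot,B_R)$ with prescribed exterior data, combined with the uniform energy bound $\E(u,B_R)\le CR^{n-1}$ (for $s>1/2$), $CR^{n-1}\log R$ (for $s=1/2$), $CR^{n-2s}$ (for $s<1/2$) from Theorem~\ref{acenest1}, whose proof only used the double-well structure of the potential, hence goes through verbatim with $W(x,u):=\frac14 Q(x)(1-u^2)^2$ since $Q$ is bounded between $Q_-$ and $Q_+$. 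First I would fix $\omega\in\partial B_1$ and, for each large $R$, take a \emph{class $\mathcal A$-minimizer} $u_R$ of $\E(\cdot,B_R)$ among competitors that agree outside $B_R$ with a fixed ``heteroclinic-in-direction-$\omega$'' barrier (e.g.\ a one-dimensional profile of $\pm1$ type transversal to $\{\omega\cdot x=0\}$). One then passes to the limit $R\to+\infty$ (using the uniform local energy bounds and compactness in $L^1_{\rm loc}$, plus lower semicontinuity of the Gagliardo seminorm) to obtain an entire minimal solution $u_\omega$ of $(-\Delta)^s u_\omega = Q(x)(u_\omega-u_\omega^3)$.

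The key point is the uniform strip confinement of the level set $\{|u_\omega|\le 9/10\}$. Here I would run the standard ``clean-ball'' / density-estimate argument: Theorem~\ref{TY78UU} (whose proof, again, only used $W$ bounded from below away from $\pm1$ on compact subsets of $(-1,1)$, hence applies to $Q(x)(1-u^2)^2/4$) gives that if $|u_\omega(x_0)|\le\theta$ in a ball then $\{u_\omega>\theta'\}\cap B_R(x_0)$ and $\{u_\omega<-\theta'\}\cap B_R(x_0)$ each occupy a fixed positive fraction of $B_R(x_0)$ once $R\ge\overline R(\theta,\theta')$. Combined with the energy upper bound $\E(u_\omega,B_R(x_0))\le CR^{n-1}$ (or the relevant power of $R$), a contradiction argument shows that a transition cannot occur at distance $M$ from the strip $\{|\omega\cdot x|\le M\}$ if $M$ is large: otherwise we would find too many disjoint ``transition balls'' stacked along the $\omega$ direction, each contributing a fixed amount of energy, forcing the energy in $B_{CM}$ to grow faster than the allowed power of $M$. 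The constant $M$ is then universal because the density constant $C$ in Theorem~\ref{TY78UU} and the energy bound constant in Theorem~\ref{acenest1} depend only on $n,s$ and on $Q_-,Q_+$ (through $W$), not on $\omega$.

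For the periodicity statement, suppose $\omega\cdot k_o=0$ for some $k_o\in\Z^n$. Since $Q(x+k_o)=Q(x)$, the translate $u_\omega(\cdot+k_o)$ is again a minimal solution with the \emph{same} strip confinement (the strip $\{|\omega\cdot x|\le M\}$ is invariant under translation by $k_o$ because $\omega\cdot k_o=0$). I would then invoke a uniqueness/rigidity fact: among minimal solutions confined to a fixed strip transversal to $\omega$ and ``asymptotic to $\pm1$'' in the $\pm\omega$ directions, there is a well-ordered one-parameter family (a foliation), and two such solutions are either ordered or equal; minimality and the strict maximum principle (Theorem~\ref{THM-MA-1-STRONG}, applied to the difference) then force $u_\omega(\cdot+k_o)=u_\omega$ after a unique choice of ``phase'', which one fixes at the outset by a suitable normalization (e.g.\ $\int_{\{ \omega\cdot x=0\}}$-type centering, or by picking $u_\omega$ to be the minimal minimizer above a fixed subsolution). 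This yields $u_\omega(x)=u_\omega(y)$ whenever $x-y=k$ with $\omega\cdot k=0$, after noting the $\Z^n$-periodicity of $Q$ makes the lattice of admissible $k$ the full $\{k\in\Z^n:\omega\cdot k=0\}$.

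The main obstacle I expect is \emph{not} the energy bookkeeping (which is essentially Theorems~\ref{acenest1} and~\ref{TY78UU} recycled) but the nonlocal well-ordering / uniqueness input needed for periodicity: in the fractional setting one must be careful that the minimal solution in a strip is genuinely unique up to the continuous symmetry, which requires a sliding argument combined with the strong maximum principle for $(-\Delta)^s$ and a careful treatment of the exterior-data contributions to the energy when comparing translates (the kinetic integral over $Q_R$ is genuinely nonlocal, so ``gluing'' competitors as in Step~3 of the proof of Theorem~\ref{dgdim2} produces tail terms of size $O(R^{-2s})$ that must be shown to be negligible against the fixed energy gap). Making the strip constant $M$ manifestly independent of $\omega$ also requires that all the auxiliary constants be tracked as depending only on $n,s,Q_-,Q_+$, which is true but should be stated explicitly.
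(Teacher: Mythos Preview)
The paper does not actually prove this theorem: it is stated at the end of Section~\ref{sbsdg} with the parenthetical ``we refer to~\cite{MATTEO-ENRICO} for further details on the argument'' and no further discussion. So there is no proof in the paper to compare against; I can only assess your sketch on its own terms.

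Your ingredients are the right ones (the energy bound of Theorem~\ref{acenest1} and the density estimate of Theorem~\ref{TY78UU} do carry over to the weighted potential~$W(x,u)=\tfrac14 Q(x)(1-u^2)^2$ because the proofs only use the uniform double-well structure), but the architecture of the construction is off in two places.

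First, the strip confinement argument as you wrote it does not close. Saying that ``too many disjoint transition balls stacked along~$\omega$'' would force the energy in~$B_{CM}$ to exceed the allowed growth is not a contradiction: a single chain of unit transition balls along the $\omega$-axis contributes energy of order~$M$, which is \emph{below} the bound~$CM^{n-1}$ (or~$CM^{n-2s}$) as soon as~$n\ge2$. What is actually needed, and what you are missing, is the \emph{Birkhoff property}: one shows that the minimizer satisfies, for every~$k\in\Z^n$, either~$u(\cdot+k)\ge u$ everywhere or~$u(\cdot+k)\le u$ everywhere (this follows from minimality via the min/max energy inequality you already know, formula~(38) in~\cite{PSV13}, combined with the strong maximum principle). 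The Birkhoff property is what forces the interface to be ordered with respect to integer translations, and \emph{then} the density estimate converts this ordering into a uniform strip bound. Without Birkhoff, the density estimate alone does not prevent the interface from wandering.

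Second, and relatedly, periodicity for rational~$\omega$ is not obtained a posteriori by a uniqueness/sliding argument as you propose; it is built into the construction. For rational~$\omega$ one minimizes directly over the class of functions that are periodic with respect to the sublattice~$\{k\in\Z^n:\omega\cdot k=0\}$ (working, in effect, on a quotient torus crossed with~$\R$), and the minimizer in that class is periodic by definition. Your route---produce an entire minimizer by exhausting~$\R^n$ with balls, then argue that it must coincide with its $k_o$-translate---would require a genuine uniqueness statement for minimal heteroclinics in a fixed strip, and no such statement is available in the periodic medium (indeed, for irrational~$\omega$ one expects a Cantor-like family of minimizers, so uniqueness up to a continuous symmetry is false in general). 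The irrational case is then handled by approximating~$\omega$ with rational directions and passing to the limit, using that the constant~$M$ from the rational construction depends only on~$n,s,Q_-,Q_+$.
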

 
\chapter{Nonlocal minimal surfaces}\label{nlms}

In this chapter, we introduce nonlocal minimal surfaces and focus on two 
main results, a Bernstein type result in any dimension and the 
non-existence
of nontrivial $s$-minimal cones in dimension $2$. 
Moreover, some boundary properties will be discussed at the end 
of this chapter. For a preliminary introduction to some properties of
the nonlocal minimal surfaces, see \cite{MILAN}.

  Let $\Omega \subset \Rn$ be an open bounded domain, and $E \subset \Rn$ be a measurable set, fixed outside $\Omega$. We will consider for $s\in (0,1/2)$ minimizers of the $H^{s}$ norm 
	\begin{equation*}
		\begin{split}
		||\chi_E||^2_{H^{s}} =& \int_{\Rn} \int_{\Rn} \frac{|\chi_E(x)-\chi_E(y)|^2}{|x-y|^{n+2s}} \, dx \, dy\\
							=& 2 \int_{\Rn} \int_{\Rn} \frac{\chi_E(x)\chi_{E^{\C}}(y)}{|x-y|^{n+2s}} \, dx \, dy.
		\end{split}
	\end{equation*}
Notice that only the interactions between $E$ and $E^{\C}$ contribute to the norm. 

In order to define the fractional perimeter of $E$ in $\Omega$, we need to clarify the contribution of $\Omega$ to the $H^{s}$ norm here introduced. Namely, as $E$ is fixed outside $\Omega$, we aim at minimizing the ``$\Omega$-contribution'' to the norm among all measurable sets that ``vary'' inside $\Omega$. We consider thus interactions between $E\cap \Omega$ and $E^{\C}$ and between $E\setminus \Omega$ and $\Omega \setminus E$, 
neglecting the data that is fixed outside~$\Omega$
and that does not contribute to the minimization of the norm (see Figure \ref{fign:FracPer}).
\begin{center}
 \begin{figure}[htpb]
	\hspace{0.65cm}
	\begin{minipage}[b]{0.95\linewidth}
	\centering
	\includegraphics[width=0.95\textwidth]{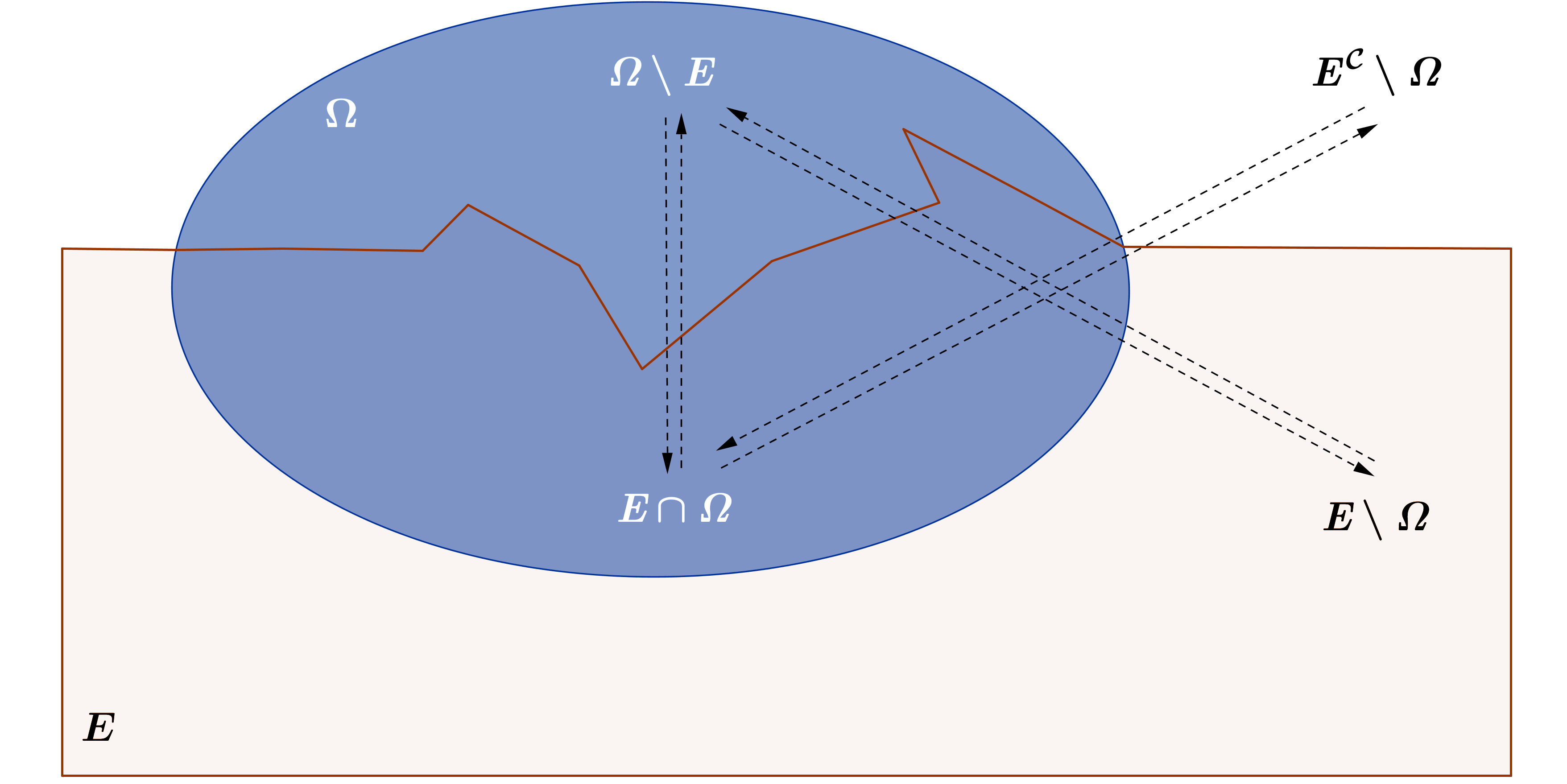}
	\caption{Fractional Perimeter}  
	\label{fign:FracPer}
	\end{minipage}
	\end{figure} 
	\end{center}
We define the interaction $I(A,B)$ of two disjoint subsets of $\Rn$ as
	\begin{equation}\label{nmsi1}
		\begin{split}
		I(A,B):=&\int_A \int_B \frac{dx\, dy}{|x-y|^{n+2s}}\\
			=& \int_{\Rn} \int_{\Rn} \frac{ \chi_A(x) \chi_B (x) }{|x-y|^{n+2s}}\, dx \, dy.
		\end{split}
	\end{equation}
Then (see~\cite{CRS10}),
one defines the nonlocal $s$-perimeter functional of $E$ in $\Omega$ as	
\begin{equation} \label{nmspf1} \text{Per}_s(E,\Omega) := I(E\cap \Omega,E^{\C})  + I(E\setminus \Omega,\Omega \setminus E). \end{equation}
Equivalently, one may write
	\[ \text{Per}_s(E,\Omega) = I( E\cap \Omega,\Omega \setminus E) + I(E\cap \Omega , E^{\C} \setminus \Omega)+ I(E\setminus \Omega,\Omega \setminus E). \]

\begin{defn}
Let~$\Omega$ be an open domain of~$\R^n$.
A measurable set $E\subset \Rn$ is $s$-minimal in~$\Omega$
if $\text{Per}_s(E,\Omega)$ is finite and if,
for any measurable set~$F$ such that $E\setminus\Omega =F\setminus \Omega$, we have that
	\[ \text{Per}_s(E,\Omega) \leq \text{Per}_s(F,\Omega).\]
A measurable set is $s$-minimal in $\Rn$ if it is $s$-minimal in any ball $B_r$, where $r > 0$.
\end{defn}

When~$ s\to \frac{1}{2}$,
the fractional perimeter~$\text{Per}_s$ 
approaches the classical perimeter, see \cite{BBM}. See also \cite{DAVILA} for the precise limit in the class
of functions with bounded variations, \cite{CV11,CV13} for a geometric
approach towards regularity and \cite{PONCE, ADM} for an approach based on $\Gamma$-convergence. See also \cite{vS-W-2004} for a different proof and Theorem 2.22 in \cite{LUCA} and the references therein for related discussions.
A simple, formal statement, up to renormalizing constants, is the following:
\begin{thm}\label{TP12}
Let $R>0$ and $E$ be a set with finite perimeter in $B_R$. Then 
	\[\lim_{s \to \frac{1}{2} }\Big(\frac{1}{2} -s\Big) \text{Per}_s(E,B_r)=\text{Per }(E,B_r) \]
for almost any $r\in (0,R)$.
\end{thm}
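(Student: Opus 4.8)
The plan is to reduce the statement to the Bourgain--Brezis--Mironescu / D\'avila asymptotics for the Gagliardo seminorms of $BV$ functions already quoted in \cite{BBM,DAVILA}, and then to control the interactions that cross the sphere $\partial B_r$; it is precisely this last point that forces the restriction to almost every $r$.

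The first step is a purely algebraic splitting. Writing, as in \eqref{nmsi1}, $I(A,B)=\int_A\int_B|x-y|^{-n-2s}\,dx\,dy$, formula \eqref{nmspf1} (in the equivalent form displayed right after it) gives
\[
\text{Per}_s(E,B_r)=\underbrace{I(E\cap B_r,\ B_r\setminus E)}_{=:\,P^{\mathrm{in}}_s(r)}\;+\;\underbrace{I(E\cap B_r,\ E^{\C}\setminus B_r)+I(E\setminus B_r,\ B_r\setminus E)}_{=:\,P^{\mathrm{cr}}_s(r)}.
\]
Since $\chi_E$ takes only the values $0$ and $1$, one has $2\,P^{\mathrm{in}}_s(r)=\int_{B_r}\int_{B_r}\frac{|\chi_E(x)-\chi_E(y)|}{|x-y|^{n+2s}}\,dx\,dy$, which is the Gagliardo seminorm of $\chi_E$ over the smooth bounded domain $B_r$ with differentiability exponent $2s\in(0,1)$. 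Because $E$ has finite perimeter in $B_R$, we have $\chi_E\in BV(B_r)$ for every $r\le R$, so the cited asymptotics yield
\[
\lim_{s\to(1/2)^-}(1-2s)\,2\,P^{\mathrm{in}}_s(r)=K_n\,|D\chi_E|(B_r)=K_n\,\text{Per}(E,B_r),
\]
with $K_n=\int_{S^{n-1}}|\omega\cdot e_1|\,d\mathcal H^{n-1}(\omega)$ a dimensional constant. Recalling that $\tfrac12-s=\tfrac12(1-2s)$, this accounts for the classical perimeter up to the renormalizing constant allowed by the statement.

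It remains to show that $(\tfrac12-s)\,P^{\mathrm{cr}}_s(r)\to 0$ for a.e.\ $r\in(0,R)$. I would treat each of the two crossing interactions separately; consider $I(E\cap B_r,E^{\C}\setminus B_r)$, the other one being symmetric. Fix $0<\delta<\min\{r,R-r\}$ and split the defining integral according to whether $|x-y|\ge\delta$ or $|x-y|<\delta$. The part where $|x-y|\ge\delta$ is bounded crudely by $|B_r|\int_{|z|\ge\delta}|z|^{-n-2s}\,dz=\tfrac{\omega_{n-1}}{2s}\,|B_r|\,\delta^{-2s}$, so $(1-2s)$ times this part tends to $0$ as $s\to(1/2)^-$ for each fixed $\delta$. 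In the region $|x-y|<\delta$, the constraints $x\in B_r$, $y\notin B_r$ force $r-\delta<|x|<r$ and $r\le|y|<r+\delta$, hence both $x$ and $y$ lie in the open annular neighbourhood $N_\delta:=B_{r+\delta}\setminus\overline{B_{r-\delta}}$ of $\partial B_r$; therefore this part is dominated by $\int_{N_\delta}\int_{N_\delta}\frac{|\chi_E(x)-\chi_E(y)|}{|x-y|^{n+2s}}\,dx\,dy$. Applying the D\'avila--BBM limit again, now on the smooth bounded domain $N_\delta$ (where $\chi_E\in BV$ since $N_\delta\subset B_R$), gives $\limsup_{s\to(1/2)^-}(1-2s)\,I(E\cap B_r,E^{\C}\setminus B_r)\le K_n\,|D\chi_E|(N_\delta)$ for every admissible $\delta$. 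Since $|D\chi_E|$ is a finite Radon measure on $B_R$ and $\bigcap_{\delta>0}N_\delta=\partial B_r$, letting $\delta\downarrow0$ yields $\limsup_{s\to(1/2)^-}(1-2s)\,I(E\cap B_r,E^{\C}\setminus B_r)\le K_n\,|D\chi_E|(\partial B_r)=K_n\,\mathcal H^{n-1}(\partial^*E\cap\partial B_r)$. The spheres $\{\partial B_r\}_{r\in(0,R)}$ are pairwise disjoint and $\mathcal H^{n-1}(\partial^*E\cap B_R)<\infty$, so $\mathcal H^{n-1}(\partial^*E\cap\partial B_r)>0$ for at most countably many $r$; for all other $r$ the crossing term, being nonnegative, is squeezed to $0$. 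Combined with the first step, this gives $\lim_{s\to(1/2)^-}(\tfrac12-s)\,\text{Per}_s(E,B_r)=\tfrac{K_n}{4}\,\text{Per}(E,B_r)$ for a.e.\ $r\in(0,R)$, which is the claim up to renormalization.

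The main obstacle is the analytic input from \cite{BBM,DAVILA}: the convergence $(1-2s)\int_\Omega\int_\Omega\frac{|\chi_E(x)-\chi_E(y)|}{|x-y|^{n+2s}}\,dx\,dy\to K_n\,|D\chi_E|(\Omega)$ is the nontrivial fact, and one has to check that it applies with the \emph{same} constant $K_n$ both on $B_r$ and on the thin annuli $N_\delta$ (which is indeed the case, since $K_n$ is purely dimensional). A fully self-contained treatment would either reprove this limit along the lines of \cite{CV11,CV13,ADM}, or establish directly the lower bound $\liminf_{s\to(1/2)^-}(1-2s)\,I(E\cap B_r,E\setminus B_r)\ge K_n\,\mathcal H^{n-1}(\partial B_r\cap E^{(1)})$ and deduce the vanishing of $P^{\mathrm{cr}}_s(r)$ from the identity $P^{\mathrm{cr}}_s(r)=I(B_r,B_r^{\C})-I(E\cap B_r,E\setminus B_r)-I(E^{\C}\cap B_r,E^{\C}\setminus B_r)$; either route rests on the same BBM-type localization near the interface.
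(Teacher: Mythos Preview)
The paper does not actually supply a proof of Theorem~\ref{TP12}: it presents the statement as ``a simple, formal statement, up to renormalizing constants'' and defers entirely to the references \cite{BBM,DAVILA,CV11,CV13,PONCE,ADM,vS-W-2004,LUCA}. Your proposal is therefore not competing with a proof in the paper but rather filling in what the paper leaves to the literature, and it does so correctly and along the standard lines one would extract from \cite{DAVILA} and \cite{ADM}: reduce the interior term~$P^{\mathrm{in}}_s(r)$ to the BBM/D\'avila asymptotics for the Gagliardo seminorm of~$\chi_E$ on~$B_r$, and show that the crossing term~$P^{\mathrm{cr}}_s(r)$ is negligible after multiplication by~$(1-2s)$ by localizing it in thin annuli~$N_\delta$ and using that~$|D\chi_E|(\partial B_r)=0$ for all but countably many~$r$.

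Two small remarks. First, your closing paragraph correctly identifies the only nontrivial input, namely the D\'avila limit for~$BV$ functions on bounded Lipschitz domains; since the paper already cites this as a black box, invoking it is entirely in the spirit of the exposition. Second, the constant~$K_n/4$ you obtain is consistent with the paper's disclaimer ``up to renormalizing constants''; if one wished to make the statement literally true as written, one would simply absorb~$K_n/4$ into the definition of the fractional perimeter (or of the classical one), which is a common convention in this literature.
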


The behavior of~$\text{Per}_s$ as~$s\to0$ is slightly more involved.
In principle, the limit as~$s\to0$ of~$\text{Per}_s$
is, at least locally, related to the Lebesgue measure
(see e.g.~\cite{VGMAZYA}). Nevertheless, the situation
is complicated by the terms coming from infinity,
which, as~$s\to0$, become of greater and greater importance.
More precisely, it is proved in~\cite{DFPV13-PER} that,
if~$\text{Per}_{s_o}(E,\Omega)$ is finite for some~$s_o\in(0,1/2)$, and the
limit
\begin{equation}\label{E:LIM:LE}
\beta_E := \lim_{s\to0} 2s
\,\int_{E\setminus B_1}\frac{dy}{|y|^{n+2s}}\end{equation}
exists, then
\begin{equation}\label{C:I:PA}
\lim_{s \to 0} 2s\, \text{Per}_{s}(E,\Omega)=
\left(|\partial B_1|-\beta_E\right)\,|E\cap\Omega| + \beta_E \,|\Omega\setminus E|.\end{equation}
We remark that, using polar coordinates,
$$ 0\le \beta_E\le 
\lim_{s\to0} 2s
\,\int_{\R^n\setminus B_1}\frac{dy}{|y|^{n+2s}}
= 
\lim_{s\to0} 2s\,|\partial B_1|
\int_{1}^{+\infty} \rho^{-1-2s}\,d\rho =|\partial B_1|,$$
therefore~$\beta_E\in [0,|\partial B_1|]$ plays the role
of a convex interpolation parameter in
the right hand-side of~\eqref{C:I:PA} (up to normalization constants).

In this sense, formula~\eqref{C:I:PA} may be interpreted by saying
that, as~$s\to0$, the $s$-perimeter concentrates itself
on two terms that are ``localized'' in the domain~$\Omega$,
namely~$|E\cap\Omega|$ and~$|\Omega\setminus E|$. Nevertheless,
the proportion in which these two terms count is given by
a ``strongly nonlocal'' interpolation parameter, namely
the quantity~$\beta_E$ in~\eqref{E:LIM:LE}
which ``keeps track''
of the behavior of~$E$ at infinity.

As a matter of fact, to see how~$\beta_E$ is influenced
by the behavior of~$E$ at infinity, one can compute~$\beta_E$
for the particular case of a cone.
For instance, if~$\Sigma\subseteq \partial B_1$,
with~$\frac{|\Sigma|}{|\partial B_1|}=: b\in[0,1]$,
and $E$ is the cone over~$\Sigma$ (that is~$E:= \{ tp,\;\, p\in\Sigma,\;\,
t\ge0\}$), we have that
$$ \beta_E = \lim_{s\to0} 2s\,|\Sigma|\,\int_1^{+\infty}\rho^{-1-2s}\,d\rho
=|\Sigma|=b\,|\partial B_1|,$$
that is~$\beta_E$ gives in this case exactly the opening of the cone.

We also remark that, in general, the limit
in~\eqref{E:LIM:LE} may not exist, even for smooth sets:
indeed, it is possible that the set~$E$ ``oscillates'' wildly at infinity,
say from one cone to another one, leading to the non-existence of the limit in~\eqref{E:LIM:LE}.

Moreover, we point out that
the existence of the limit
in~\eqref{E:LIM:LE}
is equivalent to the existence of the limit in~\eqref{C:I:PA},
except in the very special case~$
|E\cap\Omega| = |\Omega\setminus E|$, in which the limit in~\eqref{C:I:PA}
always exists. That is, the following alternative holds true:
\begin{itemize}
\item if $|E\cap\Omega| \ne |\Omega\setminus E|$, then the
limit in~\eqref{E:LIM:LE} exists if and only if
the limit in~\eqref{C:I:PA} exists,
\item if $|E\cap\Omega| = |\Omega\setminus E|$, then the
limit in~\eqref{C:I:PA} always exists (even when the one
in~\eqref{E:LIM:LE} does not exist), and
$$ \lim_{s\to0}2s \text{Per}_{s}(E,\Omega)=
|\partial B_1|\,|E\cap\Omega| =|\partial B_1|\,|\Omega\setminus E|.$$
\end{itemize}

The boundaries of $s$-minimal sets are referred to as \emph{nonlocal minimal surfaces}.
 
In \cite{CRS10} it is proved that $s$-minimizers satisfy a suitable integral equation (see in particular Theorem 5.1
in \cite{CRS10}), that is the Euler-Lagrange equation corresponding to the $s$-perimeter functional $\text{Per}_s$.
If~$E$ is $s$-minimal in $\Omega$ and~$\partial E$ is smooth enough,
this Euler-Lagrange equation can be written as
	\begin{equation} \label{ELsmin} 
\int_{\Rn} \frac{\chi_E(x_0+y) -\chi_{\Rn\setminus E} (x_0+y) }{|y|^{n+2s}}\, dy  =0,\end{equation}
for any $x_0\in \Omega \cap \partial E$.
 
Therefore, in analogy with the case of
the classical minimal surfaces, which have zero mean curvature,
one defines the \emph{nonlocal mean curvature} of~$E$ at~$x_0\in\partial E$ as
	\eqlab{ \label{nmc} H_E^s(x_0) := \int_{\Rn} \frac{
\chi_E(y) -\chi_{E^{\C}}(y) }{|y-x_0|^{n+2s}}\, dy.}
In this way, equation~\eqref{ELsmin} can be written as~$H_E^s=0$ along~$\partial E$.

It is also suggestive to think that the function~$\tilde\chi_E:=
\chi_{E}-\chi_{E^{\C}}$ averages out to zero at the points on~$\partial E$,
if~$\partial E$ is smooth enough, since at these points
the local contribution of~$E$ compensates the one of~$E^{\C}$.
Using this notation, one may take the liberty of writing
\begin{eqnarray*}
H_E^s(x_0)&=&\frac12
\int_{\Rn} \frac{\tilde\chi_{E}(x_0+y)+\tilde\chi_{E}(x_0-y)}{|y|^{n+2s}}\, dy\\
&=&
\frac12
\int_{\Rn} \frac{\tilde\chi_{E}(x_0+y)+\tilde\chi_{E}(x_0-y)-2\tilde\chi_E(x_0)}{
|y|^{n+2s}}\, dy
\\&=& \frac{-(-\Delta)^s \tilde\chi_{E}(x_0)}{C(n,s)},\end{eqnarray*}
using the notation of~\eqref{frlap2def}.
Using this suggestive representation, the
Euler-Lagrange equation in~\eqref{ELsmin} becomes
$$ {\mbox{$(-\Delta)^s \tilde\chi_{E}=0$
along~$\partial E$.}}$$
We refer to \cite{AV14} for further details on this argument.

It is also worth recalling that
the nonlocal perimeter functionals find applications
in motions of fronts by nonlocal mean curvature (see
e.g.~\cite{CAFFA-SOUG, IMBERT, CHAMBOLLE}),
problems in which aggregating and disaggregating terms
compete towards an equilibrium (see e.g.~\cite{I5}
and~\cite{I4}) and nonlocal free boundary problems
(see e.g.~\cite{CAFFA-SAVIN-VALDINOCI} and~\cite{DIPIERRO-SAVIN-VALDINOCI}).
See also~\cite{VGMAZYA}
and~\cite{VISENTIN}
for results related to this type of problems.
\bigskip

In the classical case of the local perimeter
functional, it is known that minimal surfaces are smooth in dimension $n\leq 7$. Moreover, if $n\geq 8$  minimal surfaces are smooth except on a small singular set of Hausdorff dimension $n-8$.
Furthermore, minimal surfaces that are graphs are called minimal graphs, and they reduce to hyperplanes
if~$n\le8$ (this is called the Bernstein property,
which was also discussed at the beginning of the Chapter \ref{S:NP}).
If $n\geq9$, there exist global minimal graphs that are not affine
(see e.g.~\cite{GIUSTI}).

Differently from the classical case,
the regularity theory for $s$-minimizers is still quite open.
We present here some of the partial results obtained in this direction:

\begin{thm}\label{THM 5.8} In the plane, $s$-minimal sets are smooth. More precisely:\\ 
a) If $E$ is an $s$-minimal set in $\Omega \subset \R^2$, then $\partial E \cap \Omega$ is a $C^{\infty}$-curve.\\
b) Let $E$ be $s$-minimal in $\Omega\subset \Rn$ and let $\Sigma_E \subset \partial E\cap\Omega$ denote its
singular set. Then $\mathcal{H}^d (\Sigma_E)=0$ for any $d>n-3$.
\end{thm}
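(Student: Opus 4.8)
The statement I need to prove is Theorem~\ref{THM 5.8}, which asserts smoothness of $s$-minimal sets in the plane (part a) and a dimension bound on the singular set in general dimension (part b). Let me sketch the structure of a proof.

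\textbf{General strategy.} The natural approach mirrors the classical De Giorgi--Federer regularity program for minimal surfaces, adapted to the nonlocal setting as carried out in~\cite{CRS10,CV11,CV13,SV13}. The plan is to establish, in order: (i) a monotonicity formula for a suitably scaled version of the $s$-perimeter, which guarantees that blow-up limits exist and are $s$-minimal cones; (ii) a dimension-reduction argument à la Federer, reducing the estimate of the singular set in $\R^n$ to the classification of singular $s$-minimal cones in lower dimensions; (iii) the key base case, namely that there are no nontrivial $s$-minimal cones in $\R^2$ (which, as the chapter announces, is one of its main results); and (iv) an $\varepsilon$-regularity / improvement-of-flatness theorem stating that an $s$-minimal set that is sufficiently flat in a ball is $C^{1,\alpha}$ there, after which a bootstrap using the Euler--Lagrange equation~\eqref{ELsmin} upgrades $C^{1,\alpha}$ to $C^\infty$.

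\textbf{Carrying out part a).} Given $E$ that is $s$-minimal in $\Omega\subset\R^2$ and a point $x_0\in\partial E\cap\Omega$, I would consider the blow-up sequence $E_\lambda := (E-x_0)/\lambda$. Uniform density estimates for $s$-minimal sets (clean-ball / $\rho^n$-type estimates, from~\cite{CRS10}) plus the monotonicity formula give, along a subsequence, convergence of $E_\lambda$ in $L^1_{\rm loc}$ to an $s$-minimal cone $C$ in $\R^2$. By the non-existence of nontrivial $s$-minimal cones in the plane, $C$ must be a half-plane. Hence $\partial E$ is flat at all sufficiently small scales near $x_0$; feeding this into the improvement-of-flatness theorem yields that $\partial E\cap\Omega$ is $C^{1,\alpha}$ near $x_0$. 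Finally, with $\partial E$ locally a $C^{1,\alpha}$ graph, I would rewrite~\eqref{ELsmin} as a fractional elliptic equation for the graph function and bootstrap Schauder-type estimates for the fractional mean curvature operator to conclude $\partial E\cap\Omega\in C^\infty$.

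\textbf{Carrying out part b).} Here I would run the Federer dimension-reduction machinery on the singular set $\Sigma_E$. Suppose $\mathcal H^d(\Sigma_E)>0$ for some $d>n-3$; by a standard covering and density argument, at $\mathcal H^d$-almost every point of $\Sigma_E$ one can blow up and obtain an $s$-minimal cone whose singular set still has positive $\mathcal H^d$ measure, and iterating one reaches a singular $s$-minimal cone in $\R^k$ with $k\le n$ and $k - (\text{reduction steps}) $ forcing, via part a), a contradiction once the ambient dimension drops to $2$: since there are no singular $s$-minimal cones in $\R^2$, the singular set of any $s$-minimal cone in $\R^3$ can only be the vertex, so $\Sigma_E$ cannot support positive $\mathcal H^d$ mass for $d>n-3$. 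Writing $d = n - 3 + \epsilon$ and tracking the dimension drop through $k$ reduction steps gives the clean bound $\mathcal H^d(\Sigma_E)=0$ for all $d>n-3$.

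\textbf{Main obstacle.} The hard part is not the dimension-reduction bookkeeping but the two analytic inputs it rests on: the monotonicity formula for the $s$-perimeter (which requires the right extension or the right cut-off computation so that the scaled energy is genuinely monotone) and, above all, the improvement-of-flatness theorem for $s$-minimal surfaces. The latter is the nonlocal analogue of De Giorgi's flatness-implies-regularity result and is genuinely delicate: one must compensate for the fact that the nonlocal mean curvature at a point sees the whole set, so the usual comparison-function constructions have to be done with tails controlled. I would invoke these two results as established in~\cite{CRS10,CV11,CV13}, together with the planar cone classification proved later in this very chapter, rather than reprove them; assembling them into parts a) and b) is then the routine—though technically careful—part of the argument.
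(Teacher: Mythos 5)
The paper does not actually give a proof of Theorem~\ref{THM 5.8}: immediately after the statement it cites~\cite{SV13} for the $C^{1,\alpha}$ regularity and~\cite{BFV12} for the upgrade from $C^{1,\alpha}$ to $C^\infty$, while the planar cone classification you flag as your key input (iii) is Theorem~\ref{THIS}, proved later in the same chapter. Your sketch faithfully reconstructs the De Giorgi--Federer machinery that those references actually carry out — density estimates and monotonicity yielding blow-up to $s$-minimal cones, non-existence of singular cones in $\R^2$, improvement of flatness to get $C^{1,\alpha}$, Federer dimension reduction giving $\mathcal H^d(\Sigma_E)=0$ for $d>n-3$, and the bootstrap to $C^\infty$ via the graph form of~\eqref{ELsmin} — and you are right to treat the monotonicity formula and the $\varepsilon$-regularity theorem as the genuinely hard analytic inputs to be quoted from~\cite{CRS10,CV11,CV13,SV13} rather than reproved. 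So your proposal matches the route the paper points to; the one thing worth noting is that the present text relies on Theorem~\ref{THIS} (proved here by a domain-deformation argument rather than the extension method of~\cite{SV13}) as the base case, so a self-contained account in the spirit of this chapter would feed that particular result into the reduction scheme.
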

See \cite{SV13} for the proof of this results
(as a matter of fact, in~\cite{SV13} only $C^{1,\alpha}$ regularity is proved,
but then \cite{BFV12} proved that $s$-minimal sets with~$C^{1,\alpha}$-boundary are automatically~$C^\infty$).
Further regularity results of the $s$-minimal surfaces can be found in 
\cite{CV13}. There, a regularity theory
when $s$ is near $\displaystyle \frac{1}{2}$ is stated, as we see in the following Theorem:

\begin{thm}\label{THM 5.9} There exists $\epsilon_0 \in \Big(0,\displaystyle \frac{1}{2}\Big)$ such that if $s \geq \displaystyle\frac{1}{2} -\epsilon_0$,  then\\
a) if $n\leq 7$, any $s$-minimal set is of class $C^{\infty}$, \\
b) if $n=8$ any $s$-minimal surface is of class $C^\infty$ except, at most, at countably many isolated points, \\
c) any $s$-minimal surface is of class $C^\infty$ outside a closed set $\Sigma$ of Hausdorff dimension $n-8$.
\end{thm}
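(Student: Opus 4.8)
The plan is to transplant the classical regularity machinery for area‑minimizing sets into the nonlocal setting, and to transfer the classical non‑existence of low‑dimensional singular minimal cones to the fractional perimeter via a compactness argument that exploits the convergence $\big(\tfrac12-s\big)\text{Per}_s\to\text{Per}$ as $s\to\tfrac12$ (Theorem \ref{TP12}).

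\textbf{Step 1. The regularity toolbox for $s$-minimal surfaces.} First I would recall, from \cite{CRS10}, the structural facts valid for every fixed $s\in(0,1/2)$, with constants that stay controlled when $s$ is bounded away from $0$: the uniform density estimates $|E\cap B_r(x_0)|\ge c\,r^n$ and $|B_r(x_0)\setminus E|\ge c\,r^n$ for $x_0\in\partial E$; a clean‑ball/porosity condition; a monotonicity formula for the $s$-adapted energy, which forces every blow‑up of an $s$-minimal set at a boundary point to be an $s$-minimal cone; and the improvement‑of‑flatness theorem: there is $\eta_0>0$ such that if $\partial E\cap B_1\subset\{|x\cdot e|<\eta_0\}$ then $\partial E\cap B_{1/2}$ is a $C^{1,\alpha}$ graph, which by \cite{BFV12} is automatically $C^\infty$.

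\textbf{Step 2. No singular $s$-minimal cones near $s=\tfrac12$ in low dimension.} The claim is: there exists $\epsilon_0\in(0,1/2)$ so that for $s\ge\tfrac12-\epsilon_0$ every $s$-minimal cone in $\R^m$ with $m\le7$ is a half‑space. I would argue by contradiction: take $s_k\uparrow\tfrac12$ and $s_k$-minimal cones $C_k\subset\R^m$ singular at the origin. The uniform density bounds make $\partial C_k$ non‑degenerate and $\text{Per}_{s_k}(C_k,B_1)$ uniformly bounded, so up to a subsequence $C_k\to C_\infty$ in $L^1_{\mathrm{loc}}$; lower semicontinuity together with the $\Gamma$-convergence of Theorem \ref{TP12} shows $C_\infty$ is a classical area‑minimizing cone, and the density estimates upgrade the convergence to local Hausdorff convergence of boundaries, so $C_\infty$ is again a cone singular at $0$. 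Since $m\le7$, the classical Simons theorem forces $C_\infty$ to be a half‑space; but then, for $k$ large, $\partial C_k\cap B_1$ sits in an arbitrarily thin slab, so the improvement of flatness of Step 1 makes $\partial C_k$ smooth near $0$, contradicting the singularity.

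\textbf{Step 3. Dimension reduction and the three cases.} Fix $s\ge\tfrac12-\epsilon_0$ and let $E$ be $s$-minimal in $\Omega\subset\R^n$, with singular set $\Sigma_E$. Blow‑ups at points of $\Sigma_E$ are $s$-minimal cones; if such a cone is singular away from its vertex, blowing up there splits off a line and yields a singular $s$-minimal cone in one dimension less. Iterating and invoking Step 2 (no singular cones up to dimension $7$), Federer's dimension‑reduction argument gives $\mathcal{H}^d(\Sigma_E)=0$ for every $d>n-8$: this is part (c), and part (a) follows since for $n\le7$ it forces $\Sigma_E=\emptyset$. For part (b), $n=8$: now $\Sigma_E$ has Hausdorff dimension $0$; at each of its points the blow‑up is an $s$-minimal cone in $\R^8$ whose only possible singularity is the vertex (since off the vertex a further blow‑up lands in $\R^7$ and is a half‑space by Step 2), so in a small ball that point is the only singular one; hence $\Sigma_E$ is discrete, thus locally finite, i.e. at most countably many isolated points.

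\textbf{Main obstacle.} The delicate part is Step 2, on two counts: one must know that the improvement‑of‑flatness threshold $\eta_0$ and the accompanying ellipticity/Harnack constants of \cite{CRS10} do \emph{not} degenerate as $s\to\tfrac12$, so that a single $\epsilon_0$ serves all $m\le7$; and one must verify that the $L^1_{\mathrm{loc}}$ limit of $s_k$-minimizers is a genuine classical minimizer and that the convergence of the boundaries is strong enough (Hausdorff, then $C^\infty$ on the regular part) both to invoke Simons and to transfer slab‑thinness back to the $C_k$'s. These are precisely the points requiring the uniform‑in‑$s$ estimates developed in \cite{CV13}.
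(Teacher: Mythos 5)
The paper does not prove Theorem~\ref{THM 5.9}: it only states it and refers the reader to~\cite{CV13}, so there is no in-text argument to compare against. That said, your sketch is a faithful reconstruction of the strategy actually used in~\cite{CV13}: uniform-in-$s$ density estimates and improvement of flatness (from~\cite{CRS10} together with the higher regularity of~\cite{BFV12}), a compactness argument sending $s_k\uparrow\tfrac12$ in which a sequence of singular $s_k$-minimal cones would converge to a classical singular minimal cone and thus violate Simons' theorem in dimension $\le 7$, and finally Federer dimension reduction to obtain parts (a)--(c). You have also put your finger on the genuine technical crux, namely that the flatness threshold and the associated elliptic estimates must not degenerate as $s\to\tfrac12$, which is precisely the content of~\cite{CV13}.

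Two points are worth tightening. First, Theorem~\ref{TP12} as stated in the paper is a pointwise convergence statement for a fixed set, not a $\Gamma$-convergence theorem; the $\Gamma$-limit of $(\tfrac12-s)\text{Per}_s$ to the classical perimeter is in~\cite{ADM} (also \cite{PONCE}), and it is the $\Gamma$-convergence together with uniform energy and density bounds that lets you conclude that the $L^1_{\rm loc}$ limit $C_\infty$ is a classical area-minimizing cone. Second, in part (b) the inference ``Hausdorff dimension zero, hence discrete'' is not automatic: to show that each singular point is isolated one must use the upper-semicontinuity of the monotonicity-formula density under blow-up (i.e., if singular points $p_k\to p$ then a suitable blow-up of $E$ at $p$ at scale $|p_k-p|$ produces an $s$-minimal cone singular away from the vertex, which Step~2 forbids). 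With these small additions your argument is correct and aligned with the cited reference.
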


\section{Graphs and $s$-minimal surfaces}

We will focus the upcoming material on two interesting results related to graphs: a Bernstein type result, namely the property that an $s$-minimal graph  in $\R^{n+1}$ is flat (if no singular cones exist in dimension $n$); we will then prove that an $s$-minimal surface whose prescribed data is a subgraph, is itself a subgraph.

The first result is the following theorem:
\begin{thm}\label{figv}
Let $E= \{ (x,t) \in \Rn \times \R \text{ s.t. } t<u(x)\}$ be an $s$-minimal graph, and assume there are no singular cones in dimension $n$ (that is, if $\mathcal{K} \subset \Rn$ is an $s$-minimal cone, then $\mathcal{K}$ is a half-space). Then $u$ is an affine function (thus $E$ is a half-space).
\end{thm}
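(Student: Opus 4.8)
The plan is to run the classical De Giorgi reduction of the Bernstein problem in the nonlocal setting: pass from the entire graph to its blow-down, observe that the blow-down is a cone which is still a subgraph, invoke the hypothesis (no singular $s$-minimal cones in dimension $n$) through a Federer-type dimension reduction to force this cone to be a half-space, and finally upgrade the flatness of the tangent cone at infinity to the flatness of $E$ itself by means of the regularity theory for $s$-minimal surfaces.

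First I would set up the blow-down. For $R>1$ put $E_R:=\tfrac1R E$; each $E_R$ is $s$-minimal in every ball (by the scaling invariance of $\mathrm{Per}_s$ up to a positive multiplicative constant) and it is the subgraph of $u_R(x):=\tfrac1R u(Rx)$. By the uniform density estimates, the compactness of $s$-minimal sets and the monotonicity formula for the $s$-perimeter (all from~\cite{CRS10}), there is a sequence $R_k\to+\infty$ such that $E_{R_k}\to\mathcal C$ in $L^1_{\mathrm{loc}}$, with $\mathcal C$ an $s$-minimal set in $\R^{n+1}$ which, by monotonicity, is a cone. The key structural remark is that a subgraph is monotone in the vertical direction, $E-\lambda e_{n+1}\subseteq E$ for every $\lambda\ge0$, and this property survives rescaling and the $L^1$ limit; hence $\mathcal C=\{(x,t):\ t<w(x)\}$ for some positively $1$-homogeneous $w:\R^n\to[-\infty,+\infty]$.

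The core of the argument is to show that such a cone $\mathcal C$ must be a half-space with non-vertical boundary, and this is where the hypothesis enters. Suppose $\partial\mathcal C$ had a singular point $p$ on $\partial B_1$. Since $\mathcal C$ is a subgraph, $\partial\mathcal C$ contains no vertical segment, so $p\notin\R e_{n+1}$; blowing $\mathcal C$ up at $p$ produces an $s$-minimal cone which is translation-invariant in the direction $p$, hence of the form $\mathcal C'\times\R p$ with $\mathcal C'$ an $s$-minimal cone in the $n$-dimensional space $p^{\perp}$, and $\mathcal C'$ is not a half-space because $p$ was singular. This contradicts the assumption that there are no singular $s$-minimal cones in dimension $n$. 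Therefore $\partial\mathcal C$ is smooth away from the vertex; being then a smooth $1$-homogeneous graph, the profile $w$ solves the linearized $s$-minimal surface equation with bounded, $0$-homogeneous derivatives, which forces $w$ to be affine, i.e. $\mathcal C=\{t<\ell(x)\}$ for some linear $\ell$. (If at some intermediate stage $\mathcal C$ is seen to contain an entire vertical line, one instead uses the cylinder splitting $\mathcal C=\mathcal C_0\times\R e_{n+1}$ with $\mathcal C_0$ $s$-minimal in $\R^n$, hence a half-space, and then excludes the resulting vertical half-space using that $E$ is a graph over all of $\R^n$.)

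Finally, after subtracting $\ell$ from $u$ we may assume the tangent cone of $E$ at infinity is the horizontal hyperplane $\{t=0\}$, so $\partial E$ is $\delta$-flat in $B_R$ for every $\delta>0$ provided $R$ is large. The improvement-of-flatness theorem for $s$-minimal surfaces (in the spirit of Theorems~\ref{THM 5.8} and~\ref{THM 5.9}) then applies at all large scales and yields that $\partial E$ is a $C^{1,\alpha}$ graph whose slope decays at infinity; iterating the oscillation decay starting from infinity shows that this slope vanishes identically, so $u$ is affine and $E$ is a half-space. I expect the cone step to be the main obstacle: one has to make rigorous, in the nonlocal framework where~\eqref{ELsmin} is a genuinely nonlocal constraint and the regularity theory is far less complete than the classical one, both the interplay between the subgraph structure and the cone-splitting and the exclusion of the vertical half-space among the admissible blow-downs of a globally defined $s$-minimal graph.
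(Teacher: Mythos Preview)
Your outline (blow down to a cone, use the hypothesis via dimension reduction to force regularity away from the vertex, conclude the cone is a half-space, then upgrade to $E$) matches the paper's strategy, and the first and last steps are indeed handled there exactly as you say. But you skip precisely the place where the paper does the real work. After knowing that $E_\infty$ is singular only at the origin, you write that ``the profile $w$ solves the linearized $s$-minimal surface equation with bounded, $0$-homogeneous derivatives, which forces $w$ to be affine''. This is the gap: smoothness of $\partial\mathcal C$ away from $0$ does \emph{not} by itself give that the graph function $u_\infty$ is smooth (or even Lipschitz) on $\R^n\setminus\{0\}$, because the tangent plane to $\partial\mathcal C$ could be vertical at some regular point; the Federer blow-up at such a point is a half-space and yields no contradiction with the hypothesis. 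So the assertion ``$\nabla u_\infty$ is $0$-homogeneous hence bounded'' is not yet available, and no Liouville-type argument on a linearized equation can start.

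The paper closes this gap by a geometric sliding argument with a nonlocal mean-curvature comparison, not by further dimension reduction. One fixes $\sigma\in\partial B_1$, a radial cutoff $w$, and a small $\theta>0$, and considers the family
\[
F_t:=\bigl\{x_{n+1}\le u_\infty\bigl(x'+t\theta\,w(x')\sigma\bigr)-t\bigr\},
\]
which lies below $E_\infty$ when $t=1$. One decreases $t$ to the first touching time $t_0$. Since $F_{t_0}$ is a $C^2$ diffeomorphic image of $E_\infty$, its nonlocal mean curvature at the contact point is $O(\theta t_0)$; on the other hand the strict separation between $F_{t_0}$ and $E_\infty$ in a fixed annulus forces a lower bound of order $c\,t_0$ on that same curvature. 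For $\theta$ small this forces $t_0=0$, and letting $t\to0$ gives $|\nabla u_\infty|\le 1/\theta$ on an annulus, hence globally by homogeneity. Only now can one invoke Lemma~\ref{reg} to get $u_\infty\in C^\infty$ across the origin, and a cone smooth at its vertex is a half-space.

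Finally, the last step in the paper is simpler than your improvement-of-flatness iteration from infinity: once the blow-down is known to be a half-space, Lemma~\ref{halfspace} (quoted from~\cite{FV13}) directly gives that $E$ itself is a half-space.
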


To be able to prove Theorem \ref{figv}, we recall some useful auxiliary results. 
In the following lemma we state a dimensional reduction result (see Theorem 10.1 in \cite{CRS10}).
\begin{lemma} \label{dimrid}
Let $E=F\times \R$. Then if $E$ is $s$-minimal if and only if $F$ is $s$-minimal.
\end{lemma}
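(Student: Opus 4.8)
The plan is to reduce both implications to one elementary computation: integrating the extra variable out of the kernel $|X-Y|^{-(n+1)-2s}$. Writing $X=(x,\tau)$, $Y=(y,\sigma)\in\R^{n}\times\R$ and $h=\tau-\sigma$, the scaling identity
\[
\int_{\R}\frac{dh}{\big(|x-y|^{2}+h^{2}\big)^{\frac{n+1+2s}{2}}}
=\frac{c_{n,s}}{|x-y|^{n+2s}},\qquad
c_{n,s}:=\int_{\R}\frac{dh}{(1+h^{2})^{\frac{n+1+2s}{2}}}>0,
\]
converts the $(n+1)$-dimensional $s$-kernel into a multiple of the $n$-dimensional one. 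As a direct consequence (Fubini), for any measurable $G\subseteq\R^{n}$ and $\rho,M>0$,
\[
\text{Per}_s\big(G\times\R,\,B_\rho\times(-M,M)\big)
= 2M\,c_{n,s}\,\text{Per}_s(G,B_\rho)+ R_M(G),
\]
with $B_\rho\subset\R^{n}$, where the remainder $R_M(G)$ — coming from the finite integration window and from the two ``caps'' $\{|\tau|\approx M\}$ — stays bounded as $M\to\infty$ whenever $\text{Per}_s(G,B_\rho)<\infty$; the standing assumption $s<\tfrac12$ enters exactly here, making the cap integrands (which decay like $|x-y|^{-(n-1+2s)}$) locally integrable. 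Throughout one uses the standard fact that if two sets agree outside a bounded set $D$, then the difference of their $s$-perimeters in any $\Omega\supseteq D$ is finite and independent of $\Omega$; it is these ``regularized differences'' that are compared.

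For the implication ``$F$ $s$-minimal $\Rightarrow F\times\R$ $s$-minimal'' I would argue by contradiction. Suppose $G$ beats $F$ in some ball $B_\rho\subset\R^{n}$, say $\text{Per}_s(G,B_\rho)=\text{Per}_s(F,B_\rho)-\delta_0$ with $\delta_0>0$, and define the truncated competitor $G_M\subseteq\R^{n+1}$ slice-wise by $(G_M)_\tau:=G$ for $|\tau|<M$ and $(G_M)_\tau:=F$ for $|\tau|\ge M$; then $G_M\triangle(F\times\R)=(G\triangle F)\times(-M,M)$ is bounded, so $G_M$ is admissible for $E:=F\times\R$ in any ball containing it. Applying the factorisation above to the ``$|\tau|,|\sigma|<M$'' part of the Gagliardo energy, and checking that all the ``one height outside $(-M,M)$'' contributions are $O(M^{1-2s})=o(M)$ (again using $s<\tfrac12$), one gets
\[
\text{Per}_s(G_M,\Omega)-\text{Per}_s(E,\Omega)=-2M\,c_{n,s}\,\delta_0+o(M),
\]
which is negative for $M$ large — contradicting the $s$-minimality of $E$ (note that passing from ``minimal in every ball'' to this single comparison is immediate, since $E\cap B_r$ is bounded for every $r$).

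For the converse, ``$F\times\R$ $s$-minimal $\Rightarrow F$ $s$-minimal'', let $\tilde E$ be any competitor for $E=F\times\R$ in a ball $B\subset\R^{n+1}$. Then $\tilde E\triangle E$ is bounded and each horizontal slice $\tilde E_\tau$ coincides with $F$ outside a fixed ball $B_\rho\subset\R^{n}$, hence $\tilde E_\tau$ is an admissible competitor for $F$ and, by minimality of $F$, $\text{Per}_s(\tilde E_\tau,B_\rho)\ge\text{Per}_s(F,B_\rho)$ for a.e.\ $\tau$. What remains — and this is the main obstacle — is the Fubini-type inequality
\[
\text{Per}_s(\tilde E,B)-\text{Per}_s(E,B)\ \ge\ c\int_{\R}\Big(\text{Per}_s(\tilde E_\tau,B_\rho)-\text{Per}_s(F,B_\rho)\Big)\,d\tau
\]
for a suitable $c>0$, which combined with the previous inequality yields $\text{Per}_s(\tilde E,B)\ge\text{Per}_s(E,B)$, i.e.\ the $s$-minimality of $E$. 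To prove it one compares $|\chi_{\tilde E}(x,\tau)-\chi_{\tilde E}(y,\sigma)|^2$ with the slice quantity $|\chi_{\tilde E_\tau}(x)-\chi_{\tilde E_\tau}(y)|^2$, absorbs the ``vertical oscillation'' error $|\chi_{\tilde E}(y,\tau)-\chi_{\tilde E}(y,\sigma)|^2$ into a nonnegative term, and integrates $h=\tau-\sigma$ out via the kernel-contraction identity so that precisely the $n$-dimensional kernel $|x-y|^{-n-2s}$ reappears on the right; the careful handling of the cap terms and of the regularized-difference bookkeeping (all of it valid because $s<\tfrac12$) is exactly the content of Theorem~10.1 in \cite{CRS10}, to which we refer for the details.
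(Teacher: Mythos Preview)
The paper does not prove this lemma at all: it is stated with the parenthetical ``(see Theorem 10.1 in \cite{CRS10})'' and no argument is given. So there is nothing to compare your approach against from the paper itself; your sketch is in the spirit of the cited reference.

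That said, your write-up has the two implications swapped. In your first block you announce ``$F$ $s$-minimal $\Rightarrow F\times\R$ $s$-minimal'' and then immediately ``suppose $G$ beats $F$'': but if $F$ is $s$-minimal no such $G$ exists, so this is not a proof by contradiction of the stated implication. What you actually prove there is ``$F$ not minimal $\Rightarrow E$ not minimal'', i.e.\ the contrapositive of ``$E$ $s$-minimal $\Rightarrow F$ $s$-minimal''. Symmetrically, in your second block you announce ``$E$ $s$-minimal $\Rightarrow F$ $s$-minimal'', then take an arbitrary competitor $\tilde E$ for $E$ and invoke ``by minimality of $F$'' to conclude $\text{Per}_s(\tilde E,B)\ge\text{Per}_s(E,B)$: you are using the minimality of $F$ as a hypothesis and concluding the minimality of $E$, which is precisely the implication ``$F$ $s$-minimal $\Rightarrow E$ $s$-minimal''. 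Once the labels are exchanged, the two arguments are the right ones, and your identification of the slice-wise Fubini-type inequality as the nontrivial step (deferred to \cite{CRS10}) is accurate.
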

We define then the blow-up and blow-down of the set $E$ are, respectively
	\[E_0:=\lim_{r\to 0}E_r \quad \mbox{and} \quad E_\infty: =\lim_{r\to +\infty} E_r, \quad \mbox{where} \quad E_r=\frac{E}{r}.\]
A first property of the blow-up of $E$ is the following (see Lemma 3.1 in \cite{FV13}).
\begin{lemma}
If $E_\infty$ is affine, then so is $E$.
\label{halfspace}
\end{lemma}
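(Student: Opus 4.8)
The plan is to invoke the monotonicity formula for $s$-minimal sets established (via the Caffarelli--Silvestre extension) in~\cite{CRS10}, which is anyway the only viable tool here. To each point $p\in\R^{n+1}$ one attaches a quantity $\Phi_{E,p}(r)$, nondecreasing in $r>0$, scale invariant in the sense that $\Phi_{E,p}(r)=\Phi_{(E-p)/r,\,0}(1)$, continuous in $E$ under $L^1_{\rm loc}$ convergence, and \emph{constant} on an interval of radii if and only if $E-p$ is a cone (with vertex $0$) at those scales. I will also use the two quantitative facts from the same circle of ideas (the ones behind $\epsilon$-regularity in~\cite{CRS10}): the density lower bound, namely $\Phi_{E,p}(0^+)\ge\Lambda_0$ whenever $p\in\partial E$, where $\Lambda_0$ is the common value of $\Phi$ on half-spaces through the origin; and the rigidity statement, namely that a minimizing cone with $\Phi\equiv\Lambda_0$ is a half-space.

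The argument itself is short. Fix any $p\in\partial E$ (nonempty, since $E=\{t<u(x)\}$ is a genuine subgraph). Sending $r\to+\infty$ in the scaling identity and using that $(E-p)/r\to E_\infty$ (the translation by $p$ being irrelevant in the blow-down), continuity of $\Phi$ gives $\lim_{r\to+\infty}\Phi_{E,p}(r)=\Phi_{E_\infty,0}(1)$. By hypothesis $E_\infty$ is affine, and being a nontrivial blow-down cone it is a half-space through the origin, so $\Phi_{E_\infty,0}(1)=\Lambda_0$; hence $\Phi_{E,p}(r)\le\Lambda_0$ for all $r>0$ by monotonicity. Sending $r\to0^+$ gives $\Phi_{E,p}(0^+)=\Phi_{E_{0,p},0}(1)$ with $E_{0,p}$ a blow-up of $E$ at the boundary point $p$ (a nontrivial minimizing cone), whence $\Phi_{E,p}(0^+)\ge\Lambda_0$. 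Squeezing, $\Lambda_0\le\Phi_{E,p}(0^+)\le\Phi_{E,p}(r)\le\Phi_{E,p}(+\infty)=\Lambda_0$, so $\Phi_{E,p}\equiv\Lambda_0$. Constancy forces $E-p$ to be a cone, and a minimizing cone with $\Phi\equiv\Lambda_0$ is a half-space, so $E$ is a half-space, in particular affine.

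The main obstacle is purely one of importing machinery rather than inventing it: one must set up the extension, write the monotonicity quantity $\Phi_{E,p}$ precisely, verify its scale invariance and its continuity under set convergence (so that the limits at $r\to0^+$ and $r\to+\infty$ genuinely compute the densities of the blow-up and blow-down), and quote the density lower bound and the $\Lambda_0$-rigidity from~\cite{CRS10}. If one wishes to avoid the rigidity statement, the endgame can instead be closed geometrically: the squeezing argument applies at \emph{every} $p\in\partial E$, so $E$ is a cone with respect to every boundary point; since $\partial E$ is a graph over $\R^n$, this produces $n$ linearly independent translation-invariance directions for $E$, and iterating Lemma~\ref{dimrid} reduces $E$ to a one-dimensional $s$-minimal cone, i.e. a half-line, so that $E$ is a half-space.
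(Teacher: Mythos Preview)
The paper does not give its own proof of this lemma; it simply cites Lemma~3.1 in~\cite{FV13}. Your argument via the monotonicity formula from~\cite{CRS10} is correct and is exactly the method used in~\cite{FV13}: one squeezes $\Phi_{E,p}(r)$ between its limits as $r\to0^+$ and $r\to+\infty$, both equal to the half-space value $\Lambda_0$, deduces constancy, and concludes that $E$ is a half-space.
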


We recall also a regularity result for the $s$-minimal surfaces (see \cite{FV13} and \cite{BFV12} for details and proof).
\begin{lemma}\label{reg}  Let~$E$ be $s$-minimal. Then:\\
a) If $E$ is Lipschitz, then $E$ is~$C^{1,\alpha}$.\\
b) If $E$ is~$C^{1,\alpha}$, then $E$ is~$C^{\infty}$.
\end{lemma}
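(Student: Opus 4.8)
The plan is to assemble the two regularity steps from the literature (\cite{CRS10} for part (a), \cite{BFV12} for part (b)) and indicate how each piece fits together. For (a) the central tool is the Caffarelli--Roquejoffre--Savin \emph{improvement of flatness}: once $\partial E$ is $\epsilon_0$-flat at some scale around a boundary point (with $\epsilon_0$ small, depending only on $n$ and $s$), it is a $C^{1,\alpha}$ graph at the next scale, and iterating gives $C^{1,\alpha}$ regularity. So the real work in (a) is to show that the hypothesis "$E$ is Lipschitz'' (i.e.\ $\partial E$ is locally, up to rotation, the graph $\{x_{n+1}=u(x)\}$ of a Lipschitz function $u$) forces such small flatness at small scales. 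For (b) one recasts the Euler--Lagrange equation and bootstraps by nonlocal Schauder estimates.

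\textbf{Part (a).} First I would invoke the monotonicity formula for $s$-minimal sets (most conveniently stated on the Caffarelli--Silvestre extension), which forces every blow-up $E_0:=\lim_{r\to0}(E-p)/r$ at a point $p\in\partial E$ to be an $s$-minimal cone. Since $\partial E$ is locally a Lipschitz graph, $E_0$ is again the subgraph of a Lipschitz function, now $1$-homogeneous, with Lipschitz constant no larger than that of $u$. The delicate sub-step is then to prove that a Lipschitz, $1$-homogeneous, $s$-minimal cone is a half-space: one uses the Euler--Lagrange equation $H^s_{E_0}=0$ on $\partial E_0$ (see \eqref{ELsmin}--\eqref{nmc}) together with a sliding argument, moving affine half-spaces $\{x_{n+1}<\ell(x)+t\}$ against $\partial E_0$ and applying the comparison principle for the nonlocal mean curvature, so that homogeneity forces $\ell$ to be linear. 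Once every blow-up of $\partial E$ is flat, a compactness/covering argument shows $\partial E$ is $\epsilon_0$-flat in a small ball around each of its points, and the CRS improvement-of-flatness theorem (which itself rests on a Harnack inequality for nonlocal minimal surfaces and a linearization to the fractional Laplace equation) yields that $\partial E$ is a $C^{1,\alpha}$ graph.

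\textbf{Part (b).} Assuming now $\partial E$ is a $C^{1,\alpha}$ graph of a function $u$, I would rewrite the Euler--Lagrange condition $H^s_E=0$ of \eqref{ELsmin} as a nonlocal elliptic equation for $u$. Linearizing, the leading part is an integro-differential operator of order $1+2s$ whose kernel, thanks to the $C^{1,\alpha}$ regularity of the graph, is itself Hölder continuous; incremental-quotient estimates for such operators then improve $u$ from $C^{1,\alpha}$ to $C^{2,\alpha}$. Differentiating the equation and reapplying the nonlocal Schauder estimates at each stage gives $u\in C^{k,\alpha}$ for every $k$, hence $u\in C^\infty$. This iteration is precisely the content of \cite{BFV12}.

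\textbf{Main obstacle.} For (a) the genuine difficulty is the step "Lipschitz $\Rightarrow$ flat at small scales'': Lipschitz graphs are \emph{not} automatically nearly flat, so one truly needs the cone structure from the monotonicity formula plus the classification of Lipschitz $s$-minimal cones as half-spaces, and then the full CRS machinery. For (b) the obstacle is the correct formulation and proof of nonlocal Schauder estimates for operators with variable, only Hölder, kernels — which is why this part is deferred to \cite{BFV12}.
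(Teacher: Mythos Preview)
The paper does not actually prove this lemma: it is stated as a recalled result, with the sentence ``see \cite{FV13} and \cite{BFV12} for details and proof'' standing in for the argument. So there is nothing to compare line by line --- your proposal is already far more detailed than what the paper offers.

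That said, your outline is a faithful summary of how those references proceed. One small correction on attribution: the specific implication ``Lipschitz $\Rightarrow$ $C^{1,\alpha}$'' for $s$-minimal sets is the content of \cite{FV13} (Figalli--Valdinoci), not \cite{CRS10} directly. The CRS paper provides the improvement-of-flatness machinery you invoke, but the step you correctly flag as the main obstacle --- classifying Lipschitz $s$-minimal cones as half-spaces and thereby getting $\epsilon_0$-flatness at small scales from the Lipschitz hypothesis --- is precisely what \cite{FV13} supplies. Your description of part (b) via the nonlocal Schauder bootstrap of \cite{BFV12} is accurate.
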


We give here a sketch of the proof of Theorem \ref{figv} (see \cite{FV13} for all the details).
\begin{proof}[Sketch of the proof of Theorem \ref{figv}]
If $E\subset \R^{n+1}$ is an $s$-minimal graph, then the blow-down $E_\infty$ is an $s$-minimal cone (see Theorem 9.2 in \cite{CRS10} for the proof of this statement).  By applying the dimensional reduction argument in Lemma \ref{dimrid} we obtain an $s$-minimal cone in dimension $n$. According to the assumption  that no singular $s$-minimal cones exist in dimension $n$, it follows that necessarily $E_\infty$ can be singular only at the origin.

We consider a bump function $w_0 \in C^{\infty}( \mathbb{R}, [0,1])$ such that 
	\begin{equation*}
		\begin{split}
		&w_0(t)=0 \text{ in } \bigg(-\infty, \frac{1}{4}\bigg)\cup \bigg(\frac{3}{4}, +\infty\bigg) \\
		& w_0(t) =1 \text{ in } \bigg(\frac {2}{5},\frac{3}{5}\bigg)\\
		&w(t)=w_0(|t|).
		\end{split}
	\end{equation*}
The blow-down of $E$ is	
	\[E_\infty =\big\{(x',x_{n+1}) \text{ s.t. } x_{n+1}\leq u_\infty(x')\big\} .\]
For a fixed $\sigma \in \partial B_1$, let
	\[F_t:=\big\{(x',x_{n+1}) \text{ s.t. } x_{n+1}\leq u_{\infty} \big(x'+t\theta w(x')\sigma\big) -t\big\}\]
	be a family of sets, where $t\in (0,1)$ and $\theta >0$.
Then for $\theta$ small, we have that 
\begin{equation}\label{ABC567}
{\mbox{$F_1$ is below $E_\infty$.}}\end{equation} Indeed, suppose by contradiction that this is not true. Then, there exists $\theta_k \to 0$ such that
	\begin{equation}\label{UF78988799} u_\infty\big(x'_k+\theta_kw(x'_k)\sigma\big)-1 \geq u_\infty (x'_k).\end{equation}
But $x'_k \in \text{supp}w$, which is compact, therefore $\displaystyle x'_\infty:=\lim_{k\to +\infty} x'_k$ belongs to the support of $w$, and $w(x'_\infty)$ is defined. Then, by sending $k \to +\infty$
in~\eqref{UF78988799}
we have that
	\[u_\infty(x'_\infty) -1\geq u_\infty(x'_\infty),\]
which is a contradiction.
This establishes~\eqref{ABC567}.

Now consider the smallest $t_0\in (0,1)$ for which $F_t$ is below $E_\infty$. Since $E_\infty$ is a graph, then $F_{t_0}$ touches $E_\infty$ from below in one point $X_0=(x'_0,x_{n+1}^0)$, where $x'_0 \in \text{supp} w$. 
Now, since $E_\infty$ is $s$-minimal, we have that the nonlocal mean curvature (defined in \eqref{nmc}) of the boundary is null. 
Also, since $F_{t_0}$ is a $C^2$ diffeomorphism of $E_\infty$ we have that
	\begin{equation}\label{PLO90-1}
H_{F_{t_0}}^s(p) \simeq \theta t_0,\end{equation}
and there is a region where $E_\infty$ and $F_{t_0}$ are well separated by $t_0$, thus
	\[\big|\big(E_\infty \setminus F_{t_0}\big) \cap \big(B_3
\setminus B_2\big)\big| \geq ct_0,\]
for some~$c>0$.
Therefore, we see that
	\[H_{F_{t_0}}^s(p) =H_{F_{t_0}}^s(p) -H_E^s(p) \geq c t_0.\]
This and~\eqref{PLO90-1} give that~$\theta t_0 \ge c t_0$, for some~$c>0$
(up to renaming it). 
If~$\theta$ is small enough, this implies that $t_0=0$.

In particular,
we have proved that there exists $\theta >0$ small enough such that,
for any $t\in (0,1)$ and any~$\sigma\in\partial B_1$, we have that 
	\[  u_\infty\big(x'+t \theta w(x')\sigma\big)-t \leq u_\infty (x').\]
This implies that
	\[\frac{u_\infty\big(x'+t \theta w(x')\sigma\big)-u_\infty (x')}{t\theta}\leq \frac{1}{\theta},\]
hence, letting $t\to 0$, we have that 
	\[ \nabla u_\infty (x')w(x')\sigma \leq \frac{1}{\theta}, \text { for any } x\in \Rn\setminus \{0\}, \text{ and } \sigma \in B_1.\]
We recall now
that $w=1$ in $B_{3/5}\setminus B_{2/5}$ and $\sigma$ is arbitrary in $\partial B_1$.
Hence, it follows that
	\[|\nabla u_\infty(x)| \leq \frac{1}{\theta}, \text{ for any } x \in B_{3/5}\setminus B_{2/5}.\]
Therefore $u_\infty$ is globally Lipschitz. By the regularity statement in 
Lemma \ref{reg}, we have that~$u_\infty$ is $C^\infty$.
This says that~$u$ is
smooth also at the origin, hence (being a cone) it follows that~$E_\infty$ is necessarily
a half-space. Then by Lemma \ref{halfspace}, we conclude that E is a half-space as well.
\end{proof}

We introduce in the following theorem another interesting property related to $s$-minimal surfaces, in the case in which the fixed given data outside a domain is a subgraph. In that case, the $s$-minimal surface itself is a subgraph. Indeed:

\begin{thm}\label{thmgraph}
Let $\Omega_0$ be an open and bounded subset of $\R^{n-1}$ with boundary of class $C^{1,1}$ and let $\Omega:=\Omega_0 \times \R$. Let $E$ be an $s$-minimal set in $\Omega$. Assume that
		\eqlab{\label{thgr2}E\setminus \Omega =\{x_n <u(x'), \, x'\in \R^{n-1}\setminus \Omega_0\}}
		for some continuous function $u\colon \R^{n-1} \to \R$. Then
		\[ E\cap \Omega =\{x_n<v(x'), \, x' \in \Omega_0\}\]
		for some function $v\colon \R^{n-1}\to \R$.  
\end{thm}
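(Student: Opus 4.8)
The plan is to show that the $s$-minimal set $E$ is, slice by slice in the $x_n$-direction, a half-line of the form $(-\infty, v(x'))$. Concretely, it suffices to prove that $E$ is \emph{invariant under downward vertical translations}, i.e. that for every $\tau>0$ the translated set $E - \tau e_n$ contains $E$ (equivalently $E+\tau e_n \supseteq E$ would fail, but the downward inclusion is what we want: if $(x',x_n)\in E$ then $(x',x_n-\tau)\in E$). Once this monotonicity is established, for each $x'\in\Omega_0$ the vertical slice $E_{x'}:=\{x_n\in\R : (x',x_n)\in E\}$ is a (possibly empty or full) downward-closed subset of $\R$, hence an interval $(-\infty,v(x'))$ with $v(x')\in[-\infty,+\infty]$; the hypothesis \eqref{thgr2} that the exterior data is a genuine subgraph of a continuous function, together with the fact that $\mathrm{Per}_s(E,\Omega)<\infty$, rules out the degenerate values and gives $v$ real-valued, which is the assertion.

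**Carrying out the translation argument.** First I would fix $\tau>0$ and set $E':=E-\tau e_n$ (translation downward). Note that $E'$ is again $s$-minimal in $\Omega=\Omega_0\times\R$, because $\mathrm{Per}_s$ is translation-invariant and $\Omega$ is itself invariant under vertical translations; moreover the exterior datum of $E'$ is the subgraph of $u(\cdot)-\tau$, which lies strictly below that of $E$, so $E'\setminus\Omega\subseteq E\setminus\Omega$. Now consider the two competitors $E\cup E'$ and $E\cap E'$. By the submodularity of the $s$-perimeter — the inequality
\[
\mathrm{Per}_s(E\cup E',\Omega)+\mathrm{Per}_s(E\cap E',\Omega)\le \mathrm{Per}_s(E,\Omega)+\mathrm{Per}_s(E',\Omega),
\]
which is proved exactly as the analogous inequality for the Allen--Cahn energy cited in the excerpt (``formula~(38) in~\cite{PSV13}'', i.e. by splitting the interaction integrals and using the pointwise fact $|a\vee b - a'\vee b'|^2 + |a\wedge b - a'\wedge b'|^2 \le |a-a'|^2+|b-b'|^2$ on the characteristic functions) — and using that $E\cup E'$ is an admissible competitor for $E$ in $\Omega$ (it agrees with $E$ outside $\Omega$ since $E'\setminus\Omega\subseteq E\setminus\Omega$) while $E\cap E'$ is an admissible competitor for $E'$, minimality of both gives that equality holds and that $E\cup E'$ and $E\cap E'$ are themselves minimizers. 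Then the Euler--Lagrange equation / strong maximum principle for nonlocal minimal surfaces — precisely the comparison principle proved in~\cite{CRS10} (see also the strong maximum principle behind Theorem~\ref{THM-MA-1-STRONG} applied to $\tilde\chi$) — forces either $|E\setminus E'|=0$ or $|E'\setminus E|=0$. The second alternative would say $E'\subseteq E$ up to null sets, i.e. $E-\tau e_n\subseteq E$, which is exactly downward invariance; the first says $E\subseteq E-\tau e_n$, i.e. upward invariance, and this is excluded because the exterior data strictly decreases under the translation (a point of $E\setminus\Omega$ high above the graph of $u$ is not in $E-\tau e_n$). Hence $E-\tau e_n\subseteq E$ for all $\tau>0$, as desired.

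**From vertical monotonicity to a subgraph.** With downward invariance in hand, define $v(x'):=\sup\{x_n : (x',x_n)\in E\}\in[-\infty,+\infty]$ for $x'\in\Omega_0$; then $E_{x'}=(-\infty,v(x'))$ up to a null set. It remains to exclude $v\equiv+\infty$ on a positive-measure set and $v\equiv-\infty$ on a positive-measure set. If $v(x')=+\infty$ for $x'$ in a set of positive measure, then $E$ contains arbitrarily high portions of the cylinder $\Omega_0\times\R$; but for $x'$ near $\partial\Omega_0$ the exterior data $\{x_n<u(x')\}$ with $u$ continuous (hence bounded on the compact $\partial\Omega_0$) is a proper subgraph, so the interaction between the unbounded high part of $E\cap\Omega$ and the bounded complement $E^{\mathrm C}$ outside forces $\mathrm{Per}_s(E,\Omega)=+\infty$, contradicting minimality (finiteness is part of the definition of $s$-minimal); the case $v\equiv-\infty$ on a positive set is handled symmetrically using that $E^{\mathrm C}$ would contain an unbounded low part while $E$ outside is large. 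Therefore $v:\Omega_0\to\R$ (extended by $u$ on $\R^{n-1}\setminus\Omega_0$) is real-valued and $E\cap\Omega=\{x_n<v(x')\}$.

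**Main obstacle.** The delicate point is the rigidity step: upgrading ``$E-\tau e_n\subseteq E$ or $E\subseteq E-\tau e_n$'' (which comes cheaply from submodularity plus minimality) to ruling out the wrong alternative, and more subtly, ensuring the strong maximum principle applies even when $\partial E$ may a priori be irregular. One must invoke the measure-theoretic version of the comparison/strong-maximum principle for nonlocal minimal surfaces from~\cite{CRS10} rather than a naive pointwise argument on $\partial E$; justifying that the two minimizers $E\cup E'$ and $E\cap E'$ cannot be ``tangent'' at a density point without coinciding is where the real work lies. The finiteness-of-perimeter argument excluding $v=\pm\infty$ also needs the geometry of the cylinder $\Omega=\Omega_0\times\R$ together with the $C^{1,1}$ regularity of $\partial\Omega_0$ to control the interaction integrals near the lateral boundary.
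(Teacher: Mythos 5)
Your proof is correct, and it takes a genuinely different route from the paper's. The paper uses a geometric sliding argument: it translates $E$ vertically, slides until touching, and — because $\partial E$ is not a priori regular — must first regularize via sup/subconvolutions and then separately handle interior touching points (Proposition~\ref{posubsup}) and lateral-boundary touching points (using the $C^{1,\frac12+s}$ boundary-regularity result of~\cite{regint} and a pointwise Euler--Lagrange inequality). You instead rely on the measure-theoretic comparison principle that follows from strict submodularity of $\text{Per}_s$. With $E'=E-\tau e_n$ (still $s$-minimal since $\Omega$ is a vertical cylinder, with $E'\setminus\Omega\subsetneq E\setminus\Omega$), the submodularity deficit is the exact identity
\begin{equation*}
\text{Per}_s(E,\Omega)+\text{Per}_s(E',\Omega)-\text{Per}_s(E\cup E',\Omega)-\text{Per}_s(E\cap E',\Omega)
=2\iint_{\big((E\setminus E')\times(E'\setminus E)\big)\setminus(\Omega^{\C}\times\Omega^{\C})}\frac{dx\,dy}{|x-y|^{n+2s}},
\end{equation*}
and the two minimality inequalities squeeze the right-hand side to zero; since $E'\setminus E\subseteq\Omega$ while $|E\setminus E'|>0$ (it contains a slab outside $\Omega$), positivity of the kernel forces $|E'\setminus E|=0$, i.e.\ $E-\tau e_n\subseteq E$, for every $\tau>0$. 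This delivers the vertical monotonicity with \emph{no} regularity input and \emph{no} interior/boundary case analysis, which is exactly the difficulty the paper's sup/subconvolution machinery was built to handle. Two caveats. First, I would not label the key step a ``strong maximum principle / Euler--Lagrange'' argument: that phrasing suggests a pointwise evaluation on $\partial E$, which would reimport the regularity problem you are trying to avoid — the correct mechanism is the explicit deficit above (which is also how the comparison principle you cite from~\cite{CRS10} is proved). Second, passing from ``$E-\tau e_n\subseteq E$ up to null sets, for each $\tau>0$'' to ``a.e.\ slice $E_{x'}$ is a half-line'' needs a mollification or countable-$\tau$ Fubini step that you elided; it is routine but should be said. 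On the other side of the ledger, the paper's sliding/regularization apparatus is not wasted work: it is reused for the finer stickiness and boundary-behaviour results later in the chapter, and its horizontal ball-sliding estimate~\eqref{thgr1} gives a \emph{uniform} two-sided bound on $v$, which is stronger than the a.e.\ finiteness your divergent-interaction argument provides (and is all the theorem requires).
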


The reader can see \cite{graph}, where this theorem and the related results are proved; here, we only state the preliminary results needed for our purposes
and focus on the proof of Theorem \ref{thmgraph}. The proof relies on a
sliding method, more precisely, we take a translation of $E$ in the $n^{\mbox{th}}$ direction, and move it until it touches $E$ from above. If the set $E\cap \Omega$ is a subgraph, then, up to a set of measure $0$, the contact between the translated $E$ and $E$, will be $E$ itself.

 However, since we have no information on the regularity of the minimal surface, we need at first to ``regularize'' the set by introducing the notions of supconvolution and subconvolution. With the aid of a useful result related to the sub/supconvolution of an $s$-minimal surface, we proceed then with the proof of the Theorem \ref{thmgraph}.
  
 The supconvolution of a set $E\subseteq \Rn$ is given by 
 	\[ E_{\delta}^{\sharp} := \bigcup_{x\in E} \overline {B_{\delta} (x)}.\]
 	\begin{center}
 \begin{figure}[htpb]
	\hspace{0.65cm}
	\begin{minipage}[b]{0.75\linewidth}
	\centering
	\includegraphics[width=0.75\textwidth]{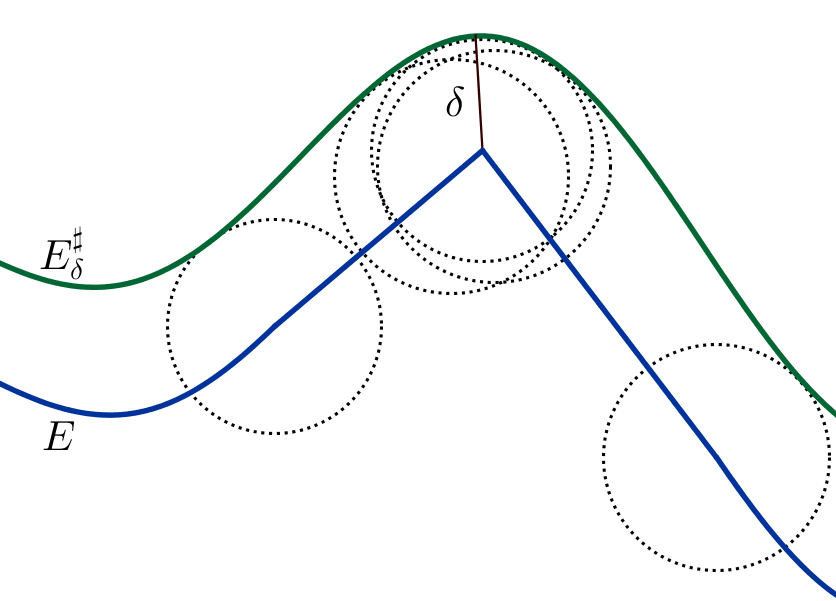}
	\caption{The supconvolution of a set}  
	\label{fign:sgrcor1}
	\end{minipage}
	\end{figure} 
	\end{center}
 	In an equivalent way, the supconvolution can be written as \[ E_{\delta}^{\sharp} =  \bigcup_{ { v\in \Rn}\atop {|v|\leq \delta}}   (E+v)  .\] Indeed, we consider $\delta>0$ and an arbitrary $x\in E$. Let $y\in \overline{ B_\delta (x)}$ and we define $v:=y-x$. Then
 		\[ |v|\leq|y-x|\leq \delta\quad \mbox{and} \quad y=x+v \in E+v. \]
 Therefore $\overline{B_\delta(x)} \subseteq E+v$ for $|v|\leq \delta$. In order to prove the inclusion in the opposite direction, one notices that taking $y\in E+v$ with $|v|\leq \delta$ and defining $x:= y-v$, it follows that
 \[ |x-y|=|v|\leq \delta.\] Moreover, $x\in (E+v)-v=E$ and the inclusion $E+v \in \overline{B_\delta(x)}$ is proved.
  	
 	      On the other hand, the subconvolution is defined as
 		\[ E_{\delta}^{\flat} := \Rn \setminus \left( (\Rn \setminus E)_{\delta}^{\sharp}\right).\] Now, the idea is that the supconvolution of~$E$
is a regularized version of~$E$ whose nonlocal minimal curvature
is smaller than the one of~$E$, i.e.:
\begin{equation}\label{oforUI89}
\int_{\R^n} \frac{ \chi_{\R^n\setminus E_{\delta}^{\sharp}}(y)
-\chi_{ E_{\delta}^{\sharp} }(y)
}{|x-y|^{n+2s}} \,dy \le
\int_{\R^n} \frac{ \chi_{\R^n\setminus E}(y)
-\chi_{ E }(y)
}{|\tilde x-y|^{n+2s}} \,dy
\le0,
\end{equation}
for any~$x\in\partial E_{\delta}^{\sharp}$, where~$\tilde x:= x-v\in
\partial E$ for some~$v\in\R^n$ with~$|v|=\delta$. Then, by construction, the set~$E+v$ lies in~$E_{\delta}^{\sharp}$,
and this implies~\eqref{oforUI89}.

Similarly, one has that the opposite inequality holds
for the subconvolution of~$E$, namely
\begin{equation}\label{oforUI89-bis}
\int_{\R^n} \frac{ \chi_{\R^n\setminus E_{\delta}^{\flat}}(y)
-\chi_{ E_{\delta}^{\flat} }(y)
}{|x-y|^{n+2s}} \,dy 
\ge0,
\end{equation}
By~\eqref{oforUI89} and~\eqref{oforUI89-bis}, we obtain:
 		 \begin{prop}\label{posubsup}
 		 Let $E$ be an $s$-minimal set in $\Omega$. Let $p\in \partial E_{\delta}^{\sharp}$ and assume that $\overline {B_\delta(p)}\subseteq\Omega$. Assume also that $E_{\delta}^{\sharp}$ is touched from above by a translation of $E_{\delta}^{\flat}$, namely there exists $\omega \in \Rn$ such that 
 		 \[ E_{\delta}^{\sharp} \subseteq E_{\delta}^{\flat} +\omega\] 
 		 and 
 		 \[ p\in (\partial E_{\delta}^{\sharp} )\cap (\partial E_{\delta}^{\flat}+\omega).\] Then \[E_{\delta}^{\sharp} = E_{\delta}^{\flat }+\omega. \]
 		 \end{prop}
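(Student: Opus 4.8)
The plan is to argue by a standard maximum-principle-type comparison, using the curvature inequalities \eqref{oforUI89} and \eqref{oforUI89-bis} at the contact point~$p$. First I would set~$A:=E_{\delta}^{\sharp}$ and~$B:=E_{\delta}^{\flat}+\omega$; by hypothesis $A\subseteq B$ and $p\in(\partial A)\cap(\partial B)$. Translating \eqref{oforUI89-bis} by~$\omega$ (which does not affect the integral, by a change of variables) gives that the nonlocal mean curvature of~$B$ at~$p$ satisfies
\[
\int_{\R^n}\frac{\chi_{\R^n\setminus B}(y)-\chi_B(y)}{|p-y|^{n+2s}}\,dy\ge0,
\]
while \eqref{oforUI89} applied at~$p\in\partial A$ gives the reverse inequality
\[
\int_{\R^n}\frac{\chi_{\R^n\setminus A}(y)-\chi_A(y)}{|p-y|^{n+2s}}\,dy\le0.
\]
Here I am using that~$\overline{B_\delta(p)}\subseteq\Omega$, so that the point~$\tilde x=p-v\in\partial E$ appearing in \eqref{oforUI89} lies in~$\Omega$ and the $s$-minimality Euler–Lagrange equation \eqref{ELsmin} is available there, ensuring the right-most inequality~$\le 0$ is actually an equality; similarly for~$B$.

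The key step is then to subtract these two relations. Since~$A\subseteq B$, for every~$y$ we have~$\chi_{\R^n\setminus A}(y)-\chi_A(y)\ge \chi_{\R^n\setminus B}(y)-\chi_B(y)$, with strict inequality exactly on~$B\setminus A$. Hence
\[
0\ \ge\ \int_{\R^n}\frac{\bigl(\chi_{\R^n\setminus A}(y)-\chi_A(y)\bigr)-\bigl(\chi_{\R^n\setminus B}(y)-\chi_B(y)\bigr)}{|p-y|^{n+2s}}\,dy\ =\ 2\int_{B\setminus A}\frac{dy}{|p-y|^{n+2s}}\ \ge\ 0,
\]
so the integrand vanishes identically, which forces~$|B\setminus A|=0$, i.e.~$A=B$ up to a null set. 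Finally I would upgrade this to genuine set equality: both~$A=E_{\delta}^{\sharp}$ and~$B=E_{\delta}^{\flat}+\omega$ are (by construction as unions of closed balls, respectively complements of such unions) regular enough that their boundaries have measure zero and they coincide with the closure of their interiors, so equality up to measure zero implies~$E_{\delta}^{\sharp}=E_{\delta}^{\flat}+\omega$.

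The main obstacle is making the two curvature inequalities rigorous at the single contact point~$p$: one must check that the principal-value integrals defining the nonlocal mean curvature of~$E_{\delta}^{\sharp}$ and~$E_{\delta}^{\flat}$ are well defined and finite at~$p$ (which uses the interior/exterior ball condition of radius~$\delta$ enjoyed by sup/sub-convolutions, giving a~$C^{1,1}$ bound on~$\partial E_{\delta}^{\sharp}$ near~$p$), and that the touching hypothesis~$A\subseteq B$ with~$p$ on both boundaries legitimately lets one compare the curvatures \emph{at the same point}~$p$ rather than at two different points — this is precisely where the sliding/touching picture is used, together with the geometric fact that~$E+v\subseteq E_{\delta}^{\sharp}$ for~$|v|=\delta$ that produced \eqref{oforUI89} in the first place. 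Once that is in place, the cancellation argument above closes the proof.
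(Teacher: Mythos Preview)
Your proposal is correct and follows exactly the approach the paper indicates: in the text the proof of Proposition~\ref{posubsup} is literally the sentence ``By~\eqref{oforUI89} and~\eqref{oforUI89-bis}, we obtain,'' and you have faithfully unpacked this into the sign comparison at the contact point~$p$ and the resulting vanishing of~$\int_{B\setminus A}|p-y|^{-n-2s}\,dy$. Your identification of the technical caveats (well-posedness of the principal value at~$p$ via the interior/exterior~$\delta$-ball condition, and the role of~$\overline{B_\delta(p)}\subseteq\Omega$ in placing the underlying point of~$\partial E$ inside~$\Omega$) is also the right one; the paper defers these details to~\cite{graph}.
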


\begin{proof}[Proof of Theorem \ref{thmgraph}]
One first remark is that
the $s$-minimal set does not have spikes which go to infinity: more precisely,
one shows that
	\eqlab{\label{thgr1} \Omega_0\times (-\infty, -M)\subseteq E\cap \Omega \subseteq \Omega_0 \times (-\infty,M) }
for some $M\geq 0$.
The proof of \eqref{thgr1} can be performed by sliding horizontally a large ball,
see~\cite{graph} for details.

After proving~\eqref{thgr1}, one can deal with the core of
the proof of Theorem \ref{thmgraph}.
The idea is to
slide $E$ from above until it touches itself and analyze what happens at the contact points.
For simplicity, we will assume here that the function~$u$
is uniformly continuous (if~$u$ is only continuous,
the proof needs to be slightly modified since the subconvolution
and supconvolution that we will perform may create new touching
points at infinity).

At this purpose, we consider $E_t=E+t e_n$ for $t\geq 0$. Notice that, by \eqref{thgr1}, if $t \geq 2M$, then $E\subseteq E_t$. Let then $ t$ be the smallest for which the inclusion $E\subseteq E_t$ holds.
We claim that  $t=0$.
		 If this happens, one may consider 
		 \[ v=\inf\{\tau \text{ s.t. }(x,\tau)\in E^{\C}\}\]
and, up to sets of measure $0$, $E\cap \Omega_0$ is the subgraph of $v$.
 
 The proof is by contradiction, so let us assume  that $ t>0$. According to \eqref{thgr2}, the set $E\setminus \Omega$ is a subgraph, hence the contact points between $\partial E$ and $\partial E_t$ must lie in $\overline \Omega_0 \times \R$. Namely, only two 
possibilities may occur: the contact point is
interior (it belongs to  $\Omega_0 \times\R)$, or it is at the boundary (on $\partial \Omega_0 \times\R$).
So, calling $p$ the contact point, one may have\footnote{As a matter
of fact, the number of contact
points may be higher than one, and even infinitely many
contact points may arise. So, to be rigorous,
one should distinguish the case in which all
the contact points are interior and the case in which
at least one contact point lies on the boundary.

Moreover, since the surface may have vertical portions
along the boundary of the domain, one needs to carefully define
the notion of contact points (roughly speaking, one needs to take
a definition for which the vertical portions
which do not prevent the sliding are not in the contact set).

Finally, in case the contact points are all interior, it is also
useful to perform the sliding method in a slighltly reduced domain,
in order to avoid that the supconvolution method produces
new contact points at the boundary (which may arise from
vertical portions of the surfaces).

Since we do not aim to give a complete proof of Theorem~\ref{thmgraph} here,
but just to give the main ideas and underline the additional
difficulty, we refer to~\cite{graph} for the full details of these arguments.}
that
	\eqlab{\label{caseone}  {\mbox{either }} p\in   \Omega_0 \times \R\quad \mbox{or}}
	\eqlab{\label{casetwo}  p\in \partial \Omega_0 \times \R. \quad \mbox{ }}

We deal with the first case in~\eqref{caseone} (an example of this behavior is depicted in Figure \ref{fign:thmgrph1}).
We consider $E_{\delta}^{\sharp} $ and $E_{\delta}^{\flat}$ to be the supconvolution, respectively the subconvolution of $E$.  We then slide the subconvolution until it touches the supconvolution. More precisely, let $\tau>0$ and we take a translation of the subconvolution, $E_{\delta}^{\flat}+ \tau e_n$. For $\tau$ large, we have that $E_{\delta}^{\sharp}\subseteq E_{\delta}^{\flat}+ \tau e_n$ and we consider $\tau_\delta$ to be the smallest for which such inclusion holds. We have (since $t$ is positive by assumption) that
	\[ \tau_\delta\geq \frac{t}{2}>0.\]
	Moreover, for $\delta$ small, the sets $\partial E_\delta^{\sharp} $ and $\partial (E_{\delta}^{\flat}+ \tau_\delta e_n)$ have a contact point which, according to \eqref{caseone}, lies in $\Omega_0\times \R$. Let $p_{\delta}$ be such a point, so we may write
 \[  p_\delta \in   (\partial E_\delta^{\sharp} ) \cap \partial (E_{\delta}^{\flat}+ \tau_\delta e_n) \quad \mbox{and } \quad   p_\delta \in \Omega_0\times \R.\]
 Then, for $\delta$ small (notice that $\overline{B_{\delta}(p)}\subseteq \Omega$), Proposition \ref{posubsup} yields that 
 \[E_\delta^\sharp =E_\delta^\flat +\tau_\delta e_n.\]
 Considering $\delta$ arbitrarily small, one obtains that
 \[E=E+\tau_0 e_n,\quad \mbox{with} \quad \tau_0>0.\] 
 But $E$ is a subgraph outside of $\Omega$, and this provides a contradiction. Hence, the claim that $t=0$ is proved.
\begin{center}
 \begin{figure}[htpb]
	\hspace{0.65cm}
	\begin{minipage}[b]{0.95\linewidth}
	\centering
	\includegraphics[width=0.95\textwidth]{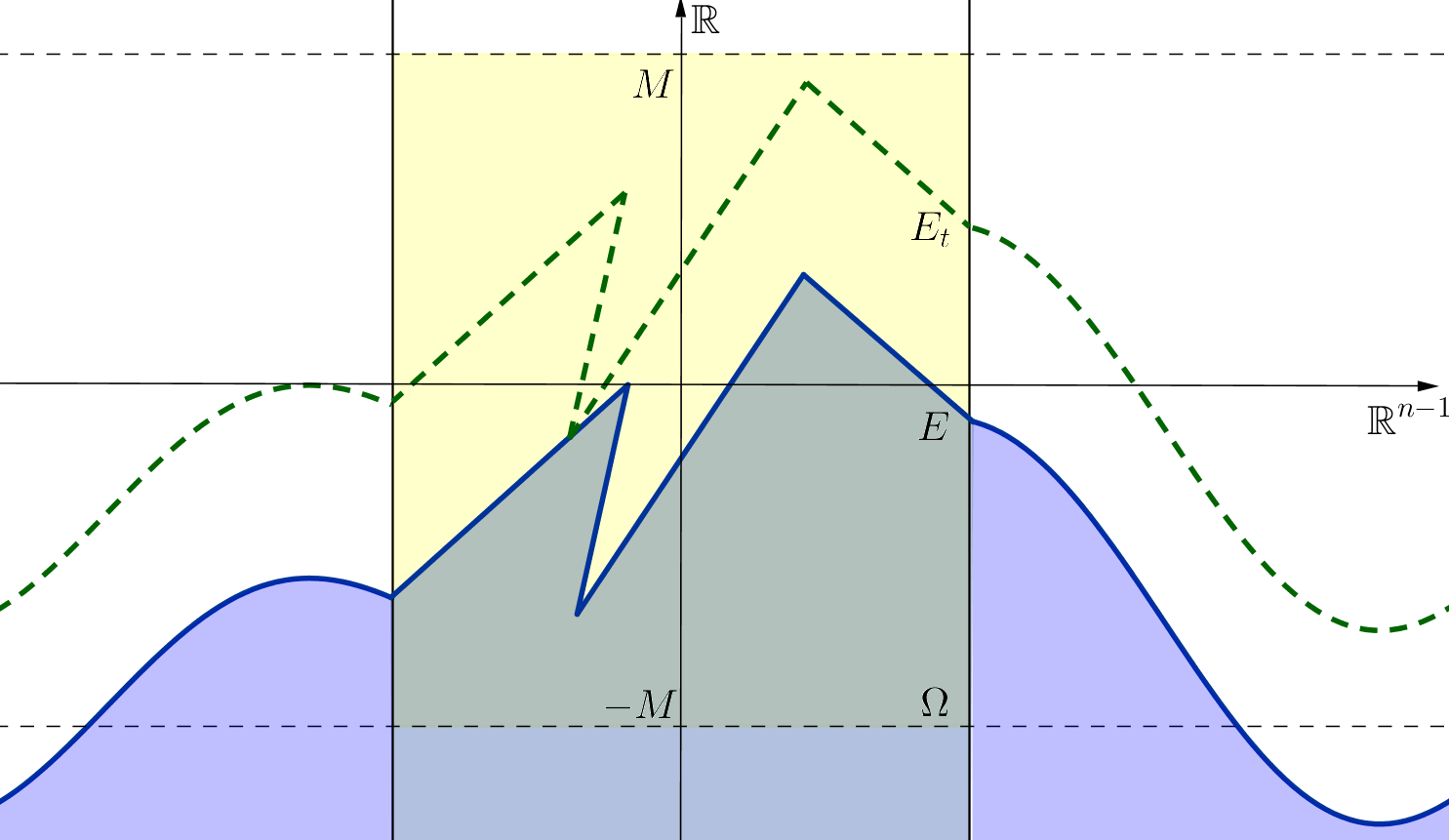}
	\caption{Sliding $E$ until it touches itself at an interior point}  
	\label{fign:thmgrph1}
	\end{minipage}
	\end{figure} 
	\end{center}
Let us see that we also obtain a contradiction when supposing that $t>0$ and that the second case \eqref{casetwo} holds. Let
 	\[ p=(p',p_n) \quad \mbox{and } \quad p \in  (\partial E)\cap (\partial E_t) . \]
 	Now, if one takes sequences $a_k\in \partial E$ and $b_k\in \partial E_t$, both that tend to $p$ as $k $ goes to infinity, since $E\setminus \Omega $ is a subgraph and $t>0$, necessarily $a_k, b_k$ belong to $\Omega$. Hence
 	 		\eqlab{ \label{pinclos} p\in \overline { (\partial E) \cap \Omega}  \cap \overline {(\partial E_t)\cap \Omega}.} 
 	Thanks to Definition 2.3 in \cite{CRS10}, one obtains that $E$ is a variational subsolution in a neighborhood of $p$. In other words, if $A\subseteq E\cap\Omega$ and $p\in \overline A$, then 
 			\[ 0\geq \mbox{Per}_s( E,\Omega)-\mbox{Per}_s(E\setminus A, \Omega) = I(A,E^{\C}) -I(A, E\setminus A)\] 
 			(we recall the definition of $I$ in \eqref{nmsi1} and of the fractional perimeter $Per_s$ in \eqref{nmspf1}).
  According to Theorem 5.1 in \cite{CRS10}, this implies in a viscosity sense (i.e. if $E$ is touched at $p$ from outside by a ball), that
 			\eqlab{\label{curvgz} \int_{\Rn}\frac{\chi_E(y)-\chi_{\Rn \setminus E}(y)}{|p-y|^{n+2s}}\, dy\geq 0.} 
In order to obtain an estimate on the fractional mean curvature in the strong sense, we consider the translation of the point $p$ as follows: 
\[p_t=p-t e_n=(p',p_n-t)=(p',p_{n,t}).\] Since $t>0$, one may have that either $p_n\neq u(p')$, or $p_{n,t}\neq u(p')$. 
 			
 			These two possibilities can be dealt with in a similar way, so we just continue with the proof in the case $p_n\neq u(p')$ (as is also exemplified in Figure \ref{fign:thmgrph2}).  Taking $r>0$ small, the set $B_r(p)\setminus \Omega$ is contained entirely in $E$ or in its complement. Moreover, one has from \cite{regint} that $\partial E\cap B_r(p) $ is a $C^{1,\frac{1}2+s}$-graph in the direction of the normal to $\Omega$ at $p$. That is: in Figure \ref{fign:thmgrph2}
the set~$E$ is $C^{1,\frac12+s}$, hence in the vicinity of~$p=(p',p_n)$, it appears to be sufficiently smooth.

So, let $\nu(p)=(\nu'(p),\nu_n(p))$ be the normal in the interior direction, then up to a rotation and since $\Omega$ is a cylinder (hence $\nu_n(p)=0$), we can write $\nu(p)=e_1$. Therefore, there exists a function $\Psi$ of class $ C^{1,\frac{1}2+s}$ such that $p_1=\Psi(p_2,\dots,p_n)$ and, in the vicinity of $p$, we can write $\partial E$ as the graph $G=\{x_1=\Psi(x_2,\dots,x_n)\}$. 
 			 
 			 Given \eqref{pinclos}, we deduce that there exists a sequence $p_k\in G$ such that $p_k\in \Omega$ and $p_k \to p$ as $k\to \infty$. From this it follows that there exists a sequence of points $p_k\to p$ such that 
 			 \eqlab {\label{pkE1} \partial E \mbox{ in the vicinity of }p_k \mbox{ is a graph of class }C^{1,\frac{1}2+s}} 
 			 and 
 			  	 \eqlab{\label{pkE2} \int_{\Rn}\frac{\chi_{E}(y)-\chi_{E^{\C}}(y)}{|p_k-y|^{n+2s}}\, dy =0.}
 			  	  \begin{center}
 \begin{figure}[htpb]
	\hspace{0.65cm}
	\begin{minipage}[b]{0.95\linewidth}
	\centering
	\includegraphics[width=0.95\textwidth]{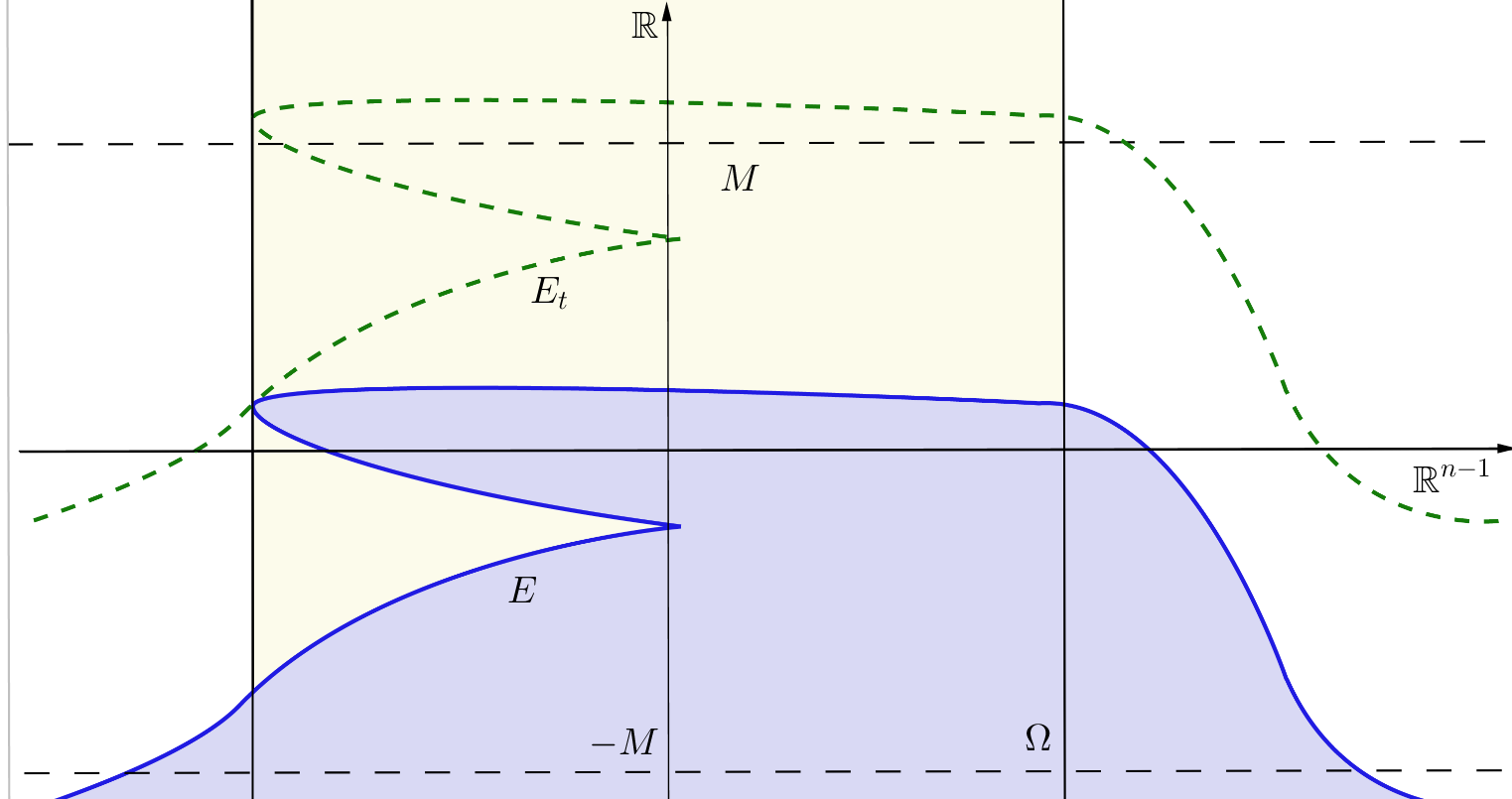}
	\caption{Sliding $E$ until it touches itself at a boundary point}  
	\label{fign:thmgrph2}
	\end{minipage}
	\end{figure} 
	\end{center}

From \eqref{pkE1} and~\eqref{pkE2}, and using 
a pointwise version of the Euler-Lagrange equation (see~\cite{graph}
for details), we have that
 			 \bgs{ \int_{\Rn}\frac{\chi_{E}(y)-\chi_{E^{\C}}(y)}{|p-y|^{n+2s}}\, dy =0.}	
 		Now, $E\subset E_t$ for $t$ strictly positive, hence
 			\eqlab{\label{curvgz11} \int_{\Rn}\frac{\chi_{E_t}(y)-\chi_{E_t^{\C}}(y)}{|p-y|^{n+2s}}\, dy >0.}	 	
 		Moreover, we have that the set $\partial E_t \cap B_{\frac{r}4}(p)$ must remain on one side of the graph $G$, namely one could have that
 			\bgs{ & E_t\cap	B_{\frac{r}4}(p) \subseteq \{x_1\leq \Psi(x_2,\dots,x_n)\} \mbox{ or }\\
 				& E_t\cap	B_{\frac{r}4}(p) \supseteq \{x_1\geq \Psi(x_2,\dots,x_n)\} .}
 	Given again \eqref{pinclos}, 	we deduce that there exists a sequence $\tilde p_k\in \partial E_t\cap \Omega$ such that $\tilde p_k \to p$ as $k\to \infty$ and $ \partial E_t\cap \Omega $ in the vicinity of $\tilde p_k$ is touched by a surface lying in $E_t$, of class $ C^{1,\frac{1}2+s} $. 
 	Then 	\bgs{ \int_{\Rn}\frac{\chi_{E_t}(y)-\chi_{E_t^{\C}}(y)}{|\tilde p_k-y|^{n+2s}}\, dy \leq 0.} 
Hence, making use of
a pointwise version of the Euler-Lagrange equation (see~\cite{graph}
for details), we obtain that
 	\bgs{ \int_{\Rn}\frac{\chi_{E_t}(y)-\chi_{E_t^{\C}}(y)}{| p-y|^{n+2s}}\, dy \leq 0.} But this is a contradiction with \eqref{curvgz11}, and this concludes the proof of Theorem~\ref{thmgraph}.
 	\end{proof}

On the one hand, one may think that Theorem \ref{thmgraph} has to be well-expected. On the other hand, it is far from being obvious, not only because the proof is not trivial, but also because the statement itself almost risks to be false, especially at the boundary.
Indeed we will see in Theorem \ref{STI-DSV-2}
that the graph property is close to fail at the boundary of the domain, where the $s$-minimal surfaces may present vertical tangencies and stickiness phenomena (see Figure \ref{eST2}).

\section{Non-existence of singular cones in dimension $2$}

We now prove the non-existence of singular
$s$-minimal cones in dimension $2$, as stated in the next result
(from this, the more general statement in Theorem~\ref{THM 5.8}
follows after a blow-up procedure):

\begin{thm}\label{THIS}
If $E$ is an $s$-minimal cone in $\mathbb{R}^2$, then $E$ is a half-plane.
\label{noconestwo}
\end{thm}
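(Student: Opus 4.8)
The plan is to show that an $s$-minimal cone $E$ in $\R^2$ must be a half-plane by combining a first-variation (Euler--Lagrange) computation with the special structure of cones in the plane. First I would reduce to the case where $\partial E$ consists of finitely many rays emanating from the origin: since $E$ is a cone, $E\cap\partial B_1$ is a finite union of arcs (finiteness follows from the finiteness of $\mathrm{Per}_s(E,B_r)$ together with the density estimates and a clean-ball argument, or can be taken as part of the definition of a nontrivial singular cone), so $E$ is determined by this collection of arcs and the only way $E$ can fail to be a half-plane is that $\partial E$ has at least two rays (equivalently, $\partial E\cap\partial B_1$ has at least two points). The goal is to derive a contradiction from the assumption that there are $\ge 2$ rays.

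The key mechanism I would exploit is the Euler--Lagrange equation in the viscosity/pointwise sense: at a regular boundary point $x_0\in\partial E$ where $\partial E$ is smooth (which holds away from the origin, since in $\R^2$ the boundary is a finite union of rays, hence smooth everywhere except $0$), one has the nonlocal mean curvature condition $H_E^s(x_0)=0$, i.e.
\[
\int_{\R^2}\frac{\chi_E(y)-\chi_{E^\C}(y)}{|x_0-y|^{2+2s}}\,dy=0 .
\]
Now I would run a \emph{domain-deformation / sliding} argument of the same flavor used in Lemma~\ref{endg} and Theorem~\ref{dgdim2}: rotating the cone $E$ by a small angle $\theta$ about the origin produces a competitor $E_\theta$ with $\mathrm{Per}_s(E_\theta,B_R)=\mathrm{Per}_s(E,B_R)$ by scale invariance, but $E_\theta$ does not agree with $E$ outside any fixed ball, so instead one uses a vector field that rotates near the origin and is the identity far away, and estimates the energy cost as in~\eqref{DG01}. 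The cleaner route, which I would take, is the following rigidity argument: if $\partial E$ contains two distinct rays $\ell_1,\ell_2$, pick a point $x_0$ on $\ell_1$ far from the origin and far from $\ell_2$; near $x_0$ the set $E$ looks like a half-plane $H$ bounded by (the tangent line to) $\ell_1$, so $H_E^s(x_0)$ equals $H_H^s(x_0)$ plus a correction coming from the ``missing'' or ``extra'' mass relative to $H$ near $\ell_2$ and near the vertex. By symmetry of the half-plane kernel, $H_H^s(x_0)=0$ on $\partial H$, so the Euler--Lagrange equation $H_E^s(x_0)=0$ forces the correction term to vanish. But the correction term has a \emph{sign}: moving $x_0$ along $\ell_1$ away from the origin, the wedge structure near the vertex contributes a definite one-signed amount (a genuine wedge of opening $\ne\pi$ always over- or under-counts against the flat half-plane), giving $H_E^s(x_0)\ne 0$, a contradiction. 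This is exactly the heuristic ``at a point on $\partial E$ the local contribution of $E$ compensates that of $E^\C$'' made quantitative for cones.

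The main obstacle — and the step that requires real care — is making the sign of the correction term rigorous and uniform: one must show that for a planar cone which is not a half-plane, there is at least one regular boundary point at which $\int_{\R^2}(\chi_E-\chi_{E^\C})(y)|x_0-y|^{-2-2s}\,dy$ is strictly positive (or strictly negative), contradicting the Euler--Lagrange equation. The natural way to do this is a scaling argument along the ray: write $x_0=\lambda e$ with $e$ a unit vector on $\partial E$, change variables $y=\lambda z$, and observe that $H_E^s(\lambda e)=\lambda^{-2s}H_E^s(e)$, so it suffices to show $H_E^s(e)\ne0$ for \emph{some} unit vector $e\in\partial E$; equivalently, the finitely many boundary rays cannot all simultaneously satisfy the balance condition unless the cone is a half-plane. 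For this I would compare $E$ with its reflection or rotation: if $E$ is stable and $H_E^s=0$ on every ray but $E$ is not a half-plane, one can slide a translate of $E$ (or of a half-plane containing a piece of $\partial E$) to touch $\partial E$ and apply the strong maximum principle for the nonlocal mean curvature operator, forcing $E$ to coincide with the comparison half-plane — contradicting that $\partial E$ has two rays. Assembling these pieces (finiteness of the rays, smoothness away from $0$, scale-invariance of $H_E^s$ along rays, and the sliding + strong maximum principle) yields Theorem~\ref{noconestwo}; combined with Lemma~\ref{dimrid} and a blow-up, this also recovers part (a) of Theorem~\ref{THM 5.8}.
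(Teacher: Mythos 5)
There is a genuine gap in your proposal, and it is a fatal one. Your main mechanism is to show that if the cone~$E$ is not a half-plane, then at some regular boundary point $x_0\in\partial E\setminus\{0\}$ the nonlocal mean curvature $H^s_E(x_0)$ is nonzero, contradicting the Euler--Lagrange equation. But this is false, and the paper itself gives the counterexample right before the theorem: the cross cone $\mathcal{K}=\{y^2>x^2\}\subset\R^2$. Because reflection through the line $\{y=x\}$ (which fixes every point of that ray) swaps $\mathcal{K}$ and $\mathcal{K}^{\C}$, the integral $\int_{\R^2}\bigl(\chi_{\mathcal{K}}(y)-\chi_{\mathcal{K}^{\C}}(y)\bigr)|x_0-y|^{-2-2s}\,dy$ vanishes at every $x_0\in\partial\mathcal{K}\setminus\{0\}$; the same holds on the other three rays by the analogous symmetries. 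So $\mathcal{K}$ is a planar cone which is not a half-plane yet satisfies $H^s_{\mathcal{K}}\equiv0$ on all of $\partial\mathcal{K}\setminus\{0\}$. As the paper stresses (just after Proposition~\ref{THAT}), the Euler--Lagrange equation in~\eqref{ELsmin} is \emph{implied by but not equivalent to} $s$-minimality: a first-variation argument alone can never rule out such symmetric cones.

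Your fallback --- slide a translate of $E$, or of a half-plane, until it touches $\partial E$ and invoke the strong maximum principle --- does not rescue the argument as stated, again because of $\mathcal{K}$: translates of $\mathcal{K}$ are never nested with $\mathcal{K}$ (translating in any direction pushes one ray out and pulls another in), and $\mathcal{K}$ neither contains nor is contained in any half-plane, so there is no admissible one-sided comparison to which the strong maximum principle applies. What is actually needed, and what the paper does, is a \emph{second-variation} argument: one translates $u=\chi_E-\chi_{E^{\C}}$ inside a ball of radius $R$ via the cut-off diffeomorphism $\Psi_{R,\pm}$ of Lemma~\ref{endg}, obtains from~\eqref{DG01} and the energy estimate of Theorem~\ref{acenest1} (with $n=2$) that the energy excess $\mathcal{K}_R(u_R^+)-\mathcal{K}_R(u)$ is $O(R^{-2s})\to 0$, then uses the min/max submodularity plus the strong maximum principle to extract a fixed energy gain $\delta_R\ge c>0$ in a bounded ball $B_{2M}$ whenever $E$ is not a half-plane, and contradicts the two bounds. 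This is the same scheme as the proof of Theorem~\ref{dgdim2}, and the dimension two enters precisely through the exponent count $n-2s<2$. A maximum principle does appear in the paper's proof, but at the level of the energy minimizer $v_R=\min\{u,u_R^+\}$, after the translation has been localized --- not as a pointwise curvature comparison, which is the step that cannot work.
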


We remark that, as a combination of
Theorems \ref{figv} and~\ref{THIS},
we obtain the following result of Bernstein type:

\begin{cor}
Let $E= \{ (x,t) \in \Rn \times \R \text{ s.t. } t<u(x)\}$ be an $s$-minimal graph, and assume that $n \in \{1, 2\}$. Then $u$ is an affine function.
\end{cor}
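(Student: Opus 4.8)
The plan is to read the statement off directly from the Bernstein-type Theorem~\ref{figv} together with the two-dimensional classification in Theorem~\ref{THIS}. Recall that Theorem~\ref{figv} says that an $s$-minimal graph $E=\{(x,t)\in\Rn\times\R \text{ s.t. } t<u(x)\}$ forces $u$ to be affine, \emph{provided} every $s$-minimal cone $\mathcal{K}\subseteq\Rn$ is a half-space. So the whole task reduces to verifying this ``no singular cones in dimension $n$'' hypothesis for the two cases in the statement, $n=2$ and $n=1$.

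For $n=2$ there is nothing to do beyond citing: the required hypothesis is \emph{exactly} Theorem~\ref{THIS} (equivalently Theorem~\ref{noconestwo}), namely that an $s$-minimal cone in $\R^2$ is a half-plane. Hence Theorem~\ref{figv} applies with $n=2$ and yields that $u$ is affine for $s$-minimal graphs in $\R^3$.

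For $n=1$ the hypothesis holds trivially, and I would spell this out in one line: if $\mathcal{K}\subseteq\R$ satisfies $\lambda\mathcal{K}=\mathcal{K}$ for all $\lambda>0$, then $\mathcal{K}$ coincides, up to a Lebesgue-null set, with one of $\varnothing$, $(-\infty,0)$, $(0,+\infty)$, $\R$; in each case $\partial\mathcal{K}$ is empty or a single point, so $\mathcal{K}$ has no singular set and the two nontrivial possibilities are half-lines, i.e. half-spaces of $\R$. Thus the hypothesis of Theorem~\ref{figv} is met for $n=1$ as well, and we again conclude that $u$ is affine. (As an alternative route for $n=1$ one may bypass this bookkeeping altogether: the blow-down $E_\infty$ of the $s$-minimal graph $E\subset\R^2$ is an $s$-minimal cone in $\R^2$, hence a half-plane by Theorem~\ref{THIS}, hence affine, so that $E$ itself is affine by Lemma~\ref{halfspace}, which again means $u$ is affine.)

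There is no genuine obstacle in the corollary itself: it is a formal consequence of results already established, with the substance sitting inside Theorems~\ref{figv} and~\ref{THIS}. The only point deserving a word of care is the $n=1$ verification, where one must make sure the degenerate cones $\varnothing$ and $\R$ do not escape the ``no singular cones'' hypothesis; this is exactly what the observation above that such cones have (empty, hence smooth) boundary takes care of.
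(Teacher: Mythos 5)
Your proposal is correct and follows exactly the route the paper takes: the corollary is stated there as an immediate combination of Theorem~\ref{figv} with Theorem~\ref{THIS}, the latter supplying the ``no singular cones'' hypothesis for $n=2$. Your explicit one-line verification of the hypothesis for $n=1$ (cones in $\R$ are half-lines up to null sets) is a harmless and reasonable addition that the paper leaves implicit.
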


\begin{center}
\begin{figure}[htb]
	\hspace{0.6cm}
	\begin{minipage}[b]{0.70\linewidth}
	\centering
	\includegraphics[width=0.70\textwidth]{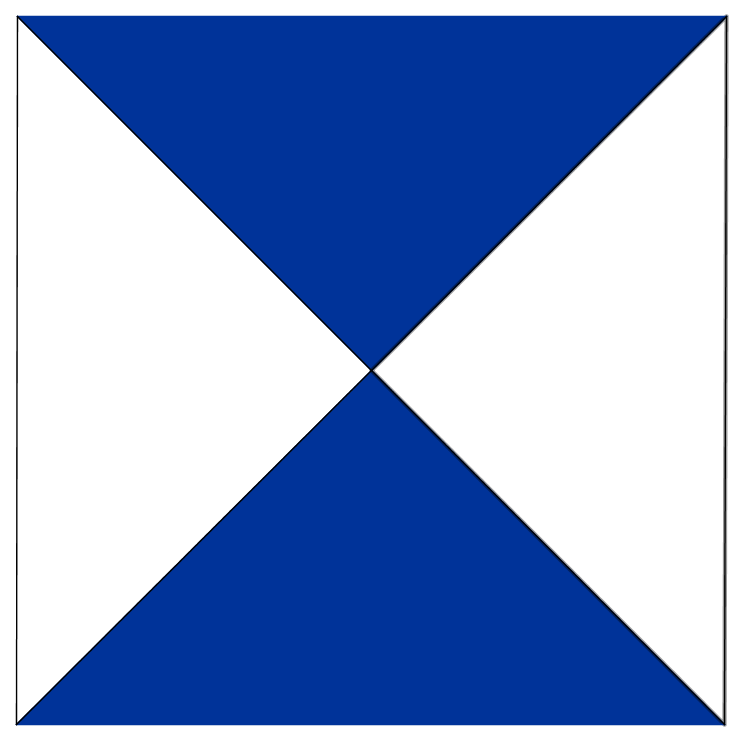}
	\caption{The cone $\mathcal{K}$}   
	\label{fign:Cone}
	\end{minipage}
	\end{figure}
	\end{center}
Let us first consider a simple example, given by the cone in the plane
	\[\mathcal{K} := \Big\{  (x,y)\in \R^2 \mbox{ s.t. } y^2>x^2\Big\},\]
see Figure \ref{fign:Cone}. 

\begin{prop}\label{THAT}
The cone $\mathcal{K}$ depicted in 
Figure \ref{fign:Cone} is not $s$-minimal in $\R^2$.
\end{prop}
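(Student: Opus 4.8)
\textbf{Proof plan for Proposition \ref{THAT}.}
The plan is to exhibit an explicit competitor for the cone $\mathcal{K}=\{(x,y)\in\R^2 \text{ s.t. } y^2>x^2\}$ and show that it strictly decreases the $s$-perimeter in some ball $B_R$, which contradicts $s$-minimality. The natural competitor is obtained by a ``rounding off'' of the two vertices of the cone at the origin: since $\mathcal{K}$ is the union of the top and bottom sectors of opening $\pi/2$ meeting only at the origin, one expects that cutting the corners (replacing $\mathcal{K}\cap B_r$ by a slightly ``fattened'' or ``pinched'' set near $0$) lowers the fractional perimeter. Concretely, first I would use that $\mathcal{K}$ is a cone, so by scaling $\mathrm{Per}_s(\mathcal{K},B_r)=r^{2-2s}\,\mathrm{Per}_s(\mathcal{K},B_1)$ for every $r>0$; hence if $\mathcal{K}$ were $s$-minimal in $\R^2$ it would be $s$-minimal in $B_1$, and it suffices to produce a competitor $F$ with $F\setminus B_1=\mathcal{K}\setminus B_1$ and $\mathrm{Per}_s(F,B_1)<\mathrm{Per}_s(\mathcal{K},B_1)$.

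The key step is the variational computation. I would take a one-parameter family $F_t$ that modifies $\mathcal{K}$ only inside a small ball $B_\rho$, $\rho<1$, near the origin — for instance the set whose boundary, instead of the four rays, connects them by short smooth arcs avoiding the origin on the ``fat'' side, so that $F_t\supset\mathcal{K}$ with $|F_t\setminus\mathcal{K}|\sim t$. Writing $g_t:=\mathrm{Per}_s(F_t,B_1)$, I would compute $\frac{d}{dt}g_t\big|_{t=0^+}$ in terms of the nonlocal mean curvature $H^s_{\mathcal{K}}$ (defined in \eqref{nmc}) integrated against the normal velocity of the deformation. The essential point is that at the two corner points the cone $\mathcal{K}$ has a genuinely singular behavior: approaching the origin along $\partial\mathcal{K}$ one checks, using the symmetry of $\mathcal{K}$ under reflection across both axes, that the nonlocal curvature does not vanish but has a sign — pushing the boundary outward (enlarging $\mathcal{K}$) strictly decreases $\mathrm{Per}_s$ because the set $\mathcal{K}$ occupies ``less than half'' of each small annulus $B_{2\delta}\setminus B_\delta$ near the origin, so that $\int_{B_{2\delta}\setminus B_\delta}(\chi_{\mathcal K}-\chi_{\mathcal K^{\mathrm C}})<0$ there. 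This yields $g_t<g_0$ for small $t>0$, the desired contradiction. An alternative, slightly slicker route is to compare directly $\mathrm{Per}_s(\mathcal K,B_1)$ with $\mathrm{Per}_s(F,B_1)$ where $F=\mathcal K\cup(B_\delta\cap\{x^2>y^2\})$, i.e. fill in a small diamond around the origin, and split the interaction integrals into the contributions that change; by the reflection symmetry $(x,y)\mapsto(y,x)$, which swaps $\mathcal K$ with (the closure of) its complement away from $\partial B_1$, the gain from removing the $E$–$E^{\mathrm C}$ interactions across the filled region beats the loss from the new interactions, for $\delta$ small.

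The main obstacle I anticipate is making the cancellation/sign argument fully rigorous: one must carefully separate the (finite, and favorable) contribution of the modification near the origin from the (a priori large, but unchanged or controlled) long-range interactions with $\mathcal{K}\setminus B_1$, and verify that the leading-order term in $\delta$ (or $t$) is strictly negative rather than merely nonpositive. This requires an honest estimate of quantities like $\int_{\mathcal{K}\cap(B_{2\delta}\setminus B_\delta)}\int_{\mathcal{K}^{\mathrm C}}|x-y|^{-2-2s}\,dx\,dy$ against the corresponding ``filled'' version, exploiting that the opening angle $\pi/2$ is strictly less than $\pi$. I would also need to confirm that $\mathrm{Per}_s(\mathcal{K},B_1)<+\infty$ (immediate since $2s<1$ in two dimensions and $\partial\mathcal{K}$ is Lipschitz away from the origin, with the origin contributing an integrable singularity) so that the comparison is meaningful. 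Once the strict inequality is established at one scale, the scaling relation upgrades it to non-$s$-minimality of $\mathcal{K}$ in $\R^2$, completing the proof.
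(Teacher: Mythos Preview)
Your approach has a genuine gap that stems from a symmetry you did not use: the reflection $(x,y)\mapsto(y,x)$ sends $\mathcal{K}$ to (the interior of) its complement. This has two consequences that break both of your proposed routes. First, along every smooth point of $\partial\mathcal{K}$ (i.e.\ on the four rays, away from the origin) the nonlocal mean curvature $H^s_{\mathcal{K}}$ vanishes identically --- the contribution of $\mathcal{K}$ and of $\mathcal{K}^{\mathrm C}$ to the integral in \eqref{nmc} are exchanged by this reflection, hence equal. So $\mathcal{K}$ \emph{is} a critical point of $\mathrm{Per}_s$, and your first-variation computation $\frac{d}{dt}g_t\big|_{t=0^+}$ gives exactly zero, not a strict sign. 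Second, your claim that ``$\mathcal{K}$ occupies less than half of each small annulus $B_{2\delta}\setminus B_\delta$ near the origin'' is false: the two opposite sectors have total opening $\pi$, so $\mathcal{K}$ occupies exactly half of every such annulus. The same reflection symmetry also makes your alternative competitor $F=\mathcal{K}\cup(B_\delta\cap\{x^2>y^2\})$ problematic: filling in the horizontal wedges is, by symmetry, energetically equivalent to emptying the vertical ones, and a direct splitting of the interactions does not produce an obvious strict sign at any order.

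The paper's argument sidesteps this obstruction entirely. Rather than trying to strictly decrease the perimeter, it adds a small square $M$ adjacent to the origin to form $\mathcal{K}'$ and shows, by an exact bookkeeping of interactions (using that the relevant regions are unbounded and congruent in pairs), that $\mathrm{Per}_s(\mathcal{K}',\Omega)=\mathrm{Per}_s(\mathcal{K},\Omega)$. Hence if $\mathcal{K}$ were $s$-minimal, so would $\mathcal{K}'$ be, and $\mathcal{K}'$ would have to satisfy the Euler--Lagrange equation at the origin. But the added square breaks the balance there --- at $0$ the set $\mathcal{K}'$ now genuinely outweighs its complement --- giving a contradiction. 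The point is precisely that $\mathcal{K}$ is a non-minimizing critical point, so one must go beyond first-order information; the paper does this by passing to a different critical-point test at a different set.
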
 

Notice that, by symmetry, one can prove that $\mathcal{K}$ satisfies \eqref{ELsmin}
(possibly in the viscosity sense). On the other hand, Proposition~\ref{THAT} gives that~${\mathcal{K}}$
is not $s$-minimal. This, in particular, provides an example of a set that
satisfies the Euler-Lagrange equation 
in~\eqref{ELsmin}, but is not~$s$-minimal (i.e.,
the Euler-Lagrange equation 
in~\eqref{ELsmin}
is implied by, but not necessarily equivalent to, the $s$-minimality property). 

\begin{proof}[Proof of Proposition~\ref{THAT}]
The proof of the non-minimality of $\mathcal{K}$ is due to an original idea by Luis Caffarelli.

Suppose by contradiction that the cone $\mathcal{K}$ is minimal in $\R^2$. 
We add to $\mathcal K$ a small square adjacent to the origin
(see Figure \ref{fign:Cone1}), and call $\mathcal{K}'$ the set obtained. Then $\mathcal{K}$ and $\mathcal{K}'$ have the same $s$-perimeter. This is due to the interactions considered in the $s$-perimeter functional and the unboundedness of the regions. We remark  that in Figure \ref{fign:Cone1} we represent bounded regions, of course, sets $A, B, C, D, A' ,B' ,C'$ and $D'$ are actually unbounded.
\begin{center}
\begin{figure}[htb]
	\hspace{0.5cm}
	\begin{minipage}[b]{0.95\linewidth}
	\centering
	\includegraphics[width=0.95\textwidth]{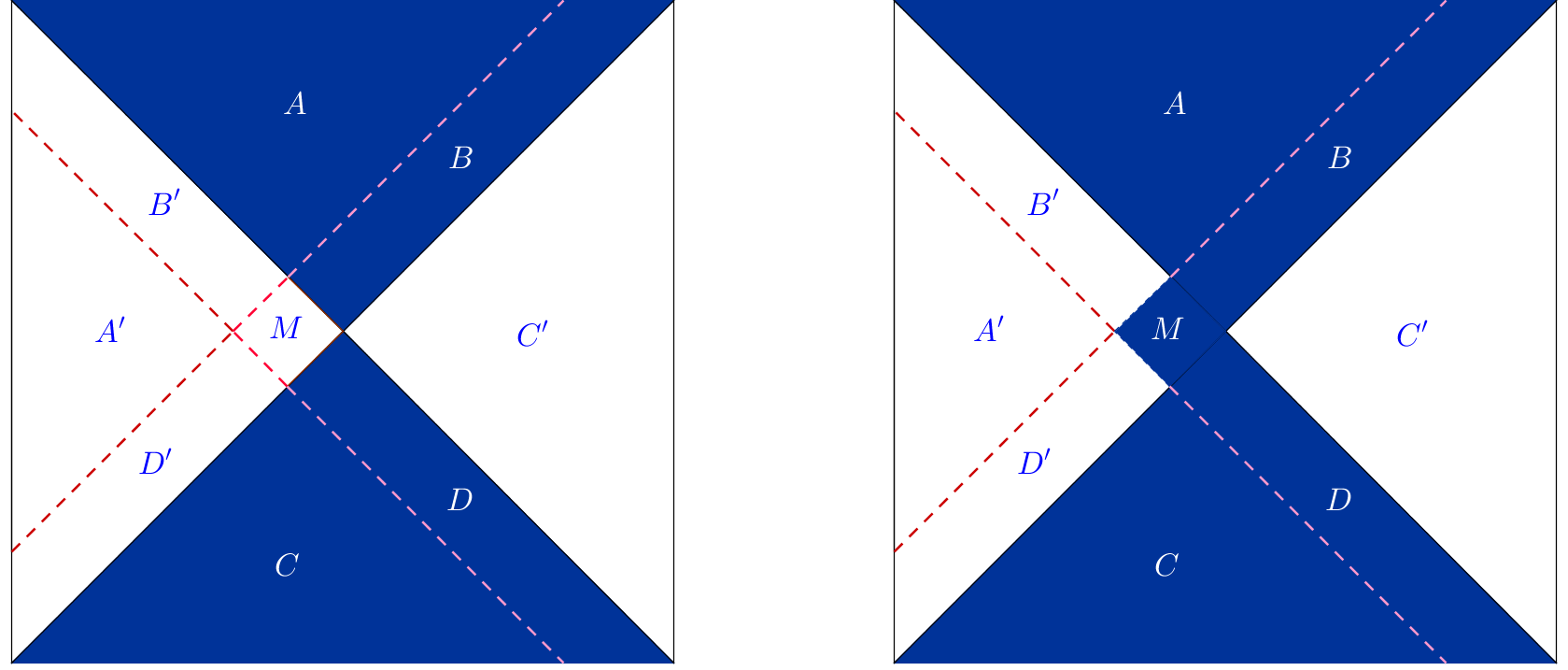}
	\caption{Interaction of $M$ with $A, B, C, D, A', B', C', D'$}   
	\label{fign:Cone1}
	\end{minipage}
\end{figure}
\end{center}

Indeed, we notice that in the first image, the white square $M$ interacts with the dark regions $A, B, C, D$, while in the second the now dark square $M$ interacts with the regions $A', B', C', D'$, and all the other interactions are unmodified. Therefore, the difference between the $s$-perimeter of $\mathcal{K}$ and that of $\mathcal{K'}$ consists only of the interactions $I(A,M)+I(B,M)+I(C,M)+I(D,M)-I(A',M)- I(B',M)-I(C',M)-I(D',M)$. But $A \cup B=A' \cup B'$ and $C\cup D=C'\cup D'$ (since these sets are all unbounded), therefore the difference is null, and the $s$-perimeter of $\mathcal{K}$ is equal to that of $\mathcal{K}'$. Consequently, $\mathcal{K}'$ is also $s$-minimal, and therefore it satisfies the Euler-Lagrange equation in \eqref{ELsmin} at the origin. But this leads to a contradiction, since the the dark region now contributes more than the white one, namely
	\[ \int_{\R^2} \frac{\chi_{\mathcal{K}'}(y) - \chi_{\R^2 \setminus \mathcal{K}'}(y)} {|y|^{2+s} } \, dy>0.\]
Thus $\mathcal{K}$ cannot be $s$-minimal, and this concludes our proof.
\end{proof}
\begin{center}
\begin{figure}[htb]
	\begin{minipage}[b]{0.75\linewidth}
	\centering
	\includegraphics[width=0.75\textwidth]{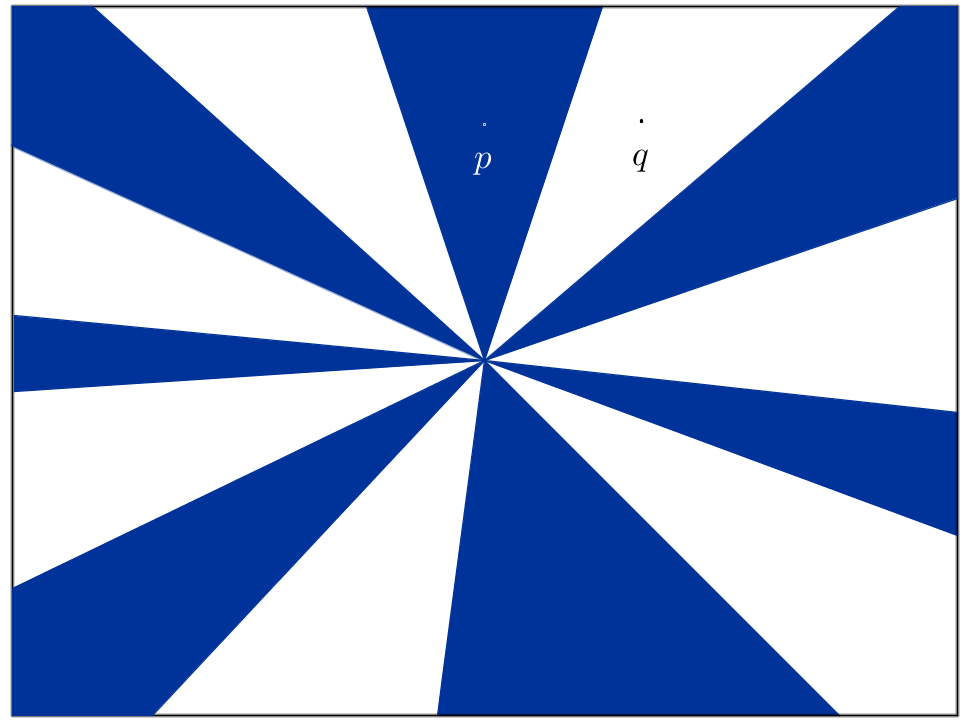}
	\caption{Cone in $\R^2$}   
	\label{fign:Cone2}
	\end{minipage}
\end{figure}
\end{center}

This geometric argument cannot be extended to a more general case (even, for instance, to a cone in $\R^2$ made of many sectors, see Figure \ref{fign:Cone2}).
As a matter of fact, the proof of Theorem~\ref{noconestwo}
will be completely different than the one of Proposition~\ref{THAT} and it will rely on
an appropriate domain perturbation argument.

The proof of Theorem~\ref{noconestwo} that we present here
is actually different than the original one in~\cite{SV13}.
Indeed,
in \cite{SV13}, the result was proved
by using the harmonic extension for the fractional Laplacian.
Here, the extension will not be used; furthermore, 
the proof follows the steps of Theorem \ref{dgdim2} and we will recall here just the main ingredients.

\begin{proof}[Proof of Theorem \ref{noconestwo}]

The idea of the proof is the following: if $E\subset \R^2$ is an $s$-minimal cone, then let $\tilde{E}$ be a perturbation of the set $E$ which coincides with a translation of $E$ in $B_{R/2}$ and with $E$ itself outside $B_R$. Then the difference between the energies of $\tilde{E}$ and $E$ tends to $0$ as $R\to +\infty$. This implies that also the energy of $E \cap \tilde{E}$ is arbitrarily close to the energy of $E$. On the other hand if $E$ is not a half-plane, the set $\tilde{E} \cap E$ can be modified locally to decrease its energy by a fixed small amount and we reach a contradiction.

The details of the proof go as follows.
Let \[ u:= \chi_E-\chi_{\R^2\setminus E}.\] {F}rom
definition \eqref{dguab} we have that
	\[ u(B_R,B_R)= 2 I(E\cap B_R,B_R\setminus E)\] and\[ u(B_R,B_R^{\C})=I(B_R\cap E,E^{\C}\setminus B_R) + I(B_R\setminus E,E\setminus B_R),\]
	thus 	
	\eqlab{ \label{peren1} \text{Per}_s (E,B_R) = \mathcal{K}_R (u),}
	where $\mathcal{K}_R (u) $ is given in \eqref{dgkenac} and $\text{Per}_s(E,B_R)$ is the $s$-perimeter functional defined in \eqref{nmspf1}. Then $E$ is $s$-minimal if $u$ is a minimizer of the energy $\K_R$ in any ball $B_R$, with $R>0$. 
	
	Now, we argue by contradiction, and suppose that $E$ is an $s$-minimal cone different from the half-space. Up to rotations, we may suppose that a sector of $E$ has an angle smaller than $\pi$ and is bisected by $e_2$. Thus there exists $M\geq 1$ and $p\in E\cap B_{M}$ on the $e_2$-axis such that $p\pm e_1 \in \R^2\setminus E$ (see
Figure \ref{fign:Cone2}). 

We take
$\varphi\in C^\infty_0(B_1)$, such that 
$\varphi (x)=1 $ in $B_{1/2}$.
For $R$ large (say $R > 8M$), we define 
\[ \Psi_{R,+}(y):=y+\varphi \Big(\frac{ y}{R}\Big)\,e_1 .\] 
We point out that,
for $R$ large, $\Psi_{R,+}$ is a diffeomorphism on~$\R^2$.

Furthermore, we define~$u_R^+(x):= u(\Psi_{R,+}^{-1}(x))$. Then
	\begin{equation*}
		\begin{aligned}
		&u_R^+(y)= u(y-e_1) &\text{ for } &p\in B_{2M} \\
	{\mbox{and }}\;\;
	& u_R^+(y)= u(y) &\text { for } &p \in \R^2 \setminus B_R.
		\end{aligned}
 \end{equation*}  
We recall the estimate obtained in \eqref{DG01}, that, combined with the minimality of $u$, gives  
 	\begin{equation*}	 {\mathcal{K}}_R( u_R^+)- {\mathcal{K}}_R( u)\leq \frac{C}{R^2} \mathcal{K}_R(u).\end{equation*}
But $u$ is a minimizer in any ball, and by the energy estimate in Theorem \ref{acenest1} we have that
   \[ \mathcal{K}_R(u_R^+)  -\mathcal{K}_R(u)  \leq CR^{-2s}.\]
This implies that
		\begin{equation} \label{urplus11} \lim_{R \to+ \infty} \mathcal{K}_R(u_R^+)  -\mathcal{K}_R(u) =0.
		\end{equation}
Let now
	\[ v_R(x):= \min \{ u(x), u_R^+(x) \} \quad \quad \text{ and } \quad \quad w_R(x):=\max \{ u(x), u_R^+(x) \}.\]
 We claim that $v_R$ is not identically $u$ nor $u_R^+$. Indeed
	\[\begin{split}  &u_R^+ (p) = u(p-e_1) = (\chi_{E} - \chi_{\R^2 \setminus E} )(p-e_1) =-1 \quad \mbox{and}  \\
				&u(p) = (\chi_{E} - \chi_{\R^2 \setminus E} )(p) = 1 .\end{split}\] 
On the other hand,
	\[\begin{split}  & u_R^+(p+e_1) = u(p)=1 \quad \mbox{and}  \\
& u(p+e_1) = (\chi_{E} - \chi_{\R^2\setminus E} )(p+e_1) = -1 .\end{split}\] 
By the continuity of $u$ and $u_R^+ $, we obtain that 
	\begin{equation}\label{neigP}
		v_R=u_R^+ < u \text{ in a neighborhood of } p \end{equation} and
		\begin{equation}\label{neigPe}  v_R =u <  u_R^+ \text{ in a neighborhood of } p+e_1.
		\end{equation}
Now, by the minimality property of~$u$,
	\[\mathcal K_R (u) \leq \mathcal K_R (v_R).\]  
Moreover (see e.g.
formula~(38) in~\cite{PSV13}), 
 	\[ \mathcal K_R (v_R)+\mathcal K_R (w_R)\le
\mathcal K_R (u) +\mathcal K_R (u_R^+).   \]
The latter two formulas give that 
	\begin{equation}
	\mathcal K_R (v_R) \leq \mathcal K_R (u_R^+).
	\label{wrur}
	\end{equation} 
We claim that 
\begin{equation}\label{I*8ui}
{\mbox{$v_R$ is not minimal for $\mathcal K_{2M}$}}\end{equation}
with respect to compact perturbations in $B_{2M}$.
Indeed, assume by contradiction that $v_R$ is minimal, then in $B_{2M}$ both $v_R$ and $u$ would satisfy the same equation. 
Recalling~\eqref{neigPe}
and applying the Strong Maximum Principle, 
it follows that $u=v_R$ in $B_{2M}$, which contradicts \eqref{neigP}.
This establishes~\eqref{I*8ui}.

Now, we consider a minimizer~$u_R^*$ of~$\mathcal K_{2M}$
among the competitors that agree with~$v_R$ outside~$B_{2M}$.
Therefore, we can define
	\[  \delta_R  : = \mathcal K_{2M} (v_R)- \K_{2M} (u_R^*). \]
In light of~\eqref{I*8ui}, we have that~$\delta_R>0$.

The reader can now compare
Step 3 in the proof of Theorem \ref{dgdim2}. There we proved that 
\begin{equation}\label{89*9*9}
{\mbox{$\delta_R$
remains bounded away from zero as~$R\to+\infty$.}}\end{equation}
Furthermore, since $u_R^*$ and $v_R$ agree outside $B_{2M}$ we obtain that 
	\[\mathcal K_R (u_R^*) +\delta_R = \mathcal K_R (v_R).\]
Using this, \eqref{wrur} and the minimality of $u$, we obtain that
	\[\delta_R  =\mathcal K_R (v_R) - \mathcal K_R (u_R^*) \leq \mathcal K_R (u_R^+) -\mathcal K_R (u).\]
Now we send $R$ to infinity, recall \eqref{urplus11} and~\eqref{89*9*9},
and
we reach a contradiction. Thus, $E$ is a half-space, and this concludes the proof of Theorem \ref{noconestwo}.
\end{proof}

As already mentioned,
the regularity theory for $s$-minimal sets is still widely open.
Little is known beyond Theorems~\ref{THM 5.8}
and~\ref{THM 5.9}, so it would be very
interesting to further investigate the regularity of~$s$-minimal surfaces
in higher dimension
and for small~$s$.\bigskip

It is also interesting to recall that if the $s$-minimal surface~$E$
is a subgraph of some function~$u:\R^{n-1}\to\R$
(at least in the vicinity of some point~$x_0=(x_0', u(x'_0))\in\partial E$)
then the Euler-Lagrange~\eqref{ELsmin} can be written
directly in terms of~$u$.
For instance (see formulas~(49) and~(50) in~\cite{BFV12}),
under appropriate smoothness assumptions on~$u$, formula~\eqref{ELsmin}
reduces to
\begin{eqnarray*}
0&=&
\int_{\Rn} \frac{\chi_{\Rn\setminus E}(x_0+y) -\chi_{E} (x_0+y) }{|y|^{n+2s}}\,dy
\\ &=&\int_{\R^{n-1}} F\left( \frac{u(x'_0+y')-u(x'_0)}{|y'|}\right)\,
\frac{\zeta(y')}{|y'|^{n-1+2s}}\,dy'+\Psi(x'_0),\end{eqnarray*}
for suitable~$F$ and~$\Psi$, and a cut-off function~$\zeta$
supported in a neighborhood of~$x_0'$.

\begin{center}
\begin{figure}[htb]
        \begin{minipage}[b]{0.95\linewidth}
        \centering
        \includegraphics[width=0.95\textwidth]{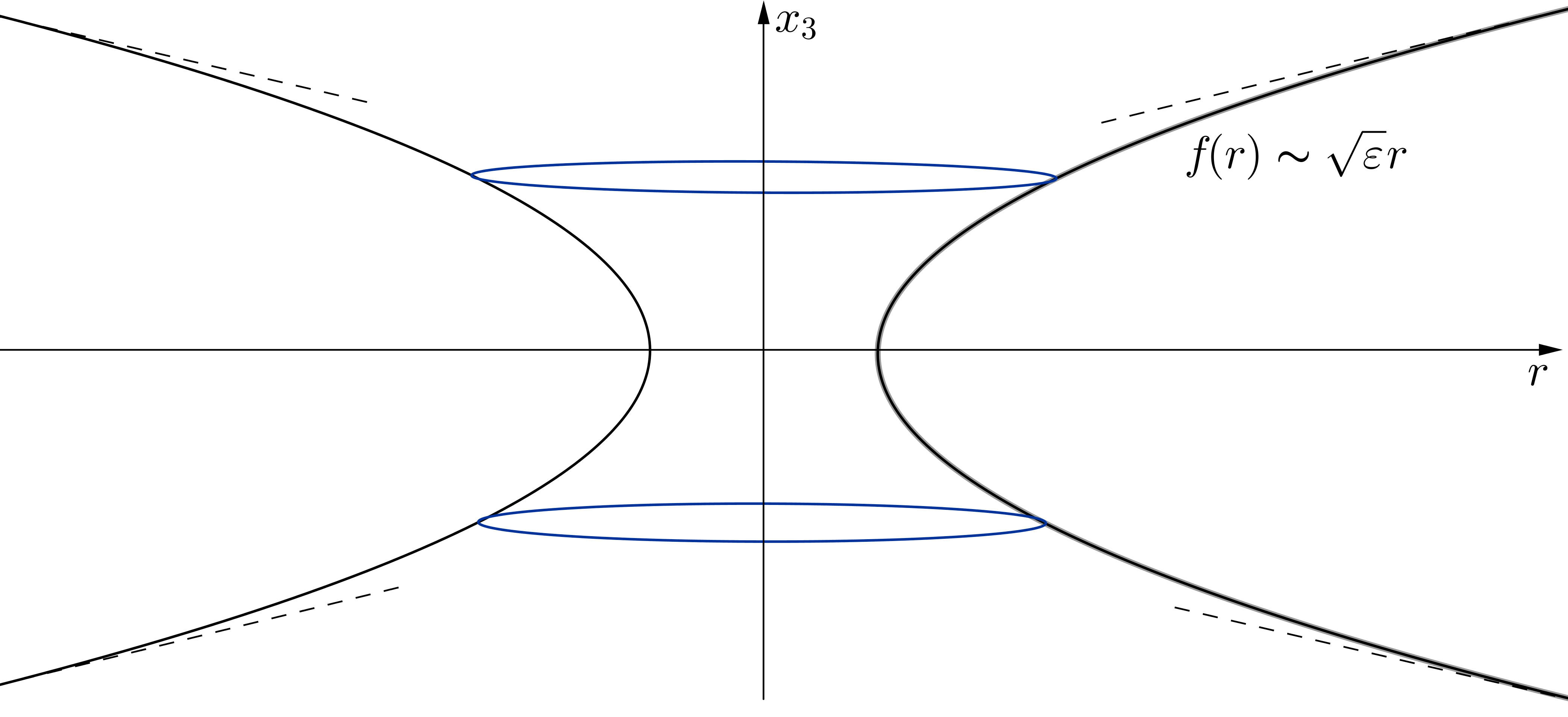}
        \caption{A nonlocal catenoid}
        \label{FRAC_CATENOID}
        \end{minipage}
\end{figure}
\end{center}

Regarding the regularity problems
of the $s$-minimal surfaces, let us mention the recent papers~\cite{Lawson}
and~\cite{Lawson2}. Among other very interesting results,
it is proved there that suitable singular cones
of symmetric type are unstable up to dimension~$6$
but become stable in dimension~$7$ for small~$s$
(these cones can be seen as the nonlocal analogue
of the Lawson cones in the classical minimal surface theory,
and the stability property is in principle weaker than minimality,
since it deals with the positivity of the second order derivative
of the functional).

This phenomenon may suggest the conjecture that the
$s$-minimal surfaces 
may develop singularities in dimension~$7$ and higher
when~$s$ is sufficiently small.
\bigskip

In~\cite{Lawson2}, interesting examples of
surfaces with vanishing nonlocal mean curvature
are provided for~$s$ sufficiently close to~$1/2$. Remarkably, the surfaces in~\cite{Lawson2}
are the nonlocal analogues of the catenoids, but, differently
from the classical case
(in which catenoids grow logarithmically),
they approach a singular cone at infinity, see Figure~\ref{FRAC_CATENOID}.

Also, these nonlocal catenoids are highly unstable from the
variational point of view, since they possess infinite Morse index
(differently from the standard catenoid, which has Morse index
equal to one, i.e. it is, roughly speaking,
a minimizer in any functional direction with the exception of one).
\bigskip

Moreover, in~\cite{Lawson2}, there are also examples
of surfaces with vanishing nonlocal mean curvature
that can be seen as the nonlocal analogues of two
parallel hyperplanes. Namely,
for~$s$ sufficiently close to~$1/2$, there exists
a surface of revolution made of two sheets
which are the graph of a radial function~$f=\pm f(r)$.
When~$r$ is small, $f$ is of the order of~$1+(\frac12-s) r^2$,
but for large~$r$ it becomes of the order of~$\sqrt{\frac12-s}\cdot r$.
That is, the two sheets ``repel each other'' and produce a linear
growth at infinity. When~$s$ approaches $1/2$ the two sheets are
locally closer and closer to two parallel hyperplanes,
see Figure~\ref{FRAC_2_SHEETS}.

The construction above may be extended to build families
of surfaces with vanishing nonlocal mean curvature
that can be seen as the nonlocal analogue of $k$
parallel hyperplanes, for any~$k\in\N$. These $k$-sheet surfaces
can be seen as the bifurcation, as~$s$ is close to~$1/2$,
of the parallel hyperplanes~$\{x_n = a_i\}$, for~$i\in\{1,\dots,k\}$,
where the parameters~$a_i$ satisfy the constraints
\begin{equation}\label{1.9}
a_1 >\dots> a_k,\qquad \sum_{i=1}^k a_i=0\end{equation}
and the balancing relation
\begin{equation}\label{1.10}
a_i =2\sum_{{1\le j\le n}\atop{j\ne i}}
\frac{(-1)^{i+j+1}}{a_i-a_j}.
\end{equation}
\begin{center}
\begin{figure}[htb]
        \begin{minipage}[b]{0.95\linewidth}
        \centering
        \includegraphics[width=0.95\textwidth]{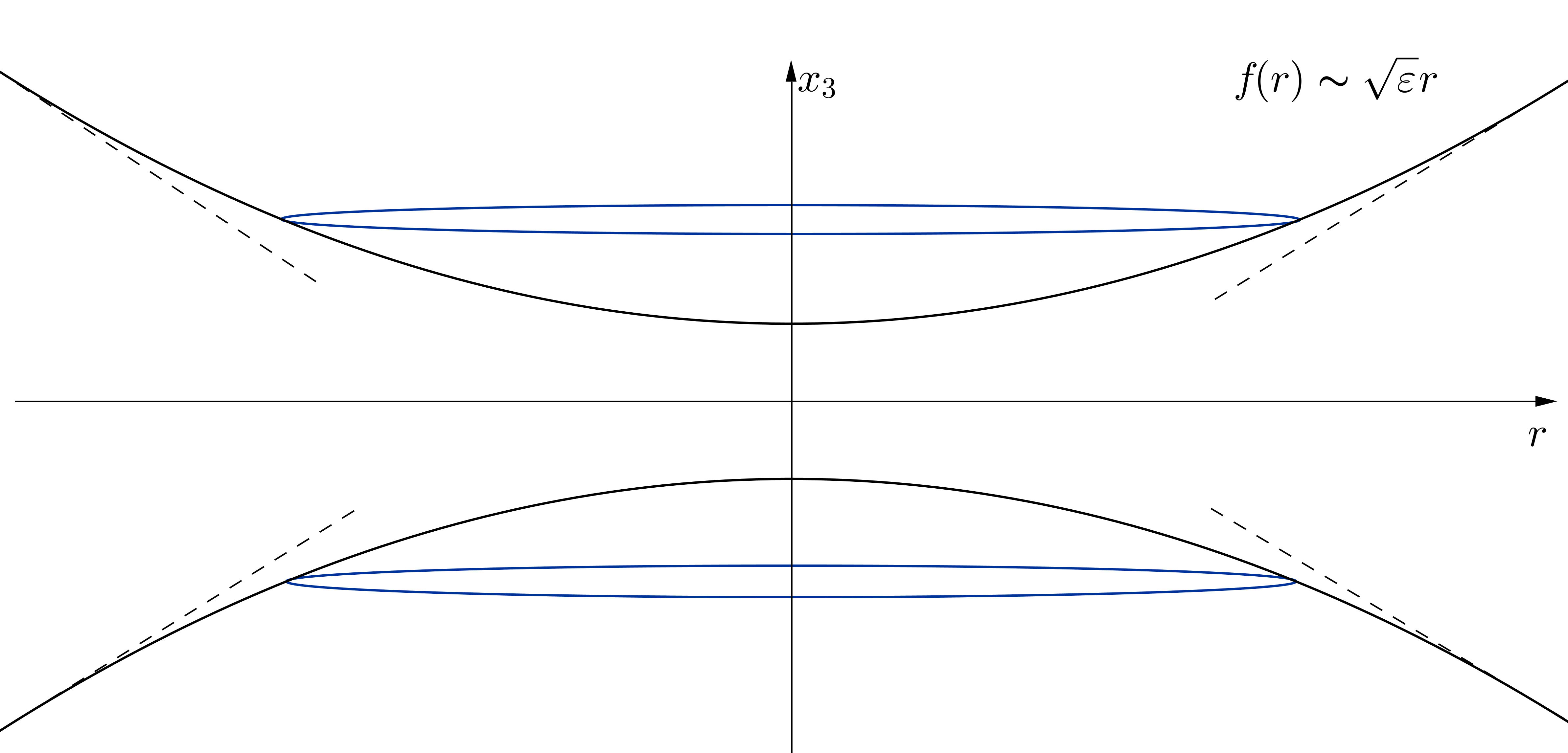}
        \caption{A two-sheet surface with vanishing fractional mean curvature}
        \label{FRAC_2_SHEETS}
        \end{minipage}
\end{figure}
\end{center}
It is actually quite interesting to observe that
solutions of~\eqref{1.10} correspond to (nondegenerate)
critical points of
the functional
$$ E(a_1,\dots,a_k):=\frac12\sum_{i=1}^k a_i^2+
\sum_{{1\le j\le n}\atop{j\ne i}}(-1)^{i+j}\log|a_i-a_j|$$
among all the $k$-ples~$(a_1,\dots,a_k)$ that satisfy~\eqref{1.9}.
\bigskip

These bifurcation techniques
rely on a careful expansion of the fractional perimeter functional
with respect to normal perturbations. That is, if~$E$
is a (smooth) set with vanishing fractional mean curvature,
and~$h$ is a smooth and compactly supported perturbation,
one can define, for any~$t\in\R$,
$$ E_h(t):= \{ x+t h(x)\nu(x),\; x\in\partial E\},$$
where~$\nu(x)$ is the exterior normal of~$E$ at~$x$.
Then, the second variation of the perimeter of~$E_h(t)$
at~$t=0$ is (up to normalization constants)
\begin{eqnarray*}
&& \int_{\partial E} \frac{h(y)-h(x)}{|x-y|^{n+2s}}
\,d{\mathcal{H}}^{n-1}(y) + h(x)\,
\int_{\partial E} \frac{\big(\nu(x)-\nu(y)\big)\cdot\nu(x)}{|x-y|^{n+2s}}
\,d{\mathcal{H}}^{n-1}(y) \\ &=&
\int_{\partial E} \frac{h(y)-h(x)}{|x-y|^{n+2s}}
\,d{\mathcal{H}}^{n-1}(y) + h(x)\,
\int_{\partial E} \frac{1-\nu(x)\cdot\nu(y)}{|x-y|^{n+2s}}
\,d{\mathcal{H}}^{n-1}(y)
.\end{eqnarray*}
Notice that the latter integral is non-negative, since
$\nu(x)\cdot\nu(y)\le1$.
The quantity above, in dependence of the perturbation~$h$,
is called, in jargon, ``Jacobi operator''.
It encodes an important geometric information,
and indeed, as~$s\to1/2$, it approaches the classical operator
$$ \Delta_{\partial E} h +|A_{\partial E}|^2\, h,$$
where~$\Delta_{\partial E}$ is the Laplace-Beltrami operator
along the hypersurface~$\partial E$ and~$|A_{\partial E}|^2$
is the sum of the squares of the
principal curvatures.
\bigskip
Other interesting sets that possess constant nonlocal mean curvature
with the structure of onduloids have been recently constructed 
in \cite{cabre-fall-weth} and \cite{davilaaa1}. This type of sets are periodic in a given direction 
and their construction has perturbative nature (indeed, the sets are 
close to a slab in the plane).

It is interesting to remark that the planar objects constructed
in~\cite{cabre-fall-weth} have no counterpart in the local framework, since
hypersurfaces of constant classical mean curvature with an onduloidal structure
only exist
in~$\R^n$ with~$n\ge3$: once again, this is a typical nonlocal effect,
in which the nonlocal mean curvature at a point is influenced by
the global shape of the set.\bigskip

While unbounded sets with constant nonlocal mean curvature
and interesting geometric features
have been constructed in~\cite{Lawson2, cabre-fall-weth},
the case of smooth and bounded sets is always geometrically trivial.
As a matter of fact, it has been recently proved independently in~\cite{cabre-fall-weth} and~\cite{figalli-maggi-ciraolo}
that bounded sets with smooth boundary
and constant mean curvature
are necessarily balls (this is the analogue
of a celebrated result by Alexandrov for surfaces
of constant classical mean curvature).
\bigskip 

\section{Boundary regularity}
The boundary regularity of the nonlocal minimal surfaces
is also a very interesting, and surprising, topic.
Indeed, differently from the classical case,
nonlocal minimal surfaces do not always attain boundary data
in a continuous way (not even in low dimension).
A possible boundary behavior is, on the contrary,
a combination of stickiness to the boundary
and smooth separation from
the adjacent portions. Namely, the nonlocal minimal surfaces
may have a portion that sticks at the boundary and that
separates from it in a~$C^{1,\frac12+s}$-way.
\begin{center}
\begin{figure}[htb]
        \begin{minipage}[b]{0.65\linewidth}
        \centering
        \includegraphics[width=0.65\textwidth]{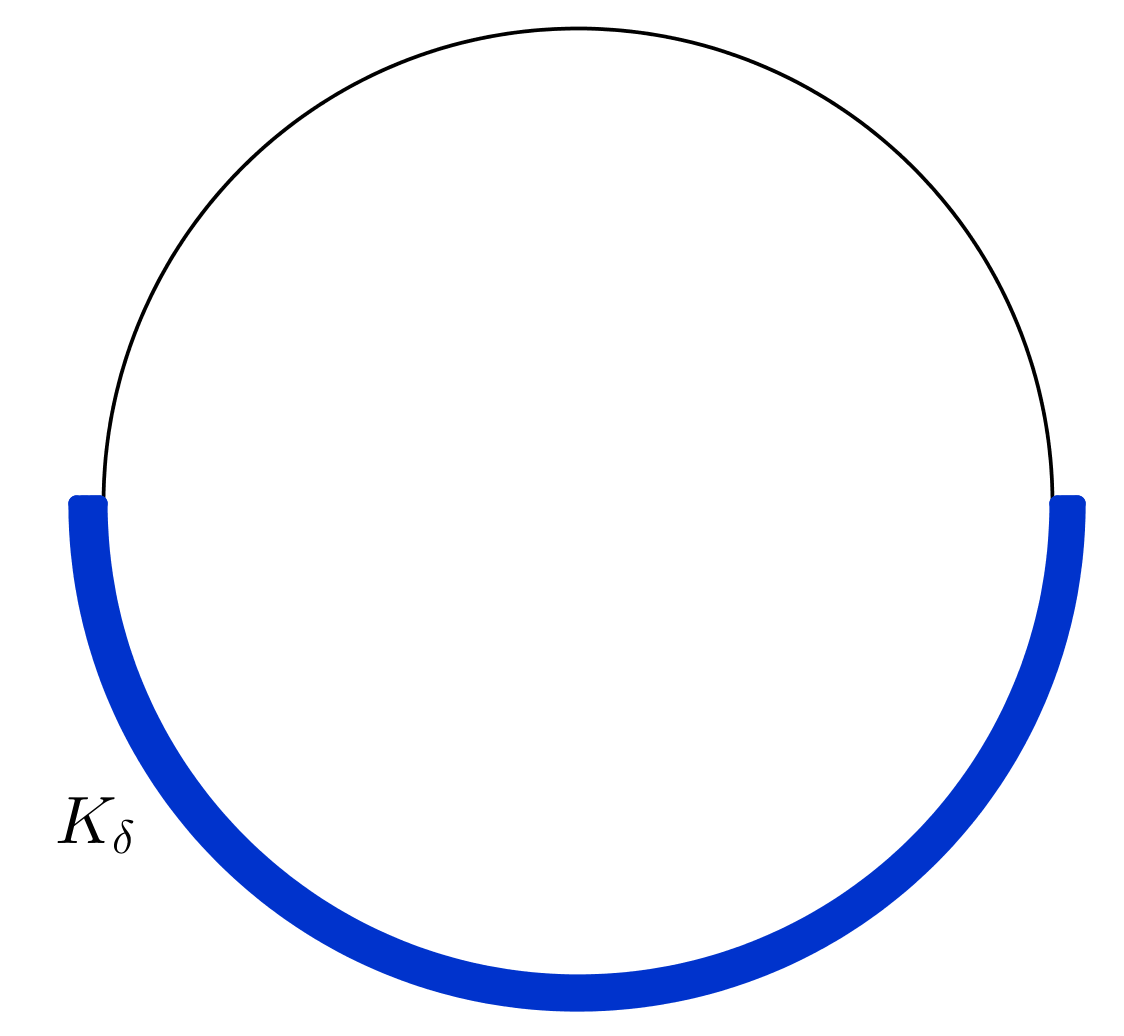}
        \caption{Stickiness properties of Theorem~\ref{STI-DSV-1}.}
        \label{eST1}
        \end{minipage}
\end{figure}
\end{center}
As an example, we can
consider, for any~$\delta>0$, the spherical cap
$$ K_\delta := \big( B_{1+\delta}\setminus B_1\big)\cap \{x_n<0\},$$
and obtain the following stickiness result:

\begin{thm}\label{STI-DSV-1}
There exists~$\delta_0>0$, depending on~$n$ and~$s$, such that
for any~$\delta\in(0,\delta_0]$, we have that
the $s$-minimal set in~$B_1$
that coincides with~$K_\delta$ outside~$B_1$
is~$K_\delta$ itself.

That is, the $s$-minimal set with datum~$K_\delta$ outside~$B_1$
is empty inside~$B_1$.
\end{thm}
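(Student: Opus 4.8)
The plan is to prove the stronger fact that, for $\delta\le\delta_0(n,s)$, the empty configuration inside~$B_1$ is the \emph{unique} $s$-minimizer with exterior datum~$K_\delta$, by comparing it with an arbitrary competitor and showing the comparison is strict.

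\textbf{Step 1: reduction to an integral inequality.} Let $E$ be $s$-minimal in~$B_1$ with $E\setminus B_1=K_\delta$, and set $A:=E\cap B_1$, so $E=K_\delta\cup A$ with $A\subseteq B_1$ and $A\cap K_\delta=\emptyset$. Since $K_\delta$ itself (empty inside $B_1$) is an admissible competitor, a direct bookkeeping of the interaction terms in the definition~\eqref{nmspf1} of $\text{Per}_s(\cdot,B_1)$ yields
\[
\text{Per}_s(E,B_1)-\text{Per}_s(K_\delta,B_1)=I\big(A,\Rn\setminus A\big)-2\,I(A,K_\delta),
\]
where $I(A,\Rn\setminus A)=\text{Per}_s(A,\Rn)$ is the full fractional perimeter of $A$. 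By minimality the left-hand side is $\le 0$, so the theorem is equivalent to the assertion that, for $\delta\le\delta_0(n,s)$,
\[
\text{Per}_s(A,\Rn)\;>\;2\,I(A,K_\delta)\qquad\text{for every measurable }A\subseteq B_1\text{ with }|A|>0.
\]
Note that minimality is no longer used beyond this point: everything reduces to this purely integral-geometric inequality.

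\textbf{Step 2: the thin cap is cheap, except near the boundary.} Since $K_\delta\subseteq B_{1+\delta}\setminus B_1$ and $s\in(0,1/2)$, writing the interaction in polar coordinates around $\partial B_1$ and using that the exponent $1-2s$ is positive gives the uniform bound $I(A,K_\delta)\le I(B_1,B_{1+\delta}\setminus B_1)\le C_0\,\delta^{1-2s}$. Combined with the fractional isoperimetric inequality $\text{Per}_s(A,\Rn)\ge c_n\,|A|^{(n-2s)/n}$ (the endpoint $p\to1$ of~\eqref{OK:sob:p}), this already proves the inequality of Step~1 whenever $|A|$ is bounded below by a fixed fraction of $|B_1|$; likewise, if $A$ has distance $d>0$ from $\partial B_1$, then $I(A,K_\delta)\le C\,|A|\,\delta\,d^{-n-2s}$, again dominated by $c_n|A|^{(n-2s)/n}$ for $\delta$ small. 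So the only delicate regime is that of a set $A$ of \emph{small} measure \emph{concentrated against $\partial B_1$}.

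\textbf{Step 3: two geometric reflections.} Decompose $\Rn\setminus A=(B_1\setminus A)\sqcup K_\delta\sqcup K_\delta^+\sqcup(\Rn\setminus B_{1+\delta})$, where $K_\delta^+:=(B_{1+\delta}\setminus B_1)\cap\{x_n>0\}$ is the mirror image of $K_\delta$ across $\{x_n=0\}$; the inequality of Step~1 then reads $I(A,B_1\setminus A)+I(A,K_\delta^+)+I(A,\Rn\setminus B_{1+\delta})>I(A,K_\delta)$. For the part $A^+:=A\cap\{x_n>0\}$, one uses $|x-\sigma(y)|<|x-y|$ whenever $x_n>0$, $y_n<0$ (with $\sigma(y)=(y',-y_n)$); since $\sigma(K_\delta)=K_\delta^+\subseteq\Rn\setminus(A\cup K_\delta)$, the mass of $K_\delta$ seen from $A^+$ is strictly smaller than that of $K_\delta^+$. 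For the part $A^-:=A\cap\{x_n<0\}$, one uses instead the inward folding $\Phi(r\omega):=(2-r)\omega$, $r\in[1,1+\delta]$, $\omega\in S^{n-1}$, which maps $K_\delta$ onto the inner cap $K_\delta':=(B_1\setminus B_{1-\delta})\cap\{x_n<0\}$; the elementary identity
\[
|x-r\omega|^2-|x-(2-r)\omega|^2=4(r-1)\,(1-x\cdot\omega)\ \ge\ 0\qquad(x\in B_1,\ r\ge1)
\]
shows $\Phi$ only increases the kernel, while its Jacobian $\big(\tfrac{2-r}{r}\big)^{n-1}$ differs from $1$ by at most $C\delta$. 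Hence, up to a factor $1+C\delta$, the mass of $K_\delta$ seen from $A^-$ is dominated by that of $K_\delta'\cap(\Rn\setminus A)\subseteq\Rn\setminus(A\cup K_\delta)$.

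\textbf{Step 4: closing the estimate — the main obstacle.} It remains to absorb two losses into the still-unused nonnegative terms $I(A,\Rn\setminus B_{1+\delta})$ and $I(A,B_1\setminus A)$: the geometric factor $1+C\delta$, and the overlap $A\cap K_\delta'$ — a self-interaction of $A$ inside a cap of width $\delta$, controlled by $I(A,A\cap K_\delta')\le I(B_1,K_\delta')\le C\,\delta^{1-2s}$. The genuinely hard point is the borderline regime isolated in Step~2: when $A$ is small and clustered near a point of $\partial B_1$ on the lower hemisphere, both $I(A,K_\delta)$ and $\text{Per}_s(A,\Rn)$ are of the \emph{same} order $|A|^{(n-2s)/n}$, so one must show that the folding comparison of Step~3 is sharp enough that, after the $O(\delta)$ and $O(\delta^{1-2s})$ corrections, the ratio stays strictly below $1/2$. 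Carrying this out quantitatively — and then fixing $\delta_0=\delta_0(n,s)$ so small that every error term is dominated — gives the strict inequality of Step~1, hence the uniqueness of the empty configuration, i.e.\ the stickiness statement.
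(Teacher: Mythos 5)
Your Step~1 is correct: indeed
\[
\text{Per}_s(K_\delta\cup A, B_1)-\text{Per}_s(K_\delta, B_1)
= I\big(A,\R^n\setminus A\big)-2\,I(A,K_\delta),
\]
so minimality of $K_\delta$ is equivalent to the interaction inequality you state, and this is a genuinely different strategy from the paper's. The paper uses the comparison with the empty competitor only to obtain the crude energy bound $\text{Per}_s(E_\delta,B_1)\le\eta$; it then exploits the \emph{structure} of $s$-minimizers through the Clean Ball Condition of~\cite{CRS10} to force $E_\delta$ into a thin collar around $\partial B_1$, and concludes by sliding a ball to a first touching point and invoking the Euler--Lagrange equation (vanishing nonlocal mean curvature) there. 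No integral inequality of your kind is ever proved.

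Your Steps~3--4, however, contain a flaw rather than merely an unfinished quantification. The inward folding sends $K_\delta$ onto the inner cap $K_\delta'\subset B_1$, so what it yields for $x\in A^-$ is
\[
\int_{K_\delta}\frac{dy}{|x-y|^{n+2s}}\ \le\ (1+C\delta)\int_{K_\delta'}\frac{dz}{|x-z|^{n+2s}},
\]
and whenever $x\in K_\delta'$ --- i.e.\ within distance $\delta$ of $\partial B_1$, which is exactly the regime you single out as hard --- the right-hand side is $+\infty$, since $|x-z|^{-n-2s}$ is not integrable near $z=x$ for $s\in(0,\tfrac12)$. The kernel comparison is correct pointwise but loses all control precisely where it is needed. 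Consistently with this, the bound you give for the overlap term, $I(A,A\cap K_\delta')\le I(B_1,K_\delta')\le C\delta^{1-2s}$, is false: if $|A\cap K_\delta'|>0$ then $I(A,A\cap K_\delta')$ is infinite, and $I(B_1,K_\delta')$ is infinite in any case since $K_\delta'\subset B_1$ and the integrand is non-integrable on the diagonal. To salvage the approach one would have to turn the \emph{strictness} of the folding inequality (the factor $4(r-1)(1-x\cdot\omega)$ is bounded away from zero once $x$ is at fixed distance from $\partial B_1$) into a quantitative gain large enough to beat the self-interaction --- and that is exactly the estimate that is missing, not a routine bookkeeping of $O(\delta)$ errors. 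The paper's route through the Clean Ball Condition and the sliding ball is what sidesteps having to establish such a borderline inequality at all.
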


The stickiness property of Theorem~\ref{STI-DSV-1}
is depicted in Figure~\ref{eST1}.

Other stickiness examples occur at the sides of slabs in the plane.
For instance,
given~$M>1$, one can 
consider the $s$-minimal set~$E_M$ in~$(-1,1)\times\R$
with datum outside~$(-1,1)\times\R$ given by
the ``jump'' set~$J_M:=J^-_M \cup J^+_M$,
where
\begin{eqnarray*}
&& J^-_M:= (-\infty,-1]\times (-\infty,-M)
\\{\mbox{and }}&&J^+_M:= [1,+\infty)\times (-\infty,M).\end{eqnarray*}
Then, if~$M$ is large enough,
the minimal set~$E_M$ sticks at the boundary of the slab:

\begin{thm}\label{STI-DSV-2}
There exist~$M_o>0$, $C_o>0$, depending on~$s$, such that
if~$M\ge M_o$ then
\begin{eqnarray}
&& [-1,1)\times [C_o M^{\frac{1+2s}{2+2s}},M]\subseteq E_M^c \label{SL-011}
\\{\mbox{and }}&& (-1,1]\times [-M,-C_o M^{\frac{1+2s}{2+2s}}]\subseteq E_M.
\label{SL-012}
\end{eqnarray}
\end{thm}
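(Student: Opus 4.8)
The statement in question is Theorem~\ref{STI-DSV-2}, the stickiness result at the sides of a slab. Here is how I would approach its proof.

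\textbf{Overall strategy.} The plan is to construct explicit barriers (competitors) that force the $s$-minimal set $E_M$ to stay away from the vertical walls $\{x_1=\pm1\}$ by the prescribed polynomial amount $C_oM^{\frac{1+2s}{2+2s}}$. By the symmetry of the datum $J_M$ under the map $(x_1,x_2)\mapsto(-x_1,-x_2)$, it suffices to prove one of the two inclusions, say~\eqref{SL-011}: the other follows immediately. So I would focus on showing that the region $[-1,1)\times[C_oM^{\frac{1+2s}{2+2s}},M]$ lies in the complement of $E_M$. The mechanism is the same as in Theorem~\ref{STI-DSV-1}: one shows that adding (or here, removing) a suitable chunk near the wall decreases the fractional perimeter, using the fact that the datum is very ``lopsided'' (it is empty on the upper-right near the wall $\{x_1=1\}$ at heights below $M$, but present as $J^-_M$ only for $x_1\le-1$ and $x_2<-M$, which is far away).

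\textbf{Key steps, in order.} First, I would record the monotonicity/ordering properties of $s$-minimal sets with ordered data: since $J_M\subseteq \R\times(-\infty,M)$ up to the far-away lower tail, one gets a rough a priori bound $E_M\subseteq (-1,1)\times\R$ intersected with something like $\{x_2\lesssim M\}$, plus the analogous lower bound; these come from comparison with half-planes as competitors (as in the proof of Theorem~\ref{acenest1}-type arguments and the barriers of Lemma~\ref{bW1}). Second — the heart of the matter — I would build the competitor: fix a height $h=C_oM^{\frac{1+2s}{2+2s}}$ and let $F:=E_M\setminus \big([-1,1)\times[h,M]\big)$. One must estimate $\mathrm{Per}_s(F,(-1,1)\times\R)-\mathrm{Per}_s(E_M,(-1,1)\times\R)$. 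Removing the slab-chunk $S:=[-1,1)\times[h,M]$ changes the energy by
\[
\mathrm{Per}_s(F)-\mathrm{Per}_s(E_M)= I(S\cap E_M,\, \R^2\setminus E_M)\;-\;I(S\cap E_M,\, E_M\setminus S),
\]
where $I$ is the interaction in~\eqref{nmsi1}. The gain term $I(S\cap E_M,\R^2\setminus E_M)$ is large because $\R^2\setminus E_M$ contains the entire region $[1,+\infty)\times(-\infty,M)$ (the datum $J_M$ is empty there) sitting right next to the wall; a direct computation of $\int_{S}\int_{\{x_1>1,\,x_2<M\}}|x-y|^{-2-2s}$ gives a contribution that grows, for the portion of $S$ at distance $t$ from the wall, like $t^{-2s}$ integrated over $t\in(0,2)$ and over vertical extent $\sim (M-h)$, i.e. of order $(M-h)$. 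The loss term $I(S\cap E_M, E_M\setminus S)$ is controlled by the ``surface area'' of $S$, which has horizontal extent $2$ and vertical extent $(M-h)$, and by the a priori containment of $E_M$; the dangerous part is the interaction with $E_M$ below height $h$, which after passing to polar coordinates is bounded by something like $(M-h)\cdot h^{\,?}$ plus $h\log$-type terms — the precise exponent is what dictates the power $\frac{1+2s}{2+2s}$. Balancing ``gain $\gtrsim M-h$'' against ``loss $\lesssim$ (a quantity increasing in $h$)'' and choosing $h=C_oM^{\frac{1+2s}{2+2s}}$ with $C_o$ large forces the gain to dominate, contradicting minimality of $E_M$ unless $S\cap E_M$ is empty (up to measure zero). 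Third, iterate/sweep: once the topmost strip is removed, repeat to conclude the full inclusion~\eqref{SL-011}, and then invoke the symmetry to get~\eqref{SL-012}.

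\textbf{Expected main obstacle.} The delicate point is the sharp bookkeeping of the interaction integrals that produces exactly the exponent $\frac{1+2s}{2+2s}$ rather than a cruder power. One has to track carefully how $E_M$ behaves at intermediate heights (between $-M$ and $M$) near the walls — in particular one needs that $E_M$ does not itself already stick out too far, which requires the a priori estimates of the first step to be quantitative and to feed back consistently into the energy comparison. A secondary technical nuisance is that the domain $(-1,1)\times\R$ is unbounded, so all interaction integrals must be shown to converge and the ``removal of a chunk'' competitor must be a legitimate admissible competitor (agreeing with $J_M$ outside the slab), which is automatic here but must be stated. I would expect to borrow the barrier construction and the polar-coordinate estimates essentially verbatim from the slab analysis, so the novelty is purely in the optimization of constants; for the full rigorous details I would refer to the paper where Theorem~\ref{STI-DSV-2} is established.
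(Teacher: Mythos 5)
There is a genuine error in the geometric setup that undermines the core of your argument. You claim that ``$\R^2\setminus E_M$ contains the entire region $[1,+\infty)\times(-\infty,M)$ (the datum $J_M$ is empty there)'' and use this as the source of the decisive gain term. But this region is exactly $J^+_M=[1,+\infty)\times(-\infty,M)$, which is \emph{part of the datum}, so $[1,+\infty)\times(-\infty,M)\subseteq E_M$, not $E_M^c$. The region that is empty at heights $x_2\in[h,M]$ is the \emph{left} exterior half-plane $(-\infty,-1]\times[-M,+\infty)$. Consequently, when you remove the slab-chunk $S$ and estimate the energy change, the interaction of $S$ with the right wall (full) produces a \emph{loss}, and the interaction with the left wall (empty) produces a \emph{gain}; to leading order these two contributions are symmetric and nearly cancel, because the only asymmetry comes from the far-away jump levels $x_2=\pm M$. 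Your one-shot estimate ``gain $\gtrsim M-h$, loss $\lesssim$ (something growing in $h$)'' therefore does not hold once the geometry is read correctly: both terms are of size $O(M-h)$ and the conclusion reduces to a delicate cancellation, which is precisely where the sharp exponent $\frac{1+2s}{2+2s}$ has to come from. (There is also a sign slip in your formula for $\mathrm{Per}_s(F)-\mathrm{Per}_s(E_M)$ — it should be $I(S\cap E_M,E_M\setminus S)-I(S\cap E_M,E_M^c)$ — but since you then call the correct term the gain, that one is only a typo.)

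Beyond the factual error, your strategy is also genuinely different from the paper's, and it is worth noting why the paper does not take the energy-comparison route. The paper first establishes a weaker, $O(M)$-order stickiness \eqref{SL-011-PRE}--\eqref{SL-012-PRE} by sliding balls of radius $c_oM$ and comparing the sign of the fractional mean curvature at a putative interior touching point; there the relevant asymmetry is between the nearby $\{x_2>M\}$ (empty) and the far $\{x_2<-M\}$ (full). It then upgrades to the sharp polynomial exponent by sliding carefully chosen \emph{portions} of balls in the intermediate range $|x_2|\in[C_oM^{\frac{1+2s}{2+2s}},c_oM]$, again arguing pointwise through the Euler--Lagrange inequality. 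This viscosity/barrier method works with pointwise integral cancellations at a contact point rather than with global energy competitors, which is what lets one extract the precise exponent without having to control the near-cancellation of wall interactions in an integrated sense. If you want to pursue a variational competitor argument instead, you would need to choose $S$ with a much more adapted shape (and exploit the a priori weak stickiness as an input), not the full rectangle $[-1,1)\times[h,M]$, and you would need to isolate the $O(h^{-\text{something}})$ net gain that survives the left/right cancellation; as written, the rectangle competitor does not produce a sign.
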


The situation of Theorem~\ref{STI-DSV-2}
is described in Figure~\ref{eST2}.
\begin{center}
\begin{figure}[htb]
        \begin{minipage}[b]{0.95\linewidth}
        \centering
        \includegraphics[width=0.95\textwidth]{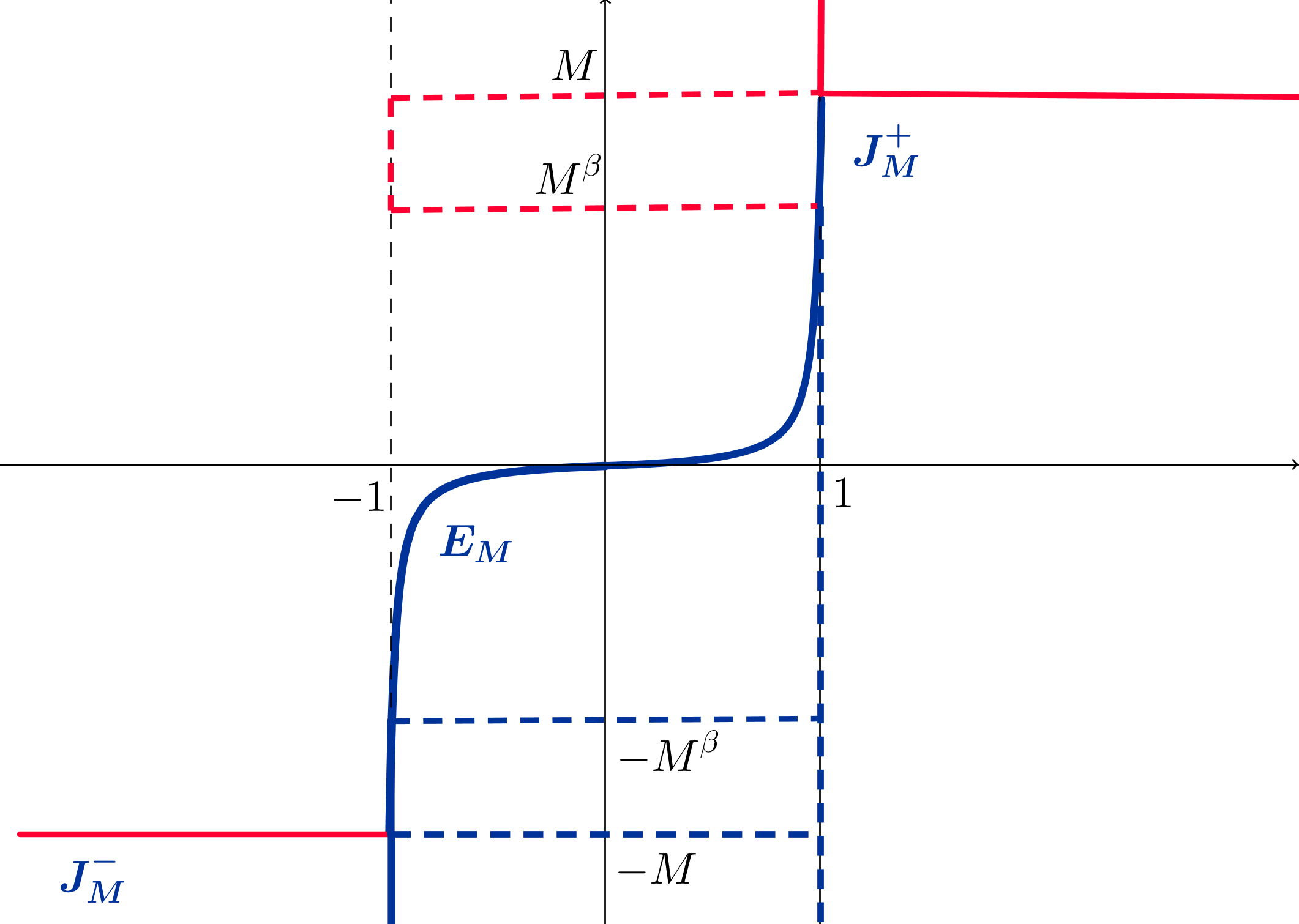}
        \caption{Stickiness properties of Theorem~\ref{STI-DSV-2}.}
        \label{eST2}
        \end{minipage}
\end{figure}
\end{center}
We mention that the ``strange''
exponent~$\frac{1+2s}{2+2s}$ in~\eqref{SL-011}
and~\eqref{SL-012} is optimal.\bigskip

For the detailed proof of
Theorems~\ref{STI-DSV-1} and~\ref{STI-DSV-2},
and other results on the 
boundary behavior
of nonlocal minimal surfaces, see~\cite{STICK}.
Here, we limit ourselves to give some 
heuristic motivation and a sketch of the proofs.
\bigskip

As a motivation for the (somehow unexpected) stickiness
property at the boundary,
one may look at Figure~\ref{eST1} and argue like this.
In the classical case, corresponding to~$s=1/2$,
independently on the width~$\delta$,
the set of minimal perimeter in~$B_1$ will always be
the half-ball~$B_1\cap \{x_n<0\}$. 

Now let us take~$s<1/2$.
Then, the half-ball~$B_1\cap \{x_n<0\}$ cannot be an $s$-minimal
set, since the nonlocal mean curvature, for instance, at the origin
cannot vanish. Indeed, the origin ``sees'' the complement 
of the set in a larger proportion than the set itself.
More precisely, in~$B_1$ (or even in~$B_{1+\delta}$)
the proportion of the set is
the same as the one of the complement, but outside~$B_{1+\delta}$
the complement of the set is dominant. Therefore, to ``compensate''
this lack of balance, the $s$-minimal set for~$s<1/2$ has to bend
a bit. Likely, the $s$-minimal set in this case will have the tendency
to become slightly convex at the origin, so that, at least nearby,
it sees a proportion of the set which is larger than the proportion of the complement
(we recall that, in any case, the proportion of the complement will
be larger at infinity, so the set needs to compensate at least near
the origin). But when~$\delta$ is very small, it turns
out that this compensation is not sufficient to obtain the desired
balance between the set and its complement: therefore,
the set has to ``stick'' to the half-sphere, in order to drop
its constrain to satisfy a vanishing nonlocal mean curvature equation.

Of course some quantitative estimates
are needed to make this argument work,
so we describe the sketch of the rigorous proof of
Theorem~\ref{STI-DSV-1} as follows.

\begin{proof}[Sketch of the proof of Theorem~\ref{STI-DSV-1}]
First of all, one checks that for any fixed~$\eta>0$,
if~$\delta>0$ is small enough, we have that
the interaction between~$B_1$ and~$B_{1+\delta}\setminus B_1$
is smaller than~$\eta$. In particular, by comparing with
a competitor that is empty in~$B_1$, by minimality we obtain that
\begin{equation}\label{ETA:DE} {\text{Per}}_s(E_\delta, B_1)\le \eta,\end{equation}
where we have denoted by~$E_\delta$ the
$s$-minimal set in~$B_1$
that coincides with~$K_\delta$ outside~$B_1$.

Then, one checks that
\begin{equation}\label{small:n}
{\mbox{the boundary of~$E_\delta$
can only lie in a small neighborhood of~$\partial B_1$}}\end{equation}
if~$\delta$ is sufficiently small.

Indeed, if, by contradiction,
there were points of~$\partial E_\delta$
at distance larger than~$\epsilon$
from~$\partial B_1$, then one could find
two balls of radius comparable to~$\epsilon$,
whose centers lie at distance larger than~$\epsilon/2$
from~$\partial B_1$ and at mutual distance smaller than~$\epsilon$,
and such that one ball is entirely contained in~$B_1\cap E_\delta$
and the other ball is entirely contained in~$B_1\setminus E_\delta$
(this is due to a 
Clean Ball Condition, see Corollary 4.3 in~\cite{CRS10}).
As a consequence,
${\text{Per}}_s(E_\delta, B_1)$ is bounded from below
by the interaction of these two balls, which is at least of
the order of~$\epsilon^{n-2s}$. Then, 
we obtain a contradiction with~\eqref{ETA:DE}
(by choosing~$\eta$ much smaller than~$\epsilon^{n-2s}$,
and taking~$\delta$ sufficiently small).

This proves~\eqref{small:n}. {F}rom this, it follows
that 
\begin{equation}\label{HJ:pr:ba}
\begin{split}
&{\mbox{the whole  set~$E_\delta$ must lie in a small neighborhood of~$\partial B_1$.}}\end{split}\end{equation}
Indeed, if this were not so, by~\eqref{small:n} the set~$E_\delta$
must contain a ball of radius, say~$1/2$. Hence,
${\text{Per}}_s(E_\delta, B_1)$ is bounded from below
by the interaction of this ball against~$\{x_n>0\}\setminus B_1$,
which would produce a contribution of order one,
which is in contradiction with~\eqref{ETA:DE}.

Having proved~\eqref{HJ:pr:ba}, one can use it to
complete the proof of
Theorem~\ref{STI-DSV-1} employing a geometric argument.
Namely, one considers the ball~$B_\rho$,
which is outside~$E_\delta$ for small~$\rho>0$, in virtue of~\eqref{HJ:pr:ba},
and then enlarges~$\rho$ untill it touches~$\partial E_\delta$.
If this contact occurs at some point~$p\in B_1$,
then the nonlocal mean curvature of~$E_\delta$ at~$p$ must be zero.
But this cannot occur (indeed, we know by~\eqref{HJ:pr:ba}
that the contribution of~$E_\delta$ to the nonlocal mean curvature
can only come from a small neighborhood of~$\partial B_1$,
and one can check, by estimating integrals, that this is not
sufficient to compensate the outer terms in which the complement of~$E_\delta$
is dominant).

As a consequence, no touching point between~$B_\rho$
and~$\partial E_\delta$ can occur in~$B_1$, which shows that~$E_\delta$
is void inside~$B_1$ and completes the proof of
Theorem~\ref{STI-DSV-1}.
\end{proof}

As for the proof of Theorem~\ref{STI-DSV-2}, 
the main arguments are based on sliding a ball of suitably large radius
till it touches the set, with careful quantitative estimates.
Some of the details are as follows (we refer to~\cite{STICK}
for the complete arguments).

\begin{proof}[Sketch of the proof of Theorem~\ref{STI-DSV-2}]
The first step is to prove
a weaker form of stickiness as the one claimed in Theorem~\ref{STI-DSV-2}.
Namely, one shows that
\begin{eqnarray}
&& [-1,1)\times [c_oM\,,M]\subseteq E_M^c \label{SL-011-PRE}
\\{\mbox{and }}&& (-1,1]\times [-M,\,-c_o M]\subseteq E_M,
\label{SL-012-PRE}
\end{eqnarray}
for some~$c_o\in(0,1)$. Of course, the statements in~\eqref{SL-011}
and~\eqref{SL-012} are stronger than
the ones in~\eqref{SL-011-PRE}
and~\eqref{SL-012-PRE} when~$M$ is large, since~${\frac{1+2s}{2+2s}}<1$,
but we will then obtain them later in a second step.

To prove~\eqref{SL-011-PRE},
one takes balls of radius~$c_oM$ and centered at~$\{x_2=t\}$,
for any~$t\in [c_o M,\,M]$. One slides these balls from left to right,
till one touches~$\partial E_M$. When~$M$ is large enough
(and~$c_o$ small enough) this contact point cannot lie
in~$\{|x_1|<1\}$. This is due to the fact that
at least the sliding ball lies outside~$E_M$,
and the whole~$\{x_2>M\}$ lies outside~$E_M$ as well.
As a consequence, these contact points see a proportion of~$E_M$
smaller than the proportion of the complement
(it is true that
the whole of~$\{x_2<-M\}$ lies inside~$E_M$,
but this contribution comes from further away than the ones just
mentioned, provided that~$c_o$ is small enough).
Therefore, contact points cannot satisfy a vanishing mean curvature
equation and so they need to lie on the boundary of the domain
(of course, careful quantitative estimates are
necessary here, see~\cite{STICK},
but we hope to have given an intuitive sketch of the computations
needed).

In this way, one sees that all the portion
$[-1,1)\times [c_oM\,,M]$
is clean from the set~$E_M$
and so~\eqref{SL-011-PRE} is established
(and~\eqref{SL-012-PRE} can be proved similarly).

Once~\eqref{SL-011-PRE} and~\eqref{SL-012-PRE} are established, one uses them
to obtain the strongest form expressed
in~\eqref{SL-011} and~\eqref{SL-012}. For this,
by~\eqref{SL-011-PRE} and~\eqref{SL-012-PRE},
one has only to take care of points
in~$\{ |x_2|\in [C_o M^{ \frac{1+2s}{2+2s} }, \,c_oM]\}$.
For these points, one can use again a sliding method,
but, instead of balls, 
one has to use suitable surfaces obtained by appropriate
portions of balls and adapt the calculations in order to evaluate
all the contributions arising in this way.

The computations are not completely obvious (and once again we refer
to~\cite{STICK} for full details), but the idea is, once again,
that contact points that are
in the set $\{ |x_2|\in [C_o M^{ \frac{1+2s}{2+2s} }, \,c_oM]\}$
cannot satisfy the balancing relation prescribed by
the vanishing nonlocal mean curvature equation.
\end{proof}

The stickiness property discussed above also has an interesting
consequence in terms of the ``geometric stability'' of
the flat $s$-minimal surfaces. 
For instance, rather surprisingly, 
the flat lines in the plane are ``geometrically
unstable'' nonlocal minimal surfaces,
in the sense that an arbitrarily small and compactly supported
perturbation can produce a
stickiness phenomenon at the boundary of the domain.
Of course, the smaller the perturbation, the smaller
the stickiness phenomenon, but it is quite relevant that
such a stickiness property
can occur for arbitrarily small (and ``nice'')
perturbations. This means that $s$-minimal flat objects,
in presence of a perturbation, may not only ``bend''
in the center of the domain, but rather ``jump''
at boundary points as well.
\bigskip

To state this phenomenon in a mathematical framework, one can consider,
for fixed~$\delta>0$ the planar sets
\begin{eqnarray*}
&& H:=\R\times(-\infty,0),\\
&& F_-:=(-3,-2)\times [0,\delta)\\
{\mbox{and }}&&
F_+:= (2,3)\times [0,\delta).\end{eqnarray*} 
One also fixes a set~$F$
which contains~$H\cup F_-\cup F_+$
and denotes by~$E$ be the
$s$-minimal set in~$(-1,1)\times\R$ among all the sets that coincide
with~$F$ outside~$(-1,1)\times\R$. 
\begin{center}
\begin{figure}[h]
        \hspace{1.0cm}
        \begin{minipage}[b]{0.95\linewidth}
        \centering
        \includegraphics[width=0.95\textwidth]{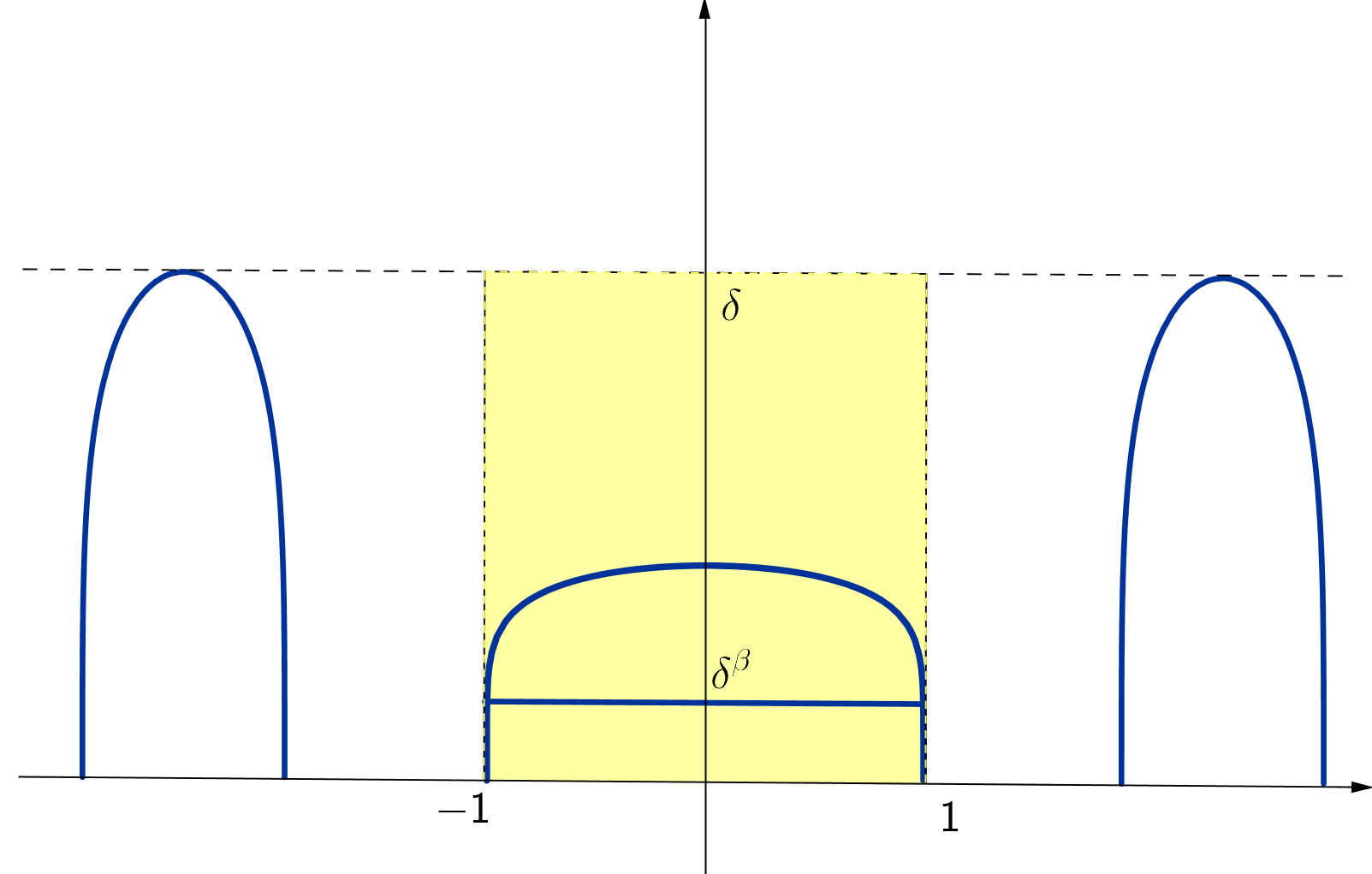}
        \caption{The stickiness/instability property in Theorem~\ref{UNS},
with~$\beta:=\frac{2+\epsilon_0}{1-2s}$}
        \label{PUNS}
        \end{minipage}
\end{figure}
\end{center}
Then, this set~$E$ sticks at the boundary of the domain, according
to the next result:

\begin{thm}\label{UNS}
Fix~$\epsilon_0>0$ arbitrarily small.
Then, there exists~$\delta_0>0$, possibly depending on~$\epsilon_0$,
such that, for any~$\delta\in(0,\delta_0]$,
$$ E\supseteq (-1,1)\times (-\infty, \delta^{\frac{2+\epsilon_0}{1-2s}} ].$$
\end{thm}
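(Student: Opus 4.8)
The plan is to construct inside the slab~$S:=(-1,1)\times\R$ an explicit \emph{subsolution} of the nonlocal mean curvature equation which already occupies the whole of~$(-1,1)\times(-\infty,\delta^{\beta}]$, with $\beta:=\frac{2+\epsilon_0}{1-2s}$, and then to trap it below the minimizer~$E$ by a comparison argument. As a preliminary, soft remark one sees that $E\supseteq(-1,1)\times(-\infty,0)$: indeed $F\supseteq H=\{x_2<0\}$, the half-plane~$H$ is $s$-minimal (its nonlocal mean curvature~\eqref{nmc} vanishes by oddness), and a standard comparison for $s$-minimal sets (in the spirit of the maximum principles of Theorems~\ref{THM-MA-1} and~\ref{THM-MA-1-STRONG}, obtained by the submodularity of~$\mathrm{Per}_s$ together with the minimality of~$E$) forces $E$ to contain~$H$ inside~$S$ as well.

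The only quantitative input needed about the datum is the ``pull'' of the two far-away bumps. Since $F_-\cup F_+$ has area of order~$\delta$ and lies at distance of order~$1$ from~$S$, there is $c_0=c_0(s)>0$ such that $\int_{F_-\cup F_+}|X-y|^{-(2+2s)}\,dy\ge c_0\,\delta$ for every~$X\in S$ with $0\le x_2\le 1/2$; that is, the contribution of $F_-\cup F_+$ to the nonlocal mean curvature of any set containing them is, at such points, positive and at least~$2c_0\delta$. This is the term that must dominate the errors created by the barrier near the lateral sides of~$S$.

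For the barrier I would start from the explicit function with constant fractional Laplacian. By the computation of Section~\ref{ctfrlap} (in the generalized form $\mathcal U=(1-|x|^2)_+^p$), the quantity $(-\Delta)^{1/2+s}\big[(1-t^2)_+^{1/2+s}\big]$ is a \emph{positive constant} on~$(-1,1)$; hence, for a large $A=A(s)$ to be fixed, $\phi(x_1):=A\,\delta\,(1-x_1^2)_+^{1/2+s}$ satisfies $(-\Delta)^{1/2+s}\phi\equiv A\,c_1\,\delta$ on~$(-1,1)$. Setting $\Sigma_0:=\{x_2<\phi(x_1)\}\cup F_-\cup F_+$, the nonlocal mean curvature along $\{x_2=\phi(x_1)\}$ is, to leading order where $\phi$ and $\nabla\phi$ are small, the one-dimensional expression $-\,\mathrm{const}\cdot(-\Delta)^{1/2+s}\phi(x_1)$ plus the bump term (this is the linearization of~\eqref{ELsmin} around a flat graph, in the spirit of the formula displayed near the end of Chapter~\ref{nlms}); choosing $A$ large enough makes this $\le -(A\,\tilde c-2c_0)\,\delta<0$, so that $\Sigma_0$ is a strict subsolution in the interior of~$S$. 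The genuinely delicate point — and the place where the non-optimal exponent is spent — is the behaviour near $\{x_1=\pm1\}$, where $\phi$ detaches from the exterior datum $\{x_2=0\}$ at the natural $C^{1,1/2+s}$ rate and the linearization is no longer valid. There I would modify~$\phi$ on a boundary layer of width~$\ell$ so as to keep the attained height~$h$ at least~$\delta^{\beta}$ throughout~$(-1,1)$; the modification produces a nonlocal-curvature error that, by scaling of the kernel $|\cdot|^{-(2+2s)}$, is controlled by a negative power of~$\ell$, and requiring it to stay below the gain~$c_0\delta$ ties~$h$ and~$\ell$ together. Optimizing these exponents — and spending an arbitrarily small amount~$\epsilon_0$ in the bookkeeping — yields precisely the admissible height $h=\delta^{\frac{2+\epsilon_0}{1-2s}}$. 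I expect this boundary-layer construction to be the main obstacle, and I would carry it out following~\cite{STICK}.

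With such a $\Sigma$ at hand — a strict subsolution in (a neighborhood of)~$S$ with $\Sigma\setminus S=H\cup F_-\cup F_+\subseteq F=E\setminus S$, and containing $(-1,1)\times(-\infty,\delta^{\beta}]$ — the conclusion follows from the comparison principle for $s$-minimal sets. Since $E\cup\Sigma$ agrees with~$F$ outside~$S$, minimality of~$E$ gives $\mathrm{Per}_s(E)\le\mathrm{Per}_s(E\cup\Sigma)$; since removing $E\cap\Sigma$ from~$\Sigma$ only takes away a set compactly contained in~$S$, the (strict) subsolution property gives $\mathrm{Per}_s(\Sigma)\le\mathrm{Per}_s(E\cap\Sigma)$, with equality only if $\Sigma\subseteq E$ up to a null set. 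Adding these to the submodularity inequality $\mathrm{Per}_s(E\cup\Sigma)+\mathrm{Per}_s(E\cap\Sigma)\le\mathrm{Per}_s(E)+\mathrm{Per}_s(\Sigma)$ forces equalities throughout, hence $\Sigma\subseteq E$. Therefore $E\supseteq\Sigma\supseteq(-1,1)\times(-\infty,\delta^{\beta}]$, which is the assertion of Theorem~\ref{UNS}; one finally picks $\delta_0=\delta_0(\epsilon_0)$ small enough that all the smallness conditions in the construction (in particular $A\delta_0<1$ and the boundary-layer estimates) hold.
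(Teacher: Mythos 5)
Your overall architecture (an explicit barrier built from a profile with computable fractional Laplacian, anchored by the $O(\delta)$ ``pull'' of $F_\pm$, then trapped under $E$ by comparison) is indeed the strategy behind Theorem~\ref{UNS}. But as written the proposal contains a sign error that breaks the comparison, and it omits the one step that actually produces the exponent. On the sign: a lower barrier $\Sigma$, to be slid up from inside $E$ until a first interior touching point $p$, must satisfy $H^s_\Sigma(p)>0$. Indeed at such a point $E$ is touched from \emph{inside}, so the viscosity form of the Euler--Lagrange equation gives $H^s_E(p)\le 0$, while $\Sigma\subseteq E$ gives $H^s_E(p)\ge H^s_\Sigma(p)$; the contradiction requires \emph{positive} nonlocal mean curvature of the barrier. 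Equivalently, the energy inequality you invoke, $\mathrm{Per}_s(\Sigma)\le\mathrm{Per}_s(\Sigma\setminus A)$ for $A=\Sigma\setminus E$, is the variational subsolution property of Definition~2.3 in~\cite{CRS10}, which by Theorem~5.1 there corresponds to $H^s\ge0$, not $H^s<0$. You instead take $A$ \emph{large} so that the concavity of $A\delta(1-x_1^2)_+^{1/2+s}$ dominates and $H^s_{\Sigma_0}\le-(A\tilde c-2c_0)\delta<0$: this is the wrong inequality for the inclusion $\Sigma\subseteq E$ (it is the condition for an \emph{upper} barrier), and it also defeats the mechanism of the theorem. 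The whole point is that the bump gain $+c_0\delta$ must \emph{dominate} the negative curvature created by lifting the profile, and it is precisely this constraint that caps the attainable height; wanting the curvature term to win removes the only source of positivity.

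The second, and more fundamental, issue is that the step where the exponent $\frac{2+\epsilon_0}{1-2s}$ is born — the construction of a barrier that stays at height $\ge\delta^{\beta}$ \emph{up to the lateral boundary} $\{x_1=\pm1\}$, where it must jump down to the datum $\{x_2=0\}$ while keeping $H^s>0$ against the negative contribution of the missing slab $\{|x_1|>1,\ 0<x_2<h\}$ — is exactly the part you defer to~\cite{STICK}. Your interior profile vanishes at $x_1=\pm1$ (and its gradient blows up there for $s<1/2$, so the linearization you rely on fails in the very region that matters), and the ``boundary-layer modification of width $\ell$'' is described only by the assertion that optimizing unnamed exponents yields $\delta^{\frac{2+\epsilon_0}{1-2s}}$. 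Since the quantitative content of Theorem~\ref{UNS} \emph{is} this exponent, the proposal as it stands is a plausible plan rather than a proof: to complete it you would need to (i) reverse the sign condition and build the barrier from the sublinear, $x_+^s$-type ``self-sustaining'' profiles that detach from zero with positive nonlocal mean curvature, (ii) carry out the kernel estimates near $x_1=\pm1$ that tie the admissible height to $\delta$, and (iii) replace the submodularity argument by a genuine sliding argument, taking care to exclude touching points on $\partial\big((-1,1)\times\R\big)$, where the Euler--Lagrange equation is not available.
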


The stickiness/instability property in Theorem~\ref{UNS}
is depicted in Figure~\ref{PUNS}. We remark that
Theorem~\ref{UNS}
gives a rather precise quantification
of the size of the stickiness in terms of the size
of the perturbation: namely the size of the stickiness in Theorem~\ref{UNS}
is larger than the size of the perturbation
to the power~$\beta:=\frac{2+\epsilon_0}{1-2s}$,
for any~$\epsilon_0>0$ arbitrarily small.
Notice that~$\beta\to+\infty$
as~$s\to1/2$, consistently
with the fact that
classical minimal surfaces do not stick at the boundary.
\bigskip

The proof of Theorem~\ref{UNS}
is based on the construction of suitable auxiliary barriers.
These barriers are used to detach a portion of the set
in a neighborhood of the origin and their construction
relies on some compensations of nonlocal integral terms.
In a sense, the building blocks of these barriers are
``self-sustaining solutions'' that
can be seen as the geometric counterparts of
the $s$-harmonic function~$x_+^s$ discussed in Section~\ref{shf1}.

Indeed, roughly speaking, like the function~$x_+^s$,
these barriers ``see'' a proportion of the set
in~$\{x_1<0\}$ larger than what is produced by their tangent plane,
but a proportion smaller than that at infinity, due to their sublinear
behavior. Once again, the computations needed to
check such a balancing conditions are a bit involved,
and we refer to~\cite{STICK} for the complete details.
\begin{center}
\begin{figure}[h]
        \hspace{1.0cm}
        \begin{minipage}[b]{0.85\linewidth}
        \centering
        \includegraphics[width=0.85\textwidth]{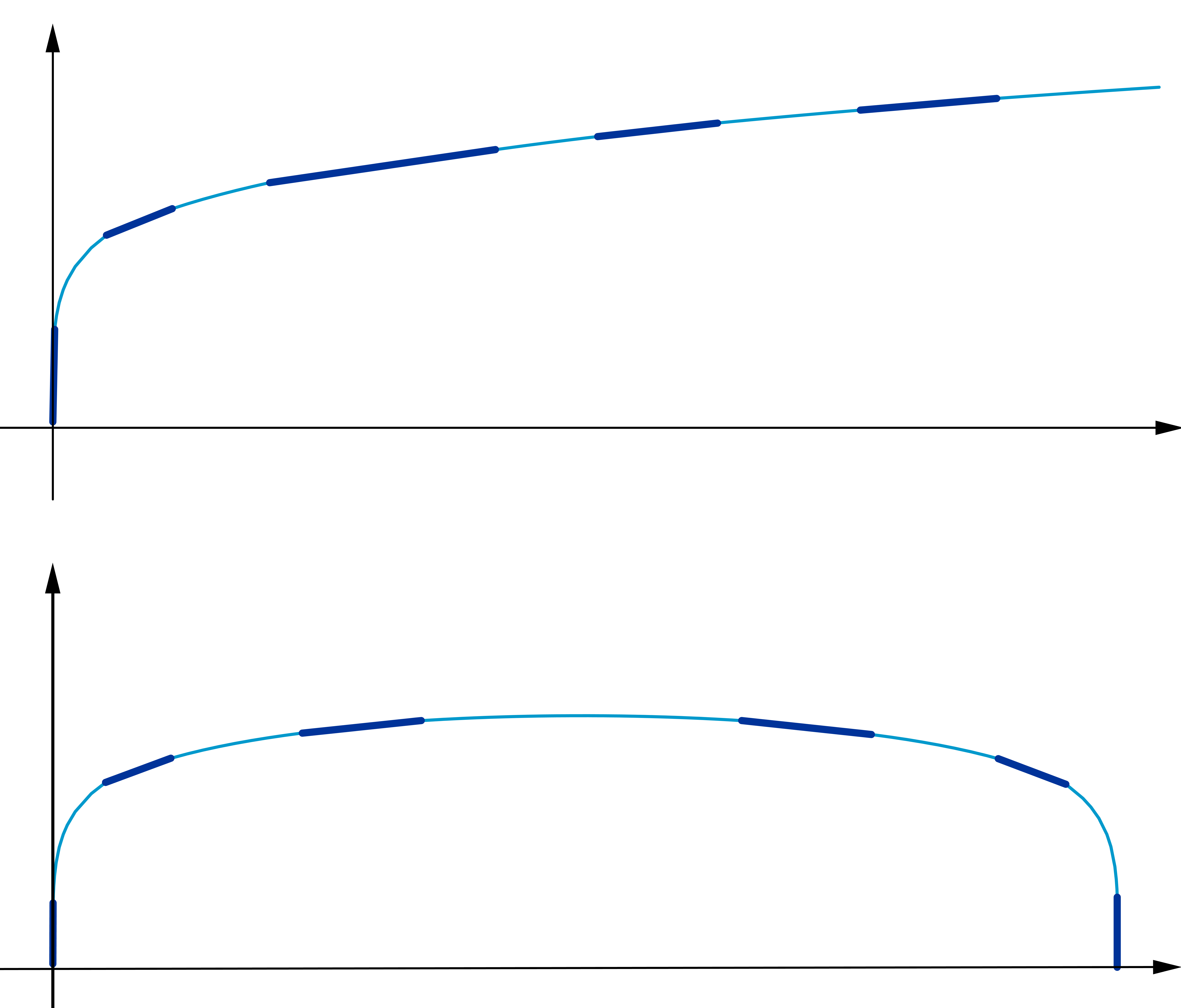}
        \caption{Auxiliary barrier for the proof of Theorem~\ref{UNS}}
        \label{barr}
        \end{minipage}
\end{figure}
\end{center}
\bigskip

To conclude this chapter, we make a remark on the connection between 
solutions of the fractional Allen-Cahn equation and $s$-minimal 
surfaces. Namely, a suitably scaled version of
the functional in \eqref{enfac}
$\Gamma$-converges to either the classical perimeter
or the nonlocal perimeter functional, depending
on the fractional parameter~$s$.
The $\Gamma$-convergence
is a type of convergence of functionals
that is compatible with the
minimization of the energy, and turns out to be very useful when dealing with variational problems indexed by a parameter. This notion was introduced by De Giorgi, see e.g.~\cite{degg}
for details. 

In the nonlocal case, some care is needed to introduce
the ``right'' scaling
of the functional, which
comes from the dilation invariance of the 
space coordinates and possesses a nontrivial energy
in the limit.
For this, one takes first the rescaled energy functional
 	\[ J_\eee (u, \Omega) := \eee^{2s} \K (u,\Omega)+ 
\int_{\Omega} W(u)\, dx,\]
where $\mathcal{K}$ is the kinetic energy defined in \eqref{kenac}.
Then, one considers the functional
\begin{equation*}\begin{split}&
F_\eee (u,\Omega) :=\left\{
\begin{matrix}
\eee^{-2s} J_\eee(u, \Omega) & {\mbox{ if $s\in(0,\,1/2)$,}} \\
\ \\
|\eee \log \eee|^{-1} J_\eee(u, \Omega) & {\mbox{ if $s=1/2$,}}\\
\ \\
\eee^{-1} J_\eee(u, \Omega) & {\mbox{ if $s\in(1/2,\,1)$.}}
\end{matrix}
\right.\end{split}\end{equation*}
The limit functional of~$F_\eee$ as~$\eee\to0$ depends on~$s$.
Namely, when~$s\in(0,1/2)$, the limit functional is
(up to dimensional constants that we neglect) the fractional
perimeter, i.e.
\begin{equation}\label{FF1}
F(u,\Omega):=\left\{\begin{matrix}
{\text{Per}}_s(E,\Omega)
& {\mbox{ if $u|_\Omega = \chi_E -\chi_{E^{\C}}$, for some set $E\subset
\Omega$}}
\\+\infty & {\mbox{otherwise.}}
\end{matrix}\right.
\end{equation}
On the other hand, when~$s\in[1/2,1)$, the limit functional of~$F_\eee$
is (again, up to normalizing constants) the classical
perimeter, namely
\begin{equation}\label{FF2}
F(u, \Omega):=\left\{\begin{matrix}
{\text{ Per}} (E,\Omega)
& {\mbox{ if $u|_\Omega = \chi_E -\chi_{ E^{\C}}$, for some set $E\subset
\Omega$}}
\\+\infty & {\mbox{otherwise,}}
\end{matrix}\right.
\end{equation}
That is, the following limit statement holds true:

\begin{thm}\label{TH1-SV-GAMMA}
Let~$s\in(0,\,1)$. Then,
$F_\eee$ $\Gamma$-converges to~$F$, as defined
in either~\eqref{FF1} or~\eqref{FF2},
depending on whether~$s\in(0,1/2)$ or~$s\in[1/2,1)$.
\end{thm}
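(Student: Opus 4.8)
The plan is to verify the two defining properties of $\Gamma$-convergence, with respect to $L^1_{\rm loc}$ convergence: the $\liminf$ inequality and the existence of a recovery sequence. It is natural to split the analysis according to whether $s\in(0,1/2)$ — in which case the limit is the fractional perimeter and essentially everything follows from lower semicontinuity — or $s\in[1/2,1)$, which is the nonlocal analogue of the Modica--Mortola theorem and contains all the difficulty. In both regimes the starting point is a compactness fact: if $F_\eee(u_\eee,\Omega)\le C$ along $\eee\to0$, then, up to a subsequence, $u_\eee\to u$ in $L^1_{\rm loc}$ with $u=\chi_E-\chi_{E^{\C}}$ for some measurable set $E$. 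That the limit is $\pm1$-valued comes from the potential term, which forces $\int_\Omega W(u_\eee)\to0$; since $W\ge0$ is continuous and vanishes exactly at $\pm1$, Fatou's Lemma gives $W(u)=0$ a.e.

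Consider first $s\in(0,1/2)$, so that $F_\eee(u,\Omega)=\K(u,\Omega)+\eee^{-2s}\int_\Omega W(u)\,dx$ and the kinetic energy appears without a vanishing prefactor. The $L^1_{\rm loc}$-lower semicontinuity of $\K(\cdot,\Omega)$ — Fatou's Lemma applied to the Gagliardo kernel along an a.e.\ convergent subsequence — yields
\[
\liminf_{\eee\to0}F_\eee(u_\eee,\Omega)\ \ge\ \K(u,\Omega)\ =\ 4\,\text{Per}_s(E,\Omega),
\]
the last identity being a direct bookkeeping of the interactions of $E$ with $E^{\C}$ inside and across $\Omega$ (the factor $4$ is absorbed in the normalization omitted in~\eqref{enfac}); the same bound provides compactness, since $\K(u_\eee,\Omega)$ is then uniformly bounded. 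For the recovery sequence one may simply take $u_\eee\equiv u=\chi_E-\chi_{E^{\C}}$: being $\pm1$-valued it kills the potential, so $F_\eee(u_\eee,\Omega)=\K(u,\Omega)=F(u,\Omega)$ for every $\eee$. (If the admissible class requires smoother competitors, one mollifies $u$ at scale comparable to $\eee$ and estimates the error by the Generalized Coarea Formula of Section~\ref{sobineq} together with $\text{Per}_s(E,\Omega)<\infty$.) This settles the case $s\in(0,1/2)$.

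The core of the theorem is $s\in[1/2,1)$, where $F$ is the classical perimeter and the diffuse interface of width $\eee$ must concentrate its energy on $\partial E$. Now the kinetic term carries the vanishing weight $\eee^{2s-1}$ (resp.\ $|\eee\log\eee|$), no a priori $H^s$ bound is available, and even compactness requires a nonlocal Modica--Mortola argument: pairing $\eee^{2s-1}\K(u_\eee,\Omega)$ against $\eee^{-1}\int_\Omega W(u_\eee)\,dx$ one extracts a uniform bound on a $BV$-type norm of $\Phi(u_\eee)$, where $\Phi'$ is a constant multiple of $\sqrt{W}$, and then uses $BV$ compactness. I would first carry out the one-dimensional analysis: let $g$ be the monotone bounded solution of $\frlap g+W'(g)=0$ on $\R$ with $g(\pm\infty)=\pm1$, and compute the asymptotics of $\eee^{2s-1}\K(g(\cdot/\eee),\R)+\eee^{-1}\int_\R W(g(\cdot/\eee))$; this tends to a finite surface-tension constant $c_\star>0$ when $s\in(1/2,1)$, while for $s=1/2$ it diverges like a constant times $|\log\eee|$, which is exactly what dictates the normalization $|\eee\log\eee|^{-1}$. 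For the $\liminf$ inequality I would localize and reduce, by a slicing-type argument, to one-dimensional energies along sections of $\Omega$, apply the $1$D lower bound to each section to recover the constant $c_\star$, and conclude by lower semicontinuity of the classical perimeter of the superlevel sets, showing along the way that cross-interactions between far-apart portions of the transition layer are negligible at this scale. For the $\limsup$ inequality I would, for a set $E$ with smooth boundary, use the signed distance $d_E$ and a tubular-neighborhood parametrization of $\partial E$, set $u_\eee(x):=g(d_E(x)/\eee)$ suitably interpolated to $\pm1$ away from a neighborhood of $\partial E\cap\Omega$, expand the Gagliardo kernel around $\partial E$ and invoke the minimality of $g$ to get $F_\eee(u_\eee,\Omega)\to c_\star\,\text{Per}(E,\Omega)$; the general finite-perimeter case follows by approximating $E$ with smooth sets, using continuity of $\text{Per}(\cdot,\Omega)$ along such approximations and a diagonal argument in $\eee$.

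The hard part will be the $\liminf$ inequality, together with the attendant compactness, in the regime $s\in[1/2,1)$, and most of all at the borderline $s=1/2$. Unlike the local Dirichlet energy, the fractional Gagliardo seminorm does not decompose cleanly along one-dimensional sections, so the reduction to the $1$D surface-tension estimate requires careful control of the genuinely nonlocal terms coupling distant pieces of the diffuse interface; and matching the precise logarithmic rate at $s=1/2$ needs sharp two-sided estimates for the one-dimensional layer energy. By contrast, once $g$ and its energy asymptotics are in hand, the recovery-sequence construction is comparatively routine, and the whole $s\in(0,1/2)$ case is soft.
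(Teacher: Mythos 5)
Since the paper itself does not prove this theorem --- the text explicitly defers to~\cite{savingamma} for the precise statements and arguments --- there is no in-paper proof to compare your sketch against. Your division into the soft regime $s\in(0,1/2)$ and the hard Modica--Mortola regime $s\in[1/2,1)$ is the right one, and the $s<1/2$ case is handled essentially correctly: a uniform energy bound gives a uniform $H^s$ bound, Fatou along an a.e.\ convergent subsequence gives both lower semicontinuity of the kinetic term and $W(u)=0$ a.e., and the constant recovery sequence matches the $\limsup$.

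Your proposed route to compactness for $s\in[1/2,1)$, however, contains a genuine gap. You suggest ``pairing'' $\eee^{2s-1}\K(u_\eee,\Omega)$ against $\eee^{-1}\int_\Omega W(u_\eee)$ to get a uniform bound on a $BV$ norm of $\Phi(u_\eee)$ with $\Phi'\sim\sqrt{W}$. That is the classical local trick, and it rests on the \emph{pointwise} Young inequality $\tfrac{\eee}{2}|\nabla u|^2+\tfrac1\eee W(u)\ge\sqrt{2W(u)}\,|\nabla u|=|\nabla\Phi(u)|$. There is no nonlocal analogue of this: the kinetic term is a double integral $\iint|u(x)-u(y)|^2/|x-y|^{n+2s}$, and precisely on the transition layer --- where $u(x)\approx 1$, $u(y)\approx-1$ and $|u(x)-u(y)|^2$ is of order one --- both $W(u(x))$ and $W(u(y))$ are small, so there is nothing to pair against. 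No Young-type manipulation produces a bound on $\int|\nabla\Phi(u_\eee)|$ from these two terms, and this is exactly the reason compactness in this regime is one of the genuinely delicate points of the nonlocal $\Gamma$-convergence theory. The actual argument in~\cite{savingamma} proceeds by entirely different means, roughly by showing that a bounded-energy sequence has subsequentially convergent superlevel sets with locally bounded perimeter through an energy-comparison and covering scheme, not through a $BV$-substitution.

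The $\liminf$ inequality is similarly left open. You acknowledge that the Gagliardo seminorm does not decompose along one-dimensional fibers and that the ``genuinely nonlocal terms coupling distant pieces of the diffuse interface'' need control, but the proposal stops there; this is precisely where the real work lies, and it is not clear that a slicing reduction in the style of the local Modica--Mortola proof can be carried through. In~\cite{savingamma} the lower bound is not obtained by a direct $1$D slicing estimate, and the surface-tension constant is matched separately (via the recovery sequence and a lower bound for half-space configurations). Your recovery-sequence construction via $g(d_E(\cdot)/\eee)$, the identification of the $1$D layer profile with the solution of~\eqref{allencahn}, and the observation that the borderline $s=1/2$ forces the $|\eee\log\eee|$ normalization because the layer energy diverges logarithmically are, on the other hand, all in line with what is actually done there and in~\cite{PSV13}.
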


For precise statements and further details, see~\cite{savingamma}.
Additionally, we remark that the level sets
of the minimizers of
the functional in \eqref{enfac}, 
after a homogeneous scaling in the space variables,
converge
locally uniformly to minimizers 
either of the fractional perimeter (if $s\in(0,1/2)$)
or of the classical perimeter (if~$s\in[1/2,1)$):
that is, the ``functional'' convergence stated in
Theorem~\ref{TH1-SV-GAMMA}
has also a ``geometric'' counterpart: for this, see
Corollary~1.7 in~\cite{SV14}.\bigskip

One can also interpret Theorem~\ref{TH1-SV-GAMMA} by saying that
a nonlocal phase transition possesses two parameters, $\eee$
and~$s$. When~$\eee\to0$, the limit interface
approaches a minimal surface either in the fractional
case (when~$s\in(0,1/2)$) or in the classical case (when~$s\in[1/2,1)$).
This bifurcation at~$s=1/2$ somehow states that
for lower values of~$s$ the nonlocal phase transition
possesses a nonlocal interface in the limit, but for larger values of~$s$
the limit interface is characterized only by local features
(in a sense, when~$s\in(0,1/2)$ the ``surface tension effect''
is nonlocal, but for~$s\in [1/2,1)$ this effect localizes).

It is also interesting to compare Theorems~\ref{TP12}
and~\ref{TH1-SV-GAMMA}, since
the bifurcation at~$s=1/2$ detected by Theorem~\ref{TH1-SV-GAMMA}
is perfectly compatible with the limit behavior of the
fractional perimeter, which reduces to the classical
perimeter exactly for this value of~$s$, as stated in Theorem~\ref{TP12}.
  
\chapter{A nonlocal nonlinear stationary Schr\"{o}dinger type equation}\label{S:NP:2}

The type of problems introduced in this chapter are connected to solitary solutions of nonlinear dispersive wave equations (such as the Benjamin-Ono equation, the Benjamin-Bona-Mahony equation and the fractional Schr\"{o}dinger equation). In this chapter, only stationary equations are studied and we redirect the reader to \cite{vazquez_nonlinear, vazquez_recent} for the study of evolutionary type equations.

Let $n\geq 2$ be the dimension of the reference space, $s \in (0,1)$ be the fractional parameter, and $\epsilon>0$ be a small parameter. We consider the so-called fractional Sobolev exponent 
		\begin{equation*}
			2_s^*:= 
				\begin{cases}
				\displaystyle \frac{2n}{n-2s} &\text { for } n\geq 3, \text{ or } n=2 \text{ and } s\in(0,1/2)\\
				+ \infty &\text{ for } n=1 \text{ and } s\in(0,1/2]	  
				\end{cases}
		\end{equation*}
and introduce the following nonlocal nonlinear Schr\"{o}dinger equation 
	\begin{equation}
		\begin{cases}
		\epsilon^{2s} \frlap u +u = u^p &\text{ in } \Omega \subset \Rn\\ 
		u=0 &\text{ in } \Rn \setminus \Omega, 
		\end{cases}
		\label{sch}
	\end{equation}
in the subcritical case $p\in (1, 2_s^*-1)$, namely when $p\in \displaystyle \bigg(1, \frac{n+2s}{n-2s}\bigg)$.

This equation arises in the study of the 
fractional Schr\"{o}dinger equation when looking for standing waves. 
Namely, the fractional Schr\"{o}dinger equation
considers solutions~$\Psi=\Psi(x,t):\R^n\times\R\to {\mathbb{C}}$ of
\begin{equation}\label{PLANCK}
i\hslash\partial_t \Psi = \big( \hslash^{2s} (-\Delta)^s +V\big)\Psi,\end{equation}
where~$s\in(0,1)$, $\hslash$ is the reduced
Planck constant and~$V=V(x,t,|\Psi|)$ is a potential.
This equation
is of interest in quantum mechanics
(see e.g.~\cite{L00} and the appendix in~\cite{DPDV14}
for details and physical motivations). 
Roughly speaking, the quantity~$|\Psi(x,t)|^2\,dx$
represents the probability density of finding a quantum particle
in the space region~$dx$ and at time~$t$.

The simplest solutions of~\eqref{PLANCK} are the ones
for which this probability density is independent of time,
i.e.~$|\Psi(x,t)|=u(x)$ for some~$u:\R^n\to[0,+\infty)$.
In this way, one can write $\Psi$ as~$u$ times
a phase that oscillates (very rapidly) in time: that is
one may look for solutions of~\eqref{PLANCK} of the form
$$ \Psi(x,t) := u(x)\, e^{i\omega t/\hslash},$$
for some frequency~$\omega\in\R$.
Choosing~$V=V(|\Psi|)=-|\Psi|^{p-1}=-u^{p-1}$,
a substitution into~\eqref{PLANCK} gives that
$$
\Big( \hslash^{2s} (-\Delta)^s u +\omega u - u^p \Big)\, e^{i\omega t/\hslash}
=\hslash^{2s} (-\Delta)^s \Psi -
i\hslash\partial_t \Psi +V\Psi=0,$$
which is~\eqref{sch}
(with the normalization convention~$\omega:=1$ and~$\epsilon:=\hslash$).

The goal of this chapter is to construct solutions of problem \eqref{sch} that concentrate at interior points of the domain $\Omega$ for sufficiently small values of $\epsilon$. We perform
a blow-up of the domain, defined as
	\[\Omega_\epsilon:=\displaystyle \frac{1}{\epsilon} \Omega=\bigg\{  \frac{x}{\epsilon}, x\in \Omega\bigg\}.\]
We can also rescale the solution of \eqref{sch} on $\Omega_\epsilon$, 
	\[u_\epsilon(x)=u(\epsilon x).\]
The problem \eqref{sch} for $u_\epsilon$ then reads
	\begin{equation}
		\begin{cases}
		 \frlap u +u = u^p &\text{ in } \Omega_\epsilon\\ 
		u=0 &\text{ in } \Rn \setminus \Omega_\epsilon.
		\end{cases}
		\label{dsch}
	\end{equation} 
When~$\epsilon\to0$, the domain~$\Omega_\epsilon$ invades the whole of the space.
Therefore, it is also natural to 
consider (as a first approximation)
the equation on the entire space  
	\begin{equation}
	 	\frlap u +u=u^p \text{ in } \Rn.
		\label{entsch}
	\end{equation}
The first result that
we need  is that there exists an entire positive radial least energy solution $w \in H^s(\Rn)$ of \eqref{entsch}, called the \emph{ground state solution}. 
Here follow some relevant results on this. The interested reader can find their proofs in \cite{FLS13}.
\begin{enumerate}
	\item The ground state solution $w\in H^s(\Rn)$ is unique up to translations. 
	\item The ground state solution $w\in H^s(\Rn)$ is nondegenerate, i.e., the derivatives $D_i w$ are solutions to the linearized equation
		\begin{equation}
		 	\frlap Z +Z= pZ^{p-1}.
			\label{linsch}
		\end{equation} 
	\item The ground state solution $w\in H^s(\Rn)$ decays polynomially at infinity, namely there exist two constants $\alpha, \beta >0$  such that 
		\[ \alpha |x|^{-(n+2s)} \leq u(x) \leq \beta |x|^{-(n+2s)}.\] 
\end{enumerate}
Unlike the fractional case, we remark that for the (classical) Laplacian, at infinity  the ground state solution decays exponentially fast. We also refer to~\cite{FL1D} for the one-dimensional case.

The main theorem of this chapter establishes the existence of a solution that concentrates at interior points of the domain for sufficiently small values of $\epsilon$. This concentration phenomena is written
in terms of the ground state solution~$w$.
Namely, the first approximation for the solution
is exactly the ground state~$w$,
scaled and concentrated at an appropriate point~$\xi$ of
the domain. More precisely, we have:

\begin{thm}\label{THSP}
If $\epsilon$ is sufficiently small, there exist a point $\xi \in \Omega$ and a solution $U_\epsilon$ of the problem \eqref{sch} such that
			\[ \bigg| U_\epsilon (x)- w\Big(\frac{x-\xi}{\epsilon}\Big)\bigg| \leq C \epsilon^{n+2s},\]
and $\text{dist}(\xi, \partial \Omega)\geq \delta>0$. Here, $C$ and $\delta$ are constants independent of $\epsilon$ or $\Omega$, and the
function $w$ is the ground state solution of problem \eqref{entsch}.
\label{nnscheq}
\end{thm}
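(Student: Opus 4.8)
The plan is to construct the solution via a Lyapunov--Schmidt reduction, following the variational framework for concentration phenomena. First I would rephrase the problem~\eqref{dsch} on the dilated domain~$\Omega_\epsilon$ in variational form, working in the space~$H^s_0(\Omega_\epsilon)$, whose energy functional is
\[
I_\epsilon(u) = \frac12\iint_{\R^{2n}\setminus(\Omega_\epsilon^{\C})^2}\frac{|u(x)-u(y)|^2}{|x-y|^{n+2s}}\,dx\,dy + \frac12\int_{\Omega_\epsilon}u^2\,dx - \frac{1}{p+1}\int_{\Omega_\epsilon}(u_+)^{p+1}\,dx,
\]
whose critical points are (positive) weak solutions of~\eqref{dsch}. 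The natural approximate solution is the bump~$w_{\xi_\epsilon}(x) := w\bigl(x - \xi_\epsilon/\epsilon\bigr)$, a translate of the ground state of~\eqref{entsch}, projected onto~$H^s_0(\Omega_\epsilon)$; call this projection~$W_{\xi}$, obtained by solving the linear problem~$(-\Delta)^s W_\xi + W_\xi = w_\xi^p$ in~$\Omega_\epsilon$ with zero exterior datum. The polynomial decay estimate~$w(x)\le\beta|x|^{-(n+2s)}$ recalled from~\cite{FLS13} is exactly what controls the error~$w_\xi - W_\xi$ in~$H^s$, and it is this decay rate that produces the~$\epsilon^{n+2s}$ appearing in the statement.

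Next I would set up the finite-dimensional reduction. Using the nondegeneracy of~$w$ (the kernel of the linearized operator~$(-\Delta)^s + 1 - pw^{p-1}$ on~$H^s(\R^n)$ is exactly~$\mathrm{span}\{D_1 w,\dots,D_n w\}$, again from~\cite{FLS13}), one shows that for~$\xi$ in a compact subset of~$\Omega$ and~$\epsilon$ small the linearized operator at~$W_\xi$, restricted to the~$L^2$-orthogonal complement of~$\{\partial_{\xi_i} W_\xi\}$, is uniformly invertible. A contraction-mapping argument then yields, for each such~$\xi$, a unique small correction~$\phi_{\epsilon,\xi}\perp\{\partial_{\xi_i}W_\xi\}$ such that~$U_{\epsilon,\xi} := W_\xi + \phi_{\epsilon,\xi}$ solves the equation up to a Lagrange-multiplier term lying in the~$n$-dimensional space spanned by the~$\partial_{\xi_i}W_\xi$; moreover one gets the quantitative bound~$\|\phi_{\epsilon,\xi}\|_{H^s}\le C\|w_\xi-W_\xi\|_{\text{err}} = O(\epsilon^{n+2s})$, which already encodes the desired closeness~$|U_\epsilon - w((x-\xi)/\epsilon)|\le C\epsilon^{n+2s}$ after scaling back and using elliptic (here, fractional regularity) estimates to upgrade from~$H^s$ to~$L^\infty$.

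Then I would carry out the reduced finite-dimensional problem: define~$\mathcal{I}_\epsilon(\xi) := I_\epsilon(U_{\epsilon,\xi})$ and show that critical points of~$\mathcal I_\epsilon$ over~$\xi\in\Omega_\delta := \{\xi: \mathrm{dist}(\xi,\partial\Omega)>\delta\}$ give genuine critical points of~$I_\epsilon$, hence solutions of~\eqref{sch}. An asymptotic expansion gives~$\mathcal I_\epsilon(\xi) = I_\infty(w) + \Upsilon(\xi,\epsilon)$ where~$I_\infty(w)$ is the energy of the ground state on~$\R^n$ and the correction~$\Upsilon$ is a small, negative boundary-repulsion term that forces~$\xi$ away from~$\partial\Omega$; since~$\overline{\Omega_\delta}$ is compact, $\mathcal I_\epsilon$ attains its minimum (or, more robustly, one runs a max-min/linking scheme) at an interior point~$\xi_\epsilon$ with~$\mathrm{dist}(\xi_\epsilon,\partial\Omega)\ge\delta$, and positivity of~$U_\epsilon$ follows from the maximum principle (Theorem~\ref{THM-MA-1}) since the profile is a small perturbation of the positive function~$w$. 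Finally, undoing the dilation~$x\mapsto x/\epsilon$ converts~$U_{\epsilon,\xi_\epsilon}$ into the claimed solution~$U_\epsilon$ of~\eqref{sch}.

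The main obstacle will be making the fractional analysis in the gluing/reduction step rigorous: unlike the local Laplacian, one cannot localize freely, so the exterior contributions (the interactions of the bump's polynomial tail with~$\Omega_\epsilon^{\C}$) must be estimated carefully, and the nondegeneracy/invertibility of the linearized fractional operator has to be transferred from~$\R^n$ to the bounded domain~$\Omega_\epsilon$ with control uniform in~$\xi$ and~$\epsilon$. The polynomial—rather than exponential—decay of~$w$ both weakens the error estimates (hence the~$\epsilon^{n+2s}$ rate, larger than the exponentially small error in the classical Schr\"odinger concentration results) and complicates the interaction computations; keeping track of these tails through each step is where the real work lies. I would refer to~\cite{DPDV14} for the complete argument and the precise regularity tools used.
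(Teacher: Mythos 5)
Your proposal is correct and follows essentially the same Lyapunov--Schmidt strategy as the paper: project the ground state onto $H^s_0(\Omega_\epsilon)$ by solving the linear exterior problem, use the nondegeneracy of $w$ from~\cite{FLS13} to invert the linearized operator off the kernel of translations, bound the corrector by $\epsilon^{n+2s}$ via the polynomial tail, and locate the concentration point by minimizing the reduced energy. The paper makes the ``boundary-repulsion'' effect you allude to quantitative through the Green/Robin function $H_\epsilon(x,y)=\Gamma(x-y)-G_\epsilon(x,y)$ and a barrier $\beta_\xi$, which give the explicit expansion $I_\epsilon(\overline u_\xi)=I(w)+\tfrac12\mathcal{H}_\epsilon(\xi)+\mathcal{O}(\epsilon^{n+4s})$ with $\mathcal H_\epsilon(\xi)\simeq d^{-(n+4s)}$ \emph{positive} and blowing up near $\partial\Omega_\epsilon$ (so it penalizes, rather than being a ``negative'' term as you wrote), which is what makes the interior minimum nondegenerate and stable under the $\mathcal O(\epsilon^{n+4s})$ error.
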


The concentration point~$\xi$ in Theorem~\ref{THSP}
is influenced by the global geometry of the domain.
On the one hand, when~$s=1$, the point~$\xi$ is the one that maximizes
the distance from the boundary. On the other hand,
when~$s\in(0,1)$, such simple characterization of~$\xi$
does not hold anymore: in this case, $\xi$
turns out to be asymptotically the maximum of a (complicated, but rather explicit)
nonlocal functional: see \cite{DPDV14} for more details.

We state here the basic idea of the proof of Theorem~\ref{THSP}
(we refer again to \cite{DPDV14} for more details).
\begin{proof}[Sketch of the proof of Theorem \ref{nnscheq}] In this proof, we make use of the Lyapunov-Schmidt procedure. Namely, rather than looking for the
solution in an infinite-dimensional functional space,
one decomposes the problem into two orthogonal subproblems.
One of these problem is still infinite-dimensional,
but it has the advantage to bifurcate from a known object
(in this case, a translation of the ground state).
Solving this auxiliary subproblem does not provide a true
solution of the original problem, since a leftover
in the orthogonal direction may remain. To kill this
remainder term, one solves a second subproblem,
which turns out to be finite-dimensional
(in our case, this subproblem is set in~$\R^n$,
which corresponds to the action of the translations
on the ground state).

A structural advantage of the problem considered
lies in its variational structure.
Indeed, equation~\eqref{dsch} is 
the Euler-Lagrange equation of
the energy functional
	\eqlab{ \label{peren} 
I_\epsilon(u) =\frac{1}{2} \int_{\Omega_\epsilon} \Big( \frlap u(x)+u(x) \Big) u(x) \, dx - \frac{1}{p+1} \int_{\Omega_\epsilon} u^{p+1} (x) \, dx }
for any $u \in H^s_0(\Omega_\epsilon) := \{ u\in H^s(\R^n) \text { s.t. } u=0 \text{ a.e. in } \Rn \setminus \Omega_\epsilon\}$.
Therefore, the problem reduces to finding critical points of~$I_\epsilon$.

To this goal,
we consider the \emph{ground state} solution $w$ and  for any $\xi \in \Rn$ we let $w_\xi:=w(x-\xi)$. For a given $\xi \in \Omega_\epsilon$ a first approximation $\bar{u}_\xi$ for the solution of problem \eqref{dsch} can be taken as the solution of the linear problem 
	\begin{equation}
		\begin{cases}
		 \frlap \overline u_\xi +\overline u_\xi = w_\xi^p &\text{ in } \Omega_\epsilon,\\ 
		\overline u_\xi=0 &\text{ in } \Rn \setminus \Omega_\epsilon.
		\end{cases}
		\label{dlsch}
	\end{equation}
The actual solution will be obtained as a small perturbation of $\bar u_\xi$ for a suitable point $\xi = \xi (\epsilon)$.
We define the operator $\mathcal{L}:=\frlap +I$, where~$I$ is the identity and we notice that $\mathcal{L}$ has a unique fundamental solution that solves
	\[\mathcal{L}\Gamma = \delta_0 \quad \text{ in } \Rn.\]
The Green function $G_\epsilon$ of the operator $\mathcal{L}$ in $\Omega_\epsilon$ satisfies 	
	\begin{equation}
		\begin{cases}
		\mathcal{L} G_\epsilon(x,y) = \delta_y(x)  &\text{ if } x \in \Omega_\epsilon,\\ 
		 G_\epsilon (x,y)=0 &\text{ if } x \in \Rn \setminus \Omega_\epsilon. 
		\end{cases}
		\label{gsch}
	\end{equation}
It is convenient to introduce 
the regular part of $G_\epsilon$, which is often called
the Robin function. This function is defined by
	\begin{equation}\label{7.8bis}
H_\epsilon (x,y):= \Gamma(x-y) - G_\epsilon(x,y) \end{equation}
and it satisfies, for a fixed $y \in \Rn$,
	\begin{equation}
		\begin{cases}
		\mathcal{L} H_\epsilon(x,y) = 0 &\text{ if } x \in \Omega_\epsilon,\\ 
		H_\epsilon (x,y)=\Gamma(x-y) &\text{ if } x \in \Rn \setminus \Omega_\epsilon. 
		\label{rsch}
		\end{cases}
	\end{equation}
Then 
	\[\overline u_\xi (x)= \int_{\Omega_\epsilon} \overline u_\xi (y)\delta_0(x-y) \, dy,\]
and by \eqref{gsch}
	\[\overline u_\xi (x)=  \int_{\Omega_\epsilon} \overline u_\xi (y)\mathcal{L} G_\epsilon(x,y) \, dy.\]
The operator $\mathcal{L}$ is self-adjoint and thanks to the above identity and to equation \eqref{dlsch} it follows that 
	\[ \begin{split}
			 \overline u_\xi (x)= &\; \int_{\Omega_\epsilon} \mathcal{L} \overline u_\xi (y) G_\epsilon(x,y) \, dy\\
					=&\;  \int_{\Omega_\epsilon}
w_\xi^p(y) G_\epsilon(x,y)\, dy.
		\end{split}\]
So, we use~\eqref{7.8bis} and we obtain that
$$ \overline u_\xi (x)=
\int_{\Omega_\epsilon} w_\xi^p(y) \Gamma(x-y) \, dy-\int_{\Omega_\epsilon} w_\xi^p(y) H_\epsilon(x,y)\, dy.$$
Now we notice that, since $w_\xi$ is solution of \eqref{entsch} and $\Gamma$ is the fundamental solution of $\mathcal{L}$, we have that
		\[ \begin{split}
			\int_{\Rn} w_\xi^p(y)\Gamma(x-y) \, dy =&\;\int_{\Rn} \mathcal{L}w_\xi(y) \Gamma(x-y) \, dy\\
											=&\; \int_{\Rn} w_\xi (y)  \mathcal{L}\Gamma(x-y) \, dy\\
											=&\; w_\xi (x).
		\end{split}\]
Therefore we have obtained that 
 		\begin{equation}
	 \overline u_\xi (x) = w_\xi (x) -\int_{\Rn \setminus \Omega_\epsilon} w_\xi^p(y) \Gamma(x-y) \, dy- \int_{\Omega_\epsilon} w_\xi^p(y) H_\epsilon(x,y)\, dy.
		\label{uexp}
	\end{equation}
Now we can insert~\eqref{uexp} into the energy functional~\eqref{peren}
and expand the errors
in powers of~$\epsilon$.
For $\text{dist}(\xi,\partial \Omega_\epsilon)\geq \displaystyle \frac{\delta}{\epsilon}$ with $\delta$ fixed and small, the energy of $\overline u_\xi$ is a perturbation of the energy of the ground state~$w$
and one finds (see Theorem 4.1 in~\cite{DPDV14})
that
	\begin{equation}
		 I_\epsilon(\overline u_\xi) = I(w) + \frac{1}{2} \mathcal{H}_\epsilon(\xi) + \mathcal{O}(\epsilon^{n+4s}),
	\label{en}
	\end{equation}
where 
	\[\mathcal{H}_\epsilon (\xi):= \int_{\Omega_\epsilon}\int_{\Omega_\epsilon} H_\epsilon(x,y) w_\xi^p(x)w_\xi^p(y) \, dx \, dy\]
and $I$ is the energy computed on the whole space $\Rn$, namely
	\begin{equation}
	I(u) =\frac{1}{2} \int_{\Rn} \Big( \frlap u(x)+u(x) \Big) u(x) \, dx - \frac{1}{p+1} \int_{\Rn} u^{p+1} (x) \, dx.
	\label{enfuncrn}
	\end{equation}
In particular, $I_\epsilon(\overline u_\xi)$
agrees with a constant (the term~$I(w)$), plus a functional over a finite-dimensional space
(the term~$\mathcal{H}_\epsilon (\xi)$, which only depends on~$\xi\in\R^n$),
plus a small error.

We remark that the solution~$\overline u_\xi$
of equation~\eqref{dlsch} which can be obtained from~\eqref{uexp}
does not provide a solution for the original problem~\eqref{dsch}
(indeed, it only solves~\eqref{dlsch}):
for this,
we look for solutions $u_\xi$ of \eqref{dsch} as perturbations of~$\overline u_\xi$,
in the form
	\begin{equation}\label{POj}
u_\xi :=\overline u_\xi +\psi.\end{equation}
The perturbation functions $\psi$ are considered among those vanishing outside $\Omega_\epsilon$ and orthogonal to the space $\mathcal{Z}=\text{Span}(Z_1,\dots,Z_n)$, where $Z_i=\displaystyle \frac{\partial w_\xi}{\partial x_i}$ are solutions of the linearized equation \eqref{linsch}.
This procedure somehow ``removes the degeneracy'',
namely we look for the corrector~$\psi$ in
a set where the linearized operator is invertible.
This makes it
possible, fixed any~$\xi\in\R^n$, 
to find~$\psi=\psi_\xi$ such that the function~$u_\xi$,
as defined in~\eqref{POj}
solves the equation 
	\eqlab{ \label{qsci} \frlap u_\xi +u_\xi=u_\xi^p +\sum_{i=1}^n c_i Z_i \text{ in }\Omega_\epsilon.}
That is, $u_\xi$ is solution of
the original equation~\eqref{dsch}, up to an error
that lies in the tangent space of the translations
(this error is exactly the price that we pay
in order to solve the corrector equation for~$\psi$
on the orthogonal of the kernel, where the operator is nondegenerate).
As a matter of fact (see Theorem~7.6 in~\cite{DPDV14} for details)
one can see that the corrector~$\psi=\psi_\xi$ is of order~$\epsilon^{n+2s}$.
Therefore, one can compute~$I_\epsilon (u_\xi)
=I_\epsilon (\overline u_\xi+\psi_\xi)$ as a higher order
perturbation of~$I_\epsilon (\overline u_\xi)$.
{F}rom~\eqref{en}, one obtains that
\begin{equation}\label{9HU88I}
I_\epsilon (u_\xi)=
I(w) +\frac{1}{2}\mathcal{H}_\epsilon(\xi)+\mathcal{O}(\epsilon^{n+4s}),\end{equation}
see Theorem~7.17 in~\cite{DPDV14} for details.

Since this energy expansion now depends only on~$\xi\in\R^n$,
it is convenient
to define the operator $J_\epsilon \colon \Omega_\epsilon \to \R$ as
	\[ J_\epsilon(\xi):=I_\epsilon (u_\xi).\]
This functional is often
called the reduced energy functional.
{F}rom~\eqref{9HU88I}, we conclude that
	\begin{equation}\label{JK L}
J_\epsilon(\xi)=
I(w) +\frac{1}{2}\mathcal{H}_\epsilon(\xi)+\mathcal{O}(\epsilon^{n+4s}).\end{equation}
The reduced energy~$J$ plays an important role in this framework
since critical points of~$J$ correspond to true solutions
of the original equation~\eqref{dsch}. More precisely
(see Lemma 7.16 in~\cite{DPDV14}) one has that
$c_i=0$ for all $i=1, \dots, n$ 
in~\eqref{qsci}
if and only if 
	\begin{equation}\label{NEAT}
\frac{\partial J_\epsilon}{\partial \xi}(\xi)=0.\end{equation}
In other words, when $\epsilon$ approaches $0$, to find
concentration points, it is enough to find critical points
of~$J$,
which is a finite-dimensional problem.
Also, critical points for~$J$ come from critical points of~$\mathcal{H}_\epsilon$,
up to higher orders, thanks to~\eqref{JK L}.
The issue is thus to prove that~$\mathcal{H}_\epsilon$
does possess critical points
and that these critical points survive after
the small error of
size~$\epsilon^{n+4s}$: in fact, we show that~$\mathcal{H}_\epsilon$
possesses a minimum,
which is stable for perturbations.
For this, one needs
a bound for the Robin function $H_\epsilon$ from above and below.
To this goal,
one builds a barrier function $\beta_\xi$ defined for $\xi \in \Omega_\epsilon$  and $x \in \Rn$ as
	\[ \beta_\xi (x) := \int_{\Rn \setminus \Omega_\epsilon} \Gamma(z-\xi)\Gamma(x-z) \, dz.\]
Using this function in combination with suitable maximum principles,
one obtains the existence of a constant $c\in (0,1)$ such that
	\[ c H_\epsilon (x,\xi) \leq \beta_\xi(x) \leq c^{-1}H_\epsilon(x,\xi),\]
 for any $x\in \Rn$ and any $\xi \in \Omega_\epsilon$ with $\text{dist} (\xi, \partial \Omega_\epsilon)>1$, see Lemma~2.1
in~\cite{DPDV14}. {F}rom this it follows that  
	\begin{equation}\label{ATT}
\mathcal{H}_\epsilon (\xi) \simeq d^{-(n+4s)},\end{equation}
for all points $\xi \in \Omega_\epsilon$ such that $d \in [5, \delta / \epsilon]$.
So, one considers the domain~$\Omega_{\epsilon,\delta}$ of the points
of~$\Omega_\epsilon$ that lie at distance more than~$\delta/\epsilon$
from the boundary of~$\Omega_\epsilon$. By~\eqref{ATT},
we have that
  \begin{equation}\label{ATT-1}
\mathcal{H}_\epsilon (\xi) \simeq \frac{\epsilon^{n+4s}}{\delta^{n+4s}}
\;{\mbox{ for any }}\;\xi\in \partial \Omega_{\epsilon,\delta}.
\end{equation}
Also, up to a translation, we may suppose that~$0\in\Omega$.
Thus, $0\in\Omega_{\epsilon}$ and its distance from~$\partial\Omega_\epsilon$
is of order~$1/\epsilon$ (independently of~$\delta$).
In particular, if~$\delta$ is small enough, we have that~$0$
lies in the interior of~$ \Omega_{\epsilon,\delta}$,
and~\eqref{ATT} gives that
$$ \mathcal{H}_\epsilon (0) \simeq \epsilon^{n+4s}.$$
By comparing this with~\eqref{ATT-1}, we see that~$\mathcal{H}_\epsilon $
has an interior minimum in~$ \Omega_{\epsilon,\delta}$.
The value attained at this minimum
is of order~$\epsilon^{n+4s}$,
and the values attained at the boundary of~$\Omega_{\epsilon,\delta}$
are of order~$\delta^{-n-4s}\epsilon^{n+4s}$,
which is much larger than~$\epsilon^{n+4s}$, if~$\delta$ is small enough.
This says that the interior minimum of~$\mathcal{H}_\epsilon $
in~$ \Omega_{\epsilon,\delta}$ is nondegenerate and it survives to
any perturbation of order~$\epsilon^{n+4s}$, if~$\delta$ is small enough.

This and~\eqref{JK L} imply that~$J$ has also an interior 
minimum at some point~$\xi$ in~$ \Omega_{\epsilon,\delta}$. By 
construction, this point~$\xi$ satisfies~\eqref{NEAT},
and so this completes the proof of
Theorem~\ref{THSP}.
\end{proof}

The variational argument in the proof above (see in particular~\eqref{NEAT})
has a classical and neat geometric interpretation. Namely, the ``unperturbed'' functional (i.e. the one with~$\epsilon=0$) has a very degenerate geometry, since it has a whole manifold of minimizers with the same energy: this manifold corresponds to the translation of the ground state~$w$, namely it is of the form~$M_0:=\{ w_\xi,\;\xi\in\R^n\}$ and, therefore, it can be identified with~$\R^n$.
\begin{center}
\begin{figure}[htb]
\begin{minipage}[b]{0.65\linewidth}
	\centering
	\includegraphics[width=0.65\textwidth]{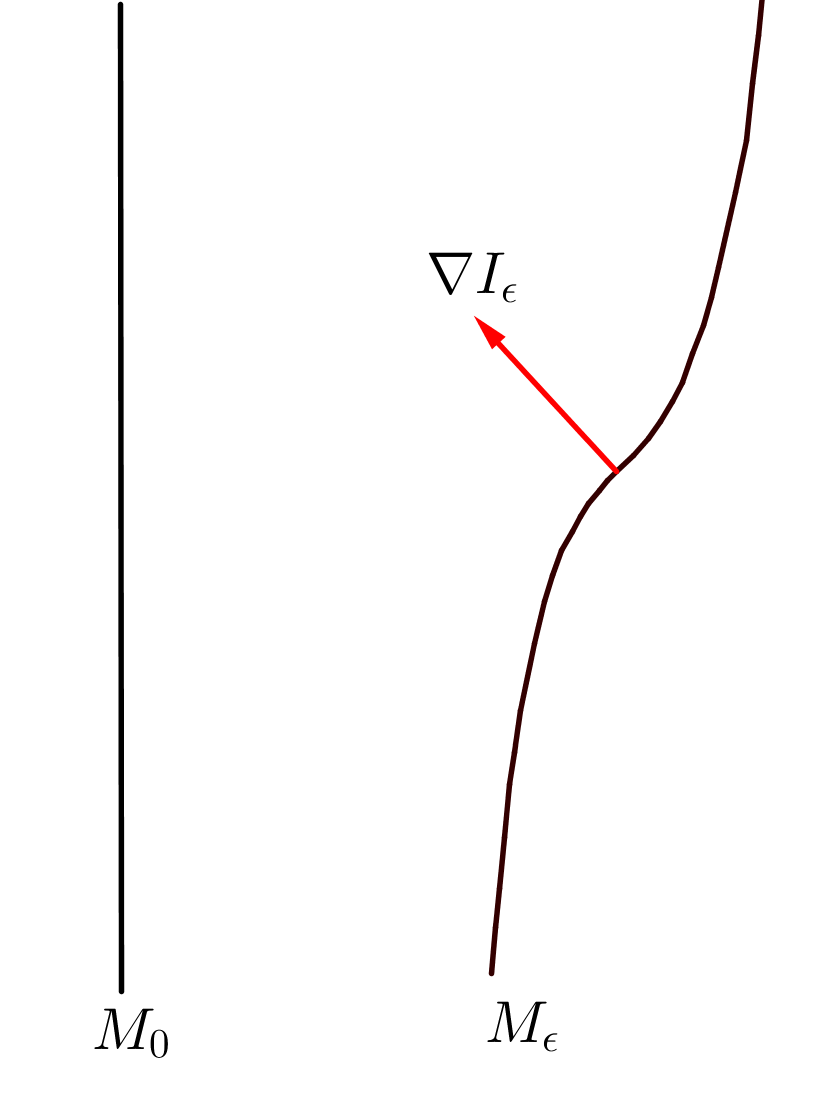}
	\caption{Geometric interpretation}   
	\label{fign:seq}
	\end{minipage}
\end{figure}
\end{center}

For topological arguments, this degenerate picture may constitute a serious obstacle to the existence of critical points for the ``perturbed'' functional (i.e. the one with~$\epsilon\ne0$). As an obvious example, the reader may think of the function of two variables~$f_\epsilon:\R^2\to\R$ given by~$f_\epsilon(x,y):=x^2+\epsilon y$.
When~$\epsilon=0$, this function attains its minimum along the manifold~$\{x=0\}$, but all the critical points on this manifold are ``destroyed'' by the
perturbation when~$\epsilon\ne0$ (indeed~$\nabla f_\epsilon(x,y) =(2x,\epsilon)$ never vanishes).

In the situation described in the proof of Theorem~\ref{nnscheq}, this pathology does not occur, thanks to the nondegeneracy provided in~\cite{FLS13}.
Indeed, by the nondegeneracy of the unperturbed critical manifold, when~$\epsilon\ne0$ one can construct a manifold, diffeomorphic to the original one 
(in our case of the form~$M_\epsilon:=\{ \overline u_\xi+\psi(\xi),\;\xi\in\R^n\}$), that enjoys the special feature of ``almost annihilating'' the gradient of the functional, up to vectors parallel to the original manifold~$M_0$ (this is the meaning of formula \eqref{qsci}).

Then, if one finds a minimum of the functional constrained to~$M_\epsilon$, the theory of Lagrange multipliers (at least in finite dimension) would suggest that the gradient is normal to~$M_\epsilon$. That is, the gradient of the functional is, simultaneously, parallel to~$M_0$ and orthogonal to~$M_\epsilon$. But since~$M_\epsilon$ is diffeomorphically
close to~$M_0$, the only vector with this property is the null vector, hence this argument provides the desired critical point.
\bigskip

We also recall that the fractional Schr\"odinger equation
is related to a nonlocal 
canonical quantization, which in turn produces
a nonlocal Uncertainty Principle.
In the classical setting, one considers the momentum/position
operators, which are defined in~$\R^n$, by
\begin{equation}\label{CQP}
P_k:=-i\hslash\partial_k \ {\mbox{
and }} \
Q_k:=x_k\end{equation}
for~$k\in\{1,\dots,n\}$. Then, the
Uncertainty Principle states that the operators~$P=(P_1,\dots,P_n)$
and~$Q=(Q_1,\dots,Q_n)$ do not commute (which makes
it practically impossible to measure simultaneously both momentum
and position).
Indeed, in this case a simple computation shows that
\begin{equation}\label{PLj89aaLL}
[Q,P]:=\sum_{k=1}^n [Q_k,P_k] = i\hslash n.\end{equation}
The nonlocal analogue of this quantization may be formulated
by introducing a nonlocal momentum, i.e. by replacing the operators
in~\eqref{CQP} by
\begin{equation}\label{CQ}
P_k^s:=-i\hslash^s\partial_k (-\Delta)^{\frac{s-1}2} \ {\mbox{
and }} \
Q_k:=x_k.\end{equation}
In this case
one has that
\begin{equation}\label{723}
\begin{split}
 {\mathcal{F}}\big(x_k {\mathcal{F}}^{-1} g(x)\big) (\xi)
&= \int_{\R^n} \,dx \int_{\R^n} \,dy \; e^{2\pi ix\cdot (y-\xi)} x_k g(y)
\\
&= \frac1{2\pi i}
\,\int_{\R^n} \,dx \int_{\R^n} \,dy \;
\partial_{y_k} e^{2\pi ix\cdot (y-\xi)} g(y)
= \frac{i}{2\pi}\int_{\R^n} \,dx \int_{\R^n} \,dy \;
e^{2\pi ix\cdot (y-\xi)} \partial_k g(y)\\
&= \frac{i}{2\pi}
\int_{\R^n} \,dx\;
e^{-2\pi ix\cdot \xi} {\mathcal{F}}^{-1}(\partial_k g)(x)
= \frac{i}{2\pi} {\mathcal{F}}\big({\mathcal{F}}^{-1}(\partial_k g)\big)(\xi)
= \frac{i}{2\pi} \partial_k g(\xi),
\end{split}\end{equation}
for any test function~$g$.
In addition,
$${\mathcal{F}}(P_k^s f)= (2\pi)^{s}\hslash^s
\xi_k\,|\xi|^{s-1}\widehat f.$$
Therefore, given any test function~$\psi$, using this with $f:=\psi$
and~$f:=x_k\psi$,
and also~\eqref{723} with $g:={\mathcal{F}}(P_k^s \psi)$
and~$g:=\widehat\psi$,
we obtain that
\begin{equation*}\begin{split}
& {\mathcal{F}} \big(Q_k P_k^s \psi(x)-P_k^s Q_k\psi(x)\big)
= {\mathcal{F}} \big(x_k P_k^s \psi(x)\big) -{\mathcal{F}} \big (P_k^s (x_k\psi(x)) \big)\\
=& \frac{i}{2\pi} \partial_k {\mathcal{F}}( P_k^s \psi)(\xi)
- (2\pi)^{s}\hslash^s
\xi_k\,|\xi|^{{s-1}} {\mathcal{F}}(x_k\psi(x))(\xi)
\\ 
=& (2\pi)^{s-1} i \hslash^s
\partial_k \big( \xi_k\,|\xi|^{{s-1}}\widehat \psi(\xi)\big)
- (2\pi)^{s}\hslash^s
\xi_k\,|\xi|^{{s-1}} \widehat x_k* \widehat\psi(\xi)\\
= &(2\pi)^{s-1} i \hslash^s
\partial_k \big( \xi_k\,|\xi|^{{s-1}}\widehat \psi(\xi)\big)
- (2\pi)^{s-1}i\hslash^s
\xi_k\,|\xi|^{{s-1}}\partial_k\widehat \psi(\xi)
\\ =& (2\pi)^{s-1} i \hslash^s 
\partial_k \big( \xi_k\,|\xi|^{{s-1}}\big)\,\widehat \psi(\xi)
=
(2\pi)^{s-1} i \hslash^s 
\left( |\xi|^{{s-1}} +(s-1)
\xi_k^2\,|\xi|^{s-3}\right)\,\widehat \psi(\xi)
.\end{split}\end{equation*}
Consequently, by summing up,
\[ \mathcal F \big([Q,P^s]\psi)=(2\pi )^{s-1} i \hslash^s\,|\xi|^{s-1}\,\left(n +s-1\right)\, \widehat \psi (\xi).\] 
So, by taking the anti-transform,
\eqlab{ \label{s-COMM-QP}
 [Q,P^s]\psi=\al 
i \hslash^s\,\left(
n +{s-1}
\right)\,
{\mathcal{F}}^{-1}
\big( (2\pi |\xi|)^{{s-1}} \widehat\psi\big)\\=\al 
i\hslash^s 
\,(n+s-1)\,
(-\Delta)^{\frac{s-1}2}\psi.}
Notice that, as $s\rightarrow1$, this formula reduces to the
the classical Heisenberg Uncertainty Principle in~\eqref{PLj89aaLL}.
\subsection{From the nonlocal Uncertainty Principle
to a fractional weighted inequality}

Now we point out a simple consequence of
the Uncertainty Principle
in formula~\eqref{s-COMM-QP}, which can be seen as
a fractional Sobolev inequality
in weighted spaces. The result
(which boils down to known formulas as~$s\to1$)
is the following:

\begin{prop}
For any~$u\in \mathcal{S}(\R^n)$, we have that
\begin{equation*}
\Big\| \, (-\Delta)^{\frac{s-1}{4}} u \, \Big\|_{L^2(\R^n)}^2
\le \frac{2}{n+s-1} \,
\Big\|\, |x|u \, \Big\|_{L^2(\R^n)}
\,\Big\| \,\nabla (-\Delta)^{\frac{s-1}{2}} u\,
\Big\|_{L^2(\R^n)}.\end{equation*}
\end{prop}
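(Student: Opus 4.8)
The plan is to derive this weighted inequality directly from the nonlocal canonical commutation relation~\eqref{s-COMM-QP}, exactly as the classical Heisenberg inequality follows from~\eqref{PLj89aaLL}. Fix $u\in\mathcal{S}(\R^n)$, normalize $\hslash=1$, and recall that~\eqref{s-COMM-QP} reads $[Q,P^s]u=i\,(n+s-1)\,(-\Delta)^{\frac{s-1}{2}}u$, where $[Q,P^s]=\sum_{k=1}^n\big(Q_kP_k^s-P_k^sQ_k\big)$ with $Q_k=x_k$ and $P_k^s=-i\,\partial_k(-\Delta)^{\frac{s-1}{2}}$. The first step is to pair this identity with $u$ in $L^2(\R^n)$, obtaining
$$ (n+s-1)\,\big\langle (-\Delta)^{\frac{s-1}{2}}u,\,u\big\rangle
= -i\sum_{k=1}^n\Big(\langle Q_kP_k^su,u\rangle-\langle P_k^sQ_ku,u\rangle\Big). $$
Here one uses Plancherel to note that the left-hand inner product equals $\int_{\R^n}(2\pi|\xi|)^{s-1}|\widehat u(\xi)|^2\,d\xi=\big\|(-\Delta)^{\frac{s-1}{4}}u\big\|_{L^2}^2$, which is real and nonnegative (this is where the hypothesis $n+s-1>0$, guaranteed since $n\ge2$ in this chapter, is needed for the integral to be finite near the origin).

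The second step is algebraic: $Q_k$ is self-adjoint since it is multiplication by a real function, and $P_k^s$ is self-adjoint as a composition of the self-adjoint multiplier $(-\Delta)^{\frac{s-1}{2}}$ with the skew-adjoint $\partial_k$ and the factor $-i$. Hence $\langle Q_kP_k^su,u\rangle=\langle P_k^su,Q_ku\rangle$ and $\langle P_k^sQ_ku,u\rangle=\overline{\langle P_k^su,Q_ku\rangle}$, so each summand equals $2i\,\mathrm{Im}\,\langle P_k^su,Q_ku\rangle$, and therefore
$$ (n+s-1)\,\big\|(-\Delta)^{\frac{s-1}{4}}u\big\|_{L^2}^2
= 2\sum_{k=1}^n \mathrm{Im}\,\big\langle P_k^su,\,Q_ku\big\rangle. $$
Now I would estimate the right-hand side by $2\sum_k\|P_k^su\|_{L^2}\|Q_ku\|_{L^2}$ using Cauchy–Schwarz in $L^2$, then by $2\big(\sum_k\|P_k^su\|_{L^2}^2\big)^{1/2}\big(\sum_k\|Q_ku\|_{L^2}^2\big)^{1/2}$ using Cauchy–Schwarz for finite sums. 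Finally, $\sum_k\|Q_ku\|_{L^2}^2=\int_{\R^n}|x|^2|u|^2=\big\||x|u\big\|_{L^2}^2$ and $\sum_k\|P_k^su\|_{L^2}^2=\sum_k\|\partial_k(-\Delta)^{\frac{s-1}{2}}u\|_{L^2}^2=\big\|\nabla(-\Delta)^{\frac{s-1}{2}}u\big\|_{L^2}^2$, the factor $-i$ being irrelevant to the norm. Dividing by $n+s-1$ yields the asserted inequality.

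The only genuinely delicate point is the functional-analytic bookkeeping: $(-\Delta)^{\frac{s-1}{2}}$ and $(-\Delta)^{\frac{s-1}{4}}$ are Riesz potentials of negative order $s-1\in(-1,0)$, whose Fourier symbols are singular at $\xi=0$, so $P_k^su$ and $(-\Delta)^{\frac{s-1}{4}}u$ need not lie in $\mathcal{S}(\R^n)$. One therefore has to check, all on the Fourier side, that $u\in\mathcal{S}(\R^n)$ with $n\ge2$ makes each quantity $\|(-\Delta)^{\frac{s-1}{4}}u\|_{L^2}$, $\|\nabla(-\Delta)^{\frac{s-1}{2}}u\|_{L^2}$ finite and that the pairings above are legitimate (e.g. by Plancherel, $\langle P_k^su,Q_ku\rangle$ is a convergent integral because $|\xi|^{s-1}\widehat u\in L^2$ and $Q_ku\in\mathcal S\subset L^2$). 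Once these integrability facts are in place, the self-adjointness manipulations and the two applications of Cauchy–Schwarz are routine, and the proof is essentially a transcription of the classical argument into the nonlocal setting.
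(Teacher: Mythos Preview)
Your proof is correct and uses the same ingredients as the paper—the commutator identity~\eqref{s-COMM-QP} and the self-adjointness of~$Q_k$ and~$P_k^s$—but the technical execution differs. The paper expands the nonnegative quantity~$\sum_k\|(iP_k^s+\lambda Q_k)u\|_{L^2}^2$ as a quadratic in the real parameter~$\lambda$, obtaining
\[
0\le \big\|\nabla(-\Delta)^{\frac{s-1}{2}}u\big\|_{L^2}^2+\lambda^2\big\||x|u\big\|_{L^2}^2-\lambda\,(n+s-1)\,\big\|(-\Delta)^{\frac{s-1}{4}}u\big\|_{L^2}^2,
\]
and then minimizes over~$\lambda$. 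You instead pair the commutator directly with~$u$ and apply Cauchy--Schwarz twice (once in~$L^2$, once over the index~$k$). The two routes are classically equivalent for Heisenberg-type inequalities and yield the same constant~$2/(n+s-1)$; your version is slightly more direct, while the paper's quadratic-form argument makes the role of the anti-self-adjoint/self-adjoint splitting more transparent and avoids the intermediate step of taking imaginary parts. Your remarks on the integrability near~$\xi=0$ (requiring~$n+s-1>0$) are also pertinent and are left implicit in the paper.
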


\begin{proof}
The proof is a general argument in operator theory.
Indeed, suppose that there are two operators~$S$ and~$A$,
acting on a space with a scalar
Hermitian product. Assume that~$S$ is self-adjoint
and~$A$ is anti-self-adjoint, i.e.
$$ \langle u,Su\rangle=\langle Su,u\rangle
\;{\mbox{ and }}\;
\langle u,Au\rangle=-\langle Au,u\rangle,$$
for any~$u$ in the space.
Then, for any~$\lambda\in\R$,
\begin{eqnarray*}
\| (A+\lambda S)u\|^2&=&
\|Au\|^2 + \lambda^2 \|Su\|^2 + \lambda\Big(
\langle Au,Su\rangle +\langle Su,Au\rangle\Big)\\
&=&
\|Au\|^2 + \lambda^2 \|Su\|^2 + \lambda
\langle (SA-AS)u,u\rangle .
\end{eqnarray*}
Now we apply this identity in the space~$C^\infty_0(\R^n)\subset L^2(\R^n)$,
taking~$S:=Q_k=x_k$ and~$A:=i P_k^s=\hslash^s\partial_k (-\Delta)^{\frac{s-1}2}$
(recall~\eqref{CQ}
and notice that~$iP_k^s$ is anti-self-adjoint,
thanks to the integration by parts formula).
In this way, and using~\eqref{s-COMM-QP}, we obtain that
\begin{eqnarray*}
0&\le&
\sum_{k=1}^n
\| (iP_k^s+\lambda Q_k)u\|^2_{L^2(\R^n)}\\
&=&
\sum_{k=1}^n \left[
\|P_k^s u\|^2_{L^2(\R^n)}
+ \lambda^2 \|Q_k u\|^2_{L^2(\R^n)} 
+ i\lambda \langle [Q_k ,P_k^s] u,u\rangle_{L^2(\R^n)}\right]
\\ &=&
\hslash^{2s} \Big\| \,\nabla (-\Delta)^{\frac{s-1}{2}} u\,\Big\|_{L^2(\R^n)}^2
+\lambda^2 \Big\|\, |x|u \, \Big\|_{L^2(\R^n)}^2 \\ 
&&+ i^2 \,\lambda\,(n + s - 1)\,\hslash^{s}
\langle (-\Delta)^{\frac{s-1}{2}} u,u\rangle_{L^2(\R^n)}
\\ &=&
\hslash^{2s} \Big\| \,\nabla (-\Delta)^{\frac{s-1}{2}} u\,\Big\|_{L^2(\R^n)}^2
+\lambda^2 \Big\|\, |x|u \, \Big\|_{L^2(\R^n)}^2\\
&&-\lambda\,(n + s -1)\,\hslash^{s}
\Big\| \, (-\Delta)^{\frac{s-1}{4}} u \, \Big\|_{L^2(\R^n)}^2.
\end{eqnarray*}
Now, if~$u\not\equiv0$,
we can optimize this identity by choosing
$$\lambda:=\frac{ 
(n + s -1)\,\hslash^{s}
\Big\| \, (-\Delta)^{\frac{s-1}{4}} u \, \Big\|_{L^2(\R^n)}^2}{2\,
\Big\|\, |x|u \, \Big\|_{L^2(\R^n)}^2 }$$
and we obtain that
$$ 0\le
\hslash^{2s} \Big\| \,\nabla (-\Delta)^{\frac{s-1}{2}} u\,\Big\|_{L^2(\R^n)}^2
- \frac{(n + s -1)^2\,\hslash^{2s}
\Big\| \, (-\Delta)^{\frac{s-1}{4}} u \, \Big\|_{L^2(\R^n)}^4}{4\,
\Big\|\, |x|u \, \Big\|_{L^2(\R^n)}^2},$$
which gives the desired result.
\end{proof}

\begin{appendix}
\chapter{Alternative proofs of some results} 
\section{Another proof of Theorem \ref{G1}}\label{1.APP-APP-TH}

Here we present a different proof of Theorem \ref{G1},
based on the Fourier transforms of homogeneous distributions.
This proof is the outcome of a pleasant discussion with
Alexander Nazarov.

\begin{proof}[Alternative proof of Theorem \ref{G1}]

We are going to use the Fourier transform of~$|x|^q$
in the sense of distribution, with~$q\in\Co\setminus\Z$.
Namely (see e.g. 
Lemma~2.23 on page~38 of~\cite{KOLDO})
\begin{equation}\label{KSp}
{\mathcal{F}} (|x|^q) =
C_q\,|\xi|^{-1-q},
\end{equation}
with
\begin{equation}\label{J5K:1} C_q:=-2(2\pi)^{-q-1}\Gamma(1+q)\,\sin\frac{\pi q}{2}.
\end{equation}
We remark that the original value of the constant $C_q$ in~\cite{KOLDO} is here multiplied by a $(2\pi)^{-q-1}$ term, in order to be consistent with the Fourier normalization that we have introduced.
We observe that the function~$\R\ni x\mapsto |x|^q$  is locally integrable only when~$q>-1$, so it naturally induces a distribution only in this range of the parameter~$q$ (and, similarly, the function~$\R\ni\xi\mapsto |\xi|^{-1-q}$ is locally integrable only when~$q<0$): therefore, to make sense of the formulas above in a wider range of parameters~$q$
it is necessary to use analytic continuation and a special procedure that is called regularization:
see e.g. page~36 in~\cite{KOLDO}
(as a matter of fact, we will do a procedure of this type
explicitly in a particular case in~\eqref{pOL8901}).
Since~$\R\ni x\mapsto|x|^q$
is even, we can write~\eqref{KSp}
also as
\begin{equation}\label{KSp2}
{\mathcal{F}}^{-1} (|\xi|^q) =
C_q\,|x|^{-1-q}.
\end{equation}
We observe that, by elementary trigonometry,
$$ \sin\frac{\pi (s+1)}{2}=
-\sin\frac{\pi (s-1)}{2}
\;{\mbox{ and }}\;
\sin\frac{\pi (s-2)}{2}=\sin\frac{\pi s}{2}.$$
Moreover,
$$ \Gamma(2+s)=(1+s) \Gamma(1+s)
\;{\mbox{ and }}\;
\Gamma(s)=(s-1) \Gamma(s-1).$$
Hence, by~\eqref{J5K:1},
\begin{equation}\label{gJK:100}
\begin{split}
& \frac{1-s}{1+s} \cdot 
C_{s+1}\,C_{s-2}
= \frac{1-s}{1+s} \cdot 
4(2\pi)^{-2s-1} \Gamma(2+s)\,\Gamma(s-1)\,\sin\frac{\pi (s+1)}{2}
\,\sin\frac{\pi (s-2)}{2}
\\ &\qquad = 
4\Gamma(1+s)\,\Gamma(s)\,\sin\frac{\pi (s-1)}{2}
\,\sin\frac{\pi s}{2}
= 
C_s\,C_{s-1}.
\end{split}\end{equation}
Moreover,
$$ |x|^s +\frac{1}{s+1} \partial_x |x|^{s+1} = 2x_+^s.$$
So, taking the Fourier transform and using~\eqref{KSp} with~$q:=s$
and~$q:=s+1$, we obtain that
\begin{eqnarray*}
2{\mathcal{F}}(x_+^s) &=&
{\mathcal{F}}(|x|^s) +\frac{1}{s+1} {\mathcal{F}}\big(\partial_x |x|^{s+1}\big)\\
&=&
{\mathcal{F}}(|x|^s) +\frac{2\pi i\xi}{s+1} {\mathcal{F}}(|x|^{s+1})
\\ &=& C_s\,|\xi|^{-1-s}+\frac{2\pi i\xi}{s+1} C_{s+1}\,|\xi|^{-2-s}.
\end{eqnarray*}
As a consequence,
$$ 2 |\xi|^{2s} {\mathcal{F}}(x_+^s)
= C_s\,|\xi|^{-1+s}+\frac{2\pi i\xi}{s+1} C_{s+1}\,|\xi|^{-2+s}.$$
Hence, recalling~\eqref{723},
\begin{eqnarray*}
2 {\mathcal{F}}^{-1}\Big( |\xi|^{2s} {\mathcal{F}}(x_+^s)\Big)
&=&
C_s\,{\mathcal{F}}^{-1}(|\xi|^{-1+s})
+\frac{2\pi C_{s+1}\,i}{s+1} 
{\mathcal{F}}^{-1}(\xi) *
{\mathcal{F}}^{-1}(|\xi|^{-2+s})
\\ &=&C_s\,{\mathcal{F}}^{-1}(|\xi|^{-1+s})
-\frac{2\pi C_{s+1}\,i}{s+1}\cdot\frac{i}{2\pi} \partial_x
{\mathcal{F}}^{-1}(|\xi|^{-2+s})
\\ &=&C_s\,{\mathcal{F}}^{-1}(|\xi|^{-1+s})
+\frac{C_{s+1}}{s+1}
\partial_x
{\mathcal{F}}^{-1}(|\xi|^{-2+s}) 
.\end{eqnarray*}
Accordingly, exploiting~\eqref{KSp2} with~$q:=-1+s$
and~$q:=-2+s$,
\begin{eqnarray*}
2 {\mathcal{F}}^{-1}\Big( |\xi|^{2s} {\mathcal{F}}(x_+^s)\Big)
&=&
C_s\,C_{s-1}\,|x|^{-s}
+\frac{C_{s+1}\,C_{s-2}}{s+1}
\partial_x |x|^{1-s}\\
&=&C_s\,C_{s-1}\,|x|^{-s}
+\frac{1-s}{1+s}\cdot C_{s+1}\,C_{s-2} \,x\,|x|^{-1-s}
.\end{eqnarray*}
So, recalling~\eqref{gJK:100},
$$ 2 {\mathcal{F}}^{-1}\Big( |\xi|^{2s} {\mathcal{F}}(x_+^s)\Big)
= C_s\,C_{s-1}\,\Big( |x|^{-s} - x\,|x|^{-1-s}\Big).$$
This and \eqref{frlaphdef} give that
$$ (-\Delta)^s (x_+^s) = \tilde C_s 
\,\Big( |x|^{-s} - x\,|x|^{-1-s}\Big),$$
for some~$\tilde C_s$. In particular, the quantity above
vanishes when~$x>0$, thus providing an alternative proof
of Theorem~\ref{G1}.
\end{proof}

Yet another proof of Theorem \ref{G1}
can be obtained in a rather short, but technically quite advanced
way, using the
Paley-Wiener theory in the distributional setting.
The sketch of this proof goes as follows:

\begin{proof}[Alternative proof of Theorem \ref{G1}]
The function~$h:=x_+^s$ is homogeneous of degree~$s$.
Therefore its (distributional) Fourier
transform~${\mathcal{F}} h$ is homogeneous of
degree~$-1-s$ (see Lemma~2.21 in~\cite{KOLDO}).

Moreover, $h$ is supported in~$\{x\ge0\}$, therefore~${\mathcal{F}} h$
can be continued to an analytic function
in~$\Co_-:=\{ z\in \Co {\mbox{ s.t. }} \Im z<0\}$
(see Theorem~2 in~\cite{PALEY_DIS}).

Therefore, $g(\xi):=|\xi|^s {\mathcal{F}}h(\xi)$
is homogeneous of degree~$-1$ and is the trace of a function that is analytic
in~$\Co_-$. 

In particular, for any~$y<0$,
we have that
$$ g(-iy)= \frac{g(-i)}{y}=\frac{c}{-i y},$$ where~$c:=-i g(-i)$.
That is, $g(z)$ coincides with~$\frac{c}{z}$ on a half-line,
and then in the whole of~$\Co_-$,
by analytic continuation.

That is, in the sense of distributions,
$$ g(\xi)= \frac{c}{\xi-i0},$$
for any~$\xi\in\R$.

Now we recall the Sokhotski Formula (see e.g.~(3.10)
in~\cite{BLANC}), according to which
$$ \frac{1}{\xi\pm i0} =\mp i\pi\delta + \mbox{P.V.}\frac{1}{\xi},$$
where $\delta$ is the Dirac's Delta and the identity holds
in the sense of distributions. 
By considering this equation with the two sign choices and
then summing up, we obtain that
$$ \frac{1}{\xi+ i0} -\frac{1}{\xi- i0} =
- 2i\pi\delta.$$
As a consequence
$$ g(\xi)= \frac{c}{\xi-i0}=
\frac{c}{\xi+ i0} +2ic\pi\delta.$$
Therefore
$$ |\xi|^{2s} {\mathcal{F}}h(\xi)=
|\xi|^s g(\xi)=  \frac{c\,|\xi|^s}{\xi+ i0}+2ic\pi|\xi|^s \delta.$$
Of course, as a distribution, $|\xi|^s \delta=0$,
since the evaluation of~$|\xi|^s$ at~$\xi=0$ vanishes,
therefore we can write
$$ \ell(\xi):=|\xi|^{2s} {\mathcal{F}}h(\xi)=  \frac{c\,|\xi|^s}{\xi+ i0}.$$
Accordingly, $\ell$
is homogeneous of degree~$-1+s$ and it
is the trace of a function analytic in~$\Co_+
:=\{ z\in \Co {\mbox{ s.t. }} \Im z>0\}$.

Consequently, ${\mathcal{F}}^{-1}\ell$
is homogeneous of degree $-s$
(see again Lemma~2.21 in~\cite{KOLDO}),
and it is supported in~$\{x\le0\}$
(see again Theorem~2 in~\cite{PALEY_DIS}).
Since~$(-\Delta)^s x_+^s$ coincides (up to multiplicative constants)
with~${\mathcal{F}}^{-1}\ell$, we have just shown that
$(-\Delta)^s x_+^s =c_o x_-^{-s}$, for some~$c_o\in\R$,
and so in particular~$(-\Delta)^s x_+^s $ vanishes in~$\{x>0\}$.
\end{proof}

\section{Another proof of Lemma~\ref{LM-COST}}\label{APP-APP}

For completeness, we provide here an alternative proof of
Lemma~\ref{LM-COST} that does not use the theory of the fractional
Laplacian.

\begin{proof}[Alternative proof of Lemma~\ref{LM-COST}]
We first recall some basic properties of the modified Bessel functions
(see e.g.~\cite{ABRAMOWITZ}).
First of all (see formula~9.1.10 on page~360 of~\cite{ABRAMOWITZ}),
we have that
$$ J_s(z) := \frac{z^s}{2^s}
\sum_{k=0}^{+\infty} \frac{(-1)^k z^{2k} }{2^{2k} \,k!\,\Gamma(s+k+1)}
= \frac{z^{s} }{2^{s} \,\Gamma(1+s)} + \mathcal{O}(|z|^{2+s})$$
as~$|z|\to0$. Therefore
(see formula~9.6.3 on page~375 of~\cite{ABRAMOWITZ}),
\begin{eqnarray*}
&& I_s(z):= e^{-\frac{i\pi s}{2}} J_s (e^{\frac{i\pi}{2}} z)
\\ &&\qquad=e^{-\frac{i\pi s}{2}} \Big(
\frac{ e^{\frac{i\pi s}{2}} z^s }{2^{s} \,\Gamma(1+s)} + 
\mathcal{O}(|z|^{2+s}) \Big) \\
&&\qquad=
\frac{ z^s }{2^{s} \,\Gamma(1+s)} + 
\mathcal{O}(|z|^{2+s}) ,\end{eqnarray*}
as~$|z|\to0$.
Using this and formula~9.6.2 on page~375 of~\cite{ABRAMOWITZ},
\begin{eqnarray*} K_s(z)&:=&\frac{\pi}{2\sin(\pi s)}\Big(
I_{-s}(z)-I_s(z)\Big) \\
&=&
\frac{\pi}{2\sin(\pi s)}\left(
\frac{ z^{-s} }{2^{-s} \,\Gamma(1-s)}
-\frac{ z^s }{2^{s} \,\Gamma(1+s)}
+\mathcal{O}(|z|^{2-s})
\right).
\end{eqnarray*}
Thus, recalling Euler's reflection formula
$$ \Gamma(1-s) \,\Gamma(s) = \frac{\pi}{\sin(\pi s)},$$
and the relation~$\Gamma(1+s)=s\Gamma(s)$,
we obtain
\begin{eqnarray*}
K_s(z) &=&
\frac{\Gamma(1-s) \,\Gamma(s)}{2}\left(
\frac{ z^{-s} }{2^{-s} \,\Gamma(1-s)}
-\frac{ z^s }{2^{s} \,\Gamma(1+s)}
+\mathcal{O}(|z|^{2-s})
\right) \\ &=&
\frac{\Gamma(s)\, z^{-s}}{2^{1-s}}
-
\frac{\Gamma(1-s)\,z^s}{2^{1+s}s}+\mathcal{O}(|z|^{2-s}),\end{eqnarray*}
as~$|z|\to0$. We use this and formula~(3.100) in~\cite{MATHAR}
(or page~6 in~\cite{Oberhettinger}) and get that, for any small~$a>0$,
\begin{equation}\label{789OO-P}
\begin{split}
&\int_0^{+\infty} \frac{\cos(2\pi t)}{(t^2+a^2)^{s+\frac12}}\,dt=
\frac{\pi^{s+\frac12}}{ a^{s}\,\Gamma\left(s+\frac12\right) } K_s(2\pi a)
\\ &\qquad=
\frac{\pi^{s+\frac12}}{ a^{s}\,\Gamma\left(s+\frac12\right) }
\left[
\frac{\Gamma(s)}{2 \pi^s a^s}
-
\frac{\Gamma(1-s)\, \pi^s a^s}{2s}+\mathcal{O}(a^{2-s})
\right] \\
&\qquad=
\frac{\pi^{\frac12} \Gamma(s) }{ 2 a^{2s}\,\Gamma\left(s+\frac12\right) }
-
\frac{\pi^{2s+\frac12}\,\Gamma(1-s)}{ 2s\Gamma\left(s+\frac12\right) }
+\mathcal{O}(a^{2-2s}).
\end{split}
\end{equation}
Now we recall the generalized hypergeometric functions~${}_m F_n$
(see e.g. page~211 in~\cite{Oberhettinger}): as a matter of
fact, we just need that for any~$b$, $c$, $d>0$,
$$ {}_1 F_2 (b;c,d;0) = \frac{\Gamma(c)\Gamma(d)}{\Gamma(b)}\cdot
\frac{\Gamma(b)}{\Gamma(c)\Gamma(d)}=1.$$
We also recall the Beta function relation
\begin{equation} \label{betafunction} B(\alpha,\beta)=\frac{\Gamma(\alpha)\,\Gamma(\beta)}{
\Gamma(\alpha+\beta)},\end{equation}
see e.g. formula~6.2.2 in~\cite{ABRAMOWITZ}.
Therefore using
formula~(3.101) in~\cite{MATHAR}
(here with~$y:=0$, $\nu:=0$ and~$\mu:=s-\frac12$,
or see page~10 in~\cite{Oberhettinger}),
\begin{eqnarray*}
\int_0^{+\infty} \frac{dt}{(a^2+t^2)^{s+\frac12}}
&=& \frac{a^{-2s}}{2} B\left(\frac{1}{2},s\right)
{}_1 F_2\left( \frac{1}{2}; 1-s,\frac12;0\right)
\\ &=&
\frac{\Gamma\left(\frac{1}{2}\right)\,\Gamma(s)}{
2a^{2s}\Gamma\left(\frac{1}{2}+s\right)}.
\end{eqnarray*}
Then, we recall that~$\Gamma\left(\frac{1}{2}\right)=\pi^{\frac12}$,
so, making use of~\eqref{789OO-P}, for any small~$a>0$,
$$ \int_0^{+\infty} \frac{1-\cos(2\pi t)}{(t^2+a^2)^{s+\frac12}}\,dt
=
\frac{\pi^{2s+\frac12}\,\Gamma(1-s)}{ 2s\Gamma\left(s+\frac12\right) }
+\mathcal{O}(a^{2-2s}).$$
Therefore, sending~$a\to0$ by the Dominated Convergence Theorem
we obtain
\begin{equation}\label{DOMINATED}
\int_0^{+\infty} \frac{1-\cos(2\pi t)}{t^{1+2s}}\,dt
=\lim_{a\to0}
\int_0^{+\infty} \frac{1-\cos(2\pi t)}{(t^2+a^2)^{s+\frac12}}\,dt
=
\frac{\pi^{2s+\frac12}\,\Gamma(1-s)}{ 2s\Gamma\left(s+\frac12\right) }
.\end{equation}
Now we recall the integral representation of the
Beta function 
(see e.g. formulas~6.2.1 and 6.2.2 in~\cite{ABRAMOWITZ}), namely
\begin{equation}\label{89TTUHg}
\frac{\Gamma\left( \frac{n-1}{2}\right)\,\Gamma\left( \frac12+s\right)}{
\Gamma\left(\frac{n}{2}+s\right)}=
B\left( \frac{n-1}{2},\,\frac12+s\right)
=\int_0^{+\infty} \frac{\tau^{\frac{n-3}{2}}}{(1+\tau)^{ \frac{n}{2}+s } }
\,d\tau.\end{equation}
We also observe that in any dimension~$N$
the $(N-1)$-dimensional
measure of the unit sphere 
is~$\frac{N \pi^{\frac{N}{2}}}{\Gamma\left( \frac{N}{2}+1\right)}$,
(see e.g.~\cite{HUBER}).
Therefore
$$ \int_{\R^N} \frac{dY}{(1+|Y|^2)^{\frac{N+1+2s}{2}}}=
\frac{N \pi^{\frac{N}{2}}}{\Gamma\left( \frac{N}{2}+1\right)}
\int_0^{+\infty} \frac{\rho^{N-1}}{
(1+\rho^2)^{\frac{N+1+2s}{2}} }\,d\rho.$$
In particular, taking~$N:=n-1$ and using the change of
variable~$\rho^2=:\tau$,
\begin{eqnarray*}
&& \int_{\R^{n-1}} \frac{d\eta}{(1+|\eta|^2)^{\frac{n+2s}{2}}}
=
\frac{(n-1)\, \pi^{\frac{n-1}{2}}}{\Gamma\left( \frac{n-1}{2}+1\right)}
\int_0^{+\infty} \frac{\rho^{n-2}}{ 
(1+\rho^2)^{\frac{n+2s}{2}} }\,d\rho \\
&&\qquad=
\frac{(n-1)\, \pi^{\frac{n-1}{2}}}{2\,\Gamma\left( \frac{n-1}{2}+1\right)}
\int_0^{+\infty} \frac{\tau^{\frac{n-3}{2}} }{ 
(1+\tau)^{\frac{n+2s}{2}} }\,d\tau.
\end{eqnarray*}
By comparing this with~\eqref{89TTUHg}, we conclude that
\begin{eqnarray*}
\int_{\R^{n-1}} \frac{d\eta}{(1+|\eta|^2)^{\frac{n+2s}{2}}}&=&
\frac{(n-1)\, \pi^{\frac{n-1}{2}}}{2\,\Gamma\left( \frac{n-1}{2}+1\right)}
\cdot
\frac{\Gamma\left( \frac{n-1}{2}\right)\,\Gamma\left( \frac12+s\right)}{
\Gamma\left(\frac{n}{2}+s\right)}\\
&=& 
\frac{\pi^{\frac{n-1}{2}}
\,\Gamma\left( \frac12+s\right)
}{\Gamma\left(\frac{n}{2}+s\right)}.
\end{eqnarray*}
Accordingly, with the change of variable~$\eta:= |\omega_1|^{-1}
(\omega_2,\dots,\omega_n)$,
\begin{eqnarray*}
&& \int_{\R^n} \frac{1-\cos(2\pi\omega_1)}{|\omega|^{n+2s}}\,d\omega
\\&&\qquad= \int_\R \left( \int_{\R^{n-1}} 
\frac{1-\cos(2\pi\omega_1)}{ (\omega_1^2+\omega_2^2+\dots+
\omega_n^2)^{\frac{n+2s}{2}}
}\,d\omega_2\,\dots\,d\omega_n
\right)\,d\omega_1\\
&&\qquad =
\int_\R \left( \int_{\R^{n-1}}
\frac{1-\cos(2\pi\omega_1)}{ |\omega_1|^{1+2s}
(1+|\eta|^2)^{\frac{n+2s}{2}}
}\,d\eta
\right)\,d\omega_1
\\ &&\qquad =
\frac{\pi^{\frac{n-1}{2}}
\,\Gamma\left( \frac12+s\right)
}{\Gamma\left(\frac{n}{2}+s\right)}
\,
\int_\R 
\frac{1-\cos(2\pi\omega_1)}{ |\omega_1|^{1+2s} }
\,d\omega_1\\
&&\qquad =
\frac{2\pi^{\frac{n-1}{2}}
\,\Gamma\left( \frac12+s\right)
}{\Gamma\left(\frac{n}{2}+s\right)}
\,
\int_0^{+\infty}
\frac{1-\cos(2\pi t)}{ t^{1+2s} }
\,dt.
\end{eqnarray*}
Hence, recalling~\eqref{DOMINATED},
\begin{eqnarray*}
\int_{\R^n} \frac{1-\cos(2\pi\omega_1)}{|\omega|^{n+2s}}\,d\omega
&=&
\frac{2\pi^{\frac{n-1}{2}}
\,\Gamma\left( \frac12+s\right)
}{\Gamma\left(\frac{n}{2}+s\right)}
\cdot
\frac{\pi^{2s+\frac12}\,\Gamma(1-s)}{ 2s\Gamma\left(s+\frac12\right) }\\
&=&
\frac{ \pi^{\frac{n}{2}+2s} \,\Gamma(1-s)}{s\,\Gamma\left(\frac{n}{2}+s\right)}
,\end{eqnarray*}
as desired.
\end{proof}

To complete the picture, we also give a different
proof of~\eqref{DOMINATED} which does not use the theory
of special functions, but the Fourier transform in the sense of
distributions and the unique analytic continuation:

\begin{proof}[Alternative proof of \eqref{DOMINATED}]
We recall (see\footnote{To check~\eqref{TSUM01}
the reader should note that the normalization of the Fourier
transform on page~138 in~\cite{DONOGHUE} is different than
the one here in~\eqref{transF}.}
e.g. pages~156--157 in~\cite{DONOGHUE}) that
for any~$\lambda\in\Co$ with~$\Re\lambda\in(0,1/2)$ we have
$$ {\mathcal{F}} \big( |x|^{\lambda-1} \big)= \frac{\pi^{\frac12-\lambda}
\Gamma\left(\frac{\lambda}{2}\right)}{ \Gamma\left(\frac{1-\lambda}{2}\right)\,|x|^\lambda}$$
in the sense of distribution, that is
\begin{equation*}
\int_\R |x|^{\lambda-1}\widehat\phi(x)\,dx=
\frac{\pi^{\frac12-\lambda}
\Gamma\left(\frac{\lambda}{2}\right)}{ \Gamma\left(\frac{1-\lambda}{2}\right)}
\int_\R |x|^{-\lambda}\phi(x)\,dx\end{equation*}
for every~$\phi\in C^\infty_0(\R)$. The same result is obtained in \cite{B15}, in the proof of Proposition 2.4 b), pages 13--16.

As a consequence
\begin{equation}\label{TSUM01}
\int_\R \left(\int_\R |x|^{\lambda-1} \phi(y) \,e^{-2\pi i xy} \,dy
\right)\,dx=
\frac{\pi^{\frac12-\lambda}
\Gamma\left(\frac{\lambda}{2}\right)}{ \Gamma\left(\frac{1-\lambda}{2}\right)}
\int_\R |x|^{-\lambda}\phi(x)\,dx.\end{equation}
By changing variable~$x\mapsto-x$ in the first integral,
we also see that
\begin{equation}\label{TSUM02}
\int_\R \left(\int_\R |x|^{\lambda-1} \phi(y) \,e^{2\pi i xy} \,dy
\right)\,dx=
\frac{\pi^{\frac12-\lambda}
\Gamma\left(\frac{\lambda}{2}\right)}{ \Gamma\left(\frac{1-\lambda}{2}\right)}
\int_\R |x|^{-\lambda}\phi(x)\,dx.\end{equation}
By summing together~\eqref{TSUM01} and~\eqref{TSUM02},
we obtain
\begin{equation*}
\int_\R \left(\int_\R |x|^{\lambda-1} \phi(y) \,\cos(2\pi xy) \,dy
\right)\,dx=
\frac{\pi^{\frac12-\lambda}
\Gamma\left(\frac{\lambda}{2}\right)}{ \Gamma\left(\frac{1-\lambda}{2}\right)}
\int_\R |x|^{-\lambda}\phi(x)\,dx.\end{equation*}
It is convenient to exchange the names of
the integration variables in the first integral above: hence we write
\begin{equation*}
\int_\R \left(\int_\R |y|^{\lambda-1} \phi(x) \,\cos(2\pi xy) \,dx
\right)\,dy=
\frac{\pi^{\frac12-\lambda}
\Gamma\left(\frac{\lambda}{2}\right)}{ \Gamma\left(\frac{1-\lambda}{2}\right)}
\int_\R |x|^{-\lambda}\phi(x)\,dx.\end{equation*}
Now we fix~$R>0$, we point out that
$$ \int_\R |y|^{\lambda-1}\chi_{(-R,R)}(y)\,dy=
2\int_{0}^R y^{\lambda-1}\,dy = \frac{2R^\lambda}{\lambda},$$
and we obtain that
\begin{equation}\label{pOL8901}\begin{split}&
\int_\R \left(\int_\R |y|^{\lambda-1} \phi(x) \,
\big( \cos(2\pi xy)-\chi_{(-R,R)}(y)\big)\,dx
\right)\,dy\\ &\qquad=
\frac{\pi^{\frac12-\lambda}
\Gamma\left(\frac{\lambda}{2}\right)}{ \Gamma\left(\frac{1-\lambda}{2}\right)}
\int_\R |x|^{-\lambda}\phi(x)\,dx
- \frac{2R^\lambda}{\lambda}\int_\R \phi(x) \,dx.\end{split}\end{equation}
This formula holds true, in principle, for~$\lambda\in\Co$,
with~$\Re\lambda\in(0,1/2)$, but by the uniqueness
of the analytic continuation in the variable~$\lambda$
it holds also for~$\lambda\in(-2,0)$.

Now we observe that
the map
$$ (x,y)\mapsto |y|^{\lambda-1} \phi(x) \,
\big( \cos(2\pi xy)-\chi_{(-R,R)}(y)\big)$$
belongs to~$L^1(\R\times\R)$ if~$\lambda\in(-2,0)$, hence we can use Fubini's Theorem
and exchange the order of the repeated integrals
in~\eqref{pOL8901}: in this way,
we deduce that
\begin{equation*}\begin{split}&
\int_\R \left(\int_\R |y|^{\lambda-1} \phi(x) \,
\big( \cos(2\pi xy)-\chi_{(-R,R)}(y)\big)\,dy
\right)\,dx\\&\qquad=
\frac{\pi^{\frac12-\lambda}
\Gamma\left(\frac{\lambda}{2}\right)}{ \Gamma\left(\frac{1-\lambda}{2}\right)}
\int_\R |x|^{-\lambda}\phi(x)\,dx
- \frac{2R^\lambda}{\lambda}\int_\R \phi(x) \,dx.\end{split}\end{equation*}
Since this is valid for every~$\phi\in C^\infty_0(\R)$,
we conclude that
\begin{equation}\label{PO0u33}
\int_\R |y|^{\lambda-1}\,
\big( \cos(2\pi xy)-\chi_{(-R,R)}(y)\big)\,dy =
\frac{\pi^{\frac12-\lambda}
\Gamma(\frac{\lambda}{2})}{ \Gamma(\frac{1-\lambda}{2})}
|x|^{-\lambda}
- \frac{2R^\lambda}{\lambda},\end{equation}
for any~$\lambda\in(-2,0)$ and~$x\ne0$.
In this setting, we also have that
$$ \int_\R |y|^{\lambda-1}\,
\big( \chi_{(-R,R)}(y)-1\big)\,dy=
-2\int_R^{+\infty} |y|^{\lambda-1}\,dy=\frac{2R^\lambda}{\lambda}.$$
By summing this with~\eqref{PO0u33}, we obtain
\begin{equation*}
\int_\R |y|^{\lambda-1}\,
\big( \cos(2\pi xy)-1\big)\,dy =
\frac{\pi^{\frac12-\lambda}
\Gamma(\frac{\lambda}{2})}{ \Gamma(\frac{1-\lambda}{2})}
|x|^{-\lambda}
.\end{equation*}
Hence, we take~$\lambda:=-2s\in(-2,0)$, 
and we obtain that
\begin{equation*}
\int_\R \frac{\cos(2\pi xy)-1}{
|y|^{1+2s} } \,dy =
\frac{\pi^{2s+\frac12}
\Gamma(-s)}{ \Gamma\left(\frac{1}{2}+s\right)}
|x|^{2s}.\end{equation*}
By changing variable
of integration~$t:=xy$, we obtain~\eqref{DOMINATED}.
\end{proof}
\end{appendix}

\bibliography{biblio}
\bibliographystyle{plain}

\end{document}